\newtheorem{theorem}{Theorem}[section]
\newtheorem{proposition}[theorem]{Proposition}
\newtheorem{lemma}[theorem]{Lemma}
\newtheorem{corollary}[theorem]{Corollary}
\newtheorem{example}{Example}
\newtheorem{question}{Question}
\newtheorem{problem}{Problem}
\def\bp{\partial_{+} }
\def\C{\mathbb{C} }
\def\R{\mathbb{R} }
\def\Z{\mathbb{Z} }
\def\nbd{neighborhood }
\def\nbds{neighborhoods }
\def\Sv{\mathop{\mathrm{Sing}}(v)}
\def\Pv{\mathop{\mathrm{Per}}(v)}
\def\Cv{\mathop{\mathrm{Cl}}(v)}
\title[Quotient spaces and topological invariants of flows]{Quotient spaces and topological invariants of flows}
\author{Tomoo Yokoyama}
\date{\today}
\address{Department of Mathematics, Kyoto University of Education,
1 Fujinomori, Fukakusa, Fushimi-ku Kyoto, 612-8522, Japan \\
}
\email{tomoo@kyokyo-u.ac.jp}
\subjclass[2010]{Primary:37B30,54B15,37B35; Secondary:54D10,\\06A06,37E35}
\thanks{The author is partially supported by the JST PRESTO  Grant Number JPMJPR16E and JSPS Kakenhi Grant Number 20K03583
}
\begin{document}

\maketitle

\begin{abstract}
We construct topological invariants, called abstract weak orbit spaces, of flows and homeomorphisms on topological spaces, to describe both gradient dynamics and recurrent dynamics. In particular, the abstract weak orbit spaces of flows on topological spaces are generalizations of both Morse graphs of flows on compact metric spaces and Reeb graphs of Hamiltonian flows with finitely many singular points on surfaces. Moreover, we show that the abstract weak orbit spaces are complete and finite for several kinds of flows on manifolds, and we state several examples whose Morse graphs are singletons but whose abstract weak orbit spaces are not singletons. In addition, we consider when the time-one map reconstructs the topology of the original flow. Therefore we show that the orbit space of a Hamiltonian flow with finitely many singular points on a compact surface is homeomorphic to the abstract weak orbit space of the time-one map by taking an arbitrarily small reparametrization, and that the abstract weak orbit spaces of a Morse flow on a compact manifold and the time-one map are homeomorphic. Furthermore, the abstract weak orbit space of a Morse flow on a closed manifold is a refinement of the CW decomposition which consists of the unstable manifolds of singular points. Though the CW decomposition of a Morse flow on a closed manifold is finite, the intersection of the unstable manifold and the stable manifold of saddles of a Morse-Smale flow on a closed manifold need not consist of finitely many connected components (or equivalently need not consist of finitely many abstract weak orbits). Therefore we study the finiteness of abstract weak orbit spaces of Morse(-Smale) flow on compact manifolds.
\end{abstract}

\section{Introduction}

In 1927, Birkhoff introduced the concepts of non-wandering points and recurrent points \cite{Birkhoff}.
Using these concepts, we can describe and capture sustained or stationary dynamical behaviors and conservative dynamics.
Moreover, Conley \cite{Conley1988} defined a weak form of recurrence, called chain recurrence, for a flow on a compact metric space.
The Conley theory states that dynamical systems on compact metric spaces can be decomposed into blocks,  each of which is a chain recurrent one or a gradient one.
Then this decomposition implies a directed graph, called a Morse graph, which can capture the gradient behaviors.
The vertices, called Morse sets, correspond to recurrent parts, and the edges correspond to gradient parts.
Moreover, the Conley indices of the Morse sets are used to capture information about the local dynamics near the Morse sets.
Since the Conley-Morse graphs (i.e. Morse graph with Conley indices) represent topological behaviors of dynamical systems, applying graph algorithms, one can analyze the dynamics automatically if the Morse graph is finite.
The finite representability of Conley-Morse graphs is crucial for graph algorithms because only a finite date can be computed.
In fact, the Conley-Morse graphs are implemented as a computer software CHomP (Computational Homology Project software) \cite{AKKMOP2009}, and there are several relative works for analyzing dynamical systems using algorithms \cite{MSW2015,HMMN2014,MMW2016}.
The finite realizability of the orbit class spaces is studied \cite{BHS2011,BHSV2011}.
Therefore the question of how to reduce dynamical systems into finite topological invariants is essential from a theoretical and application point of view.
On the other hand, Hamiltonian flows, which are typical examples of recurrent dynamics, with finitely many singular points on compact surfaces can be classified by Reeb graphs of Hamiltonians with information about critical levels, because Reeb graphs are the dual graphs of the multi-saddle connection diagrams (i.e. the union of multi-saddles and orbits from or to multi-saddles) and the complement of the multi-saddle connection diagrams consist of periodic annuli.
Here a periodic annulus is an annulus consisting of periodic orbits.
In fact, Hamiltonian flows with finitely many singular points on closed surfaces are classified by a molecular, which is a finite labelled graph \cite{bolsinov2004integrable}.
The set of structurally stable Hamiltonian flows on compact surfaces is open dense in the set of Hamiltonian flows, and such Hamiltonian flows are characterized as Hamiltonian flows with nondegenerate singular points and with self-connected saddle connections and are classified by the multi-saddle connection diagrams,  which are finite invariants \cite{ma2005geometric}.
Similar results hold for the non-compact surfaces \cite{yokoyama2013word}.

It is known that finite topological spaces are in a one-to-one correspondence with finite pre-ordered sets via the specialization orders.
Here the specialization order $\leq$ is defined by $x \leq y$ if $x \in \overline{\{ y \}}$, where the closure of a subset $A$ is denoted by $\overline{A}$.
In particular, finite $T_0$ spaces are in a one-to-one correspondence with finite posets via the specialization orders.
McCord showed that there is a correspondence between finite topological spaces and finite simplicial complex up to weak homotopy equivalence \cite{McCord1966}.
Thus one can analyze topologies of dynamical systems using finite quotient spaces.
For instance, the relation between Hasse diagrams and orbit class spaces was studied \cite{BHSV2011}.
Note that a Morse graph is a quotient space of the orbit space with the directed structure.
In other words, the orbit space is a covering space of the underlying space of a Morse graph.
To analyze both gradient parts and recurrent parts of dynamical systems and describe global dynamics in detail, we define a directed structure on the orbit space, and construct topological invariants of flows on topological spaces which are refinements of Morse graphs of flows on metric spaces, CW decompositions of Morse flows on closed manifolds, and Reeb graphs of Hamiltonian flows on surfaces.
In fact, the Morse graphs of chaotic flows in the sense of Devaney, Hamiltonian flows on compact surfaces, pseudo-Anosov homeomorphisms, and non-identical non-pointwise periodic non-minimal volume-preserving homeomorphisms on compact connected manifolds are singletons.
However, the topological invariants of them are not singletons as shown in the final section.

On the other hand, we consider the following problems.

\begin{problem}\label{prob:01}
Find an equivalence relation $\sim_{w_1}$ generated by the time-one map $w_1$ of a flow $w$ and a large class $\mathcal{C}$ of flows such that, for any flow $v \in \mathcal{C}$ on a topological space $X$, the orbit space $X/v$ is homeomorphic to the quotient space $X/\sim_{v_1}$, where the orbit space $X/v$ is the quotient space collapsing orbits into singletons.
\end{problem}
\begin{problem}\label{prob:02}
Find a small equivalence relation $\approx_w$ generated by dynamical systems $w$ and a large class $\mathcal{C}$ of flows such that, for any flow $v \in \mathcal{C}$ on a topological space $X$,  the orbit space $X/\approx_v$ is homeomorphic to the quotient space $X/\approx_{v_1}$, where $v_1 \colon X \to X$ is the time-one map of $v$.
\end{problem}

\noindent
In general, the time-one map of a flow loses topological information of the original flow.
However, in some cases, the time-one map has the topological information of the original flow and so we obtain a reconstructing method.
In fact, the equivalence relation canonically induced by the abstract weak orbit space is one of the desired relations.
For instance, on Problem~\ref{prob:01}, we show that the orbit space of a Hamiltonian flows with finitely many singular points on a compact surface is homeomorphic to the abstract weak orbit space of the time-one map up to an arbitrarily small reparametrization (see Theorem~\ref{th:Ham_equiv}).
On Problem~\ref{prob:02}, we show that the abstract weak orbit spaces of a Morse flow on a compact manifold and the time-one map are homeomorphic (see Theorem~\ref{th:MS_equiv}).

The present paper consists of twelve sections.
In the next section, as preliminaries, we introduce the notions of combinatorics, topology, dynamical systems, and decomposition theory.
In particular, we introduce topological invariants of flows, called abstract weak orbit space and abstract orbit space.
In \S 3, fundamental properties of pre-orders and equivalence classes for flows on topological spaces are described.
In \S 4, fundamental properties for flows on Hausdorff spaces are described. In particular, we characterize recurrences and properness, and reductions of quotient spaces are described.
In \S 5, we show one of the main results that the Morse graph is a reduction of the abstract (weak) orbit space.
In \S 6, properties of binary relations for flows on compact Hausdorff spaces are described.
In \S 7, we show that CW decompositions of unstable manifolds of singular points of Morse flows on closed manifolds are quotient spaces of the abstract weak orbit spaces. 
Moreover, finiteness of abstract weak orbit spaces of Morse-Smale flows is described.  
In \S 8, we describe properties of gradient flows and Hamiltonian flows on compact surfaces.
In particular, the abstract weak orbit spaces of Hamiltonian surfaces flows with finitely many singular points can be reduced into the Reeb graphs.
In \S 9, we describe properties of flows of finite type on compact surfaces.
In \S 10, we reconstruct orbit spaces and abstract weak orbit spaces of flows from the time-one maps.
In \S 11, we show that the abstract weak orbit space is a complete invariant both of  Morse flows on compact surfaces and of flows generated by structurally stable Hamiltonian vector fields on compact surfaces.
In the final section, we state several examples.
In particular,  the Morse graphs in Examples~\ref{ex:rot}--\ref{ex:chaotic} are singletons but the abstract weak orbit spaces are not singletons.
Moreover, non-completeness of abstract weak orbit spaces is stated using pairs of examples.

\section{Preliminaries}

\subsection{Combinatorial and topological notions}

\subsubsection{Decomposition}
By a decomposition, we mean a family $\mathcal{F}$ of pairwise disjoint nonempty subsets of a set $X$ such that $X = \bigsqcup \mathcal{F}$, where $\bigsqcup$ denotes a disjoint union.
Note that the sets of orbits of flows are decompositions of connected elements.
Since connectivity is not required, the sets of orbits of homeomorphisms are also decompositions.

\subsubsection{Elements of topological notation}
For a subset $A$ of a topological space, denote by $\overline{A}$ the closure of $A$.
Then $\partial A := \overline{A} - \mathrm{int} A$ is the boundary of $A$, $\delta A = \partial_- A := A -  \mathrm{int} A$ is the border of $A$, and $\bp A := \overline{A} - A$ is the coborder of $A$.
Here $B - C$ is used instead of $B \setminus C$ when $C \subseteq B$.

\subsubsection{Stratification}
A finite filtration $\emptyset = M_{-1} \subseteq M_0 \subseteq \cdots \subseteq M_n = M$ of closed subsets of an $n$-dimensional manifold $M$ is a stratification if the difference $S_i := M_i - M_{i-1}$ is either the emptyset or an $i$-dimensional submanifold with $\overline{S_i} - S_i \subseteq \bigsqcup_{j <i} S_j$ for any $i \geq 0$.

\subsubsection{Local homeomorphism}
A continuous mapping $f \colon X \to Y$ is a local homeomorphism if for any point $x \in X$ there is an open \nbd $U$ of $x$ such that the image $f(U)$ is open and the restriction $f|_U  \colon  U \to f(U)$ is a homeomorphism.

\subsubsection{Separation axiom}
A point $x$ of a topological space $X$ is $T_0$ (or Kolmogorov) if for any point $y \neq x \in X$ there is an open subset $U$ of $X$ such that $|\{x, y \} \cap U| =1$, where $|A|$ is the cardinality of a subset $A$.
A point is $T_1$ if its singleton is closed.
Here a singleton is a set consisting of a point.
A topological space is $T_0$ (resp. $T_1$) if each point is $T_0$ (resp. $T_1$).
A topological space is KC if any compact subset is closed.
It is known that $KC$ implies $T_1$, and that Hausdorff separation axiom implies $KC$.


\subsubsection{Fr\'echet-Urysohn space}
A point $x \in X$ is a limit of a sequence $(x_n)_{n \in \Z_{\geq 0}}$ if for any \nbd $U$ of $x$, there is an integer $N$ such that  $\{ x_n \mid n > N \} \subseteq U$.
For a subset $A \subseteq X$, the sequential closure of $A$ is the set of limits of sequences of points in $A$.
A topological space $X$ is Fr\'echet-Urysohn if the closure of a subset of $X$ corresponds to its sequential closure.
It is known that a first-countable space is Fr\'echet-Urysohn.


\subsubsection{Graphs}
An ordered pair $G := (V, D)$ is an abstract directed graph (or a directed graph) if $V$ is a set and $D \subseteq V \times V$.
An ordered triple $G := (V, E, r)$ is an abstract multi-graph (or a multi-graph) if $V$ and $E$ are sets and $r : E \to \{ \{ x,y \} \mid x, y \in V \}$.

\subsubsection{Orders}
A binary relation $\leq$ on a set $P$ is a pre-order if it is reflexive (i.e. $a \leq a$ for any $a \in P$) and transitive (i.e. $a \leq c$ for any $a, b, c \in P$ with $a \leq b$ and $b \leq c$).
For a pre-order $\leq$, the inequality $a<b$ means both $a \leq b$ and $a \neq b$.
A pre-order $\leq$ on $X$ is a partial order if it is antisymmetric (i.e. $a = b$ for any $a,b \in P$ with $a \leq b$ and $b \leq a$).
A poset is a set with a partial order.
A pre-order order $\leq$ is a total order (or linear order) if either $a < b$ or $b < a$ for any  points $a \neq b$.
Note that a poset is totally ordered if and only if there is no pair of two incomparable points.
A chain is a totally ordered subset of a pre-ordered set with respect to the induced order.

\subsubsection{Heights of points}
Let $(X, \leq)$ be a pre-ordered set.
For a point $x \in X$, define the upset $\mathop{\uparrow}_{\leq} x = \mathop{\uparrow} x  := \{ y \in X \mid x \leq y \}$, the downset $\mathop{\downarrow}_{\leq} x = \mathop{\downarrow} x  := \{ y \in X \mid y \leq x \}$, and the class $\hat{x} := \mathop{\downarrow}  x \cap \mathop{\uparrow}  x$.
Denote by $\min X$ the set of minimal points.
For a subset $A \subseteq X$, define the upset $\mathop{\uparrow}_{\leq} A = \mathop{\uparrow} A := \bigcup_{x \in A} \mathop{\uparrow}  x$, the downset $\mathop{\downarrow}_{\leq} A = \mathop{\downarrow} A := \bigcup_{x \in A} \mathop{\downarrow}  x$, and the class $\hat{A} := \bigcup_{x \in A} \hat{x}$.
Define the height $\mathop{\mathrm{ht}} x$ of $x$ by
$\mathop{\mathrm{ht}}_{\leq} x = \mathop{\mathrm{ht}} x := \sup \{ |C| - 1 \mid C :\text{chain containing }x \text{ as the maximal point}\}$.
Define the height of the empty set is $-1$.
The height $\mathop{\mathrm{ht}} A$ of a nonempty subset $A \subseteq X$ is defined by
$\mathop{\mathrm{ht}}_{\leq} A = \mathop{\mathrm{ht}} A := \sup_{x \in A} \mathop{\mathrm{ht}} x$.

\subsubsection{Multi-graphs as posets}
A poset $P$ is said to be multi-graph-like if the height of $P$ is at most one and  $| \mathop{\downarrow} x | \leq 3$ for any element $x \in P$.
For a multi-graph-like poset $P$, each element of $P_0$ is called a vertex and each element of $P_1$ is called an edge.
Then an abstract multi-graph $G$ can be considered as a multi-graph-like poset $(P, \leq_G)$ with $V = P_{0}$ and $E = P_1$ as follows: $P = V \sqcup \mathrm{E}$ and $x <_G e$ if $x \in r(e)$, where $\sqcup$ denotes a disjoint union.
Conversely, a multi-graph-like poset $P$ can be considered as an abstract multi-graph with $V = P_{0}$, $E = P_1$, and $r: P_1 \to \{ \{ x,y \} \mid x, y \in V \}$ defined by $r(e) := \mathop{\downarrow}e - \{ e \}$.
Therefore we identify a multi-graph-like poset with an abstract multi-graph.


\subsubsection{Specialization order}
For a subset $A$ and a point $x$ of a topological space $(X, \tau)$, an abbreviated form of the singleton $\{ x \}$ (resp. the difference $A - \{ x\}$, the point closure $\overline{\{ x \}}$) will be $x$ (resp. $A - x$, $\overline{x}$).
The specialization order $\leq_\tau$ on a topological space $(X, \tau)$ is defined as follows: $ x \leq_\tau y $ if  $ x \in \overline{y}$.
Note that $\mathop{\downarrow} x  = \overline{x}$ and that the set $\min X$ of minimal points in $X$ is the set of points whose classes are closed.
Then $\hat{x} = \{ y \in X \mid \overline{x} = \overline{y} \}$ for any point $x \in X$.

\subsubsection{$T_0$-tification of a topological space}
Let $X$ a topological space with the specialization order.
Then the set $\hat{X} := \{ \hat{x} \mid x \in X \} = \{ \{ y \in X \mid \overline{x} = \overline{y} \} \mid x \in X \}$ of classes is a decomposition of $X$ and is a $T_0$ space as a quotient space, which is called the $T_0$-tification (or Kolmogorov quotient) of $X$.
Recall the following observation.

\begin{lemma}\label{lem:quotient_top}
Let $\pi \colon (X, \mathcal{O}_X) \to (\hat{X}, \mathcal{O}_{\hat{X}})$ be the quotient map.
Then induced mapping $\pi^* \colon \mathcal{O}_{\hat{X}} \to \mathcal{O}_X$ by $\pi^*(\mathcal{U}) := \pi^{-1}(\mathcal{U}) = \bigsqcup_{\hat{x} \in \mathcal{U}} \hat{x}$ is bijective.
Moreover, we have $\mathcal{O}_{\hat{X}} = \pi(\mathcal{O}_X) = \{ \{ \hat{x} \mid x \in U \} \mid U \in \mathcal{O}_X \} = \{ \mathcal{U} \subseteq \hat{X} \mid \bigsqcup_{\hat{x} \in \mathcal{U}} \hat{x} \in \mathcal{O}_X \}$.
\end{lemma}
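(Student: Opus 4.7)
The plan is to exploit the fact that points lying in the same class $\hat{x}$ are topologically indistinguishable, so that every open subset of $X$ is automatically a union of classes; once this is established, the bijectivity of $\pi^{*}$ becomes a bookkeeping exercise.

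First I would record the following key observation. If $\hat{x} = \hat{y}$, then $\overline{x} = \overline{y}$, so in particular $x \in \overline{y}$ and $y \in \overline{x}$. The first inclusion forces every open neighborhood of $x$ to contain $y$ (otherwise its complement would be a closed set containing $y$ but not $x$, contradicting $x \in \overline{y}$), and the second gives the symmetric statement. Hence $x$ and $y$ belong to exactly the same members of $\mathcal{O}_X$. Consequently, every $U \in \mathcal{O}_X$ is $\pi$-saturated, that is, $U = \bigsqcup_{\hat{x} \subseteq U} \hat{x}$.

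Given this saturation principle, injectivity of $\pi^{*}$ follows from the surjectivity of $\pi$: if $\pi^{-1}(\mathcal{U}_1) = \pi^{-1}(\mathcal{U}_2)$, applying $\pi$ to both sides gives $\mathcal{U}_1 = \mathcal{U}_2$. For surjectivity, given $U \in \mathcal{O}_X$, I would set $\mathcal{U} := \{\hat{x} \mid x \in U\} = \pi(U)$; the saturation of $U$ yields $\pi^{-1}(\mathcal{U}) = U \in \mathcal{O}_X$, and the definition of the quotient topology then gives $\mathcal{U} \in \mathcal{O}_{\hat{X}}$ with $\pi^{*}(\mathcal{U}) = U$.

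The equalities in the ``moreover'' clause fall out at once: the surjectivity step just performed is exactly the identity $\mathcal{O}_{\hat{X}} = \pi(\mathcal{O}_X) = \{\{\hat{x} \mid x \in U\} \mid U \in \mathcal{O}_X\}$, while the final equality $\mathcal{O}_{\hat{X}} = \{\mathcal{U} \subseteq \hat{X} \mid \bigsqcup_{\hat{x} \in \mathcal{U}} \hat{x} \in \mathcal{O}_X\}$ is literally the definition of the quotient topology, applied to subsets of $\hat{X}$ that, thanks to saturation, equal their own image under $\pi \circ \pi^{-1}$. There is no real obstacle here; the only point requiring care is that in verifying surjectivity one must check that $\mathcal{U} = \pi(U)$ is open in $\hat{X}$, which is exactly confirmed by computing its $\pi$-preimage.
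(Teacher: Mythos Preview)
Your proof is correct and follows essentially the same approach as the paper's: both arguments hinge on showing that every open set $U \in \mathcal{O}_X$ is $\pi$-saturated, and then deduce bijectivity of $\pi^*$ from this. The only cosmetic difference is that you phrase saturation via topological indistinguishability of points in the same class, whereas the paper fixes $y \notin U$ and argues $\hat{y} \cap U = \emptyset$; these are the same observation viewed from opposite sides.
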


\begin{proof}
By definition of quotient topology, the image $\pi^*(\mathcal{U}) = \pi^{-1}(\mathcal{U}) = \bigsqcup_{\hat{x} \in \mathcal{U}} \hat{x}$ for any element $\mathcal{U} \in \mathcal{O}_{\hat{X}}$ is an open subset of $X$ (i.e. $\pi^{-1}(\mathcal{U}) \in \mathcal{O}_X$).
Thus $\pi^*$ is well-defined.
Since $\pi^*(\mathcal{U}) = \bigsqcup_{\hat{x} \in \mathcal{U}} \hat{x}$ for any element $\mathcal{U} \in \mathcal{O}_{\hat{X}}$, the induced mapping $\pi^*$ is injective.
%
Fix an open subset $U \in \mathcal{O}_X$.
To show the surjectivity of $\pi^*$, it suffices to show that $U = \pi^{-1}(\pi(U))$.
Indeed, fix a point $y \notin U$.
Since $y \notin U$, we have $\overline{y} \cap U = \emptyset$ and so $\overline{y}   \neq \overline{z}$ for any $z \in U$.
This means that $\hat{y} \cap \hat{U} = \emptyset$ and so $U = \hat{U}$.
By definition of $\hat{X}$, we have $\pi(U) = \{ \hat{x} \mid x \in U \}$ and so $\pi^{-1}(\pi(U)) = \bigsqcup_{\hat{x} \in \pi(U)} \hat{x} = \bigsqcup_{x \in U} \hat{x} = \hat{U} = U$.
\end{proof}

\subsubsection{Alexandroff spaces}

A topological space $(X, \tau)$ is Alexandroff if $\tau$ is the set of upsets with respect to the specialization order $\leq_\tau$ (i.e $\tau = \{ \mathop{\uparrow} A \mid A \subseteq X \}$).
In other words, a topological space $(X, \tau)$ is Alexandroff if and only if the intersection of any family of open subsets is open.
By definition of Alexandroff spaces, notice that there is a one-to-one correspondence between Alexandroff spaces and pre-ordered sets, and that  there is a one-to-one correspondence between $T_0$ Alexandroff spaces and posets.

\subsubsection{Poset-stratification}
A continuous mapping $X \to P$ from a topological space $X$ to a topological space $P$ is a poset-stratification of $X$ if $P$ is $T_0$ and Alexandroff.

\subsubsection{Reeb graph of a function on a topological space}
For a function $f \colon  X \to \R$ on a topological space $X$, the Reeb graph of a function $f \colon  X \to \R$ on a topological space $X$ is a quotient space $X/\sim_{\mathrm{Reeb}}$ defined by $x \sim_{\mathrm{Reeb}} y$ if there are a number $c \in \R$ and a connected component of $f^{-1}(c)$ which contains $x$ and $y$.

\subsection{Fundamental notion of dynamical systems}

A mapping $v:\mathbb{R} \times X \to X$ on a topological space $X$ is an $\R$-action on $X$ if the restriction $v(t,\cdot) \colon X \to X$ by $v(t,\cdot)(x) := v(t,x)$ for any $t \in \R$ is homeomorphic such that the restriction $v(0,\cdot)$ is an identity mapping on $X$ and $v(t,v(s,x)) = v(t+s,x)$ for any $t,s \in \R$.
%
A flow is a continuous $\R$-action on a topological space.
For a point $x$ of $X$, we denote by $O(x)$ (or $O_v(x)$) the orbit of $x$ (i.e. $O(x) := \{ v_t(x) \mid t \in \R \}$), $O^+(x)$ the positive orbit (i.e. $O^+(x) := \{ v_t(x) \mid t > 0 \}$), and $O^-(x)$ the negative orbit (i.e. $O^-(x) := \{ v_t(x) \mid t < 0 \}$).
A flow $w$ is a reparametrization of a flow $v$ if $O_v(x) = O_w(x)$ for any $x \in X$.
Let $v  \colon  \R \times X \to X$ be a flow on a topological space $X$.
For $t \in \R$, define $v_t  \colon  X \to X$ by $v_t := v(t, \cdot )$.
The homeomorphism $v_1 \colon X \to X$ by $v_1(x) = v(1, x)$ is called the time-one map of $v$.
A subset of $X$ is invariant (or saturated) if it is a union of orbits.
For a subset $A \subseteq X$, the saturation $v(A)$ of $A$ is the union $\bigcup_{x \in A} O(x)$.
Recall that a point $x$ of $X$ is singular if $x = v_t(x)$ for any $t \in \R$ and is periodic if there is a positive number $T > 0$ such that $x = v_T(x)$ and $x \neq v_t(x)$ for any $t \in (0, T)$.
An orbit is singular (resp. periodic) if it contains a singular (resp. periodic) point.
An orbit is closed if it is singular or periodic.
Note that any closed orbit of a flow on a Hausdorff space is closed (see Lemma~\ref{lem:kc_closed}), and that an orbit of a flow on a compact Hausdorff space is closed if and only if it is a closed subset (see Lemma~\ref{lem:closed}).
Denote by $\mathop{\mathrm{Sing}}(v)$ the set of singular points and by $\mathop{\mathrm{Per}}(v)$ (resp. $\mathop{\mathrm{Cl}}(v)$) the union of periodic (resp. closed) orbits.
A point is wandering if there are its neighborhood $U$
and a positive number $N$ such that $v_t(U) \cap U = \emptyset$ for any $t > N$.
Then such a neighborhood $U$ is called a wandering domain.
A point is non-wandering if it is not wandering (i.e. for any its neighborhood $U$ and for any positive number $N$, there is a number $t \in \mathbb{R}$ with $|t| > N$ such that $v_t(U) \cap U \neq \emptyset$).
Denote by $\Omega (v)$ the set of non-wandering points, called the non-wandering set.

\subsubsection{Chain recurrence}
Let $w \colon  \R \times M \to M$ be a flow on a metric space $(M,d)$.
For any $\varepsilon > 0$ and $T>0$, a pair $\{ (x_i)_{i=0}^{k+1}, (t_i)_{i=0}^{k} \}$ is an $(\varepsilon, T)$-chain from a point $x \in M$ to a point $y \in M$ if $x_0 = x$, $x_{p+1} = y$, $t_i > T$ and $d(w_{t_i}(x_i),x_{i+1}) < \varepsilon$ for any $i = 0, \ldots , k$.
Define a binary relation $\sim_{\mathop{CR}}$ on $M$ by $x \sim_{\mathop{CR}} y$ if for any $\varepsilon > 0$ and $T>0$ there is an $(\varepsilon, T)$-chain from $x$ to $y$.
A point $x \in M$ is chain recurrent \cite{conley1978isolated} if $x \sim_{\mathop{CR}} x$.
Denote by $\mathop{CR} (w)$ the set of chain recurrent points, called the chain recurrent set.
It is known that the chain recurrent set $\mathop{CR} (w)$ is closed and invariant and contains the non-wandering set $\Omega (w)$ \cite[Theorem~3.3B]{Conley1988}, and that connected components of $\mathop{CR} (w)$ are equivalence classes of the relation $\approx_{\mathop{CR}}$ on $\mathop{CR} (w)$ \cite[Theorem~3.3C]{Conley1988}, where $x \approx_{\mathop{CR}} y$ if $x \sim_{\mathop{CR}} y$ and $y \sim_{\mathop{CR}} x$.
\\

From now on, we assume that $v$ is a flow on a topological space $X$ unless otherwise stated.

\subsubsection{$\alpha$-limit sets, $\omega$-limit sets, and recurrent orbits}

Recall that the $\omega$-limit set of a point $x \in X$ is $\omega(x) := \bigcap_{n\in \mathbb{R}}\overline{\{v_t(x) \mid t > n\}}$, and that the $\alpha$-limit set of $x$ is $\alpha(x) := \bigcap_{n\in \mathbb{R}}\overline{\{v_t(x) \mid t < n\}}$.
By definitions,  the $\alpha$-limit set and the $\omega$-limit set of $x$ are closed and invariant.
For an orbit $O$, define $\omega(O) := \omega(x)$ and
$\alpha(O) := \alpha(x)$ for some point $x \in O$.
Note that an $\omega$-limit (resp. $\alpha$-limit) set of an orbit is independent of the choice of a point in the orbit.
Moreover, $\alpha(O) = \bigcup_{x \in O} \alpha(x)$ and $\omega(O) = \bigcup_{x \in O} \omega(x)$ for any orbit $O$.
Therefore we define $\alpha(A) := \bigcup_{x \in A} \alpha(x)$ and $\omega(A) := \bigcup_{x \in A} \omega(x)$ for a subset $A$ of $X$.
A separatrix is a non-singular orbit whose $\alpha$-limit or $\omega$-limit set is a singular point.
A point $x$ is recurrent if $x \in \alpha(x) \cup \omega(x)$.
An orbit is recurrent if it contains a recurrent point.
Denote by $\mathcal{R}(v)$ the set of recurrent points.
By definition, we have that $\mathcal{R}(v) \subseteq \Omega(v)$ and that $ \Omega(v) \subseteq \mathop{CR}(v)$ if the base space $X$ is a compact metric space.
Moreover, denote by $\mathrm{R}(v)$ (resp. $\mathrm{P}(v)$) the union of non-closed recurrent orbits (resp. non-recurrent orbits).
By definition, for a flow $v$ on a topological space, we have a decomposition $X = \mathcal{R}(v) \sqcup \mathrm{P}(v) = \mathop{\mathrm{Sing}}(v) \sqcup \mathop{\mathrm{Per}}(v) \sqcup \mathrm{P}(v) \sqcup \mathrm{R}(v)$.

\subsubsection{$\alpha'$-limit sets and $\omega'$-limit sets}
Define $\alpha'(x)$ (resp. $\omega'(x))$ for a point $x \in X$ as follows \cite{markus1954global,buendia2018markus}:
$$\alpha'(x) := \alpha(x) \setminus O(x)$$
$$\omega'(x) := \omega(x) \setminus O(x)$$
Similarly, define $\alpha'(O) := \alpha(O) \setminus O$ and $\omega'(O) := \omega(O) \setminus O$ for an orbit $O$.
We call $\alpha'(x)$ (resp. $\alpha'(O)$) the $\alpha'$-limit set and $\omega'(x)$ (resp. $\omega'(O)$) the $\omega'$-limit set.
The definition of $\alpha'(x)$ (resp. $\omega'(x)$) by Buend{\'\i}a and L\'opes is slightly different from the original one, cf.~\cite{markus1954global,neumann1975classification,neumann1976global} but these definitions correspond for proper orbits.

\subsubsection{Topological properties of orbits}
An orbit is proper if it is embedded.
In other words, an orbit is proper if and only if it is homeomorphic to either a singleton, the unit circle $\{ z \in \C \mid |z| = 1 \}$, or the real line $\R$.
A point is proper if so is its orbit.
Note that an orbit of a flow on a Hausdorff space is proper if and only if it either is non-recurrent or is a closed subset (see Lemma~\ref{cor:proper}), and that an orbit of a flow on a Hausdorff space is recurrent if and only if it either is either non-proper or closed (see Lemma~\ref{cor:recurrent}).
%
Note that $\mathrm{P}(v)$ for a flow on a Hausdorff space is the union of non-closed proper orbits.

\subsection{Quotient spaces and binary relations for flows}

\subsubsection{Morse graph}
Recall that a directed graph is a pair of a set $V$ and a subset $D \subseteq V \times V$.
For a flow $w$ on a compact metric space $M$ with a set $\mathcal{M} = \{ M_i \}_{i \in \Lambda}$ of pairwise disjoint compact invariant subsets, a directed graph $(V, D)$ with the vertex set $V := \{ M_i \mid i \in \Lambda \}$ and with the directed edge set $D := \{ (M_j, M_k) \mid D_{j,k} \neq \emptyset  \}$ is a Morse graph of $\mathcal{M}$ if $M - \bigsqcup_i M_i = \bigsqcup D_{j,k}$, where $D_{j,k} := \{ x \in M \mid \alpha(x) \subseteq M_j, \omega(x) \subseteq M_k \} = W^u(M_j) \cap W^s(M_k)$ for any distinct indices $j \neq k$.
Then such a graph is denoted by $G_{\mathcal{M}}$, and $D_{j,k}$ is called a connecting orbit set from $M_j$ to $M_k$.
We also call that the vertex $M_k$ is a connecting orbit set from $M_k$ to $M_k$. 
%
If $\mathcal{M}$ is the set of connected components of the chain recurrent set $\mathop{CR}(w)$, then the graph $G_{\mathcal{M}}$ is called the Morse graph of the flow $w$ and denoted by $G_w$, and vertices are called Morse sets.
Note that
a Morse graph $\mathcal{M}_{w}$ of a flow $w$ is a quotient space of the orbit space $M/w$ with a directed structure (see Theorem~\ref{th:Morse_reduction} and Corollary~\ref{cor:Morse_reduction_weak} for details).
To describe dynamics in detail, we will introduce an intermediate quotient spaces, called abstract weak orbit space and weak orbit class space, of the orbit space which are refinements of the Morse graph.
In other words, the Morse graph is a quotient space of such quotient spaces.

\subsubsection{Orbit classes}
For any point $x \in X$, define the orbit class $\hat{O}(x)$ as follows:
$$\hat{v}(x) = \hat{O}(x) := \{ y \in X \mid \overline{O(x)} = \overline{O(y)} \} \subseteq \overline{O(x)}$$
We call $\hat{O}(x)$ the orbit class of $x$.
Note that $\hat{O}(x) = \hat{O}(y)$ for any point $y \in \hat{O}(x)$.
It's known that the following conditions are equivalent for an orbit $O$ on a paracompact manifold: $(1)$ The orbit $O$ is proper; $(2)$ $O = \hat{O}$ \cite[Corollary 3.4]{yokoyama2019properness}.

\subsubsection{Orbit spaces and orbit class spaces}

For a flow $v$ on a topological space $X$, the orbit space $X/v$ of $X$ is a quotient space $X/\sim_v$ defined by $x \sim_v y$ if $O(x) = O(y)$.
Similarly, the orbit class space $X/\hat{v}$ of $X$ is a quotient space $X/\sim_{\hat{v}}$ defined by $x \sim_{\hat{v}} y$ if $\overline{O(x)} = \overline{O(y)}$.
Note that the orbit class space $X/{\hat{v}}$ is the $T_0$-tification of the orbit space $X/v$.
The orbit class space is also called the quasi-orbit space in \cite{BHS2011}.
For a saturated subset $T$ of $X$, the subset $\pi_v(T)$ is denoted by $T/v$, where $\pi_v \colon X \to X/v$ is the quotient map.
Notice that an orbit space $T/v$ is the set of orbits contained in $T$ as a set.

\subsubsection{Weak orbit classes and weak orbit class spaces}

For any point $x \in X$, define the saturated subset $\check{O}(x)$ as follows:
$$\check{v}(x) = \check{O}(x) := \{ y \in X \mid \overline{O(x)} = \overline{O(y)}, \alpha(x) = \alpha(y), \omega(x) = \omega(y) \} \subseteq \hat{O}(x)$$
We call that $\check{O}(x)$ is the weak orbit class of $x$.
By definition, we have the following observation.

\begin{lemma}
The following properties hold for a flow $v$ on a topological space $X$ and for any points $x,y \in X$:
\\
$(1)$ $O(x) \subseteq \check{O}(x) \subseteq \hat{O}(x) \subseteq \overline{O(x)}$.
\\
$(2)$ $\overline{O(x)} = \overline{\hat{O}(x)} = \overline{\check{O}(x)}$.
\\
$(3)$  If $y \in \check{O}(x)$, then $\check{O}(x) = \check{O}(y)$.
\end{lemma}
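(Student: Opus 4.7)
The plan is to unpack the definitions directly; everything follows by elementary set manipulations once one uses the fact, recorded earlier in the preliminaries, that the $\alpha$- and $\omega$-limit sets of an orbit do not depend on the choice of base point in the orbit.

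For (1), I would first verify $O(x) \subseteq \check{O}(x)$. Given $y \in O(x)$ we have $O(y) = O(x)$, hence $\overline{O(y)} = \overline{O(x)}$, and the orbit-invariance of limit sets gives $\alpha(y) = \alpha(x)$ and $\omega(y) = \omega(x)$, so $y$ meets all three conditions in the definition of $\check{O}(x)$. The inclusion $\check{O}(x) \subseteq \hat{O}(x)$ is immediate: the defining condition $\overline{O(y)} = \overline{O(x)}$ alone already places $y$ in $\hat{O}(x)$. For the last inclusion, any $y \in \hat{O}(x)$ satisfies $y \in O(y) \subseteq \overline{O(y)} = \overline{O(x)}$.

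For (2), take closures throughout the chain in (1). Monotonicity of closure yields
\[
\overline{O(x)} \subseteq \overline{\check{O}(x)} \subseteq \overline{\hat{O}(x)} \subseteq \overline{\overline{O(x)}} = \overline{O(x)},
\]
so all three closures coincide.

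For (3), suppose $y \in \check{O}(x)$, so $\overline{O(y)} = \overline{O(x)}$, $\alpha(y) = \alpha(x)$, and $\omega(y) = \omega(x)$. For any $z \in \check{O}(x)$ one has $\overline{O(z)} = \overline{O(x)} = \overline{O(y)}$, and similarly $\alpha(z) = \alpha(y)$ and $\omega(z) = \omega(y)$; thus $z \in \check{O}(y)$. The reverse inclusion follows by symmetry (swap the roles of $x$ and $y$, using the three equalities to pass the defining conditions from $y$ back to $x$). Hence $\check{O}(x) = \check{O}(y)$.

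There is no genuine obstacle here: the statement is purely formal and the only content to keep track of is that the three defining conditions in the definition of $\check{O}$ are each equivalence-like (reflexive, symmetric, transitive on orbits), which is exactly what makes the equality in (3) go through.
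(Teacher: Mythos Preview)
Your proof is correct. The paper does not actually give a proof of this lemma; it is stated as an immediate observation ``by definition'' following the introduction of $\check{O}(x)$, and your argument is exactly the direct unpacking of the definitions that the paper has in mind.
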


Define the weak orbit class space $X/\check{v}$ as a quotient space $X/\sim_{\check{v}}$ defined by $x \sim_{\check{v}} y$ if $\check{O}(x) = \check{O}(y)$.
By definition, the weak orbit class space $X/\check{v}$ is a quotient space of the orbit space $X/v$ and the orbit class space $X/\hat{v}$ is a quotient space of the weak orbit class space $X/\check{v}$.

\subsubsection{The specialization orders of flows}

The specialization order $\leq_v$ on $X/v$ of the flow $v$ is defined as follows:
$$O_1 \leq_v O_2 \text{ if } O_1 \subseteq \overline{O_2} \text{ as subsets of } X$$
By abuse of terminology, define the specialization order $\leq_v$ on $X$ of the flow $v$ defined as follows:
$$x \leq_v y \text{ if } O(x) \subseteq \overline{O(y)}$$
Moreover, the specialization (partial) order $\leq_v$ on $X/\hat{v}$ is induced as follows:
$$\hat{O}_1 \leq_v \hat{O}_2 \text{ if } \hat{O}_1 \subseteq \overline{\hat{O}_2} \text{ as subsets of } X$$
For any orbit $O$, since $O \subseteq \hat{O} \subseteq \overline{O}$, we have that $\overline{\hat{O}} = \overline{O}$, and so that $\hat{O}_1 \leq_v \hat{O}_2$ if and only if $O_1 \subseteq \overline{O_2}$.
Then we have the following observation.

\begin{lemma}
The following conditions are equivalent: \\
$(1)$ $\overline{O(x)} \subseteq \overline{O(y)}$
\\
$(2)$ $x \leq_v y$
\\
$(3)$ $O(x) \leq_v O(y)$
\\
$(4)$ $\hat{O}(x) \leq_v \hat{O}(y)$
\end{lemma}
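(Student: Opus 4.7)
The plan is to observe that conditions (2), (3), (4) are all direct rewordings of the single inclusion $O(x) \subseteq \overline{O(y)}$, so proving the equivalence amounts to (a) unfolding the three definitions and (b) establishing the equivalence of $O(x) \subseteq \overline{O(y)}$ with the closure-level inclusion $\overline{O(x)} \subseteq \overline{O(y)}$.

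First I would dispatch (2) $\Leftrightarrow$ (3) by pure unpacking: the definition of the specialization order on points is $x \leq_v y$ iff $O(x) \subseteq \overline{O(y)}$, and the definition on orbits gives $O(x) \leq_v O(y)$ iff $O(x) \subseteq \overline{O(y)}$, so these two statements are literally identical. For (3) $\Leftrightarrow$ (4), I would invoke the remark just made in the excerpt: since $O \subseteq \hat{O} \subseteq \overline{O}$, taking closures gives $\overline{\hat{O}} = \overline{O}$, hence $\hat{O}(x) \leq_v \hat{O}(y)$ unfolds to $\hat{O}(x) \subseteq \overline{\hat{O}(y)} = \overline{O(y)}$, which by $O(x) \subseteq \hat{O}(x) \subseteq \overline{O(x)}$ is equivalent to $O(x) \subseteq \overline{O(y)}$ (the forward direction is immediate since $O(x) \subseteq \hat{O}(x)$, and the reverse follows because every $z \in \hat{O}(x)$ satisfies $\overline{O(z)} = \overline{O(x)}$, so $z \in \overline{O(x)} \subseteq \overline{O(y)}$ once $O(x) \subseteq \overline{O(y)}$).

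Finally I would prove (1) $\Leftrightarrow$ (2). The implication (1) $\Rightarrow$ (2) is immediate from $O(x) \subseteq \overline{O(x)}$. For (2) $\Rightarrow$ (1), assume $O(x) \subseteq \overline{O(y)}$; since $\overline{O(y)}$ is a closed subset of $X$ containing $O(x)$, the minimality of the closure yields $\overline{O(x)} \subseteq \overline{O(y)}$.

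There is no genuine obstacle here; the statement is essentially a bookkeeping lemma that verifies the three orders introduced on $X$, $X/v$, and $X/\hat{v}$ are mutually consistent and are captured equivalently by closure inclusion. The only point requiring a brief argument rather than pure unfolding is the passage from $O(x) \subseteq \overline{O(y)}$ to $\overline{O(x)} \subseteq \overline{O(y)}$, which uses only that the closure of a set is the smallest closed set containing it.
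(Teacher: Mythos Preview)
Your proof is correct and matches the paper's reasoning. The paper presents this lemma as an immediate ``observation'' without a written proof; the only nontrivial step, (3) $\Leftrightarrow$ (4), is already dispatched in the sentence preceding the lemma (using $O \subseteq \hat{O} \subseteq \overline{O}$ to get $\overline{\hat{O}} = \overline{O}$), and your unpacking follows exactly that line together with the standard closure minimality argument for (1) $\Leftrightarrow$ (2).
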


Let $\tau_v$ be the quotient topology on the orbit space $X/v$ and $\tau_{\hat{v}}$ the quotient topology on the orbit space $X/\hat{v}$.
Then the binary relation $\leq_v$ on the orbit space $X/v$ (resp. orbit class space $X/\hat{v}$) corresponds to the specialization order $\leq_{\tau_v}$ (resp. partial order $\leq_{\tau_{\hat{v}}}$).

\subsubsection{Height of an orbit space}

For a point $x \in X$, define the height of $x$ by the height of the element $O(x)$ in the orbit space $X/v$ and write $\mathop{\mathrm{ht}}(x) := \mathop{\mathrm{ht}}_{\tau_v}(O(x))$, where $\mathop{\mathrm{ht}}_{\tau_v}$ is the height with respect to the specialization order $\leq_{\tau_v}$ on the orbit space $X/v$ induced by $v$.
Since the orbit class space $X/{\hat{v}}$ is the $T_0$-tification of the orbit space $X/v$, we have $\mathop{\mathrm{ht}}(x) = \mathop{\mathrm{ht}}_{\tau_v}(O(x)) = \mathop{\mathrm{ht}}_{\tau_{\hat{v}}}(\hat{O}(x))$.
Moreover, we have the following observation.
\begin{lemma}
The following conditions are equivalent for a point $x \in X$: \\
$(1)$ $\mathop{\mathrm{ht}}(x) \geq k$.
\\
$(2)$ There is a sequence $\overline{O_0} \subsetneq \overline{O_1} \subsetneq  \cdots \subsetneq \overline{O_{k-1}} \subsetneq \overline{O_k} = \overline{O(x)}$ as subsets of $X$.
\\
$(3)$ There is a sequence $\overline{\hat{O}_0} \subsetneq \overline{\hat{O}_1} \subsetneq  \cdots \subsetneq \overline{\hat{O}_{k-1}} \subsetneq \overline{\hat{O}_k} = \overline{\hat{O}(x)}$ as subsets of $X$.
\end{lemma}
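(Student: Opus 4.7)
The plan is to pass to the $T_0$-tification $X/\hat{v}$, where the specialization pre-order on orbits becomes a genuine partial order, and to exploit two observations already in the excerpt: the equality $\mathop{\mathrm{ht}}(x) = \mathop{\mathrm{ht}}_{\tau_{\hat{v}}}(\hat{O}(x))$ stated just above the lemma, and the identity $\overline{O(y)} = \overline{\hat{O}(y)}$ noted earlier. With these, the equivalences reduce to a short translation exercise; I would establish $(1)\Leftrightarrow(3)$ inside the poset $X/\hat{v}$, and then deduce $(2)\Leftrightarrow(3)$ by switching between an orbit and its orbit class inside a closure.

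For $(1)\Leftrightarrow(3)$, I would work in the poset $X/\hat{v}$. The defining relation $\hat{O}_1 \leq_v \hat{O}_2$, i.e.\ $\hat{O}_1 \subseteq \overline{\hat{O}_2}$, is equivalent to $\overline{\hat{O}_1} \subseteq \overline{\hat{O}_2}$ (one direction from $\hat{O}_1 \subseteq \overline{\hat{O}_1}$, the other from idempotence of closure). Because $X/\hat{v}$ is $T_0$, distinct orbit classes have distinct closures, so the strict inequality $\hat{O}_1 <_v \hat{O}_2$ is equivalent to the strict inclusion $\overline{\hat{O}_1} \subsetneq \overline{\hat{O}_2}$. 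Consequently a chain of $k+1$ elements in the poset $X/\hat{v}$ with maximum $\hat{O}(x)$ is exactly a strictly increasing sequence of closures ending at $\overline{\hat{O}(x)}$. Passing to suprema and using $\mathop{\mathrm{ht}}(x) = \mathop{\mathrm{ht}}_{\tau_{\hat{v}}}(\hat{O}(x))$ identifies condition $(1)$ with condition $(3)$.

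For $(2)\Leftrightarrow(3)$, I would use $\overline{O(y)} = \overline{\hat{O}(y)}$ to interchange orbits and their classes. Given a chain as in $(3)$, pick for each $i$ any orbit $O_i$ contained in $\hat{O}_i$; the closure chain becomes $\overline{O_0} \subsetneq \cdots \subsetneq \overline{O_k} = \overline{O(x)}$, which is $(2)$. Conversely, from a chain as in $(2)$, set $\hat{O}_i := \hat{O}(y_i)$ for any $y_i \in O_i$ and apply the same identity to recover $(3)$.

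No serious obstacle is anticipated, but one subtlety is worth flagging: one should not attempt to run $(1)\Leftrightarrow(2)$ directly on the pre-ordered set $X/v$, since there distinct orbits can share a closure, so $<_v$ is strictly weaker than $\subsetneq$ on closures. Routing the argument through the $T_0$-tification bypasses this, and is legitimate precisely because of the height equality $\mathop{\mathrm{ht}}_{\tau_v}(O(x)) = \mathop{\mathrm{ht}}_{\tau_{\hat{v}}}(\hat{O}(x))$ recorded just above the lemma.
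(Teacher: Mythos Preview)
Your proposal is correct. The paper does not supply a proof for this lemma at all; it is stated as an ``observation'' immediately after the height equality $\mathop{\mathrm{ht}}(x) = \mathop{\mathrm{ht}}_{\tau_v}(O(x)) = \mathop{\mathrm{ht}}_{\tau_{\hat v}}(\hat O(x))$ and the identity $\overline{\hat O} = \overline{O}$, and your argument is precisely the unfolding of those facts that the paper leaves implicit.
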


The height $\mathop{\mathrm{ht}} A$ of a subset $A \subseteq X$ is defined by $\mathop{\mathrm{ht}} A := \sup_{x \in A} \mathop{\mathrm{ht}}(x)$.
The height of the flow $v$ is defined by $\mathop{\mathrm{ht}}(v) := \mathop{\mathrm{ht}}(X)$.


\subsubsection{Order structures of orbit spaces of flows}

Define pre-orders $\leq_{\alpha}$ and $\leq_{\omega}$
on $X$ as follows:
\\
$x \leq_{\alpha} y$ if  either $O(x) = O(y)$ or $x \in \alpha (y)$ (i.e. $x \in \alpha (y) \cup O(y)$)
\\
$x \leq_{\omega} y$ if either $O(x) = O(y)$ or $x \in \omega (y)$ (i.e. $x \in \omega (y) \cup O(y)$)
\\
\\
These orders induce the following pre-orders on the orbit space $X/v$ of the flow $v$, by abuse of terminology, which are denoted by the same symbols:
\\
$O_1 \leq_{\alpha} O_2$ if either $O_1 = O_2$ or $O_1 \subseteq \alpha (O_2)$ (i.e. $O_1 \subseteq \alpha (O_2) \cup O_2$)
\\
$O_1 \leq_{\omega} O_2$ if either $O_1 = O_2$ or $O_1 \subseteq \omega (O_2)$ (i.e. $O_1 \subseteq \omega (O_2) \cup O_2$)
\\

\subsubsection{Equivalence classes of orbits}

For a point $x \in X$, the connected component of the subset $\{ y \in X \mid \alpha' (x) = \alpha' (y), \omega'(x) = \omega' (y) \}$ containing $x$ is denoted by $[x]'$.
Note that $O(x) \subseteq [x]'$ and that $[x]' = [y]'$ for any $y \in [x]'$.
%
Define a saturated subset $[x]'' \subseteq [x]'$ by the connected component of $[x]' \cap \Sv$ (resp. $[x]' \cap \Pv$, $[x]' \cap \mathrm{P}(v)$, $[x]' \cap  \mathrm{R}(v)$) containing $x$ if $x \in \Sv$ (resp. $x \in \Pv$, $x \in \mathrm{P}(v)$, $x \in \mathrm{R}(v)$).
Note that $O(x) \subseteq [x]''$ and that $[x]'' = [y]''$ for any $y \in [x]''$.
Define $X/[v]''$ by a quotient space $X/\sim_{[v]''}$ defined by $x \sim_{[v]''} y$ if $[x]'' = [y]''$.

\subsubsection{Abstract weak orbits and abstract orbits}
Define a saturated subset $[x]$ of $X$ as follows:
\[
  [x] := \begin{cases}
    [x]'' & \text{if } x \in \mathop{\mathrm{Cl}}(v) \sqcup \mathrm{P}(v) = X - \mathrm{R}(v)  \\
    \check{O}(x) & \text{if } x \in \mathrm{R}(v)
  \end{cases}
\]
We call that $[x]$ is the abstract weak orbit of $x$.
Note that $[x] = [y]$ for any point $y \in [x]$.
In the case that $X$ is Hausdorff, we have a simple explicit characterization of abstract weak orbit as in Corollary~\ref{cor:ch}.
In general, the closure of an abstract weak orbit is saturated but is not a union of abstract weak orbits.
The abstract weak orbit of a subset $A$ is defined by $[A]:= \bigcup_{a \in A} [a]$.
We have the following observation.

\begin{lemma}\label{lem:same_limit_set}
For any point $x \in X - \mathop{\mathrm{Cl}}(v)$ and any point $y \in [x]$, we have $\alpha(x) = \alpha(y)$ and $\omega(x) = \omega(y)$.
\end{lemma}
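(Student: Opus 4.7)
The plan is to split on which of the two defining cases puts $x$ into $X - \mathop{\mathrm{Cl}}(v)$, namely $x \in \mathrm{R}(v)$ versus $x \in \mathrm{P}(v)$. In the recurrent case the conclusion is immediate from unpacking definitions, so the real content is the non-recurrent case, where I have to bridge between the $\alpha'/\omega'$-version of the equivalence relation (which is what the definition of $[x]'$ uses) and the actual $\alpha/\omega$-sets.

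First I would handle the easy case $x \in \mathrm{R}(v)$. Here $[x] = \check{O}(x)$ by definition, and $\check{O}(x)$ is defined by the very conditions $\overline{O(x)} = \overline{O(y)}$, $\alpha(x) = \alpha(y)$, $\omega(x) = \omega(y)$. So any $y \in [x]$ satisfies the desired equalities.

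Next I would treat the non-recurrent case $x \in \mathrm{P}(v)$. Here $[x] = [x]''$ is by definition a connected component of $[x]' \cap \mathrm{P}(v)$, so any $y \in [x]$ is also in $\mathrm{P}(v)$ and in $[x]'$. The key observation is that for a non-recurrent point $z$ one has $O(z) \cap (\alpha(z) \cup \omega(z)) = \emptyset$: indeed, $z \notin \alpha(z) \cup \omega(z)$ by non-recurrence, and since $\alpha(z)$ and $\omega(z)$ are invariant under $v$ (as recalled in the preliminaries), if some $v_t(z)$ lay in $\alpha(z)$ then applying $v_{-t}$ would force $z \in \alpha(z)$, a contradiction; the same argument handles $\omega(z)$. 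Consequently $\alpha'(z) = \alpha(z) \setminus O(z) = \alpha(z)$ and $\omega'(z) = \omega(z)$ for every $z \in \mathrm{P}(v)$, in particular for both $x$ and $y$. Since $y \in [x]'$ gives $\alpha'(x) = \alpha'(y)$ and $\omega'(x) = \omega'(y)$ by the definition of $[x]'$, the conclusion $\alpha(x) = \alpha(y)$ and $\omega(x) = \omega(y)$ follows.

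The only subtle step is the invariance argument showing $O(z) \cap \alpha(z) = \emptyset$ for non-recurrent $z$; it is routine but it is what lets me drop the primes and convert $\alpha' = \alpha'$ to $\alpha = \alpha$. Everything else is bookkeeping around the definitions of $\check{O}$, $[x]'$, and $[x]''$, together with the inclusion $[x]'' \subseteq [x]' \cap \mathrm{P}(v)$ used to ensure $y$ is itself non-recurrent.
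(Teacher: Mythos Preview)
Your proof is correct and follows essentially the same approach as the paper's own proof: split into the cases $x \in \mathrm{R}(v)$ (where the conclusion is immediate from the definition of $\check{O}(x)$) and $x \in \mathrm{P}(v)$ (where non-recurrence gives $\alpha = \alpha'$ and $\omega = \omega'$, reducing to the defining equalities of $[x]'$). Your explicit justification of $O(z) \cap (\alpha(z) \cup \omega(z)) = \emptyset$ via invariance is a bit more detailed than the paper, which simply asserts it from non-recurrence, but the argument is otherwise identical.
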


\begin{proof}
Fix a point $x \in X - \mathop{\mathrm{Cl}}(v)$ and a point $y \in [x]$.
Suppose that $x \in \mathrm{R}(v)$.
Then $[x] = \check{O}(x)$.
Definition of $\check{O}(x)$ implies the assertion.
Thus we may assume that $x \in \mathrm{P}(v) = X -(\mathop{\mathrm{Cl}}(v) \sqcup \mathrm{R}(v))$.
Then $[x] = [x]'' \subseteq \mathrm{P}(v)$ and so $y \in \mathrm{P}(v)$.
The non-recurrence of $x$ (resp. $y$) implies that $O(x) \cap (\alpha(x) \cup \omega(x)) = \emptyset$ (resp. $O(y) \cap (\alpha(y) \cup \omega(y)) = \emptyset$).
This means that $\alpha(x) = \alpha'(x)$, $\omega(x) = \omega'(x)$, $\alpha(y) = \alpha'(y)$, and $\omega(y) = \omega'(y)$.
Then $\alpha(x) = \alpha'(x) = \alpha'(y) = \alpha(y)$ and $\omega(x) = \omega'(x) = \omega'(y) = \omega(y)$.
\end{proof}

Define a saturated subset $\langle x \rangle$ of $X$ as follows:
\[
  \langle x \rangle := \begin{cases}
    [x]'' & \text{if } x \in \mathop{\mathrm{Cl}}(v) \sqcup \mathrm{P}(v) = X - \mathrm{R}(v)  \\
    \hat{O}(x) & \text{if } x \in \mathrm{R}(v)
  \end{cases}
\]
We call that $\langle x \rangle$ is the abstract orbit of $x$.
Note that $\langle x \rangle = \langle y \rangle$ for any point $y \in \langle x \rangle$.
In the caee that $X$ is Hausdorff, we obtain a simple explicit characterization of abstract orbit as in Corollary~\ref{cor:ch01}.
In general, the closure of an abstract orbit is saturated but is not a union of abstract orbits.
By definition, we have the following observation.

\begin{lemma}
The following properties hold for a flow $v$ on a topological space $X$ and for any point $x \in X$:
\\
$(1)$ $O(x) \subseteq [x] \subseteq \langle x \rangle$.
\\
$(2)$ $\overline{[x]} = \overline{\langle x \rangle}$.
\end{lemma}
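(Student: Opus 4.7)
The plan is a direct case analysis on whether $x$ is a non-closed recurrent point or not, appealing to the earlier lemma about $\check{O}$ and $\hat{O}$ (the one stating $O(x) \subseteq \check{O}(x) \subseteq \hat{O}(x) \subseteq \overline{O(x)}$ and $\overline{O(x)} = \overline{\hat{O}(x)} = \overline{\check{O}(x)}$).

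For assertion $(1)$, I would split into the two cases in the definition of $[x]$ and $\langle x \rangle$. If $x \in X - \mathrm{R}(v) = \mathop{\mathrm{Cl}}(v) \sqcup \mathrm{P}(v)$, then both $[x]$ and $\langle x \rangle$ equal $[x]''$, and we already observed $O(x) \subseteq [x]''$ (the class $[x]''$ is the connected component of $[x]' \cap S$ containing $x$, where $S$ is one of the saturated pieces $\Sv, \Pv, \mathrm{P}(v), \mathrm{R}(v)$; saturation and connectedness of each orbit give $O(x) \subseteq [x]''$). If $x \in \mathrm{R}(v)$, then $[x] = \check{O}(x)$ and $\langle x \rangle = \hat{O}(x)$, and the inclusion $O(x) \subseteq \check{O}(x) \subseteq \hat{O}(x)$ is exactly the cited earlier lemma.

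For assertion $(2)$, the same case split. In the non-recurrent case $[x] = \langle x \rangle$ so the equality of closures is trivial. In the case $x \in \mathrm{R}(v)$, one has $[x] = \check{O}(x)$ and $\langle x \rangle = \hat{O}(x)$, and by the earlier lemma $\overline{\check{O}(x)} = \overline{O(x)} = \overline{\hat{O}(x)}$, giving $\overline{[x]} = \overline{\langle x \rangle}$.

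There is no real obstacle: the whole statement is bookkeeping that reduces to the previously recorded inclusions and closure identities for $\hat{O}$ and $\check{O}$, together with the fact that in the case $x \notin \mathrm{R}(v)$ the two saturated sets $[x]$ and $\langle x \rangle$ are identical by definition. The only point worth writing carefully is flagging that $\mathop{\mathrm{Cl}}(v) \sqcup \mathrm{P}(v) = X - \mathrm{R}(v)$ (used implicitly so that the two defining cases of $[x]$ and $\langle x \rangle$ partition $X$), which follows from the decomposition $X = \mathop{\mathrm{Sing}}(v) \sqcup \mathop{\mathrm{Per}}(v) \sqcup \mathrm{P}(v) \sqcup \mathrm{R}(v)$ noted earlier.
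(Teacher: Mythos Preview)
Your proposal is correct and follows essentially the same approach as the paper's own proof: a case split on whether $x \in \mathrm{R}(v)$, with the non-recurrent case handled by $[x] = \langle x \rangle = [x]''$ and the recurrent case reduced to the previously recorded inclusions and closure identities $O(x) \subseteq \check{O}(x) \subseteq \hat{O}(x)$ and $\overline{\check{O}(x)} = \overline{O(x)} = \overline{\hat{O}(x)}$. The paper's argument is slightly more compressed (it handles assertion~(1) in one line by invoking $\check{O}(x) \subseteq \hat{O}(x)$), but the content is identical.
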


\begin{proof}
Fix any points $x \in X$.
Since $\check{O}(x) \subseteq \hat{O}(x)$, we have $O(x) \subseteq [x] \subseteq \langle x \rangle$.
If $x \in \mathop{\mathrm{Cl}}(v) \sqcup \mathrm{P}(v) = X - \mathrm{R}(v)$, then $\langle x \rangle = [x]''= [x]$ and so $\overline{[x]} = \overline{\langle x \rangle}$.
Thus we may assume that $x \in \mathrm{R}(v)$.
Then $\overline{[x]} = \overline{\check{O}(x)} = \overline{O(x)} = \overline{\hat{O}(x)} = \overline{\langle x \rangle}$.
\end{proof}

The abstract orbit of  a subset $A$ is defined by $\langle A \rangle:= \bigcup_{a\in A} \langle a \rangle$.

\subsubsection{Abstract weak orbit space and abstract orbit space of a flow}

Define the abstract weak orbit space $X/[v]$ as a quotient space $X/\sim_{[v]}$ defined by $x \sim_{[v]} y$ if $[x] = [y]$.
Similarly, define the abstract orbit space $X/\langle v \rangle$ as a quotient space $X/\sim_{\langle v \rangle}$ defined by $x \sim_{\langle v \rangle} y$ if $\langle x \rangle = \langle y \rangle$.
Since $O(x) \subseteq [x] \subseteq \langle x \rangle$ for any $x \in X$, the abstract weak orbit space is a quotient space of the orbit space, and the abstract orbit space is a quotient space of the abstract weak orbit space.
We have the following observation.

\begin{lemma}\label{lem:w_s}
Let $v$ be a flow on a topological space $X$ with $\mathrm{R}(v) = \emptyset$.
Then the abstract orbit and the abstract weak orbit of a point coincide with each other.
Moreover, we obtain $X/[v] = X/\langle v \rangle$.
\end{lemma}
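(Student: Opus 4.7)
The statement follows directly from unwinding the piecewise definitions, so my plan is essentially bookkeeping rather than any substantive argument.

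First, I would invoke the decomposition $X = \mathop{\mathrm{Sing}}(v) \sqcup \mathop{\mathrm{Per}}(v) \sqcup \mathrm{P}(v) \sqcup \mathrm{R}(v)$ recorded in the preliminaries. Since $\mathop{\mathrm{Cl}}(v) = \mathop{\mathrm{Sing}}(v) \sqcup \mathop{\mathrm{Per}}(v)$, the hypothesis $\mathrm{R}(v) = \emptyset$ forces $X = \mathop{\mathrm{Cl}}(v) \sqcup \mathrm{P}(v)$; that is, every point of $X$ lies in the first case of both piecewise definitions.

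Next, fix any point $x \in X$. By the previous step $x \in \mathop{\mathrm{Cl}}(v) \sqcup \mathrm{P}(v) = X - \mathrm{R}(v)$, so by the very definition of $[x]$ and $\langle x \rangle$ we obtain $[x] = [x]'' = \langle x \rangle$. Hence the abstract weak orbit and the abstract orbit of every point coincide, which is the first assertion.

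Finally, because $[x] = \langle x \rangle$ for every $x \in X$, the equivalence relations $\sim_{[v]}$ and $\sim_{\langle v \rangle}$ on $X$ are literally the same relation, and therefore their quotient spaces are identical: $X/[v] = X/\langle v \rangle$. The only ``obstacle'' is purely notational — checking that the case split in the two definitions collapses to a single common case under the hypothesis — so no deeper argument is required.
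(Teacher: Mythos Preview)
Your proposal is correct and matches the paper's approach exactly: the paper states this lemma as an immediate observation ``by definition'' without a written proof, and your argument is precisely the unpacking of the piecewise definitions that justifies this.
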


\subsubsection{Transverse pre-orders for abstract {\rm(}weak {\rm)} orbits}

The binary relations $\leq_v$, $\leq_{\alpha}$, and $\leq_{\omega}$ on $X/v$ of a  flow $v$ induces following binary relations on the abstract weak orbit space $X/[v]$ and on the abstract orbit space $X/\langle v \rangle$, by abuse of terminology, which are denoted by the same symbols:
$$[x] \leq_v [y] \text{ if either } [x] = [y] \text{ or } \overline{O(x_1)} \subseteq \overline{O(y_1)} \text{ for some } x_1 \in [x] \text{ and } y_1 \in [y]$$
$$[x] \leq_\alpha [y] \text{ if either } [x] = [y] \text{ or } x_1 \in \alpha (y_1) \text{ for some } x_1 \in [x] \text{ and } y_1 \in [y]$$
$$[x] \leq_\omega [y] \text{ if either } [x] = [y] \text{ or } x_1 \in \omega (y_1) \text{ for some } x_1 \in [x] \text{ and } y_1 \in [y]$$
$$\langle x \rangle \leq_v \langle y \rangle \text{ if either } \langle x \rangle = \langle y \rangle \text{ or } \overline{O(x_1)} \subseteq \overline{O(y_1)} \text{ for some } x_1 \in \langle x \rangle \text{ and } y_1 \in \langle y \rangle$$
$$\langle x \rangle \leq_\alpha \langle y \rangle \text{ if either } \langle x \rangle = \langle y \rangle \text{ or } x_1 \in \alpha (y_1) \text{ for some } x_1 \in \langle x \rangle \text{ and } y_1 \in \langle y \rangle$$
$$\langle x \rangle \leq_\omega \langle y \rangle \text{ if either } \langle x \rangle = \langle y \rangle \text{ or } x_1 \in \omega (y_1) \text{ for some } x_1 \in \langle x \rangle \text{ and } y_1 \in \langle y \rangle$$

By definition, we have the following observation.

\begin{lemma}
The following properties hold for points $x,y \in X$: \\
$(1)$ $[x] \leq_\alpha [y]$ if and only if  $[x] = [y]$ or $[x] \cap \alpha([y]) \neq \emptyset$.
\\
$(2)$ $[x] \leq_\omega [y]$ if and only if  $[x] = [y]$ or $[x] \cap \omega([y]) \neq \emptyset$.
\\
$(3)$ $\langle x \rangle \leq_\alpha \langle y \rangle$ if and only if $\langle x \rangle = \langle y \rangle$ or $\langle x \rangle \cap \alpha(\langle y \rangle) \neq \emptyset$.
\\
$(4)$ $\langle x \rangle \leq_\omega \langle y \rangle$ if and only if $\langle x \rangle = \langle y \rangle$ or $\langle x \rangle \cap \omega(\langle y \rangle) \neq \emptyset$.
\end{lemma}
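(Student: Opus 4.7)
The plan is to prove the lemma by direct unpacking of the definitions, observing that all four parts are really the same assertion in four parallel contexts (abstract weak orbits versus abstract orbits, and $\alpha$-limits versus $\omega$-limits). I would first note that the arguments for (1), (2), (3), (4) are word-for-word identical after substituting $\omega$ for $\alpha$ and/or $\langle \cdot \rangle$ for $[\cdot]$, so it suffices to carry out (1) in full and then invoke symmetry.

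For (1), the key step is to remember the convention introduced in the preliminaries that limit sets are extended additively to subsets: $\alpha(A) := \bigcup_{a \in A} \alpha(a)$ for $A \subseteq X$. Applied to $A = [y]$, this gives
\[
\alpha([y]) \;=\; \bigcup_{y_1 \in [y]} \alpha(y_1),
\]
so distributing the intersection with $[x]$ over the union yields
\[
[x] \cap \alpha([y]) \;=\; \bigcup_{y_1 \in [y]} \bigl([x] \cap \alpha(y_1)\bigr).
\]
Consequently $[x] \cap \alpha([y]) \neq \emptyset$ holds if and only if there exist $y_1 \in [y]$ and $x_1 \in [x]$ with $x_1 \in \alpha(y_1)$. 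Comparing this with the definition of $[x] \leq_\alpha [y]$ on $X/[v]$ — namely that either $[x] = [y]$ or some $x_1 \in [x]$ and $y_1 \in [y]$ satisfy $x_1 \in \alpha(y_1)$ — the two conditions are literally the same. This proves (1).

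Parts (2)–(4) follow immediately by the same computation with $\omega$ in place of $\alpha$ or with $\langle x \rangle, \langle y \rangle$ in place of $[x], [y]$; since the definitions of $\leq_\omega$ and of the preorders on $X/\langle v \rangle$ mirror the definition of $\leq_\alpha$ on $X/[v]$ exactly, no additional ingredient is needed. I do not anticipate a substantive obstacle: the lemma is in essence a reformulation confirming that the transverse preorders are invariants of the abstract (weak) orbits, so that they may be tested by checking whether the subset-level limit set $\alpha([y])$ or $\omega([y])$ meets $[x]$, rather than by exhibiting particular representatives.
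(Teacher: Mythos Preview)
Your proposal is correct and takes essentially the same approach as the paper: the paper presents this lemma as an immediate ``observation'' following directly from the definitions (it is introduced with the phrase ``By definition, we have the following observation'' and no proof is given), and your argument is simply an explicit unpacking of those definitions using the convention $\alpha(A) = \bigcup_{a \in A} \alpha(a)$.
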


We will show that these binary relations $\leq_\alpha$ and $\leq_\omega$ on the abstract weak orbit space $X/[v]$ and on the abstract orbit space $X/\langle v \rangle$ are pre-orders (see Lemma~\ref{lem:preorders} for details).
If $X$ is Hausdorff, then the binary relation $\leq_v$ is also a pre-order on such spaces (see Lemma~\ref{lem:preorder} and Lemma~\ref{lem:preorders02} for details).

\subsubsection{Horizontal boundaries and transverse boundaries of a saturated subset}

For a saturated subset $A$, define the horizontal boundary $\partial_\perp A$ and the transverse boundary $\partial_\pitchfork$ as follows:
$$\partial_\perp A := \left( \bigcup_{z \in A} \alpha(z) \cup \omega(z) \right) \setminus A = (\alpha(A) \cup \omega(A)) \setminus A$$
$$\partial_\pitchfork A := (\overline{A} - A) \setminus \left( \bigcup_{z \in A} (\alpha(z) \cup \omega(z)) \right) = (\overline{A} - A) \setminus (\alpha(A) \cup \omega(A))$$
Note that $\partial_\perp A \sqcup \partial_\pitchfork A = \overline{A} - A = \bp A$ and so that $\partial A = \delta A \sqcup \partial_\perp A \sqcup \partial_\pitchfork A$, where $\delta A = A - \mathrm{int}A$ is the border of $A$.

\subsubsection{Specialization orders and transverse binary relations}

We define binary relations $\leq_\partial$, $\leq_\perp$, and $\leq_\pitchfork$ on $X$ as follows:
$$x \leq_\partial y \text{ if } x \in \overline{[y]}$$
$$x \leq_\perp y \text{ if either } [x] = [y] \text{ or } x \in \bigcup_{z \in [y]} (\alpha(z) \cup \omega(z)) = \alpha([y]) \cup \omega([y])$$
$$x \leq_\pitchfork y \text{ if either } [x] = [y] \text{ or } x \in \overline{[y]} - \left( \bigcup_{z \in [y]} (\alpha(z) \cup \omega(z)) \right) =\overline{[y]} - (\alpha([y]) \cup \omega([y]))$$

By definition, the binary relation $\leq_\partial $ is the union of binary relations $\leq_\perp$ and $\leq_\pitchfork$ as a direct product, and it contains the pre-order $\leq_v$ as a direct product (i.e. for any points $x$ and $y$ in $X$, the relation $x \leq_v y$ implies that $x \leq_\partial y$).
Since $\overline{[y]} = \overline{\langle y \rangle}$, we have that $x \leq_\partial y$ if and only if $x \in \overline{\langle y \rangle}$.
We have the following observations.

\begin{lemma}
The following properties hold for points $x,y \in X$: \\
$(1)$ $x \leq_\partial y$ if and only if $[x] = [y]$ or $x \in \bp [y]$.
\\
$(2)$ $x \leq_\perp y$ if and only if $[x] = [y]$ or $x \in \partial_\perp [y]$.
\\
$(3)$ $x \leq_\pitchfork y$ if and only if $[x] = [y]$ or $x \in \partial_\pitchfork [y]$.
\end{lemma}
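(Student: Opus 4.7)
The plan is to deduce all three equivalences by partitioning the relevant sets according to whether a point lies in $[y]$ or in its coborder $\bp [y] = \overline{[y]} - [y]$. The underlying observation, recorded just after the definition of the abstract weak orbit, is that $[x] = [y]$ exactly when $x \in [y]$; hence in each biconditional the first disjunct ``$[x] = [y]$'' can be replaced by ``$x \in [y]$'' without loss. Combined with the disjoint decomposition $\overline{[y]} = [y] \sqcup \bp [y]$ and the two-piece splitting $\bp [y] = \partial_\perp [y] \sqcup \partial_\pitchfork [y]$ noted in the excerpt, this reduces each claim to an unravelling of the definitions of $\leq_\partial$, $\leq_\perp$, and $\leq_\pitchfork$.

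For (1), the definition of $\leq_\partial$ gives $x \leq_\partial y$ precisely when $x \in \overline{[y]}$, and the split $\overline{[y]} = [y] \sqcup \bp[y]$ yields the claim at once. For (2), by definition $\partial_\perp [y] = (\alpha([y]) \cup \omega([y])) \setminus [y]$: if $x \in \alpha([y]) \cup \omega([y])$ then either $x \in [y]$, giving the first disjunct, or $x \in \partial_\perp [y]$, giving the second; the converse uses only $\partial_\perp [y] \subseteq \alpha([y]) \cup \omega([y])$. Part (3) is analogous via $\partial_\pitchfork [y] = (\overline{[y]} - [y]) \setminus (\alpha([y]) \cup \omega([y]))$: a point $x \in \overline{[y]} - (\alpha([y]) \cup \omega([y]))$ lies either in $[y]$ or in $\partial_\pitchfork [y]$, and conversely $\partial_\pitchfork [y] \subseteq \overline{[y]} - (\alpha([y]) \cup \omega([y]))$.

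No genuine obstacle arises; the argument is formal set-theoretic manipulation. The only point that needs attention is the harmless identification of ``$[x] = [y]$'' with ``$x \in [y]$'' used to align the first disjunct on each side of every biconditional, which relies only on the equivalence-class property of abstract weak orbits already in hand.
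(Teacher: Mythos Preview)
Your proposal is correct. The paper states this lemma as an observation without giving an explicit proof, so your direct unfolding of the definitions together with the identification of ``$[x]=[y]$'' with ``$x\in[y]$'' is exactly the intended verification.
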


\begin{lemma}
The following properties hold for points $x,y \in X$: \\
$(1)$ $x <_\partial y$ if and only if $x \in \bp [y]$.
\\
$(2)$ $x <_\perp y$ if and only if $x \in \partial_\perp [y]$.
\\
$(3)$ $x <_\pitchfork y$ if and only if $x \in \partial_\pitchfork [y]$.
\end{lemma}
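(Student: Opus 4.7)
The plan is to deduce the three equivalences of this lemma as direct consequences of the preceding lemma, which characterizes the non-strict relations $\leq_\partial$, $\leq_\perp$, and $\leq_\pitchfork$. The strict version $<$ here is to be read as the strict part modulo the $\sim_{[v]}$-equivalence: $x <_\partial y$ means $x \leq_\partial y$ and $[x] \neq [y]$, and similarly for $<_\perp$ and $<_\pitchfork$. Under this reading, the strict version strips off precisely the ``$[x] = [y]$'' disjunct that appears in the preceding lemma.

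For part $(1)$, the forward direction assumes $x <_\partial y$, so $x \leq_\partial y$ with $[x] \neq [y]$; the preceding lemma gives either $[x] = [y]$ or $x \in \bp [y]$, and the first case is excluded, so $x \in \bp [y]$. For the converse, assume $x \in \bp [y] = \overline{[y]} - [y]$. Then $x \notin [y]$, and since $x \in [x]$ this forces $[x] \neq [y]$. Moreover $x \in \overline{[y]}$ means $x \leq_\partial y$ by definition. Hence $x <_\partial y$.

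Parts $(2)$ and $(3)$ follow from the same pattern with $\bp [y]$ replaced by $\partial_\perp [y]$ and $\partial_\pitchfork [y]$, respectively. The only additional observation needed is that both of these subsets are contained in $\bp [y] = \overline{[y]} - [y]$ and hence disjoint from $[y]$; this is immediate from the decomposition $\partial_\perp A \sqcup \partial_\pitchfork A = \bp A$ recorded in the definition of horizontal and transverse boundaries just above the lemma, applied to the saturated set $A = [y]$.

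No real obstacle arises; the lemma is essentially a bookkeeping corollary of the preceding characterization, the only subtlety being the correct reading of the strict inequality as quotient-strict (i.e.\ $[x] \neq [y]$) rather than pointwise-strict ($x \neq y$), which is the only reading consistent with the statement — for otherwise, two distinct points of a common abstract weak orbit $[y]$ with more than one element would satisfy $x <_\partial y$ without lying in $\bp [y]$.
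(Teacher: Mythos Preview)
Your approach is exactly the one implicit in the paper: the lemma is stated there as an immediate observation following the preceding non-strict characterization, with no separate proof given. Your derivation from that preceding lemma, together with the disjointness $\partial_\perp[y] \sqcup \partial_\pitchfork[y] = \bp[y] \subseteq X - [y]$, is precisely what is intended.

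Your remark on the reading of the strict symbol is well taken and worth keeping. The paper's general convention (``$a<b$ means $a\leq b$ and $a\neq b$'') is stated only for pre-orders, and $\leq_\partial$, $\leq_\pitchfork$ are explicitly noted not to be pre-orders in general; moreover, since these relations on $X$ depend only on $[x]$ and $[y]$ (indeed $x\leq_\partial y$ iff $x\in\overline{[y]}$, which is unchanged if $y$ is replaced by any $y'\in[y]$), the quotient-strict reading $[x]\neq[y]$ is the natural one here. Your counterexample---two distinct points in the same abstract weak orbit---shows the pointwise reading would falsify the lemma, so your clarification resolves a genuine ambiguity that the paper leaves implicit.
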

For any points $x$ and $y$ in a Hausdorff space $X$, since $\overline{O(x)} = O(x) \cup \alpha(x) \cup \omega(x)$ (see Lemma~\ref{lem:decomp_limit}), we have that $x \leq_\pitchfork y$ if and only if either $[x] = [y]$ or $x \in \overline{[y]} - \left( \bigcup_{z \in [y]} \overline{O(z)} \right) = \overline{[y]} - \mathop{\downarrow}_{\leq_v} [y]$.
Note that the binary relations $\leq_\partial $ and $\leq_\pitchfork$ are neither antisymmetric nor transitive in general.
In fact, there is a flow whose binary relations $\leq_\partial $ and $\leq_\pitchfork$ are not antisymmetric (see Figure~\ref{non_symmetric}) and there is a flow whose binary relations $\leq_\partial $ and $\leq_\pitchfork$ are not transitive (see Figure~\ref{non_transitive}).
\begin{figure}
\begin{center}
\includegraphics[scale=0.33]{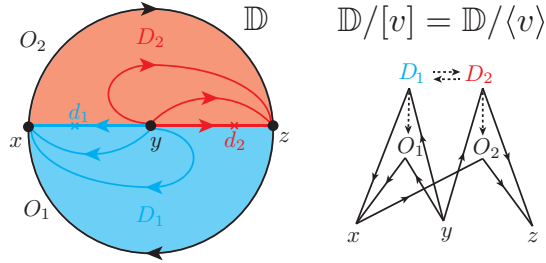}
\end{center}
\caption{A flow $v$ on a closed disk $D$ whose binary relations $\leq_\partial $ and $\leq_\pitchfork$ are not antisymmetric such that $D$ consists of seven abstract (weak) orbits $x, y, z, O_1, O_2, D_1, D_2$ with $D = \{x, y, z \} \sqcup O_1 \sqcup O_2 \sqcup D_1 \sqcup D_2$, and that the quotients $D_1/v$ and $D_2/v$ are half-open intervals, and there are points $d_1 \in D_1 \cap (\overline{D_2} - D_2)$ and $d_2 \in D_2 \cap (\overline{D_1} - D_1)$.
Then $d_1 \leq_\partial d_2 \leq_\partial d_1$ and $d_1 \leq_\pitchfork d_2 \leq_\pitchfork d_1$ but $d_1 \neq d_2$.
In the diagram on the right, solid lines form the Hessian diagram of the order $<_v$ whose down (resp. up) arrows correspond to $<_{\omega}$ (resp. $<_{\alpha}$), and dotted directed lines correspond to $<_\pitchfork$. Moreover, the binary relation $<_\partial$ is the union of $<_v$ and $<_\pitchfork$ in this case.}
\label{non_symmetric}
\end{figure}
\begin{figure}
\begin{center}
\includegraphics[scale=0.33]{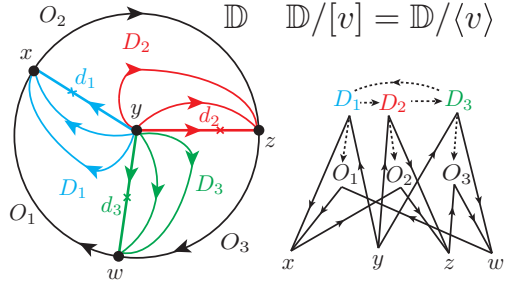}
\end{center}
\caption{A flow $v$ on a closed disk $\mathbb{D}$ whose binary relations $\leq_\partial $ and $\leq_\pitchfork$ are not transitive such that $\mathbb{D}$ consists of ten abstract (weak) orbits $\langle w \rangle = [w]=\{w\}$, $\langle x \rangle = [x]=\{ x \}$, $\langle y \rangle = [y]=\{ y \}$, $\langle z \rangle = [z] = \{ z \}$, $O_1, O_2, O_3$, $\langle d_1 \rangle = [d_1] = D_1$, $\langle d_2 \rangle = [d_2] = D_2$, $\langle d_3 \rangle = [d_3] = D_3$ with $\mathbb{D} = \{w, x, y, z \} \sqcup O_1 \sqcup O_2 \sqcup O_3 \sqcup D_1 \sqcup D_2 \sqcup D_3$, and that the quotients $D_1/v$, $D_2/v$ and $D_3/v$ are half-open intervals, and there are points $d_1 \in D_1 \cap (\overline{D_2} - D_2) \subseteq \overline{[d_2]} - [d_2]$, $d_2 \in D_2 \cap (\overline{D_3} - D_3) \subseteq \overline{[d_3]} - [d_3]$ and $d_3 \in D_3 \cap (\overline{D_1} - D_1) \subseteq \overline{[d_1]} - [d_1]$. Then both $d_1 \leq_\partial d_2 \leq_\partial d_3 \leq_\partial d_1$  and $d_1 \leq_\pitchfork d_2 \leq_\pitchfork d_3  \leq_\pitchfork d_1$ but $d_1 \not\leq_\partial d_3$ and $d_1 \not\leq_\pitchfork d_3$. In the diagram on the right, solid lines form the Hessian diagram of the order $<_{v}$ whose down (resp. up) arrows correspond to $<_{\omega}$ (resp. $<_{\alpha}$), and dotted directed lines correspond to $<_\pitchfork$. Moreover, the binary relation $<_\partial$ is the union of $<_v$ and $<_\pitchfork$ in this case.}
\label{non_transitive}
\end{figure}
Moreover, the binary relations $\leq_\partial $ and $\leq_\pitchfork$ on $X$ canonically imply binary relations, also denoted by $\leq_\partial $ and $\leq_\pitchfork$, on the orbit space $X/v$ as follows:
$$O(x) \leq_\partial O(y) \text{ if } x \leq_\partial y$$
$$O(x) \leq_\perp O(y) \text{ if } x \leq_\perp y$$
$$O(x) \leq_\pitchfork O(y) \text{ if } x \leq_\pitchfork y$$
Note that the binary relations $\leq_\partial$, $\leq_\perp$, and $\leq_\pitchfork$ are independent of the choice of points in $O(x)$ and $O(y)$.
In addition, we define binary relations $\leq_\partial$, $\leq_\perp$, and $\leq_\pitchfork$ on the abstract orbit weak class space $X/[v]$ as follows:
$$[x] \leq_\partial [y] \text{ if either } [x] = [y] \text{ or } x_1 \leq_\partial y \text{ for some } x_1 \in [x]$$
$$[x] \leq_\perp [y] \text{ if either } [x] = [y] \text{ or } x_1 \leq_\perp y \text{ for some } x_1 \in [x]$$
$$[x] \leq_\pitchfork [y] \text{ if either } [x] = [y] \text{ or } x_1 \leq_\pitchfork y \text{ for some } x_1 \in [x]$$
Note that the binary relations $\leq_\partial$, $\leq_\perp$, and $\leq_\pitchfork$ are independent of the choice of points in $[x]$ and $[y]$.
By definition, we have the following observation.
\begin{lemma}
The following properties hold for points $x,y \in X$: \\
$(1)$ $[x] \leq_\partial [y]$ if and only if $[x] = [y]$ or $[x] \cap \bp [y] \neq \emptyset$.
\\
$(2)$ $[x] \leq_\perp [y]$ if and only if $[x] = [y]$ or $[x] \cap  \partial_\perp [y] \neq \emptyset$.
\\
$(3)$ $[x] \leq_\pitchfork [y]$ if and only if $[x] = [y]$ or $[x] \cap  \partial_\pitchfork [y] \neq \emptyset$.
\end{lemma}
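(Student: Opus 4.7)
The plan is to derive each of the three equivalences by peeling off the definition of the corresponding binary relation on the abstract weak orbit space $X/[v]$ and then substituting the characterization already established in the preceding point-level lemma. More concretely, by definition on the quotient, $[x] \leq_\partial [y]$ holds precisely when either $[x]=[y]$, or there exists some $x_1 \in [x]$ with $x_1 \leq_\partial y$. Since $x_1 \in [x]$ implies $[x_1]=[x]$ (recorded when the abstract weak orbit was introduced), the preceding lemma tells us that $x_1 \leq_\partial y$ is equivalent to $[x]=[y]$ or $x_1 \in \bp [y]$. Existential quantification over $x_1 \in [x]$ then collapses this disjunction to $[x]=[y]$ or $[x]\cap \bp[y]\neq\emptyset$, which is exactly (1).

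The arguments for (2) and (3) are strictly parallel: I would replace $\leq_\partial$ by $\leq_\perp$ (respectively $\leq_\pitchfork$) and the set $\bp [y]$ by $\partial_\perp [y]$ (respectively $\partial_\pitchfork [y]$), invoking the corresponding clause of the preceding lemma. The note accompanying the definition states that each relation is independent of the representative chosen in $[y]$, which is needed to justify that the condition $x_1 \leq_\bullet y$ really does depend only on the class $[y]$, not on $y$ itself; this independence is itself immediate because the right-hand sides $\bp[y]$, $\partial_\perp [y]$, and $\partial_\pitchfork [y]$ are defined in terms of the saturated set $[y]$ as a whole.

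I do not anticipate any substantive obstacle: once the previous point-level characterization lemma is in hand, the entire proof is a single round of unpacking definitions and applying existential quantification, the only subtlety being the harmless observation that $[x_1]=[x]$ for $x_1 \in [x]$, which lets us rewrite the condition $[x_1]=[y]$ as $[x]=[y]$ inside the existential.
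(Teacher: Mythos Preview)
Your proposal is correct and matches the paper's approach: the paper states this lemma as an immediate observation (``By definition, we have the following observation'') without writing out a proof, and your unpacking of the quotient-level definition together with the preceding point-level lemma is exactly the intended justification.
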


Similarly, we define binary relations $\leq_\partial$, $\leq_\perp$, and $\leq_\pitchfork$ on the abstract orbit space $X/\langle v \rangle$ as follows:
$$\langle x \rangle \leq_\partial \langle y \rangle \text{ if either } \langle x \rangle = \langle y \rangle \text{ or } x_1 \in \overline{\langle y \rangle} \text{ for some } x_1 \in \langle x \rangle$$
$$\langle x \rangle \leq_\perp \langle y \rangle \text{ if either } \langle x \rangle = \langle y \rangle \text{ or } x_1 \in \bigcup_{z \in \langle y \rangle} (\alpha(z) \cup \omega(z)) \text{ for some } x_1 \in \langle x \rangle$$
$$\langle x \rangle \leq_\pitchfork \langle y \rangle \text{ if either } \langle x \rangle = \langle y \rangle \text{ or } x_1 \in \overline{\langle y \rangle} - \left( \bigcup_{z \in \langle y \rangle} (\alpha(z) \cup \omega(z)) \right) \text{ for some } x_1 \in \langle x \rangle$$
Note that the binary relations $\leq_\partial$, $\leq_\perp$, and $\leq_\pitchfork$ are independent of the choice of points in $\langle x \rangle$ and $\langle y \rangle$.
By definition, we have the following observation.
\begin{lemma}
The following properties hold for points $x,y \in X$: \\
$(1)$ $\langle x \rangle \leq_\partial \langle y \rangle$ if and only if $\langle x \rangle = \langle y \rangle$ or $\langle x \rangle \cap \bp \langle y \rangle \neq \emptyset$.
\\
$(2)$ $\langle x \rangle \leq_\perp \langle y \rangle$ if and only if $\langle x \rangle = \langle y \rangle$ or $\langle x \rangle \cap  \partial_\perp \langle y \rangle \neq \emptyset$.
\\
$(3)$ $\langle x \rangle \leq_\pitchfork \langle y \rangle$ if and only if $\langle x \rangle = \langle y \rangle$ or $\langle x \rangle \cap  \partial_\pitchfork \langle y \rangle \neq \emptyset$.
\end{lemma}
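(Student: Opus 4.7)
The plan is to view all three biconditionals as routine consequences of definition-unfolding, once we record the single structural fact that distinct abstract orbits are disjoint. That disjointness is already built into the observation preceding the definition of $X/\langle v\rangle$, since $\langle x\rangle=\langle y\rangle$ for every $y\in\langle x\rangle$ forces any pair of abstract orbits to either coincide or be disjoint; in particular $\langle x\rangle\neq\langle y\rangle$ implies $\langle x\rangle\cap\langle y\rangle=\emptyset$. This fact is exactly what is needed to promote membership in $\overline{\langle y\rangle}$ (resp.\ in $\alpha(\langle y\rangle)\cup\omega(\langle y\rangle)$) to membership in $\bp\langle y\rangle=\overline{\langle y\rangle}-\langle y\rangle$ (resp.\ in $\partial_\perp\langle y\rangle$, $\partial_\pitchfork\langle y\rangle$).

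For statement (1), the implication ``$\Leftarrow$'' is immediate from $\bp\langle y\rangle\subseteq\overline{\langle y\rangle}$. For ``$\Rightarrow$'', assume $\langle x\rangle\leq_\partial\langle y\rangle$ and $\langle x\rangle\neq\langle y\rangle$; by definition there is $x_1\in\langle x\rangle$ with $x_1\in\overline{\langle y\rangle}$. Disjointness gives $x_1\notin\langle y\rangle$, hence $x_1\in\overline{\langle y\rangle}-\langle y\rangle=\bp\langle y\rangle$, and so $\langle x\rangle\cap\bp\langle y\rangle\neq\emptyset$. Statement (2) proceeds identically, with $\bp\langle y\rangle$ replaced by $(\alpha(\langle y\rangle)\cup\omega(\langle y\rangle))-\langle y\rangle=\partial_\perp\langle y\rangle$: from a witness $x_1\in\langle x\rangle\cap(\alpha(\langle y\rangle)\cup\omega(\langle y\rangle))$ and disjointness $x_1\notin\langle y\rangle$, one lands in $\partial_\perp\langle y\rangle$. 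Statement (3) is the analogous translation using $\partial_\pitchfork\langle y\rangle=(\overline{\langle y\rangle}-\langle y\rangle)\setminus(\alpha(\langle y\rangle)\cup\omega(\langle y\rangle))$: any witness $x_1\in\langle x\rangle\cap\bigl(\overline{\langle y\rangle}-(\alpha(\langle y\rangle)\cup\omega(\langle y\rangle))\bigr)$ automatically lies outside $\langle y\rangle$ by disjointness, and therefore belongs to $\partial_\pitchfork\langle y\rangle$.

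There is no real obstacle; the argument is a direct unpacking, and the only point that needs to be flagged (and is the mildly substantive ingredient) is that $\langle x\rangle$ and $\langle y\rangle$ are disjoint whenever they are distinct, since this is what allows the passage from $\overline{\langle y\rangle}$ to $\overline{\langle y\rangle}-\langle y\rangle$. I would structure the write-up as a single short proof that fixes $x,y\in X$, treats the trivial case $\langle x\rangle=\langle y\rangle$ uniformly, and then handles the three cases in parallel by the same three-line disjointness argument.
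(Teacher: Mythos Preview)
Your proposal is correct and is precisely the routine definition-unfolding the paper has in mind; in the paper this lemma is stated without proof as an immediate observation following the definitions, and your argument supplies exactly the one nontrivial ingredient (disjointness of distinct abstract orbits, recorded just before the definition of $X/\langle v\rangle$) needed to pass from $\overline{\langle y\rangle}$, $\alpha(\langle y\rangle)\cup\omega(\langle y\rangle)$, and $\overline{\langle y\rangle}\setminus(\alpha(\langle y\rangle)\cup\omega(\langle y\rangle))$ to their intersections with the complement of $\langle y\rangle$.
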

We equip the abstract (weak) orbit spaces with either three relations $\leq_\alpha, \leq_\omega$, and $\leq_\pitchfork$.
Recall that the abstract weak orbit $[y]$ need not be contained in the closure $\overline{[x]}$ even if $y \in \overline{[x]}$, and that the abstract orbit $\langle y \rangle$ need not be contained in the closure $\overline{\langle x \rangle}$ even if $y \in \overline{\langle x \rangle}$.
Moreover, $\pi_{[v]}(\overline{[y]}) \subsetneq \overline{[y]}^{\tau_{[v]}}$ and $\pi_{\langle v \rangle}(\overline{\langle y \rangle}) \subsetneq \overline{\langle y \rangle}^{\tau_{\langle v \rangle}}$ in general, where $\pi_{[v]} \colon X \to X/[v]$  and $\pi_{\langle v \rangle} \colon X \to X/\langle v \rangle$ are the quotient maps, $\overline{\mathcal{A}}^{\tau_{[v]}}$ is the closure of a subset $\mathcal{A} \subseteq X/[v]$ with respect to the quotient topology $\tau_{[v]}$ on $X/[v]$, and $\overline{\mathcal{B}}^{\tau_{\langle v \rangle}}$ is the closure of a subset $\mathcal{B} \subseteq X/\langle v \rangle$ with respect to the quotient topologies $\tau_{\langle v \rangle}$ on $X/\langle v \rangle$.
Indeed, in the example in Figure~\ref{non_transitive}, we have $\pi_{\langle v \rangle}(\overline{\langle d_2 \rangle}) = \pi_{[v]}(\overline{[d_2]}) = \{ \{ x \}, \{ y \}, \{ z \}, O_2, [d_1], [d_2] \} \subsetneq \mathbb{D}/[v] = \overline{[d_2]}^{\tau_{[v]}} = \overline{\langle d_2 \rangle}^{\tau_{\langle v \rangle}}$ in the abstract (weak) orbit space $\mathbb{D}/\langle v \rangle = \mathbb{D}/[v] = \{ [w], [x], [y], [z], O_1, O_2, O_3, [d_1], [d_2], [d_3] \}$.

\subsubsection{Pre-orders for orbit classes}
Pre-orders for orbits class can be defined as weak orbit classes.
For instance, we define $\leq_\alpha, \leq_\omega, \leq_\pitchfork$ on the orbit class space $X/\hat{v}$ as follows:
\\
$\hat{O} \leq_\alpha \hat{O}' \text{ if either } \hat{O} = \hat{O}' \text{ or } x \in \alpha (y) \text{ for some } x \in \hat{O} \text{ and } y \in \hat{O}'$
\\
$\hat{O} \leq_\omega \hat{O}' \text{ if either } \hat{O} = \hat{O}' \text{ or } x_1 \in \omega (y_1) \text{ for some } x \in \hat{O} \text{ and } y \in \hat{O}'$
\\
$\hat{O} \leq_\pitchfork \hat{O}' \text{ if either } \hat{O} = \hat{O}' \text{ or } x \leq_\pitchfork y \text{ for some } x \in \hat{O} \text{ and } y \in \hat{O}'$
\\
We obtain an observation.
\begin{lemma}
The following statements hold for any orbit class $\hat{O} \neq \hat{O}'$:
\\
$(1)$ $\hat{O} <_\alpha \hat{O}'$ if and only if $\hat{O} \cap \bigcup_{y \in \hat{O}'}\alpha(y) \neq \emptyset$ {\rm(i.e.} $\hat{O} \cap \alpha(\hat{O}') \neq \emptyset$ {\rm)}.
\\
$(2)$ $\hat{O} <_\omega \hat{O}'$ if and only if $\hat{O} \cap \bigcup_{y \in \hat{O}'}\omega(y) \neq \emptyset$ {\rm(i.e.} $\hat{O} \cap \omega(\hat{O}') \neq \emptyset$ {\rm)}.
\end{lemma}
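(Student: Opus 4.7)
The plan is to prove both statements by directly unpacking the definitions; no topological arguments are needed, since each equivalence is essentially a rewriting of the defining condition for $\leq_{\alpha}$ and $\leq_{\omega}$ on the orbit class space $X/\hat{v}$.

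For part (1), the hypothesis $\hat{O} \neq \hat{O}'$ implies that $\hat{O} <_\alpha \hat{O}'$ is equivalent to $\hat{O} \leq_\alpha \hat{O}'$. By the definition of $\leq_\alpha$ on the orbit class space, together with $\hat{O} \neq \hat{O}'$, the latter relation is in turn equivalent to the existence of points $x \in \hat{O}$ and $y \in \hat{O}'$ with $x \in \alpha(y)$.

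On the other hand, recall from the preliminaries that $\alpha(A) := \bigcup_{a \in A} \alpha(a)$ for any subset $A \subseteq X$; in particular, $\alpha(\hat{O}') = \bigcup_{y \in \hat{O}'} \alpha(y)$. Hence $\hat{O} \cap \alpha(\hat{O}') \neq \emptyset$ holds precisely when some $x \in \hat{O}$ lies in $\alpha(y)$ for some $y \in \hat{O}'$. This matches the characterization from the preceding paragraph, yielding the desired equivalence in (1).

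The proof of (2) follows by the identical argument with $\omega$ in place of $\alpha$, using $\omega(\hat{O}') = \bigcup_{y \in \hat{O}'} \omega(y)$. No step presents a genuine obstacle; the content of the lemma is simply to record a convenient pointwise reformulation of $<_\alpha$ and $<_\omega$ on $X/\hat{v}$ in terms of the $\alpha$- and $\omega$-limit sets of the orbit classes, parallel to the analogous lemmas already stated for $[v]$ and $\langle v \rangle$.
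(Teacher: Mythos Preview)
Your proof is correct and matches the paper's treatment: the lemma is stated there as an ``observation'' with no proof given, since it is an immediate unpacking of the definition of $\leq_\alpha$ and $\leq_\omega$ on $X/\hat{v}$ together with the convention $\alpha(A)=\bigcup_{a\in A}\alpha(a)$. There is nothing to add.
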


\subsubsection{Higher equivalence classes of orbits}
Let $X$ be a topological space with either a flow.
Put $[x]''_1 := [x]''$ for any point $x \in X$.
Define a saturated subset $[x]''_{k+1}$ of a point $x$ of $X$ for any $k \in \Z_{>0}$ as follows:
\[
  [x]''_{k+1} := \begin{cases}
    [x]'' & \text{if } x \in X - \mathrm{P}(v)  \\
    \left\{ y \in \mathrm{P}(v) \mid [\alpha(y)]''_k = [\alpha(x)]''_k, [\omega(y)]''_k = [\omega(x)]''_k \right\} & \text{if } x \in \mathrm{P}(v)
  \end{cases}
\]
where $[A]''_k := \bigcup_{a \in A} [a]''_k$.
For any $k \in \Z_{>0}$ and any point $x \in X$, define a $k$-th abstract weak orbit $[x]_k$ and $k$-th abstract orbit $\langle x \rangle_k$ of $X$ as follows:
\[
  [x]_k := \begin{cases}
    [x]''_k & \text{if } x \in \mathop{\mathrm{Cl}}(v) \sqcup \mathrm{P}(v) = X - \mathrm{R}(v)  \\
    \check{O}(x) & \text{if } x \in \mathrm{R}(v)
  \end{cases}
\]
\[
  \langle x \rangle_k := \begin{cases}
    [x]''_k & \text{if } x \in \mathop{\mathrm{Cl}}(v) \sqcup \mathrm{P}(v) = X - \mathrm{R}(v)  \\
    \hat{O}(x) & \text{if } x \in \mathrm{R}(v)
  \end{cases}
\]
Define the $k$-th abstract weak orbit space $X/[v]_k$ as a quotient space $X/\sim_{[v]_k}$ defined by $x \sim_{[v]_k} y$ if $[x]_k = [y]_k$.
Similarly, define the abstract orbit space $X/\langle v \rangle_k$ as a quotient space $X/\sim_{\langle v \rangle_k}$ defined by $x \sim_{\langle v \rangle_k} y$ if $\langle x \rangle_k = \langle y \rangle_k$.
For a subset $A$, the $k$-th abstract weak orbit of $A$ is defined by $[A]_k:= \bigcup_{a \in A} [a]_k$ and the abstract orbit of $A$ is defined by $\langle A \rangle_k := \bigcup_{a \in A} \langle a \rangle_k$.
By definition, we have the following observations.

\begin{lemma}\label{lem:k_th_weak}
Let $v$ be a flow on a topological space $X$.
If $[\alpha(x)] = \alpha(x)$ and $[\omega(x)] = \omega(x)$ for any point $x$, then the abstract weak orbit space coincides with the $k$-th abstract weak orbit space {\rm (i.e.} $X/[v] =X/[v]_k$ {\rm)} for any $k \in \Z_{>0}$.
\end{lemma}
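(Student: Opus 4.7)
The plan is to induct on $k \geq 1$. The base case is immediate from $[x]_1 := [x]$. For the inductive step I would assume $X/[v] = X/[v]_k$, equivalently $[x]_k = [x]$ for every $x$, and prove $[x]_{k+1} = [x]$ by splitting $X$ into the three pieces $\mathrm{R}(v)$, $\mathop{\mathrm{Cl}}(v)$, $\mathrm{P}(v)$. On $\mathrm{R}(v)$ both sides equal $\check O(x)$, and on $\mathop{\mathrm{Cl}}(v) \subseteq X - \mathrm{P}(v)$ the non-recursive branch of the definition gives $[x]''_{k+1} = [x]''$ directly. Thus the real content of the step lies in the case $x \in \mathrm{P}(v)$.

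For $x \in \mathrm{P}(v)$, the target equality
$$
[x]''_{k+1} = \{y \in \mathrm{P}(v) \mid [\alpha(y)]''_k = [\alpha(x)]''_k,\ [\omega(y)]''_k = [\omega(x)]''_k\} = [x]''
$$
will follow once I show that the predicate on the right is equivalent to $\alpha(y) = \alpha(x)$ and $\omega(y) = \omega(x)$, which is exactly the relation defining $[x]'$; combined with the restriction to $\mathrm{P}(v)$ and the connected-component clause inherited from the $k = 1$ case, this identifies $[x]''_{k+1}$ with the connected component of $[x]' \cap \mathrm{P}(v)$ containing $x$, i.e. with $[x]''$.

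The key intermediate claim I would establish is the stabilization
$$
[\alpha(z)]''_k = \alpha(z), \qquad [\omega(z)]''_k = \omega(z) \qquad \text{for every } z \in X.
$$
Unpacking, $[\alpha(z)]''_k = \bigcup_{w \in \alpha(z)} [w]''_k$. By the inductive hypothesis $[w]_k = [w]$ for every $w$, combined with the non-recursive branch of the definition (which forces $[w]''_k = [w]''$ whenever $w \in X - \mathrm{P}(v)$), one obtains $[w]''_k = [w]''$ for all $w$. The standing hypothesis $[\alpha(z)] = \alpha(z)$ then says exactly that $\alpha(z)$ is saturated under the weak-orbit equivalence, which now coincides with saturation under $\sim_{[v]_k}$; hence $\bigcup_{w \in \alpha(z)}[w]''_k = \alpha(z)$, and the analogous argument handles $\omega(z)$.

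I expect the subtle step to be the bookkeeping at non-closed recurrent points, where the class $[w] = \check O(w)$ differs a priori from $[w]''$: one must verify that saturation of $\alpha(z)$ and $\omega(z)$ formulated through $[\cdot]$ really does translate into saturation of the union of the $[w]''_k$ that actually appears in the recursion. Once the stabilization above is in place, the rest of the argument is a direct unfolding of the recursive definitions together with the observation that the connectedness requirement implicit in $[x]''$ survives the induction, and the inductive step closes.
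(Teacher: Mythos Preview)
The paper states this lemma as an immediate observation (``by definition'') and gives no proof, so there is nothing to compare your approach against directly. Your inductive scheme is the natural one, and the cases $x\in\mathrm{R}(v)$ and $x\in\mathop{\mathrm{Cl}}(v)$ are handled correctly. There are, however, two genuine gaps in the $\mathrm{P}(v)$ step.

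First, there is no ``connected-component clause inherited from the $k=1$ case.'' The paper's recursive definition of $[x]''_{k+1}$ for $x\in\mathrm{P}(v)$ is the full set $\{y\in\mathrm{P}(v)\mid [\alpha(y)]''_k=[\alpha(x)]''_k,\ [\omega(y)]''_k=[\omega(x)]''_k\}$, with no connectedness restriction. So even after you establish $[\alpha(z)]''_k=\alpha(z)$ and $[\omega(z)]''_k=\omega(z)$, you obtain $[x]''_{k+1}=\{y\in\mathrm{P}(v)\mid \alpha(y)=\alpha(x),\ \omega(y)=\omega(x)\}$, which in general strictly contains the connected component $[x]''$. (For a Morse flow on $S^2$ with one source, one sink and one saddle, the two open flow boxes share the same limit sets, so $[x]''_2$ is their union while $[x]''$ is a single box.) This appears to be an omission in the paper's recursive definition rather than a flaw in your strategy; if one reads the recursive clause as taking the connected component, your argument closes on this point.

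Second, the ``subtle step'' you flag is not resolved. The hypothesis says $\alpha(z)$ is saturated under $[\cdot]$, but the recursion uses $[\cdot]''$, and for $w\in\mathrm{R}(v)$ these differ: $[w]=\check O(w)$ while $[w]''$ is a connected component of $[w]'\cap\mathrm{R}(v)$. To conclude $[\alpha(z)]''_k=\alpha(z)$ you would need $[w]''\subseteq\alpha(z)$ for every $w\in\alpha(z)\cap\mathrm{R}(v)$, and this does not follow from $\check O(w)\subseteq\alpha(z)$ without further argument.
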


\begin{lemma}\label{lem:k_th}
Let $v$ be a flow on a topological space.
If $\langle \alpha(x) \rangle = \alpha(x)$ and $\langle \omega(x) \rangle = \omega(x)$ for any point $x$, then the abstract orbit space coincides with the $k$-th abstract weak orbit space {\rm (i.e.}  $X/\langle v \rangle =X/\langle v \rangle_k$ {\rm)} for any $k \in \Z_{>0}$.
\end{lemma}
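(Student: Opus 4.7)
The plan is induction on $k \geq 1$, proving that $\langle x \rangle_k = \langle x \rangle$ for every $x \in X$. The base case $k = 1$ is immediate since $[x]''_1 := [x]''$, so the definitions of $\langle x \rangle_1$ and $\langle x \rangle$ coincide case-by-case on $\Sv$, $\Pv$, $\mathrm{P}(v)$, and $\mathrm{R}(v)$. For the inductive step, assume $\langle y \rangle_k = \langle y \rangle$ for every $y \in X$. When $x \in X \setminus \mathrm{P}(v)$, the recursive definition forces $[x]''_{k+1} = [x]''$ and hence $\langle x \rangle_{k+1} = \langle x \rangle$ at once, so the substantive content is to establish $[x]''_{k+1} = [x]''$ for $x \in \mathrm{P}(v)$.

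As a preliminary I would upgrade the inductive hypothesis to the statement $[z]''_k = [z]''$ for \emph{every} $z \in X$: for $z \notin \mathrm{P}(v)$ this is built into the recursive definition, and for $z \in \mathrm{P}(v)$ it follows from $[z]''_k = \langle z \rangle_k = \langle z \rangle = [z]''$. Consequently $[A]''_k = [A]''$ for every $A \subseteq X$, and the formula for $[x]''_{k+1}$ collapses to
\[
[x]''_{k+1} = \{\, y \in \mathrm{P}(v) \mid [\alpha(y)]'' = [\alpha(x)]'',\ [\omega(y)]'' = [\omega(x)]'' \,\}.
\]
The inclusion $[x]'' \subseteq [x]''_{k+1}$ is immediate: any $y \in [x]''$ lies in $\mathrm{P}(v)$ with $\alpha(y) = \alpha(x)$ and $\omega(y) = \omega(x)$ (using $[x]'' \subseteq [x]'$ and non-recurrence to identify $\alpha'$ with $\alpha$), which forces equality of the saturations.

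For the reverse inclusion I would invoke the hypotheses $\langle \alpha(x) \rangle = \alpha(x)$ and $\langle \omega(x) \rangle = \omega(x)$ via the intermediate identity $[\alpha(x)]'' = \alpha(x)$ (and its $\omega$-analogue). The inclusion $\alpha(x) \subseteq [\alpha(x)]''$ is automatic; for the reverse, take $a \in \alpha(x)$. If $a \notin \mathrm{R}(v)$, then $[a]'' = \langle a \rangle \subseteq \langle \alpha(x) \rangle = \alpha(x)$ directly. If $a \in \alpha(x) \cap \mathrm{R}(v)$, one exploits the invariance and closedness of $\alpha(x)$ together with the description of $[a]''$ as a connected set of recurrent points sharing $\alpha'$- and $\omega'$-data with $a$, to argue $[a]'' \subseteq \overline{O(a)} \subseteq \alpha(x)$. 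Applying the identity at both $x$ and any $y \in [x]''_{k+1}$ turns $[\alpha(y)]'' = [\alpha(x)]''$ into $\alpha(y) = \alpha(x)$, and symmetrically $\omega(y) = \omega(x)$; combined with $y \in \mathrm{P}(v)$ this places $y$ in the level set underlying $[x]'$, and the hypothesis-driven coherence of this level set then yields $y \in [x]''$.

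The step I expect to be the main obstacle is the recurrent case of $[\alpha(x)]'' = \alpha(x)$: the containment $[a]'' \subseteq \overline{O(a)}$ for recurrent $a$ is not a formal consequence of the definitions and must be extracted from the interplay between the shared $\alpha'$/$\omega'$-data of points in $[a]''$ and the density of recurrent orbits in their limit sets, invoking results such as Lemma~\ref{lem:same_limit_set}. A secondary subtlety is bridging the component-based definition of $[x]''$ with the set-theoretic form of $[x]''_{k+1}$; the hypothesis should guarantee that the relevant $\alpha$/$\omega$ level set is already an abstract orbit and hence connected, but formalising this connectedness is where the remaining technical care lies.
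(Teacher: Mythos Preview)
The paper offers no proof: both this lemma and its companion Lemma~\ref{lem:k_th_weak} are introduced with ``By definition, we have the following observations.'' Your inductive strategy is the natural attempt to make that observation precise, and you have correctly located the two points where the argument resists being made formal. Unfortunately, neither of your proposed resolutions goes through, and both obstacles reflect a genuine tension between the hypothesis and the recursive definition of $[x]''_{k+1}$.

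For the recurrent case of $[\alpha(x)]'' = \alpha(x)$: the containment $[a]'' \subseteq \overline{O(a)}$ that you aim for does not follow from the definitions on a general topological space. For $a \in \mathrm{R}(v)$ the set $[a]''$ is a connected component of $[a]' \cap \mathrm{R}(v)$, determined by shared $\alpha'$- and $\omega'$-data, and nothing forces that component inside $\overline{O(a)}$. Lemma~\ref{lem:quotient_sp} does give $[a]'' = O(a)$, but only under the extra assumption $\hat{O} \neq O$ for every non-closed recurrent orbit, which the paper establishes for paracompact manifolds and not in the generality claimed here. The stated hypothesis $\langle \alpha(x) \rangle = \alpha(x)$ yields $\hat{O}(a) = \langle a \rangle \subseteq \alpha(x)$, which constrains $\hat{O}(a)$ rather than $[a]''$.

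For the connectedness bridge: your expectation that ``the hypothesis should guarantee that the relevant $\alpha$/$\omega$ level set is already an abstract orbit'' conflates two different objects. The hypothesis controls the $\langle\cdot\rangle$-saturation of $\alpha(x)$ and $\omega(x)$; it says nothing about whether the set $\{y \in \mathrm{P}(v) : \alpha(y) = \alpha(x),\ \omega(y) = \omega(x)\}$ is connected. Since the recursive clause for $[x]''_{k+1}$ with $x \in \mathrm{P}(v)$ drops the connected-component requirement present in $[x]''$, one always has $[x]'' \subseteq [x]''_2$, with equality only when that level set is connected. The paper's own Morse--Smale example following Proposition~\ref{prop:fin_MS}, in which a connecting orbit set $W^u(s_2) \cap W^s(s_1)$ has infinitely many components, satisfies the hypothesis (each $\alpha(x)$ and $\omega(x)$ is a single hyperbolic closed orbit, hence an abstract orbit) yet has disconnected level sets. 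This strongly suggests the recursive definition was intended to carry a connected-component clause; without it the ``observation'' does not hold as a literal equality of decompositions, and your difficulty in closing the argument is not a defect of your method.
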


\begin{corollary}\label{cor:k_th}
Let $v$ be a flow on a topological space.
If the $\alpha$-limit set and $\omega$-limit set of any point are abstract weak orbits, then the abstract weak orbit space coincides with the $k$-th  abstract weak orbit space {\rm (i.e.} $X/[v] =X/[v]_k$ {\rm)} and the abstract orbit space coincides with the $k$-th abstract orbit space  {\rm (i.e.}  $X/\langle v \rangle =X/\langle v \rangle_k$ {\rm)}  for any $k \in \Z_{>0}$.
\end{corollary}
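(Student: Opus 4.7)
The plan is to deduce the corollary as an immediate consequence of Lemma~\ref{lem:k_th_weak} and Lemma~\ref{lem:k_th}. Indeed, both of those lemmas have hypotheses stated in terms of the equalities $[\alpha(x)] = \alpha(x)$, $[\omega(x)] = \omega(x)$ (respectively $\langle \alpha(x) \rangle = \alpha(x)$, $\langle \omega(x) \rangle = \omega(x)$), so the task is to show that the hypothesis of the corollary, namely that $\alpha(x)$ and $\omega(x)$ are abstract weak orbits (respectively abstract orbits) for every point $x \in X$, gives precisely these identities.

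First I would verify the following saturation-type observation: if a saturated subset $A \subseteq X$ happens to coincide with an abstract weak orbit $[y]$ for some $y \in X$, then $[A] = A$. This is because, for every $a \in A = [y]$, the identity $[a] = [y]$ holds (by the property $[x] = [y]$ whenever $y \in [x]$, noted after the definition of abstract weak orbit), so that $[A] = \bigcup_{a \in A} [a] = \bigcup_{a \in [y]} [y] = [y] = A$. Applying this with $A = \alpha(x)$ and $A = \omega(x)$ under the corollary's hypothesis yields $[\alpha(x)] = \alpha(x)$ and $[\omega(x)] = \omega(x)$ for every $x \in X$, which is exactly the hypothesis of Lemma~\ref{lem:k_th_weak}; this gives the first conclusion $X/[v] = X/[v]_k$.

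The second conclusion follows in an entirely parallel way. Using the analogous identity $\langle a \rangle = \langle y \rangle$ whenever $a \in \langle y \rangle$ (from the remark accompanying the definition of abstract orbit), the same argument as above shows that if $\alpha(x)$ and $\omega(x)$ are abstract orbits, then $\langle \alpha(x) \rangle = \alpha(x)$ and $\langle \omega(x) \rangle = \omega(x)$, which is the hypothesis of Lemma~\ref{lem:k_th}; this yields $X/\langle v \rangle = X/\langle v \rangle_k$.

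There is essentially no genuine obstacle: the corollary is a bookkeeping consequence once the preceding lemmas are in hand. The only small subtlety worth flagging in the exposition is the interpretation of ``$\alpha$-limit set and $\omega$-limit set of any point are abstract weak orbits'' as meaning both that $\alpha(x)$ (resp.\ $\omega(x)$) equals some $[y]$ and that it equals some $\langle z \rangle$, so that both Lemma~\ref{lem:k_th_weak} and Lemma~\ref{lem:k_th} are applicable; this should be made explicit at the start of the proof to avoid any ambiguity between the weak and non-weak versions.
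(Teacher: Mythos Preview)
Your treatment of the first conclusion is correct and matches the paper: once $\alpha(x)=[y]$ for some $y$, the identity $[\alpha(x)]=\alpha(x)$ is immediate and Lemma~\ref{lem:k_th_weak} applies.

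The second conclusion, however, has a genuine gap. The hypothesis of the corollary says only that $\alpha(x)$ and $\omega(x)$ are abstract \emph{weak} orbits; it does \emph{not} say they are abstract orbits, and this is not a matter of interpretation to be ``made explicit''. What the paper actually does is \emph{prove} that under this hypothesis the limit sets are automatically abstract orbits as well, and this is where the mathematical content of the corollary lies. The argument uses the fact that $\alpha$- and $\omega$-limit sets are closed: if $\gamma=[y]$ is a closed abstract weak orbit, then either $y\in X\setminus\mathrm{R}(v)$, in which case $[y]=[y]''=\langle y\rangle$ directly from the definitions; or $y\in\mathrm{R}(v)$, in which case $O(y)\subseteq[y]\subseteq\langle y\rangle\subseteq\overline{O(y)}$ together with the closedness of $\gamma=[y]$ forces $[y]=\overline{O(y)}=\langle y\rangle$. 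Only after this step does Lemma~\ref{lem:k_th} become applicable. Your proposal skips this deduction entirely and instead assumes the stronger hypothesis, so as written it does not establish the second half of the corollary.
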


\begin{proof}
By Lemma~\ref{lem:k_th}, it suffices to show that the $\alpha$-limit set and $\omega$-limit set of any point are abstract orbits.
Indeed, let $\gamma$ be either the $\alpha$-limit set or the $\omega$-limit set of a point $x$.
Then $\gamma$ is closed.
Fix a point $x \in \gamma$ with $\gamma = [x]$.
If $x \in X - \mathrm{R}(v)$, then $\gamma = [x] = [x]'' = \langle x \rangle$ by definitions of abstract weak orbit and abstract orbit.
If $x \in \mathrm{R}(v)$, then $\gamma = [x] = \overline{O(x)} = \langle x \rangle$ because $\gamma = [x]$ is closed and $O(x) \subseteq [x] \subseteq \langle x \rangle \subseteq \overline{O(x)}$.
\end{proof}

\subsection{Quotient spaces and fundamental notion of homeomorphisms}

\subsubsection{Suspension flows of homeomorphisms}

For a homeomorphism $f \colon X \to X$ on a topological space $X$, consider a quotient space $X_f := (X \times \R)/\sim_{\mathrm{susp}}$ defined by $(x,t) \sim_{\mathrm{susp}} ( f^n(x), t - n)$ for any $t \in \R$ and $n \in \Z$, and define a flow $v_f \colon \R \times X \to \R \times X$ by $v_f(t, (x,s)/\sim_{\mathrm{susp}}) =(x,t+s)/\sim_{\mathrm{susp}}$, where $(x,s)/\sim_{\mathrm{susp}}$ is the equivalence class of a point $(x,s) \in X \times \R$.
Then $v_f$ is called the suspension flow of $f$ and $X_f$ is called the mapping torus of $f$.
We have the following observation.
\begin{lemma}
Let $f \colon X \to X$ be a homeomorphism  on a topological space $X$ and $v_f$ the suspension flow of $f$ on the mapping torus $X_f$.
The orbit spaces $X/f$ and $X_f/v_f$ are homeomorphic.
\end{lemma}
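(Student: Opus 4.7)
The plan is to construct a homeomorphism $\bar\phi \colon X/f \to X_f/v_f$ from the time-zero slice inclusion. Write $\pi_{\mathrm{susp}} \colon X \times \R \to X_f$ and $q_{v_f} \colon X_f \to X_f/v_f$ for the two quotient maps appearing in the construction, and $q_f \colon X \to X/f$ for the orbit projection of $f$. Define $\iota \colon X \to X_f$ by $\iota(x) := \pi_{\mathrm{susp}}(x,0)$ and set $\phi := q_{v_f} \circ \iota$; both maps are continuous as compositions of a continuous inclusion with quotient maps.

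First I would verify that $\phi$ factors through $q_f$ to a continuous bijection. Unwinding definitions, $\phi(x) = \phi(y)$ means that $v_f(t, \pi_{\mathrm{susp}}(x,0)) = \pi_{\mathrm{susp}}(y,0)$ for some $t \in \R$, i.e.\ $(x,t) \sim_{\mathrm{susp}} (y,0)$, which forces $t \in \Z$ and $y = f^t(x)$; thus $\phi(x) = \phi(y)$ if and only if $O_f(x) = O_f(y)$. Surjectivity follows by flowing backwards: $v_f(-s, \pi_{\mathrm{susp}}(x,s)) = \pi_{\mathrm{susp}}(x,0) = \iota(x)$ for any representative $(x,s)$, so every $v_f$-orbit meets $\iota(X)$. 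Hence $\phi$ descends to a continuous bijection $\bar\phi \colon X/f \to X_f/v_f$ satisfying $\phi = \bar\phi \circ q_f$.

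The main step is to upgrade $\bar\phi$ to a homeomorphism, which I would do by proving that $\phi$ is itself a quotient map; a standard diagram chase then finishes, since $\phi$ and $q_f$ will be quotient maps with identical fibers (and hence induce the same topology on the set of $f$-orbits). Given $U \subseteq X_f/v_f$ with $\phi^{-1}(U)$ open in $X$, it suffices, since $\pi_{\mathrm{susp}}$ and $q_{v_f}$ are quotient maps, to show that $V := \pi_{\mathrm{susp}}^{-1}(q_{v_f}^{-1}(U))$ is open in $X \times \R$. The subset $q_{v_f}^{-1}(U)$ is $v_f$-invariant and $\pi_{\mathrm{susp}}(x,s) = v_f(s, \pi_{\mathrm{susp}}(x,0))$, so $(x,s) \in V$ if and only if $(x,0) \in V$. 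Consequently $V = \phi^{-1}(U) \times \R$, which is open by hypothesis. Therefore $U$ is open, $\phi$ is a quotient map, and $\bar\phi$ is the desired homeomorphism.

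I do not expect any serious obstruction. The only mildly delicate point is the identification $V = \phi^{-1}(U) \times \R$, which uses $v_f$-invariance to reduce time-$s$ membership in $V$ to time-$0$ membership; the $\sim_{\mathrm{susp}}$-saturation of $V$ is automatic and plays no role in openness, merely reflecting the fact that $\phi^{-1}(U) = q_f^{-1}(\bar\phi^{-1}(U))$ is $f$-invariant.
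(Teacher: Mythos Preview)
Your proof is correct. The argument that $\phi$ is a quotient map via $V = \phi^{-1}(U)\times\R$ is clean and the concluding diagram chase (two quotient maps with the same fibers induce a homeomorphism) is standard.

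The paper takes a different route. Rather than working with the time-zero slice, it lifts everything to $X\times\R$ and observes that the suspension quotient and the flow quotient come from two commuting actions: the $\Z$-action $F(x,t)=(f(x),t-1)$ and the $\R$-action $\widetilde{v_f}(t,(x,s))=(x,s+t)$. Since these commute, the iterated quotients $((X\times\R)/F)/v_f$ and $((X\times\R)/\widetilde{v_f})/f$ are both homeomorphic to the quotient by the combined $\R\times\Z$-action, and the latter iterated quotient is $X/f$. Your approach is more hands-on and self-contained, making the quotient-map verification explicit; the paper's is more structural but leaves the ``commuting actions give the same double quotient'' step implicit (a step whose honest proof would look much like yours). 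Either way the content is the same: the composite $X\times\R\to X_f\to X_f/v_f$ identifies exactly the $\sim_{\mathrm{susp}}$-classes along each $\R$-fiber, which collapses to the $f$-orbit relation on $X$.
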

\begin{proof}
Let $\widetilde{v_f}$ be a flow on $X \times \R$ by $\widetilde{v_f}(t,(x,s)) :=  (x, t+s)$ and $F \colon X \times \R \to X \times \R$ a homeomorphism defined by $F(x,t) = (f(x), t-1)$.
Identify $X$ with the orbit space $(X \times \R)/\widetilde{v_f} = (X \times \R)/(x,s) \sim (x,t)$.
The mapping torus $X_f = (X \times \R)/(x,t) \sim ( f^n(x), t - n)$ equals to the orbit space $(X \times \R)/F$.
Since actions $F$ and $\widetilde{v_f}$ are commutative, we obtain $X_f/v_f = ((X \times \R)/F)/v_f \approx (X \times \R)/\langle \widetilde{v_f}, F\rangle \approx ((X \times \R)/ \widetilde{v_f})/ f \approx X/f$, where $\langle \widetilde{v_f}, F \rangle \cong \R \times \Z$ is the abelian group generated by abelian groups $v_f \cong \R$ and $F \cong \Z$.
\end{proof}


\subsubsection{Notion of homeomorphisms}
Let $f$ be a homeomorphism on a topological space $X$.
An orbit $O(x)$ of $x$ is $\{ f^n(x) \mid n \in \Z \}$.
A point $x \in X$ is fixed if $f(x) = x$, and is periodic if there is a positive integer $k$ such that $f^k(x) = x$.
The $\omega$-limit set of a point $x \in X$ is $\omega(x) := \bigcap_{n\in \mathbb{Z}}\overline{\{f^m(x) \mid m > n\}}$, and that the $\alpha$-limit set of $x$ is $\alpha(x) := \bigcap_{n\in \mathbb{Z}}\overline{\{f^m(x) \mid m < n\}}$.
Moreover, put $\alpha'(x) := \alpha(x) \setminus O(x)$  and $\omega'(x) := \omega(x) \setminus O(x)$.
A point $x$ is recurrent if $x \in \alpha(x) \cup \omega(x)$.
Denote by $\mathop{\mathrm{Fix}}(f)$ (resp. $\mathop{\mathrm{Per}}(f)$, $\mathrm{R}(f)$, $\mathrm{P}(f)$) the set of fixed points (resp. periodic points, non-periodic recurrent points, non-recurrent points).
Then $X = \mathop{\mathrm{Per}}(f) \sqcup \mathrm{R}(f) \sqcup \mathrm{P}(f)$.
For any point $x \in X$, define the orbit class $\hat{O}(x)$ as follows:
$\hat{O}(x) := \{ y \in X \mid \overline{O(x)} = \overline{O(y)} \} \subseteq \overline{O}(x)$.
For any point $x \in X$, define the weak orbit class $\check{O}(x)$ as follows:
$\check{O}(x) := \{ y \in X \mid \overline{O(x)} = \overline{O(y)}, \alpha(x) = \alpha(y), \omega(x) = \omega(y) \} \subseteq \hat{O}(x)$.
By definition, we have $O(x) \subseteq \check{O}(x) \subseteq \hat{O}(x) \subseteq \overline{O(x)}$.
For a homeomorphism $f$, define an abstract orbit, an abstract weak orbit, and so on using the suspension flow $v_f$ via the canonical homeomorphism $h_f \colon X_f/v_f \to X/f$.
Indeed, let $\pi_f \colon X \to X/f$ be the quotient map and $\pi_{v_f} \colon X_f \to X_f/v_f$ be the quotient map.
The abstract weak orbit $[x]_f$ of a point $x \in X$ by $f$ is defined by $[x]_f := \pi_f^{-1}(h_f \circ \pi_{v_f}([(x,0)/\sim_{\mathrm{susp}}]_{v_f}))$, where $[(x,0)/\sim_{\mathrm{susp}}]_{v_f}$ is the abstract weak orbit of the point $(x,0)/\sim_{\mathrm{susp}} \in X_f$ by the suspension flow $v_f$.
Similarly, the abstract orbit $\langle x \rangle_f$ of a point $x \in X$ by $f$ is defined by $\langle x \rangle_f := \pi_f^{-1}(h_f \circ \pi_{v_f} (\langle(x,0)/\sim_{\mathrm{susp}}\rangle_{v_f}))$, where $\langle (x,0)/\sim_{\mathrm{susp}} \rangle_{v_f}$ is the abstract orbit of the point $(x,0)/\sim_{\mathrm{susp}} \in X_f$ by the suspension flow $v_f$.
The abstract weak orbit space $X/[f]$ (resp. abstract orbit space $X/\langle f \rangle$) of a homeomorphism is the decomposition $\{ [x]_f \mid x \in X \}$ (resp. $\{ \langle x \rangle_f \mid x \in X \}$) with the quotient topology.
By definitions, the abstract weak orbit spaces $X/[f]$ and $X_f/[v_f]$ are homeomorphic, and so are the abstract orbit spaces $X/\langle f \rangle$ and $X_f/\langle v_f \rangle$.
Moreover, the $k$-th abstract weak orbit $[x]_{f,k}$ of a point $x \in X$ by $f$ is defined by $[x]_{f,k} := \pi_f^{-1}(h_f \circ \pi_{v_f} ([(x,0)/\sim_{\mathrm{susp}}]_{v_f,k}))$, and
the $k$-th abstract orbit $\langle x \rangle_{f,k}$ of a point $x \in X$ by $f$ is defined by $\langle x \rangle_{f,k} := \pi_f^{-1}(h_f \circ \pi_{v_f} (\langle(x,0)/\sim_{\mathrm{susp}}\rangle_{v_f,k}))$.
The $k$-th abstract weak orbit space $X/[f]_k$ (resp. $k$-th abstract orbit space $X/\langle f \rangle_k$) of a homeomorphism is the decomposition $\{ [x]_{f,k} \mid x \in X \}$ (resp. $\{ \langle x \rangle_{f,k} \mid x \in X \}$) with the quotient topology.

\section{Fundamental properties of flows on topological spaces}
Let $v$ a flow on a topological space $X$.

\subsection{Transitivity and inclusions of binary relations}

The binary relations $\leq_{\alpha}$ and $\leq_{\omega}$ are pre-orders on the abstract weak orbit space $X/[v]$ and the abstract orbit space $X/\langle v \rangle$.

\begin{lemma}\label{lem:preorders}
The binary relations $\leq_{\alpha}$ and $\leq_{\omega}$ on the abstract weak orbit space $X/[v]$ of a flow $v$ on a topological space $X$ are pre-orders.
\end{lemma}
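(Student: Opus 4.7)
The plan is to handle $\leq_\alpha$ in detail and note that $\leq_\omega$ follows by the same argument with $\omega$ in place of $\alpha$. Reflexivity of either relation is immediate from the ``either $[x]=[y]$'' clause of its definition, so the real content is transitivity. Fix a chain $[x]\leq_\alpha[y]\leq_\alpha[z]$ with the three classes pairwise distinct and unpack the definitions to produce points $x_1\in[x]$, $y_1,y_2\in[y]$, $z_1\in[z]$ with $x_1\in\alpha(y_1)$ and $y_2\in\alpha(z_1)$. The goal is to locate some member of $[x]$ inside the $\alpha$-limit set of some member of $[z]$.

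The first step is to rule out the case $y\in\mathop{\mathrm{Cl}}(v)=\Sv\sqcup\Pv$. Here the construction $[y]=[y]''$ forces every member of $[y]$ to be of the same type as $y$: if $y$ is singular then $\alpha(y_1)=\{y_1\}$, while if $y$ is periodic then $\alpha(y_1)=O(y_1)$. In either case $x_1\in\alpha(y_1)$ would place $x_1$ in $[y]$, contradicting $[x]\neq[y]$. Hence $y\in\mathrm{P}(v)\sqcup\mathrm{R}(v)=X-\mathop{\mathrm{Cl}}(v)$, exactly the hypothesis of Lemma~\ref{lem:same_limit_set}.

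With $y$ in that regime, Lemma~\ref{lem:same_limit_set} gives $\alpha(y_1)=\alpha(y_2)$, so $x_1\in\alpha(y_2)$. Because $\alpha(z_1)$ is invariant, $y_2\in\alpha(z_1)$ yields $O(y_2)\subseteq\alpha(z_1)$, and because $\alpha(z_1)$ is closed, $\alpha(y_2)\subseteq\overline{O(y_2)}\subseteq\alpha(z_1)$. Therefore $x_1\in\alpha(z_1)$, whence $[x]\leq_\alpha[z]$, completing transitivity of $\leq_\alpha$.

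The only genuine obstacle is the closed-orbit case carved out in the second step; everything else is a direct unpacking of the definitions together with the standard facts that $\alpha$-limit sets are closed and invariant. The $\omega$-version is obtained mutatis mutandis, substituting $\omega$ for $\alpha$ throughout and invoking the $\omega$-half of Lemma~\ref{lem:same_limit_set}.
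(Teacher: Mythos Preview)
Your overall strategy matches the paper's, but there is a genuine gap in your second step (ruling out $y\in\mathop{\mathrm{Cl}}(v)$). This lemma lives in \S3, where $X$ is only assumed to be a topological space, not Hausdorff. In that generality your claimed identities fail: for a singular point $y_1$ one has $\alpha(y_1)=\overline{\{y_1\}}$, not $\{y_1\}$, and for a periodic point $y_1$ one has $\alpha(y_1)=\overline{O(y_1)}$, not $O(y_1)$. Hence $x_1\in\alpha(y_1)$ does not force $x_1\in[y]$, and the contradiction you derive is unjustified. (The equalities you use are exactly what Lemma~\ref{lem:kc_closed} supplies, but that is a Hausdorff-space result.)

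The paper sidesteps this by working with $\alpha'$ rather than $\alpha$. From $[x]\neq[y]$ one gets $O(x_1)\neq O(y_1)$, so $x_1\in\alpha(y_1)\setminus O(y_1)=\alpha'(y_1)$. Now the very definition of $[y]=[y]''$ for $y\notin\mathrm{R}(v)$ (which covers both $\mathop{\mathrm{Cl}}(v)$ and $\mathrm{P}(v)$) gives $\alpha'(y_1)=\alpha'(y_2)$, while for $y\in\mathrm{R}(v)$ the definition of $\check{O}(y)$ gives $\alpha(y_1)=\alpha(y_2)$; in either case $x_1\in\alpha(y_2)$. Your remaining step, $\alpha(y_2)\subseteq\overline{O(y_2)}\subseteq\alpha(z_1)$, then finishes the argument. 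So the fix is simply to replace your case-elimination of $\mathop{\mathrm{Cl}}(v)$ by the $\alpha'$ route; after that, invoking Lemma~\ref{lem:same_limit_set} is unnecessary since the case split is already done.
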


\begin{proof}
By definitions, reflexivity holds for the relations.
Therefore it suffices to show transitivity.
%
Fix points $x,y,z \in X$ with $[x] \leq_{\alpha} [y]$, $[y] \leq_{\alpha} [x]$, $[x] \neq [y]$ and $[y] \neq [z]$.
Then $O(x) \neq O(y)$ and $O(y) \neq O(z)$, and there are points $x_1 \in [x]$, $y_1, y_2 \in [y]$ and $z_2 \in [z]$ such that $x_1 \in \alpha(y_1)$ and $y_2 \in \alpha(z_2)$.
Since $O(x_1) \neq O(y_1)$, we have $x_1 \in \alpha(y_1) - O(y_1) = \alpha'(y_1)$.
Since $\alpha'(y_1) = \alpha'(y_2)$ if $y \notin \mathrm{R}(v)$, and since $\alpha(y_1) = \alpha(y_2)$ if $y \in \mathrm{R}(v)$, we obtain $x_1 \in \alpha'(y_1) \subseteq \alpha(y_2)$.
By the time reversion, symmetry implies that the binary relation $\leq_{\omega}$ is a pre-order.
\end{proof}

\begin{lemma}\label{lem:preorders02}
The binary relations $\leq_{\alpha}$ and $\leq_{\omega}$ on the abstract orbit space $X/\langle v \rangle$ of a flow $v$ on a topological space $X$  are pre-orders.
\end{lemma}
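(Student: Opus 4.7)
The plan is to follow the blueprint of the preceding Lemma~\ref{lem:preorders} for the abstract weak orbit space, adapting it to the coarser equivalence $\langle \cdot \rangle$. Reflexivity is built into the definitions, and the $\omega$-case follows from the $\alpha$-case by reversing time, so the whole task reduces to establishing transitivity of $\leq_{\alpha}$.

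Given $\langle x \rangle \leq_{\alpha} \langle y \rangle \leq_{\alpha} \langle z \rangle$ with $\langle x \rangle \neq \langle y \rangle$ and $\langle y \rangle \neq \langle z \rangle$, I would pick witnesses $x_1 \in \langle x \rangle$, $y_1, y_2 \in \langle y \rangle$, $z_1 \in \langle z \rangle$ such that $x_1 \in \alpha(y_1)$ and $y_2 \in \alpha(z_1)$, and aim to produce $x_1 \in \alpha(z_1)$. Since $\alpha(z_1)$ is a closed invariant set containing $y_2$, it contains $\overline{O(y_2)}$; hence it suffices to locate $x_1$ inside $\overline{O(y_2)}$, i.e.\ to migrate $x_1$ from $\alpha(y_1)$ to something associated with $y_2$.

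The case analysis runs along the decomposition $X = \mathop{\mathrm{Cl}}(v) \sqcup \mathrm{P}(v) \sqcup \mathrm{R}(v)$ applied to $y$. If $y \in \mathop{\mathrm{Cl}}(v)$, then $\alpha(y_1) \subseteq O(y_1) \subseteq \langle y \rangle$, which forces $\langle x \rangle = \langle y \rangle$ and contradicts the hypothesis, so this case does not occur. If $y \in \mathrm{P}(v)$, then $\langle y \rangle = [y]''$, so the defining properties of $[y]'$, together with the non-recurrence of $y_1, y_2$, deliver $\alpha(y_1) = \alpha'(y_1) = \alpha'(y_2) = \alpha(y_2)$, whence $x_1 \in \alpha(y_2) \subseteq \alpha(z_1)$. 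If $y \in \mathrm{R}(v)$, then $\langle y \rangle = \hat{O}(y)$ and membership in a common orbit class gives $\overline{O(y_1)} = \overline{O(y_2)}$, so $x_1 \in \alpha(y_1) \subseteq \overline{O(y_1)} = \overline{O(y_2)} \subseteq \alpha(z_1)$.

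The principal obstacle, relative to Lemma~\ref{lem:preorders}, is the recurrent branch: the orbit class $\hat{O}(y)$ does not come with the built-in coincidence $\alpha(y_1) = \alpha(y_2)$ that $\check{O}(y)$ supplied in the weak case, and it is conceivable that different representatives of $\hat{O}(y)$ have genuinely different $\alpha$-limit sets. The workaround is to argue through the common closure $\overline{O(y_1)} = \overline{O(y_2)}$, using only the inclusion $\alpha(y_1) \subseteq \overline{O(y_1)}$, which holds in any topological space. Once this step is installed, the $\omega$-version is obtained by applying the same scheme to the reversed flow, completing the proof.
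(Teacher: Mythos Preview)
Your approach is essentially the paper's, and the $\mathrm{P}(v)$ and $\mathrm{R}(v)$ cases are handled exactly as there. There is, however, a small but genuine gap in your $\mathrm{Cl}(v)$ case: the inclusion $\alpha(y_1) \subseteq O(y_1)$ relies on closed orbits being closed \emph{subsets}, which the paper only establishes later (Lemma~\ref{lem:kc_closed}) under a Hausdorff hypothesis. The present lemma is stated for flows on arbitrary topological spaces, and in that generality a singular or periodic point $y_1$ may well have $\alpha(y_1)=\overline{O(y_1)}\supsetneq O(y_1)$, so you cannot conclude $x_1\in\langle y\rangle$ and rule this case out.

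The paper sidesteps this by not isolating the $\mathrm{Cl}(v)$ case at all: it treats $y\notin\mathrm{R}(v)$ uniformly via the $\alpha'$-invariant. From $\langle x\rangle\neq\langle y\rangle$ one gets $O(x_1)\neq O(y_1)$, hence $x_1\in\alpha(y_1)\setminus O(y_1)=\alpha'(y_1)=\alpha'(y_2)\subseteq\alpha(y_2)\subseteq\overline{O(y_2)}\subseteq\alpha(z_1)$, which works equally well whether $y$ is closed or in $\mathrm{P}(v)$. Replacing your first bullet by this argument (or simply merging it into your second) closes the gap.
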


\begin{proof}
By definitions, reflexivity holds for the relations.
Therefore it suffices to show transitivity.
Fix points $x,y,z \in X$ with $\langle x \rangle \leq_{\alpha} \langle y \rangle$, $\langle y \rangle \leq_{\alpha} \langle x \rangle$, $\langle x \rangle \neq \langle y \rangle$ and $\langle y \rangle \neq \langle z \rangle$.
Then $O(x) \neq O(y)$ and $O(y) \neq O(z)$, and there are points $x_1 \in \langle x \rangle$, $y_1, y_2 \in \langle y \rangle$ and $z_2 \in \langle z \rangle$ such that $x_1 \in \alpha(y_1)$ and $y_2 \in \alpha(z_2)$.
Then $\alpha(y_2) \subseteq \overline{O(y_2)} \subseteq \alpha(z_2)$.
Suppose that $y \notin \mathrm{R}(v)$.
Then $\langle y \rangle = [y]''$ and so $\alpha'(y_1) = \alpha'(y) = \alpha'(y_2)$.
Since $O(x_1) \neq O(y_1)$, we have $x_1 \in \alpha(y_1) - O(y_1) = \alpha'(y_1)= \alpha'(y_2) \subseteq \alpha(z_2)$.
Suppose that $y \in \mathrm{R}(v)$.
Then $\langle y \rangle = \hat{O}(y)$ and so
$x_1 \in \alpha(y_1) \subseteq \overline{O(y_1)} = \overline{O(y)} = \overline{O(y_2)}   \subseteq \alpha(z_2)$.
By the time reversion, symmetry implies that the binary relation $\leq_{\omega}$ is a pre-order.
\end{proof}

\subsection{Properties of orbits, points, and limit sets}

We have the following properties

\begin{lemma}\label{lem:properness}
Any non-closed proper orbit of $v$ is not recurrent.
\end{lemma}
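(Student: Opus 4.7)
The plan is to argue by contradiction using the embedding from $\R$ onto the orbit. Suppose $O(x)$ is proper and non-closed; then $x$ is neither singular nor periodic, so the only remaining possibility in the trichotomy for proper orbits is that $O(x)$ is homeomorphic to the real line $\R$. In fact, the parametrization $\phi \colon \R \to O(x)$ defined by $\phi(t) := v_t(x)$ is a continuous bijection onto $O(x)$, and by properness $\phi$ is a homeomorphism onto its image, where $O(x)$ carries the subspace topology from $X$.

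Now assume, for contradiction, that $x$ is recurrent. By the time-reversal symmetry of the situation, we may suppose $x \in \omega(x)$; the case $x \in \alpha(x)$ is identical. First I would translate this membership into the following local statement: for every open neighborhood $U$ of $x$ in $X$ and every real number $N$, there exists $t > N$ such that $v_t(x) \in U$. This is immediate from the definition $\omega(x) = \bigcap_{n \in \R} \overline{\{ v_t(x) \mid t > n \}}$.

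Next I would exploit the embedding. Because $\phi$ is a homeomorphism, the image $\phi((-1,1)) = \{ v_s(x) \mid |s|<1 \}$ is open in $O(x)$ with the subspace topology, so there is an open subset $W \subseteq X$ with $W \cap O(x) = \phi((-1,1))$; in particular $x \in W$. Applying the recurrence characterization to $W$ and $N := 1$, we obtain some $t > 1$ with $v_t(x) \in W$. Since $v_t(x) \in O(x)$, this forces $v_t(x) \in W \cap O(x) = \phi((-1,1))$, and injectivity of $\phi$ gives $t \in (-1,1)$, contradicting $t > 1$.

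There is no real obstacle here; the only point that requires a moment's care is the passage from $x \in \overline{\{ v_t(x) \mid t > n \}}$ in $X$ to the existence of arbitrarily large $t$ with $v_t(x)$ in a prescribed open $X$-neighborhood of $x$, which is just the definition of closure by open neighborhoods. No Hausdorff or first-countability assumption on $X$ is needed, because the contradiction is extracted directly from the embedding $\phi$ and the openness of $W$ in $X$ rather than from any sequential argument in $X$.
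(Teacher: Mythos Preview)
Your proof is correct and follows essentially the same approach as the paper's: both exploit that properness makes the orbit map $\phi(t)=v_t(x)$ a homeomorphism onto $O(x)$ (with the subspace topology), so there is an open $W\subseteq X$ with $W\cap O(x)=\phi((-1,1))$, and hence $v_x^{-1}(W)=(-1,1)$; this immediately rules out $x\in\alpha(x)\cup\omega(x)$. The only cosmetic difference is that the paper argues directly ($x\notin\overline{v(\R_{>1},x)}$ and $x\notin\overline{v(\R_{<-1},x)}$), while you phrase it as a contradiction from $x\in\omega(x)$; your version also omits the paper's side claim $\overline{O(x)}\cap U_x=O(x)\cap U_x$, which is not actually needed for the conclusion.
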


\begin{proof}
Let $x$ be a point whose orbit is non-closed proper.
Define a mapping $v_x  \colon  \R \to X$ by $v_x :=v( \cdot, x)$.
Then the restriction $v_x$ of $v$ is continuous.
Then there is an open \nbd $U_x$ of $x$ such that $\overline{O(x)} \cap U_x = O(x) \cap U_x$ and that $v_x^{-1}(U_x)$ is the open interval $(-1,1)$ in $\R$.
We have that $U_x \cap v(\R - (-1,1), x) = \emptyset$ and so that
$x \notin \overline{v(\R_{<-1}, x)}$ and $x \notin \overline{v(\R_{>1}, x)}$.
By definitions of $\alpha$-limit set and $\omega$-limit set, we obtain $x \notin \alpha(x) \cup \omega(x)$.
This means that $x$ is not recurrent.
\end{proof}

\begin{lemma}\label{lem:kc}
Suppose that $X$ is KC.
Then a non-recurrent orbit $O$ of $v$ has an open neighborhood in which the orbit is closed.
Moreover, the coborder $\overline{O} - O$ is closed.
\end{lemma}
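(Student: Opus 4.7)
The plan is to construct, for each $x \in O$, an open neighborhood $U_x$ of $x$ satisfying $\overline{O} \cap U_x \subseteq O$, and then take $U := \bigcup_{x \in O} U_x$. Both conclusions follow painlessly from this: openness of $U$ together with $\overline{O} \cap U \subseteq O$ says $O$ is closed as a subset of $U$; and, as checked below, $\overline{O} - O = \overline{O} \cap (X - U)$, so the coborder is the intersection of two closed sets and hence closed.

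To build the local neighborhood $U_x$, I exploit non-recurrence to trim off the two tails of the orbit through $x$. Since $x \notin \omega(x) = \bigcap_{n \in \R} \overline{\{v_t(x) \mid t > n\}}$, there exists $N^+ > 0$ and an open set $V^+ \ni x$ disjoint from $\{v_t(x) \mid t > N^+\}$; symmetrically $x \notin \alpha(x)$ yields $N^- > 0$ and an open $V^- \ni x$ disjoint from $\{v_t(x) \mid t < -N^-\}$. Setting $N_x := \max(N^+, N^-)$ and $U_x := V^+ \cap V^-$, the choice forces $U_x \cap O \subseteq v([-N_x, N_x], x) =: K_x$. Now the crucial step is that $K_x$ is the continuous image of a compact interval, hence compact, and the KC hypothesis forces it to be closed in $X$. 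Combined with openness of $U_x$ (which for any set $A$ gives $\overline{A} \cap U_x \subseteq \overline{A \cap U_x}$), we conclude $\overline{O} \cap U_x \subseteq \overline{O \cap U_x} \subseteq \overline{K_x} = K_x \subseteq O$, as required.

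For the coborder claim, any $y \in \overline{O} - O$ cannot lie in any $U_x$, for then $y \in \overline{O} \cap U_x \subseteq O$ would contradict $y \notin O$; hence $\overline{O} - O \subseteq X - U$, and conversely $\overline{O} \cap (X - U) \subseteq \overline{O} - O$ because $O \subseteq U$. Thus $\overline{O} - O = \overline{O} \cap (X - U)$ is closed. I do not anticipate any serious obstacle: the argument is essentially local, and the only substantive hypothesis used beyond non-recurrence is KC, invoked exactly once to close up the compact arc $K_x$. Without KC the equality $\overline{K_x} = K_x$ could break down and the proof would genuinely fail, which is a reassuring sign that the hypothesis is being used in the right place.
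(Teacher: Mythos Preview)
Your proof is correct and follows essentially the same approach as the paper: build local neighborhoods $U_x$ using non-recurrence to avoid the tails of the orbit, invoke KC to make the compact arc $v_x([-N_x,N_x])$ closed, and take the union. Your use of the general fact $\overline{O} \cap U_x \subseteq \overline{O \cap U_x}$ is a slightly cleaner packaging than the paper's direct decomposition $\overline{O} = \overline{v_x(\R \setminus [-T,T])} \cup v_x([-T,T])$, but the substance is identical.
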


\begin{proof}
Let $O$ be an orbit.
Fix a point $x \in O$.
Define a continuous mapping $v_x  \colon  \R \to X$ by $v_x :=v( \cdot, x)$.
Suppose that $O$ is non-recurrent.
Then $x \notin \alpha(x) \cup \omega(x)$.
There is a number $T > 0$ such that $x \notin \overline{v_x(\R_{\leq -T})}$ and $x \notin \overline{v_x(\R_{\geq T})}$.
Moreover, there is an open \nbd $U_x$ of $x$ such that $U_x \cap v_x(\R - [-T,T]) = \emptyset$.
The compactness of the closed interval $[-T,T]$ implies that the image $v_x([-T,T])$ is compact.
Since $X$ is KC, the image $v_x([-T,T])$ is closed.
Then we have $\overline{O} \cap U_x = \overline{v_x(\R - [-T,T]) \cup v_x([-T,T])} \cap U_x = (\overline{v_x(\R - [-T,T])} \cup v_x([-T,T])) \cap U = v_x([-T,T]) \cap U_x = O \cap U_x$.
Therefore the union $U := \bigcup_{x \in O} U_x$ is an open \nbd of $O$ such that $\overline{O} \cap U = \bigcup_{x \in O} (\overline{O} \cap U_x) = \bigcup_{x \in O} (O \cap U_x) = O \cap U = O$.
Since $\overline{O} \cap U = O$, the difference $\overline{O} \setminus U = \overline{O} - O$ is closed.
\end{proof}

The converse does not hold if $X$ is not KC.
In other words, there is a flow on a compact $T_1$ space with a recurrent orbit which has a neighborhood in which the orbit is closed.
In fact, an $\R$-action $v \colon  \R \times X \to X$ by $v(t,x) := t + x$ with respect to the cointerval topology is a flow (see Lemma~\ref{lem:cointerval}) such that $X$ consists of one orbit $X$ such that $X = \R = \alpha(x) = \omega(x)$ for any $x \in X$.
The author would like to know whether the converse for a KC space holds or not.
In other words, is any orbit $O$ of $v$ of a flow on a KC space which has an open neighborhood in which the orbit is closed, non-recurrent?
%
Recall the following connectivity of $\alpha$-limit sets and $\omega$-limit sets.

\begin{lemma}\label{lem:connectivity}
The following properties hold for a flow $v$ on a topological space $X$ and for a point $x \in X$:
\\
$(1)$ The $\alpha$-limit set $\alpha(x)$ and the $\omega$-limit set $\omega(x)$ is closed and invariant.
\\
$(2)$ If $X$ is compact, then $\alpha(x) \neq \emptyset$ and $\omega(x) \neq \emptyset$.
\\
$(3)$ If $X$ is either compact KC or sequentially compact, then the limit sets $\alpha(x)$ and $\omega(x)$ are connected.
\end{lemma}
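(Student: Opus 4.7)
Parts (1) and (2) follow quickly from the set-theoretic definition $\omega(x) = \bigcap_{n \in \R} \overline{\{v_t(x) \mid t > n\}}$ (and its time-reversed analogue for $\alpha(x)$). For (1), closedness is immediate as an intersection of closed sets. For invariance, I would use that each $v_s$ is a homeomorphism, hence commutes with closure, so
$$v_s\bigl(\overline{\{v_t(x)\mid t>n\}}\bigr) = \overline{\{v_{t+s}(x)\mid t>n\}} = \overline{\{v_u(x)\mid u>n+s\}},$$
and reindexing $n$ over $\R$ shows $v_s(\omega(x)) = \omega(x)$; the argument for $\alpha(x)$ is identical.

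For (2), I would observe that $\{\overline{\{v_t(x)\mid t>n\}}\}_{n\in\R}$ is a decreasing family of nonempty closed subsets of the compact space $X$, hence has the finite intersection property, so its total intersection $\omega(x)$ is nonempty; reverse the time parameter for $\alpha(x)$.

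For (3), the plan is to write $\omega(x) = \bigcap_{n\in\R} K_n$ where $K_n := \overline{\{v_t(x) \mid t>n\}}$. Each $K_n$ is the closure of the continuous image of the connected set $(n,\infty)$, hence connected. The assertion then reduces to the statement that a decreasing intersection of nonempty compact connected sets is connected. In the compact KC case, I would argue as follows: suppose $\omega(x) = A \sqcup B$ with $A,B$ disjoint nonempty closed subsets. Then $A$ and $B$ are compact (being closed in the compact space $X$), and so is each $K_n$. Using that $X$ is KC together with compactness of $K_n$, I would separate $A$ and $B$ inside $K_N$ for sufficiently large $N$ by open neighborhoods of $A$ and $B$ in $K_N$ relative to $K_N$, contradicting the connectedness of $K_N$. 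In the sequentially compact case, I would instead choose sequences $t_n\to\infty$ with $v_{t_n}(x)\to a\in A$ and $s_n\to\infty$ with $v_{s_n}(x)\to b\in B$, interleave so that $t_n<s_n<t_{n+1}$, and then, using continuity along the connected intervals $[t_n,s_n]$ and the disjointness of neighborhoods of $A$ and $B$, produce times $r_n\in(t_n,s_n)$ at which $v_{r_n}(x)$ lies outside a chosen separation; by sequential compactness, extract a convergent subsequence $v_{r_{n_k}}(x)\to c$, which must lie in $\omega(x)$ but outside $A\sqcup B$, a contradiction.

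The main obstacle is the compact KC case, since compact KC is strictly weaker than compact Hausdorff and one cannot directly invoke normality to separate the disjoint closed sets $A$ and $B$ by disjoint open neighborhoods in $X$. I expect to have to work inside the subspace $K_N$ and exploit that in a compact KC space the compact sets $A,B \subseteq K_N$ are closed, together with the nested structure of the $K_n$, to build the separation in $K_N$ rather than in $X$; alternatively, I may rely on a standard lemma that a decreasing intersection of compact connected closed subsets of a $T_1$ space in which compact subsets are closed is connected, reducing (3) to that purely topological fact.
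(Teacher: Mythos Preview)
Your arguments for (1), (2), and the sequentially compact half of (3) are correct and match the paper's approach almost verbatim.

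The compact KC half of (3), however, has a genuine gap. Your plan is to reduce to a purely topological lemma of the form ``a decreasing intersection of compact connected closed subsets of a KC space is connected,'' but this lemma is not standard, and your sketch for proving it does not work: to separate the pieces $A$ and $B$ of a putative disconnection $\omega(x)=A\sqcup B$ by disjoint open sets (even inside some $K_N$) you would need a normality-type property, and KC gives you nothing of the sort---it only says compact subsets are closed. You acknowledge this obstacle but offer no concrete way around it.

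The paper's argument is quite different and exploits the \emph{dynamical} structure rather than a general intersection lemma. Assuming $x\notin\alpha(x)$ and $\alpha(x)\subseteq U\sqcup V$ is a disconnection, connectivity of each negative orbit segment produces an unbounded decreasing sequence $(t_n)$ with $v_x(t_n)\notin U\cup V$. Set $K:=O^{-0}(x)\setminus(U\cup V)$. The key step---and the only place KC is used---is to show that $K$ is \emph{closed}: any point in $\overline{O^{-0}(x)}\setminus(U\cup V)$ that is not already on the orbit must lie in $\alpha(x)$, because the images $v_x([t,0])$ of compact intervals are closed by KC, so their complements are open neighborhoods forcing such a point to be approached only by arbitrarily far-past orbit points. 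Once $K$ is closed it is compact; since $K\cap\alpha(x)=\emptyset$ and $K\subset O^{-0}(x)$, each $y\in K$ has a neighborhood missing $v(\R_{<T_y},y)$, and a finite subcover yields a uniform $T$ with $K\cap v(\R_{<T},x)=\emptyset$, contradicting the unbounded sequence $(t_n)$. This use of KC (compact arcs are closed) combined with the orbit parametrization is the missing idea in your proposal.
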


\begin{proof}
By definitions, the $\alpha$-limit set $\alpha(x)$ and the $\omega$-limit set $\omega(x)$ is closed and invariant.
Since the orbit $O(x)$ is a continuous image of $\R$, the orbit $O(x)$ is connected.
If $X$ is compact, then the finite intersection property of a compact space implies that the $\alpha$-limit set and the $\omega$-limit set of a point is nonempty.
Suppose that $O(x)$ is singular or periodic.
Since the closure of a connected subset is connected, the closure $\overline{O(x)} = \bigcap_{t \in \R} \overline{v(\R_{<t}, x)} = \alpha(x)$ is connected.
Thus we may assume that $O(x)$ is neither singular nor periodic.
Then the continuous mapping $v_x  \colon  \R \to X$ defined by $v_x :=v( \cdot, x)$
is injective.
Suppose that $x \in \alpha(x)$.
The invariance of $\alpha(x)$ implies that $O(x) \subseteq \alpha(x)$.
The closedness of $\alpha(x)$ implies that $\overline{O(x)} = \alpha(x)$.
This implies that $\alpha(x) = \overline{O(x)}$ is connected.
Thus we may assume that $x \notin \alpha(x)$.
For any $y \in O(x)$, there is a negative number $T_y$ such that $y \notin \overline{v_x(\R_{<T_y})}$, and so there is an open \nbd $U_y$ of $y$ such that $U_y \cap v_x(\R_{<T_y}) = \emptyset$.
Assume that $\alpha (x)$ is disconnected.
The disconnectivity implies that there are disjoint open subsets $U$ and $V$ such that $\alpha(x) \subseteq U \sqcup V$, $\alpha(x) \cap U \neq \emptyset$, and $\alpha(x) \cap V \neq \emptyset$.
The connectivity of $v_x(\R_{<t})$ for any $t \in \R$ implies that $v_x(\R_{<t}) \setminus U \sqcup V \neq \emptyset$ for any $t \in \R$.
There is an unbounded decreasing sequence $(t_n)_{n \in \Z_{>0}}$ such that $v_x(t_{n}) \in X - (U \sqcup V)$.
Then the difference $K := O^{-0}(x) \setminus (U \sqcup V)$ is a subset and contains $\{ v_x(t_{n}) \}_{n \in \Z_{>0}}$.
Here $O^{-0}(x) := O^{-}(x) \sqcup \{ x\} = v_x(\R_{\leq 0})$.
Suppose that $X$ is sequentially compact.
Then there is a convergence sequence $(t_{n_k})$ which is a subsequence of $(t_n)_{n \in \Z_{>0}}$.
Since $v_x(t_n) \in K \subseteq X - (U \sqcup V)$, the closedness of $X - (U \sqcup V)$ implies that $\lim v_x(t_{n_k}) \in (X - (U \sqcup V)) \cap \alpha(x) = \alpha(x) \setminus (U \sqcup V) = \emptyset$, which is a contradiction.
Suppose that $X$ is compact and KC.
We claim that $K = \overline{O^{-0}(x)} \setminus (U \sqcup V)$.
Indeed, assume that there is a point $y \in (\overline{O^{-0}(x)} \setminus (U \sqcup V)) - K = \overline{O^{-0}(x)} \setminus (O^{-0}(x) \cup U \cup V)$.
Then any \nbd of $y$ intersects $O^{-0}(x)$.
For any $t \in \R_{<0}$, the closed interval $[t,0]$ is compact in $\R$ and so the image $v_x([t,0])$ is compact.
KC property implies that $v_x([t,0])$ is closed and so the complement $U_t := X - v_x([t,0])$ of the image $v_x([t,0])$ is an open \nbd of $y$.
For any $t \in \R_{<0}$ and any \nbd $W$ of $y$, we have $v_x(\R_{<t}) \cap W = O^{-0}(x) \cap (U_t \cap W) \neq \emptyset$.
This means that $y \in \bigcap_{t \in \R_{<0}} \overline{v_x(\R_{<t})} = \bigcap_{t \in \R} \overline{v_x(\R_{<t})} = \alpha(x) \subseteq U \sqcup V$, which contradicts $y \notin U \sqcup V$.
Therefore the nonempty subset $K$ is closed and so compact.
By $K \cap \alpha(x) = \emptyset$ and $K \subset O^{-0}(x)$, for any point $y \in K$, there are an negative number $T_y < 0$ and an open \nbd $U_y$ of $y$ such that $U_y \cap v(\R_{<T_y}, y) = \emptyset$.
The compactness of $K$ implies that there are finitely many $x_1, x_2, \ldots , x_k \in K$ such that $K \subseteq \bigcup_{i=1}^k U_{x_i}$.
Since $K \subset O^{-0}(x)$, there are negative numbers $s_1, s_2, \ldots , s_k < 0$ such that $x_i = v(s_i, x)$.
Put $T: = \min_i \{ T_{x_i} \} + \min_i \{ s_i \} < 0$.
Since $v(\R_{<t}, x_i) = v(\R_{<t + s_i}, x)$ for any $t \in \R$ and since $v(\R_{<t}, y) \subseteq v(\R_{<t'}, y)$ for any $t<t'$, we have that  $U_{x_i} \cap v(\R_{<T}, x) \subseteq U_{x_i} \cap v(\R_{<T_{x_i}+s_i}, x) = U_{x_i} \cap v(\R_{<T_{x_i}}, x_i) = \emptyset$ for any $i = 1,2, \ldots , k$.
Therefore $K \cap v(\R_{<T}, x) \subseteq \bigcup_{i=1}^k U_{x_i} \cap v(\R_{<T}, x) = \emptyset$, which contradicts the existence of the unbounded decreasing sequence $(t_n)_{n \in \Z_{>0}}$ with $v_x(t_{n}) \in K$.
This implies that  the $\alpha$-limit set $\alpha(x)$ is connected.
By symmetry, the $\omega$-limit set $\omega(x)$ is connected.
\end{proof}

The author would like to know an answer to the following question.
\begin{question}
Are the $\alpha$-limit set and the $\omega$-limit set for a flow of any point on of a compact space connected?
\end{question}

\subsection{Properties of flows on typical topological spaces with low separation axioms}

We have the following continuity of $\R$-actions on discrete spaces.

\begin{lemma}
Any $\R$-actions on indiscrete spaces are continuous.
In other words, any $\R$-actions on indiscrete spaces are flows.
\end{lemma}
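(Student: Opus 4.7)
The plan is to unpack the definition of the indiscrete topology and show directly that the continuity condition on $v \colon \R \times X \to X$ becomes vacuous. Recall that a space $X$ is indiscrete precisely when its topology is $\{\emptyset, X\}$, so checking continuity of $v$ reduces to verifying that $v^{-1}(U)$ is open in $\R \times X$ for $U \in \{\emptyset, X\}$.

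First I would fix an arbitrary $\R$-action $v \colon \R \times X \to X$ on an indiscrete space $X$, meaning only that each time-$t$ map $v(t, \cdot)$ is a (set-theoretic) bijection, $v(0,\cdot) = \mathrm{id}_X$, and $v(t, v(s,x)) = v(t+s,x)$; note no continuity is assumed for the individual maps $v(t, \cdot)$. Then I would observe that every map from any topological space into an indiscrete space is automatically continuous, because the preimage of $\emptyset$ is $\emptyset$ and the preimage of $X$ is the whole domain, both of which are open. Applying this observation to $v$ itself (with domain $\R \times X$ equipped with the product topology) gives continuity of $v$, and the same observation applied to each $v(t, \cdot)$ shows these are automatically homeomorphisms (being bijections between indiscrete spaces, with continuous inverses for the same reason). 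Hence $v$ satisfies all the conditions of a flow.

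There is essentially no obstacle: the argument is a one-line check, and the only thing to be careful about is to confirm that the remaining axioms in the definition of a flow given earlier (the group law, the identity at $t=0$, and the fact that each $v(t,\cdot)$ is a homeomorphism) are either assumed as part of being an $\R$-action or fall out for free from the indiscrete hypothesis. Since the definition of an $\R$-action in the preliminaries already requires $v(t,\cdot)$ to be a homeomorphism and the group law to hold, the only extra content beyond being an $\R$-action is joint continuity of $v$, which we just verified.
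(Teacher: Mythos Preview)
Your proof is correct and follows essentially the same approach as the paper: both verify continuity of $v$ by checking that the preimages of the only two open subsets of $X$, namely $\emptyset$ and $X$, are $\emptyset$ and $\R \times X$ respectively, which are open in the product. Your additional remarks about the time-$t$ maps being homeomorphisms are fine but, as you yourself note, already built into the paper's definition of an $\R$-action.
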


\begin{proof}
Let $X$ be an indiscrete topological space and $v \colon \R \times X \to X$ an $\R$-action.
Then the open subset of $X$ is either the emptyset or the whole space $X$.
Therefore the inverse images $v^{-1}(\emptyset) = \emptyset$ and $v^{-1}(X) = \R \times X$ are open subsets of the product topological space $\R \times X$.
\end{proof}

The product of flows on an indiscrete space and a topological space is also a flow.

\begin{corollary}\label{cor:indiscrete}
Let $v_1$ be a flow on a topological space $X_1$, $v_2$ a flow on a topological space $X_2$, $X := X_1 \times X_2$ the product topological space.
If either $X_1$ or $X_2$ is indiscrete, then the induced $\R$-action $v := (v_1, v_2) \colon \R \times X \to X$ by $v(t, (x_1, x_2)) := (v_1(t,x_1), v_2(t, x_2))$ is continuous.
\end{corollary}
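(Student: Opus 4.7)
The plan is to verify continuity of $v$ via the universal property of the product topology on the target $X = X_1 \times X_2$. Under this criterion, $v \colon \R \times X \to X$ is continuous if and only if both coordinate components $p_i \circ v \colon \R \times X \to X_i$ (for $i = 1, 2$) are continuous, where $p_i \colon X \to X_i$ denotes the $i$-th projection.

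For $i = 1$, I would factor $p_1 \circ v$ as the composition
\[
\R \times X \xrightarrow{\mathrm{id}_\R \times p_1} \R \times X_1 \xrightarrow{\ v_1\ } X_1,
\]
which sends $(t,(x_1,x_2)) \mapsto (t,x_1) \mapsto v_1(t,x_1)$. The first arrow is continuous because $\mathrm{id}_\R$ and the projection $p_1$ are each continuous, and the induced map into a product space is then continuous by the universal property. The second arrow is continuous since $v_1$ is a flow. A symmetric argument using $v_2$ and $p_2 \colon X \to X_2$ establishes continuity of $p_2 \circ v$. Hence $v$ is continuous.

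There is no substantive obstacle here. In fact, the indiscreteness hypothesis on one of the factors is not actually invoked in the argument above, because the universal property already delivers continuity of $v$ from the continuity of $v_1$ and $v_2$ alone. I read the hypothesis as underlining the intended application in concert with the preceding lemma: that lemma guarantees that any $\R$-action on an indiscrete space is automatically continuous, so the corollary yields a flow $v$ on $X$ whenever one factor is indiscrete and the other carries a preassigned flow, without any separate verification on the indiscrete factor.
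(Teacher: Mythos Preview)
Your proof is correct, and your observation that the indiscreteness hypothesis is logically superfluous is also correct: since both $v_1$ and $v_2$ are assumed to be flows (hence continuous), the universal property of the product topology on the target immediately gives continuity of $v$, with no condition on either factor.

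The paper takes a different, more concrete route. It assumes without loss of generality that $X_2$ is indiscrete, observes that the product topology on $X = X_1 \times X_2$ then reduces to $\{U \times X_2 \mid U \in \mathcal{O}_1\}$, and checks directly that $v^{-1}(U \times X_2) = v_1^{-1}(U) \times X_2$ is open in $\R \times X$. So the paper genuinely \emph{uses} the hypothesis, even though it is not needed. Your argument via the universal property is cleaner and strictly more general; the paper's argument has the minor virtue of being self-contained at the level of open sets, but at the cost of obscuring that the result holds for arbitrary products of flows. Your reading of the hypothesis as a residue of the intended application (pairing with the preceding lemma, which supplies continuity on the indiscrete factor for free) is a reasonable explanation for why it appears in the statement.
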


\begin{proof}
By symmetry, we may assume that $X_2$ is indiscrete.
Let $\mathcal{O}_i$ be the topologies of $X_i$ and $\mathcal{O}$ the product topology of $X$.
Since $X_2$ is indiscrete, by definition of product topology, we have that $\mathcal{O} = \{ U \times X_2 \mid U \in \mathcal{O}_1 \}$.
For any open subset $U \times X_2$ of $X$, since $v^{-1}_1(U)$ is an open subset of $\R \times X_1$, the inverse image $v^{-1}(U \times X_2) = v^{-1}_1(U) \times  X_2$ is an open subset of $\R \times X_1 \times X_2$.
\end{proof}

We have the following triviality of flows on a discrete topological space.

\begin{lemma}\label{lem:discrete}
Any flow on a discrete topological space is identical.
\end{lemma}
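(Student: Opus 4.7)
The plan is to use the continuity of the action together with the $\R$-group law to show that every orbit is a single point. Fix an arbitrary point $x \in X$ and consider the orbit map $v_x \colon \R \to X$ defined by $v_x(t) := v(t,x)$, which is continuous because $v$ is a flow. Since $X$ is discrete, the singleton $\{x\}$ is open in $X$, so the preimage $v_x^{-1}(\{x\})$ is open in $\R$. This preimage contains $0$ because $v_0 = \mathrm{id}_X$, so there exists $\varepsilon > 0$ such that $v(t,x) = x$ for all $t \in (-\varepsilon, \varepsilon)$.

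Next I would bootstrap this local triviality to global triviality using the identity $v(t+s, x) = v(t, v(s,x))$. For any $t \in \R$, choose a positive integer $n$ large enough so that $|t/n| < \varepsilon$. Then $v(t/n, x) = x$ by the previous step, and iterating the group law $n$ times gives
\[
v(t, x) = v\bigl(t/n + \cdots + t/n, \, x\bigr) = \underbrace{v\bigl(t/n,\, v\bigl(t/n,\, \cdots v(t/n, x)\cdots\bigr)\bigr)}_{n \text{ times}} = x.
\]
Since $x \in X$ and $t \in \R$ were arbitrary, this shows $v(t,x) = x$ for all $(t,x) \in \R \times X$, i.e.\ $v$ is the trivial (identical) flow.

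There is no real obstacle here; the proof rests entirely on the interaction between discreteness (which forces each singleton to be open) and the continuity of the orbit map at $t = 0$. The only mild subtlety is to notice that one cannot directly conclude $v(t,x) = x$ for all $t$ from a local fact without invoking the $\R$-action group law, which is precisely what the bootstrap step above supplies.
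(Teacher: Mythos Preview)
Your proof is correct and follows essentially the same line as the paper: both use discreteness to conclude that the preimage of $\{x\}$ is open and contains a neighborhood of $t=0$. The only differences are cosmetic---you work with the orbit map $v_x\colon\R\to X$ rather than the full map $v\colon\R\times X\to X$ (so you avoid spelling out a base for the product topology), and you make explicit the bootstrap from $v(t,x)=x$ on $(-\varepsilon,\varepsilon)$ to $v(t,x)=x$ for all $t\in\R$ via the group law, a step the paper leaves implicit when it concludes ``$p$ is a singular point.''
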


\begin{proof}
Let $X$ be a discrete topological space and $v$ a flow on $X$.
By definition of product topology, the direct products of elements of bases of $\R$ and $X$ form a base for the product topology of $\R \times X$.
This means that the family $\mathcal{B} := \{ I \times \{ x \} \mid I \subset \R : \text{open interval}, x \in X \}$ is a base for the product topology of $\R \times X$.
Fix a point $p = (0,x) \in \R \times X$.
Since the singleton $\{ x \}$ is an open subset of $X$, the inverse image $v^{-1}(x)$ is an open subset of $\R \times X$.
Then there is an open subset $U \in \mathcal{B}$ with $p \in U \subseteq v^{-1}(x)$.
Moreover, there is a positive number $\delta >0$ with $(-\delta, \delta) \times \{ x \} \subseteq U$.
This means that $p = (0,x) = v(t, 0)$ for any $t \in (-\delta, \delta)$ and so that $p$ is a singular point.
\end{proof}

We construct a flow on a non-$T_1$ space from a flow on a topological space as follows.

\begin{lemma}\label{lem:double}
Let $v$ be a flow on a topological space $(X, \mathcal {O}_X)$.
Define a topological space $X_2 := X \times \{ -, + \}$ whose topology is generated by a base $\mathcal{B}_2 := \{ U \times \{ + \}, U \times \{ -, + \} \mid U \in \mathcal{O}_X \}$.
Then the induced $\R$-action $v_2$ on $X_2$ defined by $v_2 (t, (x, \sigma)) := (v(t,x), \sigma)$ is continuous {\rm (i.e.} a flow {\rm)}.
\end{lemma}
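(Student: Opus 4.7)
The plan is to verify directly that $v_2$ satisfies the $\R$-action axioms and is continuous, then the homeomorphism condition on each time-$t$ map comes for free.

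First, I would check the algebraic conditions. By construction $v_2(0,(x,\sigma)) = (v(0,x),\sigma) = (x,\sigma)$, and for any $s,t \in \R$,
\[
v_2(t, v_2(s,(x,\sigma))) = v_2(t,(v(s,x),\sigma)) = (v(t,v(s,x)),\sigma) = (v(t+s,x),\sigma) = v_2(t+s,(x,\sigma)).
\]
In particular $v_2(t,\cdot)$ has two-sided inverse $v_2(-t,\cdot)$, so once continuity of $v_2$ is established, each $v_2(t,\cdot)$ is automatically a homeomorphism.

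Next, to prove continuity of $v_2 \colon \R \times X_2 \to X_2$, it suffices to show that $v_2^{-1}(B)$ is open in the product topology on $\R \times X_2$ for every $B \in \mathcal{B}_2$. The product topology on $\R \times X_2$ has as a base the sets $I \times (U \times \{+\})$ and $I \times (U \times \{-,+\})$ where $I \subseteq \R$ is an open interval and $U \in \mathcal{O}_X$. For a basic open set $U \times \{+\}$, since $v_2$ preserves the $\{-,+\}$-coordinate, one gets
\[
v_2^{-1}(U \times \{+\}) = \{(t,x,+) \in \R \times X \times \{-,+\} \mid v(t,x) \in U\} = v^{-1}(U) \times \{+\},
\]
and similarly $v_2^{-1}(U \times \{-,+\}) = v^{-1}(U) \times \{-,+\}$. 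Because $v \colon \R \times X \to X$ is continuous, $V := v^{-1}(U)$ is open in $\R \times X$, so $V = \bigcup_{i} I_i \times U_i$ for open intervals $I_i \subseteq \R$ and $U_i \in \mathcal{O}_X$.

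Finally I would assemble the openness. Each piece decomposes as
\[
V \times \{+\} = \bigcup_i I_i \times (U_i \times \{+\}), \qquad V \times \{-,+\} = \bigcup_i I_i \times (U_i \times \{-,+\}),
\]
and every summand lies in the product base of $\R \times X_2$, so both preimages are open. Hence $v_2^{-1}(B)$ is open for every $B \in \mathcal{B}_2$, which proves continuity. The argument is essentially routine unpacking of definitions; the only point requiring any care is the bookkeeping that $v_2$ preserves the second coordinate, so preimages of basic open sets in $X_2$ factor through $v^{-1}$ without mixing the $\pm$ labels. No step presents a genuine obstacle.
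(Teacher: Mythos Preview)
Your proof is correct and follows essentially the same route as the paper's: both compute $v_2^{-1}(U\times T)=v^{-1}(U)\times T$ for $T\in\{\{+\},\{-,+\}\}$ and use continuity of $v$ to conclude openness. You add an explicit verification of the $\R$-action axioms and spell out the decomposition $v^{-1}(U)=\bigcup_i I_i\times U_i$, whereas the paper observes more directly that the product topology on $\R\times X_2$ has base $\{W\times\{+\},\,W\times\{-,+\}\mid W\in\mathcal{O}_{\R\times X}\}$, making $v^{-1}(U)\times T$ open immediately; these are cosmetic differences only.
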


\begin{proof}
Let $\mathcal{O}_{\R}$ be the Euclidean topology on $\R$.
By definition of product topology, the product topology $\mathcal{O}_{\R \times X}$ on a direct product $\R \times X$ is generated by a base $\mathcal{B}_{\R \times X} := \{ I \times U \mid I \in \mathcal{O}_{\R}, U \in \mathcal{O}_X \}$, and the product topology $\mathcal{O}_{\R \times X_2}$ on a direct product $\R \times X_2$ is generated by a base $\mathcal{B}_{\R \times X_2} := \{ I \times V \mid I \in \mathcal{O}_{\R}, V \in \mathcal{B}_2 \} = \{  I \times U \times \{ + \}, I \times U \times \{ -, + \} \mid I \in \mathcal{O}_{\R}, U \in \mathcal{O}_X \}$.
This means that the topology $\mathcal{O}_{\R \times X_2}$ on $\R \times X_2 = \R \times X \times  \{ -, + \}$ is generated by a base $\{ W \times \{ + \}, W \times \{ -, + \} \mid W \in \mathcal{O}_{\R \times X}\}$.
Fix an open subset $V \in \mathcal{B}_2$ of $X_2$.
By definition of $\mathcal{B}_2$, there are $U \in \mathcal{O}_X$ and $T \subseteq \{ -, + \}$ with $V = U \times T$.
Then $T$ is either $ \{ + \}$ or $ \{ -, + \}$.
The continuity of $v$ implies that the inverse image $v^{-1}(U)$ is an open subset of $\R \times X$ (i.e. $v^{-1}(U) \in \mathcal{O}_{\R \times X}$).
By definition of $v_2$, the inverse image $v_2^{-1}(V) = v_2^{-1}(U \times T) = v^{-1}(U) \times T$ is an open subset of $\R \times X_2$.
This implies that the $\R$-action $v_2$ is continuous.
\end{proof}

Recall that the cofinite topology $\mathcal{O}_{\mathrm{cofin}}$ of a set $X$ is $\{ X - F \mid F:\text{finite}  \} \cup \{ \emptyset \}$. The pair of a set and the cofinite topology is called a cofinite topological space.
We have the following triviality of flows on a cofinite topological space.

\begin{lemma}
Any flow on a cofinite topological space is identical.
\end{lemma}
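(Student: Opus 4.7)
The plan is to show that every point of $X$ is fixed by $v$. Since the cofinite topology on a one-point set is the only possible topology and the lemma is trivial, I may assume $|X| \geq 2$. Fix an arbitrary $z \in X$ and an arbitrary $t \in \R$, and I will produce an open interval $J \ni t$ on which the map $s \mapsto v_s(z)$ is constant.

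The key input is joint continuity of $v$ at a carefully chosen base point. Since $|X| \geq 2$, pick $x_0 \in X$ with $x_0 \neq v_t(z)$, which ensures $v_{-t}(x_0) \neq z$. Then $V := X - \{z\}$ is a cofinite, hence open, neighborhood of $v(-t, x_0)$. By continuity of $v \colon \R \times X \to X$ at $(-t, x_0)$, there exist an open interval $I \ni -t$ and a cofinite open set $U \ni x_0$ with $v(I \times U) \subseteq V$. Writing $U = X - G$ with $G$ finite, this says $v_{-s}(z) \in G$ for every $s \in I$, so with $J := -I$ (an open interval around $t$) we obtain that $\{v_s(z) \mid s \in J\} \subseteq G$ is finite.

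Now connectedness finishes the local step. Since the cofinite topology is $T_1$, each singleton $\{w\} \subseteq X$ is closed, so $v_{(\cdot)}(z)^{-1}(\{w\}) \cap J$ is closed in $J$. The partition
\[
J = \bigsqcup_{w \in X} \bigl(v_{(\cdot)}(z)^{-1}(\{w\}) \cap J\bigr)
\]
has only finitely many nonempty pieces, because $\{v_s(z) \mid s \in J\}$ is finite. The connected interval $J$ admits no decomposition into more than one nonempty closed subset, so exactly one piece is nonempty and $s \mapsto v_s(z)$ is constant on $J$. Since $t$ was arbitrary, the map $s \mapsto v_s(z)$ is locally constant on $\R$, and connectedness of $\R$ upgrades this to globally constant. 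Evaluating at $s = 0$ gives $v_s(z) = z$ for all $s \in \R$, and since $z$ was arbitrary, $v$ is identical.

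The only real obstacle is extracting local finite-valuedness of the orbit from joint continuity; this requires the base point trick (choosing $(-t, x_0)$ rather than $(t, z)$) so that $F := \{z\}$ avoids the image, after which the cofinite structure of the neighborhood $U = X - G$ directly controls the orbit of $z$. Once this is set up, $T_1$-ness plus connectedness of $\R$ do the rest without further work.
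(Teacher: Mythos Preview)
Your proof is correct and takes a genuinely different route from the paper's. The paper works with the closed set $E = v^{-1}(x) \subseteq \R \times X$ and proves a structural claim: if $E$ meets $\{0\}\times X$ in a single point, then $E$ is ``thin'' on a tube $[-\varepsilon,\varepsilon]\times X$. The argument covers the compact set $\{0\}\times(X-\{x\})$ by finitely many basic product-open sets inside the complement of $E$, then intersects the finite complements and minimises over the $\delta_i$ to extract a common $\varepsilon$; applying this to $E=v^{-1}(x)$ forces $v_{-t}(x)=x$ for small $t$.

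Your route bypasses the compactness covering entirely. By evaluating joint continuity at the auxiliary point $(-t,x_0)$ (rather than at $(t,z)$ or $(0,z)$), a single basic neighbourhood $I\times U$ already has cofinite $U$, and its finite complement $G$ traps the orbit segment $\{v_s(z):s\in -I\}$ directly. You then finish with the clean $T_1$ + connectedness argument (finitely many closed pieces partitioning a connected interval). What you gain is simplicity and transparency: no tube lemma, no finite-cover bookkeeping, and the essential mechanism (cofinite neighbourhoods have finite complement) is used once rather than being aggregated across a cover. What the paper's approach buys is a reusable lemma about closed subsets of $\R\times X$, though in context it is only applied to this one situation.
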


\begin{proof}
Let $X$ be a cofinite topological space and $v$ a flow on $X$.
Lemma~\ref{lem:discrete} implies that $X$ is infinite.
By definition of product topology, the direct products of elements of bases of $\R$ and $X$ form a base for the product topology of $\R \times X$.
This means that the family $\mathcal{B} := \{ I \times (X - F) \mid I \subset \R : \text{open interval}, F \subset X : \text{finite} \}$ is a base for the product topology of $\R \times X$.
Fix a point $p = (0,x) \in \R \times X$.
We claim that for any closed subset $E$ of $\R \times X$ with $E \cap (\{ 0 \} \times X) = \{ p \}$, there is a positive number $\varepsilon >0$ such that $E \cap ([- \varepsilon, \varepsilon] \times X) \subseteq [- \varepsilon,  \varepsilon] \times \{ x \}$.
Indeed, the complement $U := (\R \times X) - E$ is open.
Since the complement $X - \{ x \}$ is compact and so is the product $\{ 0 \} \times (X - \{ x \})$, there are open subsets $U_1$, \ldots , $U_k \in \mathcal{B}$ with $\{ 0 \} \times (X - \{ x \}) \subset \bigcup_{i =1}^k U_i \subseteq U$.
By $U_i \in \mathcal{B}$ for any $i = 1, \ldots , k$, there are a positive number $\delta_i >0$ and a finite subset $F_i \subset X$ with $y \notin F_i$ such that $(-\delta_i, \delta_i) \times (X - F_i) \subseteq U_i$.
Fix a positive number $\varepsilon < \min_{i} \delta_i$.
Then $[- \varepsilon, \varepsilon]  \times (X - \bigcap_{i =1}^k F_i) \subseteq U$.
Since $E \cap (\{ 0 \} \times X) = \{ p \}$ and $U = (\R \times X) - E$, we have $([- \varepsilon, \varepsilon]  \times (X - \bigcap_{i =1}^k F_i)) \cap (\{ 0 \} \times X) \subseteq U \cap (\{ 0 \} \times X) = \{ 0 \} \times (X - \{ p \})$.
This means that $\{ x \} = \bigcap_{i =1}^k F_i$.
Therefore $[- \varepsilon, \varepsilon]  \times (X - \{ x \}) = [- \varepsilon, \varepsilon]  \times (X - \bigcap_{i =1}^k F_i) \subseteq U \cap ([- \varepsilon, \varepsilon] \times X) = ([- \varepsilon, \varepsilon] \times X) \setminus E$ and so $E \cap ([- \varepsilon, \varepsilon] \times X) \subseteq [- \varepsilon, \varepsilon]  \times \{ x \}$.
Since the singleton of any point in $X$ is a closed subset, the inverse image $v^{-1}(x)$ is a closed subset with $v^{-1}(x) \cap (\{ 0 \} \times X) = \{ p \}$.
Claim implies that $p = (0,x) = v(t, x)$ for any $t \in [- \varepsilon, \varepsilon]$ and so that $p$ is a singular point.
\end{proof}

Recall that the lower topology of a pre-ordered set $X$ is the topology generated by a base $\{ X - \mathop{\downarrow} F \mid F \subseteq X : \text{finite} \}$.
For instance, the lower topology of the real line $\R$ with the canonical total order is $\{ \R_{\leq x} \mid x \in \R \} \sqcup \{ \emptyset, \R \}$.
Then we have the following observation.

\begin{lemma}
A flow on the  one-dimensional Euclidean space $\R$ is also a flow on the real line with the lower topology.
\end{lemma}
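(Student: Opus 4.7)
The plan is to verify the two flow axioms with respect to the lower topology $\tau_{\ell}$ on $\R$: (i) each time-$t$ map $v(t,\cdot)$ is a $\tau_{\ell}$-self-homeomorphism, and (ii) the action $v\colon \R\times(\R,\tau_{\ell})\to(\R,\tau_{\ell})$ is jointly continuous. The algebraic identities $v(0,\cdot)=\mathrm{id}$ and $v(t+s,x)=v(t,v(s,x))$ transfer unchanged. For the standard order on $\R$, the $\tau_{\ell}$-basic opens $\R-\mathop{\downarrow} F$ reduce to the right rays $(\max F,\infty)$, so $\tau_{\ell}$ consists of $\emptyset$, $\R$, and $(a,\infty)$ for $a\in\R$, and is strictly coarser than the Euclidean topology.

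The key preliminary is that every $v(t,\cdot)$ is \emph{order-preserving}, a point that cannot be skipped because an order-reversing self-homeomorphism of Euclidean $\R$ fails to be $\tau_{\ell}$-continuous. Each $v(t,\cdot)$ is a self-homeomorphism of Euclidean $\R$, hence strictly monotone. Consider the continuous function $f(t):=v(t,1)-v(t,0)$, which satisfies $f(0)=1>0$; if some $v(t_0,\cdot)$ were order-reversing we would have $f(t_0)<0$, and the intermediate value theorem would produce $t^{*}$ with $v(t^{*},0)=v(t^{*},1)$, contradicting injectivity of $v(t^{*},\cdot)$. Thus each $v(t,\cdot)$ is strictly increasing, sending $(b,\infty)$ bijectively onto $(v(t,b),\infty)$, and the same applies to the inverse $v(-t,\cdot)$, which yields claim (i).

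For (ii) it suffices to check preimages of basic opens $(a,\infty)$. Given $(t_0,x_0)$ with $v(t_0,x_0)>a$, use Euclidean continuity of $v(t_0,\cdot)$ to pick $b<x_0$ with $v(t_0,b)>a$, then Euclidean continuity of $t\mapsto v(t,b)$ to pick an open interval $I\ni t_0$ with $v(t,b)>a$ for every $t\in I$. Monotonicity then yields $v(t,x)>v(t,b)>a$ whenever $t\in I$ and $x>b$, so the product $I\times(b,\infty)$ is a $\tau_{\ell}$-open neighborhood of $(t_0,x_0)$ contained in $v^{-1}((a,\infty))$. The main obstacle is the global order-preserving step; once that is in place, both the homeomorphism property and the continuity argument are routine. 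A naive topological coarsening argument does not suffice here, because $\tau_{\ell}$ is coarsened on the space factor of the domain as well as on the codomain, and coarsening the domain in general destroys continuity.
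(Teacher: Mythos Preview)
Your proof is correct and follows essentially the same route as the paper: verify continuity of $v$ with respect to the lower topology by testing against the basic (co)opens and using the Euclidean continuity of $v$ to produce product-open neighborhoods. The paper works dually with closed sets $\R_{\leq y}$, fixes a point $(t,x)$ outside the preimage, and uses continuity of $s\mapsto v(t+s,y)$ to exhibit a product-open neighborhood $(t-\delta,t+\delta)\times\R_{>x-d}$ disjoint from $v^{-1}(\R_{\leq y})$; this is the complement of your argument with $(a,\infty)$.

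The substantive difference is your explicit verification that every $v(t,\cdot)$ is order-preserving, via the intermediate value argument on $f(t)=v(t,1)-v(t,0)$. The paper's computation $v^{-1}(\R_{\leq y})=\bigcup_{t}\{t\}\times\R_{\leq v(t,y)}$ tacitly assumes exactly this monotonicity (and in fact the displayed formula should read $\R_{\leq v(-t,y)}$), so your proof fills a genuine gap in the exposition. Your remark that a naive ``coarsen the codomain'' argument fails, because the space factor of the domain is coarsened as well, correctly identifies why some work is needed here. Beyond that, the two proofs are the same idea; your decomposition into (i) time-$t$ homeomorphisms and (ii) joint continuity is a clean packaging, though (i) is formally redundant once (ii) is established.
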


\begin{proof}
Let $v$ be a flow on the one-dimensional Euclidean space $\R$ and $R$ the real line with the lower topology.
Fix a closed subset $F$ with respect to the lower topology of $R$.
If $F = \emptyset$ (resp. $F = \R$), then the inverse image $v^{-1}(F)$ is empty (resp. $\R \times X$) which is open.
To show the continuity of $v$, we may assume that $F = \R_{\leq y}$ for some point $y \in R$.
Then the inverse image $v^{-1}(F)$ is $v^{-1}(\R_{\leq y}) = \bigcup_{t \in \R} \{t \} \times \R_{\leq v(t,y)}$.
Fix a point $p = (t,x) \notin v^{-1}(F)$.
Since $v$ is continuous with respect to the Euclidean topology, there is a small positive number $\delta > 0$ such that $\max_{s \in [-\delta, \delta]} v(t+s,y) < x$.
Put $d := x - \max_{s \in [-\delta, \delta]} v(t+s,y) > 0$.
This implies that the open subset $(t -\delta, t+\delta) \times  \R_{> x-d}$ of the product topological space $\R \times R$ does not intersect $v^{-1}(F)$.
Therefore $v^{-1}(F)$ is a closed subset of $\R \times R$.
\end{proof}

Recall that the cointerval topology of a pre-ordered set $X$ is the topology generated by a base $\{ X - [a,b] \mid a < b \} \sqcup \{ X, \emptyset \}$, where $[a,b] := \mathop{\uparrow} a \cap \mathop{\downarrow} b$.
Then we have the following observation.

\begin{lemma}\label{lem:cointerval}
A flow on the one-dimensional Euclidean space $\R$ is also a flow on the real line with the cointerval topology.
\end{lemma}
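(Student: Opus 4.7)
The plan is to mimic the proof of the previous lemma, reducing continuity of $v \colon \R \times R \to R$ (where $R$ denotes $\R$ with the cointerval topology) to showing that $v^{-1}([a,b])$ is closed in the product $\R \times R$ for every $a < b$; the algebraic flow conditions $v_0 = \mathrm{id}$ and $v_t \circ v_s = v_{t+s}$ are set-theoretic and persist unchanged, and the homeomorphism condition on $v_t$ follows from the fact that any Euclidean homeomorphism of $\R$ is a strictly monotone bijection (and in fact strictly increasing, by an intermediate value argument applied to the continuous function $t \mapsto v_t(1) - v_t(0)$, which equals $1$ at $t = 0$ and never vanishes by injectivity of $v_t$) and hence sends bounded closed intervals to bounded closed intervals, together with its inverse $v_{-t}$.

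For the substantive step, I would fix $(t_0, x_0) \notin v^{-1}([a,b])$ and, by symmetry, treat only the case $v(t_0, x_0) > b$. The key auxiliary functions are the Euclidean-continuous maps $\alpha(t) := v_t^{-1}(a)$ and $\beta(t) := v_t^{-1}(b)$. Strict monotonicity of $v_{t_0}$ gives $\beta(t_0) < x_0$, and continuity of $\beta$ produces $\delta > 0$ and $d \in \R$ with $\sup_{t \in [t_0 - \delta, t_0 + \delta]} \beta(t) \leq d < x_0$; compactness of $[t_0 - \delta, t_0 + \delta]$ together with continuity of $\alpha$ then furnishes $c < d$ satisfying $c \leq \inf_{t \in [t_0 - \delta, t_0 + \delta]} \alpha(t)$. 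Setting $I := (t_0 - \delta, t_0 + \delta)$, the product $I \times (R - [c, d])$ is a cointerval-open neighborhood of $(t_0, x_0)$ in $\R \times R$, and for any $(t, x)$ in it, monotonicity of $v_t$ forces $v_t(x) < v_t(c) \leq a$ if $x < c$ and $v_t(x) > v_t(d) \geq b$ if $x > d$; in either case $v_t(x) \notin [a, b]$, so this neighborhood is disjoint from $v^{-1}([a,b])$. The symmetric case $v(t_0, x_0) < a$ is handled by exchanging the roles of $\alpha$ and $\beta$.

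The main obstacle is that cointerval-open neighborhoods of $x_0$ necessarily contain both unbounded tails $(-\infty, c)$ and $(d, \infty)$, so a small Euclidean interval around $x_0$ is not available; one must simultaneously control the image of both tails under $v_t$, uniformly as $t$ varies near $t_0$. The compactness of closed time intervals, combined with the Euclidean continuity of $\alpha$ and $\beta$, supplies precisely this uniformity and is the crux of the argument.
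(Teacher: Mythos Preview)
Your proof is correct and follows essentially the same route as the paper: reduce continuity to showing each $v^{-1}([a,b])$ is closed in $\R \times R$, pick a point outside, and use Euclidean continuity over a compact time interval to produce a cointerval-open product neighborhood disjoint from the preimage. Your version is arguably more careful---explicitly establishing that each $v_t$ is orientation-preserving and working with the inverse functions $\alpha(t) = v_t^{-1}(a)$, $\beta(t) = v_t^{-1}(b)$---whereas the paper works directly with $t \mapsto v(t,a)$ and $t \mapsto v(t,b)$ and centers the excluded interval symmetrically about $[a,b]$.
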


\begin{proof}
Let $v$ be a flow on the one-dimensional Euclidean space $\R$ and $R$ the real line with the lower topology.
Fix a closed subset $F$ with respect to the cointerval topology of $R$.
If $F = \emptyset$ (resp. $F = \R$), then the inverse image $v^{-1}(F)$ is empty (resp. $\R \times R$) which is open.
To show the continuity of $v$, we may assume that $F = [a,b]$ for some pair $a \leq b \in R$.
Then the inverse image $v^{-1}(F)$ is $v^{-1}([a,b]) = \bigcup_{t \in \R} \{t \} \times [v(t,a), v(t,b)]$.
Fix a point $p = (t,x) \notin v^{-1}(F)$.
Then $x \notin [v(t,a), v(t,b)]$.
Suppose that $v(t,b) < x$ (resp. $x < v(t,a)$).
Since $v$ is continuous with respect to the Euclidean topology, there is a small positive number $\delta > 0$ such that $\max_{s \in [-\delta, \delta]} v(t+s,b) < x$ (resp. $x < \min_{s \in [-\delta, \delta]} v(t+s,a)$).
Put $d := \max \{ \max_{s \in [-\delta, \delta]} v(t+s,b) -b, a - \min_{s \in [-\delta, \delta]} v(t+s,a)\} \geq 0$.
This implies that the open subset $(t - \delta, t+\delta) \times (R- [a-d, b+d])$ of the product topological space $\R \times R$ does not intersect $v^{-1}(F)$.
Therefore $v^{-1}(F)$ is a closed subset of $\R \times R$.
This means that $v$ is continuous.
\end{proof}

%

\section{Fundamental properties of flows on Hausdorff spaces}
From now on, we assume that the topological space $X$ is Hausdorff unless otherwise stated.
We have the following property.

\begin{lemma}\label{lem:decomp_limit}
For any point $x$, we have $\overline{O(x)} = \alpha(x) \cup O(x) \cup \omega(x)$.
\end{lemma}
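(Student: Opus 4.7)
The plan is to prove the two inclusions separately. The inclusion $\alpha(x) \cup O(x) \cup \omega(x) \subseteq \overline{O(x)}$ is immediate from the definitions: $O(x) \subseteq \overline{O(x)}$ tautologically, while $\alpha(x) = \bigcap_{n \in \R}\overline{\{v_t(x) \mid t < n\}}$ and $\omega(x) = \bigcap_{n \in \R}\overline{\{v_t(x) \mid t > n\}}$ are intersections of closures of subsets of $O(x)$, hence contained in $\overline{O(x)}$. This direction requires no further argument and does not invoke the Hausdorff hypothesis.

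For the reverse inclusion, I would fix $y \in \overline{O(x)}$, assume for contradiction that $y \notin \alpha(x) \cup O(x) \cup \omega(x)$, and derive a contradiction. From $y \notin \omega(x)$ the definition of the $\omega$-limit set furnishes a real number $n$ and an open neighborhood $U$ of $y$ with $U \cap \{v_t(x) \mid t > n\} = \emptyset$; analogously $y \notin \alpha(x)$ furnishes a real number $m$ and an open neighborhood $V$ of $y$ with $V \cap \{v_t(x) \mid t < m\} = \emptyset$. If $m > n$, then $U \cap V$ is already an open neighborhood of $y$ disjoint from $O(x) = \{v_t(x) \mid t < m\} \cup \{v_t(x) \mid t > n\}$, contradicting $y \in \overline{O(x)}$; so we may assume $m \leq n$.

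In that remaining case, the closed interval $[m,n]$ is compact, so its continuous image $K := v([m,n], x) \subseteq O(x)$ is compact, and the Hausdorff hypothesis forces $K$ to be closed in $X$. Since $y \notin O(x) \supseteq K$, the set $W := (U \cap V) \setminus K$ is still an open neighborhood of $y$, and by construction it is disjoint from $O(x) = \{v_t(x) \mid t < m\} \cup K \cup \{v_t(x) \mid t > n\}$, again contradicting $y \in \overline{O(x)}$. The decisive step, and the only place the Hausdorff assumption is genuinely used, is the closedness of the compact arc $v([m, n], x)$; this is the main (and essentially only) obstacle, since without Hausdorffness one cannot in general excise a compact piece from a neighborhood of $y$ while preserving openness, as already illustrated by the cointerval-topology example behind Lemma~\ref{lem:cointerval}.
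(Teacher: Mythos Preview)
Your proof is correct and follows essentially the same route as the paper's: both argue the nontrivial inclusion by contradiction, use $y\notin\alpha(x)$ and $y\notin\omega(x)$ to produce open neighborhoods of $y$ missing the two tails of the orbit, and then invoke the Hausdorff hypothesis to know that the remaining compact orbit arc $v([m,n],x)$ is closed and can be excised while keeping the neighborhood open. The only cosmetic difference is that the paper chooses a single symmetric parameter $T>0$ with $[-T,T]$ in place of your $[m,n]$, which avoids your separate (trivial) case $m>n$; also, your closing remark about the cointerval-topology example is slightly off target---the paper's counterexample to the lemma without Hausdorffness is the doubled interval $I_2 = I_1 \times \{-,+\}$ constructed via Lemma~\ref{lem:double}, not the cointerval flow.
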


\begin{proof}
By definition of $\alpha$-limit set and $\omega$-limit set, the orbit closure $\overline{O(x)}$ contains $\alpha(x) \cup O(x) \cup \omega(x)$.
Assume that there is a point $y \in \overline{O(x)} - (\alpha(x) \cup O(x) \cup \omega(x))$.
Define a continuous mapping $v_x  \colon \R \to X$ by $v_x :=v( \cdot, x)$.
Since a closed interval $I \subset \R$ is compact, the image $v_x(I)$ is compact.
The Hausdorff separation axiom of $X$ implies that the image of a closed interval by $v_x$ is closed.
By $y \notin \alpha(x) \cup \omega(x)$, there is a number $T > 0$, such that $y \notin \overline{v_x(\R_{<-T})}$ and $y \notin \overline{v_x(\R_{>T})}$.
Then $y \notin \overline{v_x(\R - [-T,T])}$ and so there is an open \nbd $U$ of $y$ such that $U \cap v_x(\R - [-T,T]) = \emptyset$.
Since the image $v_x([-T,T]) \subseteq O(x)$ of the interval $[-T,T]$ is closed and $y \notin O(x)$, the difference $V := U \setminus v_x([-T,T])$ is an open \nbd of $y$.
Then $V \cap O(x) = (U \setminus v_x([-T,T])) \cap O(x) = U \cap (O(x) - v_x([-T,T])) = U \cap v_x(\R - [-T,T]) = \emptyset$ and so $y \notin \overline{O(x)}$, which contradicts $y \in \overline{O(x)}$.
Thus $\overline{O(x)} = \alpha(x) \cup O(x) \cup \omega(x)$.
\end{proof}

The Hausdorff separation axiom is necessary.
In fact, there is a flow on a topological space with an orbit $O$ such that $\overline{O(x)} \supsetneq \alpha(x) \cup O(x) \cup \omega(x)$.
Indeed, let $w_1$ be a flow defined by $w_1(t,x) := x + t(1- x^2)$ on a closed interval $I_1 := [-1, 1]$ with the relative topology $\mathcal{O}_1$ of the Euclidean topology.
Then $I_1$ has exactly three orbits $\{ -1 \}$, $(-1, 1)$, and $\{ 1 \}$ with $\overline{\{ -1 \}} = \{ -1 \}$, $\overline{(-1,1)} = [-1,1]$, and $\overline{\{ 1 \}} = \{ 1 \}$.
Define a topological space $I_2 := I_1 \times \{ -, + \}$ whose topology is  generated by a base $\mathcal{B}_2 := \{ U \times \{ + \}, U \times \{ -, + \} \mid U \in \mathcal{O}_1 \}$, and a flow $w_2$ by $w_2 (t, (x, \sigma)) := (w_1(t,x), \sigma)$ because of Lemma~\ref{lem:double}.
Then the closure of the orbit $O_m := (-1, 1) \times \{ + \} = O((0,+))$ is the whole space $I_2$, and $\alpha(O_m) = \{-1\} \times \{ -, + \}$ and $\omega(O_m) = \{ 1\} \times \{ -, + \}$.
Therefore $\overline{O_m} - (\alpha(O_m) \cup O_m \cup \omega(O_m)) = (-1, 1) \times \{ - \} = O((0,-))$.
The previous lemma implies the following observation.

\begin{lemma}\label{lem:tr_bdry}
The following statements hold for a saturated subset $A \subseteq X$: \\
$(1)$ $\partial_\perp A = \left( \bigcup_{z \in A} \overline{O(z)} \right) - A =  \mathop{\downarrow}_{\leq_v}A - A$.
\\
$(2)$ $\partial_\pitchfork A = \overline{A} - \bigcup_{z \in A} \overline{O(z)} = \overline{A} - \mathop{\downarrow}_{\leq_v}A$.
\end{lemma}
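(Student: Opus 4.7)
The plan is to derive both identities as direct consequences of Lemma~\ref{lem:decomp_limit} applied orbitwise, together with the saturation of $A$, with the rest being set-theoretic bookkeeping.

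First I would reconcile the two right-hand sides in each identity by establishing that $\mathop{\downarrow}_{\leq_v} A = \bigcup_{z \in A} \overline{O(z)}$. Since $x \leq_v y$ means $O(x) \subseteq \overline{O(y)}$, and since $\overline{O(y)}$ is invariant (because each $v_t$ is a homeomorphism, so the closure of an invariant set is invariant), one has $x \leq_v y$ iff $x \in \overline{O(y)}$, i.e.\ $\mathop{\downarrow}_{\leq_v} y = \overline{O(y)}$. Taking the union over $y \in A$ gives the desired equality, so the second equalities in (1) and (2) reduce to the first.

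Next I would invoke Lemma~\ref{lem:decomp_limit} to write $\overline{O(z)} = \alpha(z) \cup O(z) \cup \omega(z)$ for each $z \in A$, and take the union over $z \in A$. Saturation of $A$ gives $\bigcup_{z\in A} O(z) = A$, so
\[
\bigcup_{z\in A}\overline{O(z)} \;=\; \alpha(A) \cup A \cup \omega(A).
\]
Subtracting $A$ yields
\[
\Bigl(\bigcup_{z\in A}\overline{O(z)}\Bigr) - A \;=\; (\alpha(A) \cup \omega(A)) \setminus A \;=\; \partial_\perp A,
\]
which proves (1).

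For (2), I would compute
\[
\partial_\pitchfork A \;=\; (\overline{A} - A) \setminus (\alpha(A) \cup \omega(A)) \;=\; \overline{A} \setminus (A \cup \alpha(A) \cup \omega(A)) \;=\; \overline{A} \setminus \bigcup_{z\in A}\overline{O(z)},
\]
using the identity established in the previous step together with $A \subseteq \bigcup_{z\in A}\overline{O(z)}$. There is no real obstacle: the only nontrivial input is Lemma~\ref{lem:decomp_limit}, which is where the Hausdorff hypothesis on $X$ is used; every other step is pure set algebra.
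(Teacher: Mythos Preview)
Your proof is correct and follows essentially the same approach as the paper: both invoke Lemma~\ref{lem:decomp_limit} orbitwise, use saturation to identify $\bigcup_{z\in A}O(z)=A$, and then reduce everything to set algebra. You are slightly more explicit about why $\mathop{\downarrow}_{\leq_v}A=\bigcup_{z\in A}\overline{O(z)}$, but this is a cosmetic difference only.
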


\begin{proof}
By Lemma~\ref{lem:decomp_limit}, for any point $z$, we have $\overline{O(z)} = \alpha(z) \cup O(z) \cup \omega(z)$.
Therefore $\partial_\perp A = \left( \bigcup_{z \in A} \alpha(z) \cup \omega(z) \right) \setminus A = \left( \bigcup_{z \in A} \alpha(z) \cup O(z) \cup \omega(z) \right) \setminus A = \left( \bigcup_{z \in A} \overline{O(z)} \right) - A =  \mathop{\downarrow}_{\leq_v}A - A$.
Moreover, we obtain $\partial_\pitchfork A = (\overline{A} - A) \setminus \left( \bigcup_{z \in A} (\alpha(z) \cup \omega(z)) \right) = \overline{A} -  \left( \bigcup_{z \in A} (\alpha(z) \cup O(z) \cup \omega(z)) \right) = \overline{A} - \bigcup_{z \in A} \overline{O(z)} = \overline{A} - \mathop{\downarrow}_{\leq_v}A $.
\end{proof}

We have the following properties.

\begin{lemma}\label{lem:kc_closed}
Any closed orbit is proper and is a closed subset.
\end{lemma}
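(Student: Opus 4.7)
The plan is to split into the two cases singular and periodic that constitute the definition of closed orbit. In the singular case the orbit is a single point $\{x\}$, which is trivially proper (homeomorphic to a singleton), and is closed because the Hausdorff hypothesis forces singletons to be closed.

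For the periodic case, let $x$ satisfy $v_T(x)=x$ with $T>0$ minimal, i.e.\ $v_t(x)\ne x$ for $t\in(0,T)$. The mapping $v_x\colon \R\to X$, $t\mapsto v(t,x)$, is continuous, and the periodicity and group law $v_{t+s}=v_t\circ v_s$ yield that $v_x(s)=v_x(s')$ if and only if $s-s'\in T\Z$; I would verify this by the standard argument that if $v_{s-s'}(x)=x$ then $s-s'$ is a multiple of the minimal period $T$. Consequently $v_x$ factors through a continuous bijection $\bar v_x\colon \R/T\Z \to O(x)$.

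Now I would invoke the classical fact that a continuous bijection from a compact space to a Hausdorff space is a homeomorphism: $\R/T\Z$ is compact, and $O(x)\subseteq X$ is Hausdorff as a subspace of a Hausdorff space. Therefore $\bar v_x$ is a homeomorphism, which simultaneously shows that $O(x)$ is homeomorphic to the unit circle (so $O(x)$ is proper) and that $O(x)$ is compact. Compactness in a Hausdorff space implies closedness, so $O(x)$ is a closed subset of $X$.

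I do not expect a genuine obstacle here; the one point that requires a line of justification rather than pure quotation is the injectivity of $\bar v_x$, which rests on the minimality of $T$ together with the $\R$-action property. Everything else is a straight application of the compact-to-Hausdorff homeomorphism principle.
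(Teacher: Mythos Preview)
Your proposal is correct and follows essentially the same approach as the paper: both split into the singular and periodic cases, and in the periodic case both factor the orbit map through $\R/T\Z$ and invoke the compact-to-Hausdorff homeomorphism principle to obtain properness and compactness (hence closedness). The only cosmetic difference is the order in which closedness and properness are deduced.
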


\begin{proof}
Let $O$ be a closed orbit and $x \in O$ a point.
Suppose that $O$ is singular.
Since any singleton $Y$ has only one topology $\{ Y, \emptyset \}$, the orbit $O$ is embedded and so proper.
Since $X$ is Hausdorff and so $T_1$, the singleton $O$ is closed.
Thus we may assume that $O$ is periodic.
Define a continuous mapping $v_x  \colon \R \to X$ by $v_x :=v( \cdot, x)$.
Then there is a number $T > 0$, called the period of $O$, such that $v_x([0,T]) = O$ and $v_x((0,T)) \subsetneq O$.
Since the closed interval $[0,T]$ is compact, the image $v_x([0,T]) = O$ is compact.
The Hausdorff separation axiom implies that $O$ is closed.
Since the restriction $v_x  \colon  [0,T) \to O$ is bijective, there is an induced mapping $v_x/T \colon \R/T\Z \to O$, where $\R/T\Z$ is a quotient space $\R/\sim$ of an equivalence relation $\sim$ by $x \sim y$ if $(x - y)/T \in \Z$.
Then the induced mapping $v_x/T \colon \R/T\Z \to O$ is a continuous bijection from the compact space $\R/T\Z$ to the Hausdorff space $O$ and so is a homeomorphism.
This means that $O$ is embedded and so proper.
\end{proof}

\begin{lemma}\label{lem:kc_proper}
The following conditions are equivalent for a point $x \in X - \Cv$: \\
$(1)$ The orbit $O(x)$ is proper.
\\
$(2)$ The orbit $O(x)$ is not recurrent.
\\
In any case, the orbit $O(x)$ has an open neighborhood in which the orbit is a closed subset and the coborder $\overline{O(x)} - O(x)$ is closed.
\end{lemma}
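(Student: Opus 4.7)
The plan is to prove the two implications separately and then deduce the ``in any case'' clause from Lemma~\ref{lem:kc}. For $(1) \Rightarrow (2)$, the hypothesis $x \in X - \Cv$ says that $O(x)$ is non-closed, so non-recurrence of $O(x)$ is immediate from Lemma~\ref{lem:properness}.

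The real content lies in $(2) \Rightarrow (1)$. Assuming $O(x)$ is non-recurrent, Hausdorffness of $X$ gives the KC property, so Lemma~\ref{lem:kc} produces an open neighborhood of $O(x)$ in which $O(x)$ is closed and shows that the coborder $\overline{O(x)} - O(x)$ is closed. This simultaneously establishes the ``in any case'' addendum, once we know (1) and (2) are equivalent. What remains is to upgrade ``closed in an open neighborhood'' to ``proper''. Since $x \notin \Cv$, the orbit $O(x)$ is neither singular nor periodic, so the orbit map $v_x \colon \R \to X$, $v_x(t) := v(t,x)$, is a continuous bijection onto $O(x)$; it will suffice to show that its inverse is continuous, i.e., that $v_x$ is a topological embedding.

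Concretely, I would show that for every $t_0 \in \R$ and every $\varepsilon > 0$ there is an open neighborhood $V$ of $y := v_x(t_0)$ in $X$ with $V \cap O(x) \subseteq v_x((t_0 - \varepsilon, t_0 + \varepsilon))$. Applying Lemma~\ref{lem:kc} to the point $y$ (which is also non-recurrent, since recurrence is an orbit-invariant property) produces a number $T > 0$ and an open neighborhood $U_y \ni y$ with $U_y \cap O(x) \subseteq v_x([t_0 - T, t_0 + T])$. Because $v_x$ is injective, the restriction $v_x|_{[t_0 - T, t_0 + T]}$ is a continuous bijection from the compact interval onto the Hausdorff subspace $v_x([t_0 - T, t_0 + T]) \subseteq X$, hence a homeomorphism. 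Setting $\varepsilon' := \min(\varepsilon, T)$, the image $v_x((t_0 - \varepsilon', t_0 + \varepsilon'))$ is therefore open in $v_x([t_0 - T, t_0 + T])$, so there exists an open $W \subseteq X$ with $W \cap v_x([t_0 - T, t_0 + T]) = v_x((t_0 - \varepsilon', t_0 + \varepsilon'))$; the set $V := W \cap U_y$ is the required neighborhood.

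The main obstacle is primarily bookkeeping: isolating $O(x)$ near $y$ as a compact embedded arc via Lemma~\ref{lem:kc}, then carving out an open sub-arc inside $X$ using the standard compact-to-Hausdorff homeomorphism principle. No new dynamical input is required beyond what Lemma~\ref{lem:kc} already supplies; the remaining work is purely point-set topological.
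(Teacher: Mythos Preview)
Your proof is correct and follows essentially the same approach as the paper's: both use Lemma~\ref{lem:properness} for $(1)\Rightarrow(2)$, invoke Lemma~\ref{lem:kc} for the ``in any case'' clause, and for $(2)\Rightarrow(1)$ use non-recurrence to trap the orbit near each point inside a compact arc $v_x([t_0-T,t_0+T])$, then apply the compact-to-Hausdorff bijection principle to conclude $v_x$ is an embedding. One minor remark: the \emph{statement} of Lemma~\ref{lem:kc} only yields a neighborhood of the whole orbit in which the orbit is closed, not the pointwise data $T$ and $U_y$ with $U_y\cap O(x)\subseteq v_x([t_0-T,t_0+T])$; that finer conclusion comes from the proof of Lemma~\ref{lem:kc} (equivalently, directly from non-recurrence of $y$), so you should cite accordingly.
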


\begin{proof}
Fix a point $x \in X - \Cv$.
Lemma~\ref{lem:properness} implies that properness implies non-recurrence.
Lemma~\ref{lem:kc} implies that non-recurrence implies that the orbit $O(x)$ has an open neighborhood in which the orbit is a closed subset.
Suppose that $O(x)$ is not recurrent.
Then $x \notin \alpha(x) \cup \omega(x)$ and $x \in \mathrm{P}(v)$.
Define the continuous mapping $v_x  \colon  \R \to X$ by $v_x :=v( \cdot, x)$.
The non-recurrence implies that $v_x$ is injective.
There is a number $T > 0$ such that $x \notin \overline{v_x(\R_{<-T})}$ and $x \notin \overline{v_x(\R_{>T})}$.
Moreover, there is an open \nbd $U$ of $x$ such that $v_x(\R - [-T,T]) \cap U = \emptyset$.
Then $\overline{v_x(\R - [-T,T])} \cap U = \emptyset$ and $O(x) \cap U = v_x([-T,T]) \cap U$.
The compactness of the closed interval $[-T,T]$ implies that the image $v_x([-T,T])$ is compact.
Then the restriction $v_x|_{[-T,T]} \colon  [-T,T] \to v_x([-T,T])$ is a continuous bijection from a compact space to a Hausdorff space.
This implies that the restriction $v_x|_{[-T,T]} \colon  [-T,T] \to v_x([-T,T])$ is a homeomorphism and so $\overline{O(x)} \cap U = \overline{v_x(\R - [-T,T]) \cup v_x([-T,T])} \cap U = (\overline{v_x(\R - [-T,T])} \cup v_x([-T,T])) \cap U = v_x([-T,T]) \cap U = O(x) \cap U$.
We claim that the restriction $v_x  \colon  \R \to O(x)$ is a local homeomorphism.
Indeed, since $v_x([-T,T])$ is a simple closed arc and so is compact, there is an open flow box $B \subset U$ containing $x$ such that the intersection $C_x := B \cap v_x([-T,T])$ is an open orbit arc contained in $v_x([-T,T])$.
Since the restriction $v_x|_{[-T,T]} \colon  [-T,T] \to v_x([-T,T])$ is a homeomorphism, so is the restriction $v_x|_{v_x^{-1}(C_x)} \colon v_x^{-1}(C_x) \to C_x$.
By $\overline{O(x)} \cap U = v_x([-T,T]) \cap U$, we have that $\overline{O(x)} \cap B = O(x) \cap B = v_x([-T,T]) \cap B = C_x$.
This means that $C_x$ is open in $O(x)$.
Since the restriction $v_t|_{O(x)} \colon O(x) \to O(x)$ for any $t \in \R$ is a homeomorphism, the restriction $v_x  \colon  \R \to O(x)$ is a local homeomorphism.
Therefore $v_x  \colon  \R \to O(x)$ is a bijective local homeomorphism.
Since a local homeomorphism is an open mapping, the restriction $v_x  \colon  \R \to O(x)$ is  a homeomorphism.
This means that $O(x)$ is embedded and so proper.
\end{proof}

Note that the existence of a neighborhood as in the previous lemma does not imply recurrence.
Indeed, there is a flow on a Hausdorff space such that there is a recurrent orbit which has a neighborhood in which the orbit is a closed subset.
Indeed, consider an irrational rotation $w$ on a torus and fix an orbit $O$.
Then the subspace $X:= O$ is Hausdorff and the restriction $v:= w|_O$ is a flow on $X$ which consists of one recurrent orbit.
Since any closed orbit is recurrent, Lemma~\ref{lem:kc_proper} implies  characterizations of the saturated subsets $\mathrm{P}(v)$ and $\mathrm{R}(v)$,  and a characterization of recurrence.

\begin{corollary}\label{cor:t2_proper}
Let $v$ be a flow on a Hausdorff space $X$.
Then the following conditions are equivalent for an orbit $O$: \\
$(1)$ The orbit $O$ is non-recurrent  {\rm(i.e.} $O \subseteq \mathrm{P}(v)$ {\rm)}.
\\
$(2)$ The orbit $O$ is a non-closed proper orbit.
\end{corollary}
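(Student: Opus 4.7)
The plan is to reduce the corollary to the two preceding lemmas by first clearing away the closed orbits. The key observation is that a closed orbit is always recurrent: if $O = \{x\}$ is singular, then $\omega(x) = \bigcap_{n \in \R} \overline{\{v_t(x) \mid t > n\}} = \{x\}$, so $x \in \omega(x)$; and if $O$ is periodic with period $T$, then for any $n$ we have $x = v_{nT}(x) \in \{v_t(x) \mid t > n\}$, giving $x \in \omega(x)$ again. Hence every closed orbit is recurrent, which can be noted as a one-line preliminary.

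For the implication $(1) \Rightarrow (2)$, I would argue as follows. Suppose $O$ is non-recurrent. By the observation above, $O$ cannot be closed, so any point $x \in O$ lies in $X - \mathrm{Cl}(v)$. Then Lemma~\ref{lem:kc_proper} applies to $x$, and the implication $(2) \Rightarrow (1)$ of that lemma tells us that non-recurrence of $O(x)$ forces properness of $O(x) = O$. Thus $O$ is a non-closed proper orbit.

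For the implication $(2) \Rightarrow (1)$, suppose $O$ is non-closed and proper. Since $O$ is not closed, every point $x \in O$ lies in $X - \mathrm{Cl}(v)$, and Lemma~\ref{lem:properness} (or equivalently the $(1) \Rightarrow (2)$ direction of Lemma~\ref{lem:kc_proper}) immediately yields that $O$ is not recurrent, i.e. $O \subseteq \mathrm{P}(v)$.

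I do not expect a substantive obstacle here — the corollary is essentially a repackaging of Lemma~\ref{lem:kc_proper} once one discards closed orbits from consideration. The only mildly nontrivial step is justifying that closed orbits are always recurrent so that the hypothesis $x \in X - \mathrm{Cl}(v)$ of Lemma~\ref{lem:kc_proper} can be invoked freely under the non-recurrence assumption; this is immediate from the definitions of $\alpha$- and $\omega$-limit sets applied to singular and periodic orbits.
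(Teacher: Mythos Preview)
Your proposal is correct and matches the paper's approach exactly: the paper derives this corollary in one line from Lemma~\ref{lem:kc_proper} together with the observation that any closed orbit is recurrent, which is precisely what you spell out.
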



\begin{corollary}\label{cor:t2_nonclosed_recurrent}
Let $v$ be a flow on a Hausdorff space $X$.
Then the following conditions are equivalent for an orbit $O$: \\
$(1)$ The orbit $O$ is a non-closed recurrent orbit {\rm(i.e.} $O \subseteq \mathrm{R}(v)$ {\rm)}.
\\
$(2)$ The orbit $O$ is a non-closed non-proper orbit.
\end{corollary}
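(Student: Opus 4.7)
The plan is to derive this corollary as a direct contrapositive of Corollary~\ref{cor:t2_proper}, using the decomposition $X = \mathop{\mathrm{Cl}}(v) \sqcup \mathrm{P}(v) \sqcup \mathrm{R}(v)$ to convert between the language of the subsets $\mathrm{P}(v)$, $\mathrm{R}(v)$ and the language of recurrence/properness of individual orbits. Since Corollary~\ref{cor:t2_proper} already identifies the class of non-recurrent orbits with the class of non-closed proper orbits (in the Hausdorff setting), the two ``non-closed'' classes must also agree under complementation within ``non-closed orbits,'' which is precisely what the present statement asserts.

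First I would handle $(1) \Rightarrow (2)$: assume $O \subseteq \mathrm{R}(v)$. Then $O$ is non-closed by definition of $\mathrm{R}(v)$. Suppose for contradiction that $O$ is proper. Since $O$ is a non-closed proper orbit, the implication $(2) \Rightarrow (1)$ of Corollary~\ref{cor:t2_proper} gives $O \subseteq \mathrm{P}(v)$, i.e., $O$ is non-recurrent. This contradicts $O \subseteq \mathrm{R}(v) \subseteq \mathcal{R}(v)$, so $O$ must be non-proper.

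Next I would handle $(2) \Rightarrow (1)$: assume $O$ is non-closed and non-proper. Suppose for contradiction that $O$ is non-recurrent, i.e., $O \subseteq \mathrm{P}(v)$. Then the implication $(1) \Rightarrow (2)$ of Corollary~\ref{cor:t2_proper} yields that $O$ is a non-closed proper orbit, contradicting non-properness. Hence $O$ is recurrent, and since it is non-closed it lies in $\mathrm{R}(v)$.

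The proof involves essentially no obstacle beyond bookkeeping, since all of the analytic work (producing an open neighborhood trivialising the parametrisation of a non-recurrent orbit, and the Hausdorff-based identification of compact orbit arcs with closed subsets) has already been carried out in Lemma~\ref{lem:kc_proper} and packaged into Corollary~\ref{cor:t2_proper}. The only point requiring a moment of care is the translation between ``$O$ is recurrent and non-closed'' and ``$O \subseteq \mathrm{R}(v)$,'' which follows immediately from the definition of $\mathrm{R}(v)$ as the union of non-closed recurrent orbits together with the decomposition $X = \mathop{\mathrm{Cl}}(v) \sqcup \mathrm{P}(v) \sqcup \mathrm{R}(v)$ recorded in the preliminaries.
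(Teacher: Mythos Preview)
Your proof is correct and matches the paper's approach: the paper states this corollary without proof, treating it as an immediate consequence of Lemma~\ref{lem:kc_proper} (packaged as Corollary~\ref{cor:t2_proper}) together with the decomposition $X = \mathop{\mathrm{Cl}}(v) \sqcup \mathrm{P}(v) \sqcup \mathrm{R}(v)$, which is exactly the contrapositive argument you have written out.
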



\begin{corollary}\label{cor:recurrent}
Let $v$ be a flow on a Hausdorff space $X$.
Then the following conditions are equivalent for an orbit $O$: \\
$(1)$ The orbit $O$ is recurrent.
\\
$(2)$ The orbit $O$ either is a closed orbit or is non-proper.
\\
$(3)$ $O \subseteq \Cv \sqcup \mathrm{R}(v) = X - \mathrm{P}(v)$.
\end{corollary}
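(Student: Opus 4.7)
The plan is to obtain Corollary~\ref{cor:recurrent} as a direct bookkeeping consequence of Corollary~\ref{cor:t2_proper} together with the partition $X = \Sv \sqcup \Pv \sqcup \mathrm{P}(v) \sqcup \mathrm{R}(v)$ recorded in the preliminaries. No new dynamical argument is needed; every nontrivial ingredient is already in hand, so the task is simply to package them correctly.

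For the equivalence $(1) \Leftrightarrow (2)$, I would simply contrapose Corollary~\ref{cor:t2_proper}. That corollary says that $O$ is non-recurrent if and only if $O$ is a non-closed proper orbit; negating both sides gives that $O$ is recurrent if and only if $O$ is either closed or non-proper, which is exactly statement~$(2)$.

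For $(1) \Leftrightarrow (3)$, I would invoke the partition $X = \Cv \sqcup \mathrm{P}(v) \sqcup \mathrm{R}(v)$, where $\Cv = \Sv \sqcup \Pv$. Since $\mathrm{P}(v)$ is by definition the union of non-recurrent orbits and $\mathrm{R}(v)$ is by definition the union of non-closed recurrent orbits, any recurrent orbit must lie in $X - \mathrm{P}(v) = \Cv \sqcup \mathrm{R}(v)$. For the converse I would note that orbits in $\mathrm{R}(v)$ are recurrent by construction, and that any closed orbit is automatically recurrent: a singular point $x$ satisfies $v_t(x) = x$ for every $t \in \R$, so $x \in \omega(x)$, while a periodic point $x$ of period $T > 0$ satisfies $v_{nT}(x) = x$ for all $n \in \Z_{>0}$, so again $x \in \omega(x)$.

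The only potential pitfall is keeping the two notations straight (the set $\mathcal{R}(v)$ of recurrent points versus the saturated subset $\mathrm{R}(v)$ consisting of non-closed recurrent orbits, with $\mathcal{R}(v) = \Cv \sqcup \mathrm{R}(v)$); once the partition $X = \Sv \sqcup \Pv \sqcup \mathrm{P}(v) \sqcup \mathrm{R}(v)$ is invoked explicitly there is no real obstacle, and the three equivalences collapse to a short formal deduction from Corollary~\ref{cor:t2_proper} and Lemma~\ref{lem:kc_closed}.
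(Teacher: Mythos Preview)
Your proposal is correct and essentially identical to the paper's approach: the paper states this corollary (together with Corollaries~\ref{cor:t2_proper} and~\ref{cor:t2_nonclosed_recurrent}) as an immediate consequence of Lemma~\ref{lem:kc_proper} plus the elementary observation that any closed orbit is recurrent, which is exactly what you do by contraposing Corollary~\ref{cor:t2_proper} and invoking the partition $X = \Cv \sqcup \mathrm{P}(v) \sqcup \mathrm{R}(v)$. The only minor remark is that your final reference to Lemma~\ref{lem:kc_closed} is unnecessary---the recurrence of closed orbits is the direct computation you already gave, not a consequence of that lemma.
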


Lemma~\ref{lem:kc_closed} and Lemma~\ref{lem:kc_proper} imply the following characterization of properness.

\begin{corollary}\label{cor:proper}
Let $v$ be a flow on a Hausdorff space $X$.
Then the following conditions are equivalent for an orbit $O$: \\
$(1)$ The orbit $O$ is proper.
\\
$(2)$ The orbit $O$ either is a closed orbit or is non-recurrent.
\\
$(3)$ $O \subseteq \Cv \sqcup \mathrm{P}(v) = X - \mathrm{R}(v)$.
\end{corollary}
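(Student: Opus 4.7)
The plan is to prove this corollary by assembling the two preceding results Lemma~\ref{lem:kc_closed} and Lemma~\ref{lem:kc_proper}, together with the basic decomposition $X = \Cv \sqcup \mathrm{P}(v) \sqcup \mathrm{R}(v)$ established in the preliminaries. I would organize the argument as a cycle $(1) \Rightarrow (2) \Rightarrow (1)$ and a separate check $(2) \Leftrightarrow (3)$.

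For $(1) \Rightarrow (2)$, assume $O$ is proper. Either $O \subseteq \Cv$, in which case $O$ is a closed orbit and (2) holds with its first alternative, or $O \cap \Cv = \emptyset$. In the latter case, fixing any $x \in O \subseteq X - \Cv$, Lemma~\ref{lem:kc_proper} applies and the properness of $O(x)$ forces $O(x) = O$ to be non-recurrent.

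For $(2) \Rightarrow (1)$, suppose first that $O$ is a closed orbit: Lemma~\ref{lem:kc_closed} directly yields properness. If instead $O$ is non-recurrent, one first needs the small observation that every closed orbit is recurrent: indeed, a singular orbit $\{x\}$ satisfies $x \in \omega(x)$, and a periodic orbit of period $T$ satisfies $x = v_{nT}(x) \in \omega(x)$ for each $n$. Hence non-recurrent implies $O \subseteq X - \Cv$, so Lemma~\ref{lem:kc_proper} gives properness of $O$. This is the one nonroutine wrinkle, but it is immediate from the definitions of $\Cv$ and $\omega(x)$.

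Finally, $(2) \Leftrightarrow (3)$ is a pure definitional rewriting. By the definition of $\mathrm{P}(v)$ as the union of non-recurrent orbits, an orbit $O$ is non-recurrent iff $O \subseteq \mathrm{P}(v)$; combined with $O \subseteq \Cv$ meaning $O$ is a closed orbit, condition (2) becomes $O \subseteq \Cv \sqcup \mathrm{P}(v)$. The decomposition $X = \Cv \sqcup \mathrm{P}(v) \sqcup \mathrm{R}(v)$ then identifies this union with $X - \mathrm{R}(v)$, matching (3). I do not anticipate any substantive obstacle; the entire proof is essentially a bookkeeping exercise that packages Lemma~\ref{lem:kc_closed} and Lemma~\ref{lem:kc_proper} with the tautology that closed orbits are recurrent.
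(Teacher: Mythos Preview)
Your proof is correct and mirrors the paper's approach exactly: the paper derives this corollary directly from Lemma~\ref{lem:kc_closed} and Lemma~\ref{lem:kc_proper} together with the remark that any closed orbit is recurrent, which is precisely the combination you use.
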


\subsection{Characterizations of weak orbit classes and orbit classes}

We have the following property.

\begin{lemma}\label{lem:orbit_cl}
For any point $x \in \mathop{\mathrm{Cl}}(v) \sqcup \mathrm{P}(v)= X - \mathrm{R}(v)$, we have $O(x) = \check{O}(x) = \hat{O}(x)$.
\end{lemma}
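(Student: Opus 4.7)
The plan is to exploit the inclusions $O(x)\subseteq\check{O}(x)\subseteq\hat{O}(x)$ already recorded after the definition of $\check{O}(x)$ and reduce the statement to the single containment $\hat{O}(x)\subseteq O(x)$. Split the argument into the two cases $x\in\mathop{\mathrm{Cl}}(v)$ and $x\in\mathrm{P}(v)$, which together exhaust $X-\mathrm{R}(v)$.

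For $x\in\mathop{\mathrm{Cl}}(v)$, invoke Lemma~\ref{lem:kc_closed}: any closed orbit of a flow on a Hausdorff space is a closed subset, so $\overline{O(x)}=O(x)$. Then for $y\in\hat{O}(x)$ we have $O(y)\subseteq\overline{O(y)}=\overline{O(x)}=O(x)$; since two orbits are either equal or disjoint, this forces $O(y)=O(x)$, whence $y\in O(x)$.

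For $x\in\mathrm{P}(v)$, suppose $y\in\hat{O}(x)$, so that $\overline{O(y)}=\overline{O(x)}$. By Lemma~\ref{lem:decomp_limit}, $y\in\overline{O(x)}=O(x)\cup\alpha(x)\cup\omega(x)$, and I aim to rule out the case $y\notin O(x)$. If $y\in\alpha(x)$, then by the invariance of $\alpha(x)$ (Lemma~\ref{lem:connectivity}) we have $O(y)\subseteq\alpha(x)$, and by the closedness of $\alpha(x)$ we get $\overline{O(y)}\subseteq\alpha(x)$. But $x\in\mathrm{P}(v)$ is non-recurrent, so $x\notin\alpha(x)$, and hence $\alpha(x)\subsetneq\overline{O(x)}$; this contradicts $\overline{O(y)}=\overline{O(x)}$. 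The case $y\in\omega(x)$ is symmetric. Thus $y\in O(x)$ and $\hat{O}(x)\subseteq O(x)$.

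The only non-routine step is the contradiction argument in the proper non-closed case, and its core input is precisely that a non-recurrent orbit cannot meet its own $\alpha$- or $\omega$-limit set; this is supplied by the definition of $\mathrm{P}(v)$ combined with Lemma~\ref{lem:decomp_limit} and the invariance in Lemma~\ref{lem:connectivity}. Combining the two cases yields $\hat{O}(x)\subseteq O(x)$, and together with the trivial inclusions this gives $O(x)=\check{O}(x)=\hat{O}(x)$.
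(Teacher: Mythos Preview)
Your proof is correct and follows essentially the same approach as the paper's: reduce to showing $\hat{O}(x)\subseteq O(x)$, dispatch the closed-orbit case via Lemma~\ref{lem:kc_closed}, and in the $\mathrm{P}(v)$ case argue that any $y\in\hat{O}(x)\setminus O(x)$ would force $\overline{O(y)}$ into the closed invariant set $\alpha(x)\cup\omega(x)$, contradicting $x\notin\alpha(x)\cup\omega(x)$. The only cosmetic difference is that the paper treats $\alpha(x)\cup\omega(x)$ as a single closed set (using $\overline{O(x)}-O(x)=\alpha(x)\cup\omega(x)$) rather than splitting into the $\alpha$ and $\omega$ cases separately.
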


\begin{proof}
By definition, we have $O(x) \subseteq \check{O}(x) \subseteq \hat{O}(x) \subseteq \overline{O(x)}$.
If $x \in \mathop{\mathrm{Cl}}(v)$, then Lemma~\ref{lem:kc_closed} implies $O(x) = \overline{O(x)}$ and so $O(x) = \check{O}(x) = \hat{O}(x)$.
Thus we may assume that $x \in \mathrm{P}(v)$.
Then $(\alpha (x) \cup \omega(x)) \cap O(x) = \emptyset$ and so $\alpha (x) \cup \omega(x) = \overline{O(x)} - O(x)$.
Assume that $\hat{O}(x) - O(x) \neq \emptyset$.
Fix a point $y \in \hat{O}(x) - O(x)$.
Then $O(y) \subseteq \overline{O(y)} = \overline{O(x)}$ and so $O(y) \subseteq \overline{O(x)} - O(x) = \alpha (x) \cup \omega(x)$.
The closedness of $\alpha(x)$ and $\omega(x)$ implies $x \in \overline{O(y)} \subseteq \alpha (x) \cup \omega(x)$, which contradicts that $(\alpha (x) \cup \omega(x)) \cap O(x) = \emptyset$.
Thus $O(x) = \check{O}(x) = \hat{O}(x)$.
\end{proof}

We have the following equivalences.

\begin{lemma}\label{lem:hat02}
The following properties hold for any point $x \in X$:
\\
$(1)$ $x \in \mathop{\mathrm{Cl}}(v)$ if and only if $\hat{O}(x) \subseteq \mathop{\mathrm{Cl}}(v)$.
\\
$(2)$ $x \in \mathrm{P}(v)$ if and only if $\hat{O}(x) \subseteq \mathrm{P}(v)$.
\\
$(3)$ $x \in \mathrm{R}(v)$ if and only if $\hat{O}(x) \subseteq \mathrm{R}(v)$.
\end{lemma}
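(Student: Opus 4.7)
The \emph{if} direction of all three equivalences is immediate from the identity $\overline{O(x)}=\overline{O(x)}$, which places $x$ itself in $\hat{O}(x)$: any inclusion $\hat{O}(x)\subseteq A$ then forces $x\in A$. So the entire content lies in the \emph{only if} directions, and my plan is to reduce all three to Lemma~\ref{lem:orbit_cl}, which asserts that $\hat{O}(y)=O(y)$ whenever $y\in\mathop{\mathrm{Cl}}(v)\sqcup\mathrm{P}(v)=X-\mathrm{R}(v)$.

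First, for assertions (1) and (2), the argument is essentially one line. If $x\in\mathop{\mathrm{Cl}}(v)$, then $x\in X-\mathrm{R}(v)$, so Lemma~\ref{lem:orbit_cl} gives $\hat{O}(x)=O(x)\subseteq\mathop{\mathrm{Cl}}(v)$; the same template with $\mathrm{P}(v)$ in place of $\mathop{\mathrm{Cl}}(v)$ handles (2). In both cases, the orbit class collapses to the orbit, and the inclusion is trivial because $\mathop{\mathrm{Cl}}(v)$ and $\mathrm{P}(v)$ are unions of orbits.

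The only assertion whose \emph{only if} direction requires something beyond a direct application is (3), and I would handle it by contradiction using the partition $X=\mathop{\mathrm{Cl}}(v)\sqcup\mathrm{P}(v)\sqcup\mathrm{R}(v)$ recorded in the preliminaries. Suppose $x\in\mathrm{R}(v)$ and pick $y\in\hat{O}(x)$ with $y\notin\mathrm{R}(v)$; then $y\in\mathop{\mathrm{Cl}}(v)\sqcup\mathrm{P}(v)=X-\mathrm{R}(v)$, so Lemma~\ref{lem:orbit_cl} applied to $y$ yields $\hat{O}(y)=O(y)$. Since $y\in\hat{O}(x)$ implies $\overline{O(y)}=\overline{O(x)}$, hence $\hat{O}(x)=\hat{O}(y)$ (as noted immediately after the definition of orbit class), we conclude $x\in\hat{O}(y)=O(y)$, so $O(x)=O(y)$ and $x$ sits in the same piece of the partition as $y$, contradicting $x\in\mathrm{R}(v)$.

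There is no genuine obstacle: the only point to be careful about is invoking the standard fact $\hat{O}(x)=\hat{O}(y)$ for $y\in\hat{O}(x)$ and the disjointness of $X=\mathop{\mathrm{Cl}}(v)\sqcup\mathrm{P}(v)\sqcup\mathrm{R}(v)$, both already recorded. The proof is really a brief bookkeeping argument on top of Lemma~\ref{lem:orbit_cl}, with (3) using (1) and (2) through their consequence that $\hat{O}(\cdot)$ collapses outside $\mathrm{R}(v)$.
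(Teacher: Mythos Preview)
Your proof is correct. For assertions (1) and (2) you proceed exactly as the paper does, via Lemma~\ref{lem:orbit_cl}. For assertion (3), the paper instead argues by case analysis on the hypothetical point $y\in\hat{O}(x)\setminus\mathrm{R}(v)$: if $y\in\mathop{\mathrm{Cl}}(v)$ it uses $\overline{O(y)}=O(y)$ directly, while if $y\in\mathrm{P}(v)$ it invokes the closedness of the coborder $\overline{O(y)}-O(y)$ to contradict minimality of $\overline{O(x)}$. Your route is more uniform: you re-use Lemma~\ref{lem:orbit_cl} on $y$ together with the symmetry $\hat{O}(x)=\hat{O}(y)$ to force $O(x)=O(y)$, avoiding the case split entirely. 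Both arguments are short, but yours makes clearer that (3) is a formal consequence of the collapse $\hat{O}=O$ outside $\mathrm{R}(v)$, with no extra topological input needed.
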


\begin{proof}
By definition, we have $O(x) \subseteq \hat{O}(x) \subseteq \overline{O(x)}$.
Lemma~\ref{lem:orbit_cl} implies that assertions $(1)$ and $(2)$ hold.
If $\hat{O}(x) \subseteq \mathrm{R}(v)$, then $x \in \hat{O}(x) \subseteq \mathrm{R}(v)$.
Suppose that $x \in \mathrm{R}(v)$.
Assume that $\hat{O}(x) \not\subseteq \mathrm{R}(v)$.
Fix a point $y \in \hat{O}(x) \setminus \mathrm{R}(v)$.
Then $O(x) \neq O(y)$.
If $y \in \mathop{\mathrm{Cl}}(v)$, then $\overline{O(x)} = \overline{O(y)} = O(y) \subseteq \mathop{\mathrm{Cl}}(v)$, which contradicts $x \in \mathrm{R}(v)$.
Thus $y \in \mathrm{P}(v)$.
Since $\overline{O(x)} = \overline{O(y)}$, the difference $\overline{O(y)} - O(y) = \overline{O(x)} - O(y)$ is closed and contains $O(x)$, which contradicts that the closure $\overline{O(x)}$ is the minimal closed subset containing $O(x)$.
Thus  $\hat{O}(x) \subseteq \mathrm{R}(v)$.
\end{proof}

\begin{corollary}\label{cor:check_eq}
The following properties hold for any point $x \in X$:
\\
$(1)$ $x \in \mathop{\mathrm{Cl}}(v)$ if and only if $\check{O}(x) \subseteq \mathop{\mathrm{Cl}}(v)$.
\\
$(2)$ $x \in \mathrm{P}(v)$ if and only if $\check{O}(x) \subseteq \mathrm{P}(v)$.
\\
$(3)$ $x \in \mathrm{R}(v)$ if and only if $\check{O}(x) \subseteq \mathrm{R}(v)$.
\end{corollary}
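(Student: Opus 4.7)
The plan is to reduce Corollary~\ref{cor:check_eq} directly to the preceding results Lemma~\ref{lem:hat02} and Lemma~\ref{lem:orbit_cl}, exploiting the sandwich $O(x) \subseteq \check{O}(x) \subseteq \hat{O}(x)$. The three biconditionals split cleanly into six implications, and each is either trivial from the membership $x \in \check{O}(x)$ or a consequence of a result already established.

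First I would dispose of the ``only if'' directions of (1)--(3) all at once: since $x \in \check{O}(x)$ by definition, any containment $\check{O}(x) \subseteq S$ forces $x \in S$, and this works uniformly for $S \in \{\mathop{\mathrm{Cl}}(v), \mathrm{P}(v), \mathrm{R}(v)\}$. So only the ``if'' directions remain, i.e. I must show that membership of $x$ in one of the three pieces of the decomposition $X = \mathop{\mathrm{Cl}}(v) \sqcup \mathrm{P}(v) \sqcup \mathrm{R}(v)$ propagates to all of $\check{O}(x)$.

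Next I would handle (1) and (2) via Lemma~\ref{lem:orbit_cl}. That lemma says that whenever $x \in \mathop{\mathrm{Cl}}(v) \sqcup \mathrm{P}(v) = X - \mathrm{R}(v)$, we actually have the equality $O(x) = \check{O}(x) = \hat{O}(x)$. Thus if $x \in \mathop{\mathrm{Cl}}(v)$, then $\check{O}(x) = O(x) \subseteq \mathop{\mathrm{Cl}}(v)$, establishing (1); and if $x \in \mathrm{P}(v)$, then $\check{O}(x) = O(x) \subseteq \mathrm{P}(v)$, establishing (2). No further argument is required in these two cases.

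Finally, for (3), I would invoke Lemma~\ref{lem:hat02}(3): if $x \in \mathrm{R}(v)$, then $\hat{O}(x) \subseteq \mathrm{R}(v)$. Combined with $\check{O}(x) \subseteq \hat{O}(x)$ (from the definition of $\check{O}(x)$ as a subset of $\hat{O}(x)$, recorded in the very definition of weak orbit class), this yields $\check{O}(x) \subseteq \mathrm{R}(v)$. There is no real obstacle here; all three parts reduce to bookkeeping once the sandwich and the two preceding results are in hand. The only mild subtlety worth flagging is ensuring the reader sees that Lemma~\ref{lem:orbit_cl} actually does the work for (1)--(2) directly, so that one does not need a separate argument paralleling the proof of Lemma~\ref{lem:hat02}(3) in those cases.
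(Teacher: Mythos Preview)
Your argument is correct and is essentially the paper's own proof: the paper simply writes ``Since $O(x) \subseteq \check{O}(x) \subseteq \hat{O}(x)$, Lemma~\ref{lem:hat02} implies assertions,'' which is exactly your sandwich argument applied uniformly to all three parts (your use of Lemma~\ref{lem:orbit_cl} for (1)--(2) just unwinds what Lemma~\ref{lem:hat02} already packages). One small slip to fix: you have the ``if'' and ``only if'' labels swapped---the trivial direction $\check{O}(x)\subseteq S \Rightarrow x\in S$ is the \emph{if} direction, and the propagation $x\in S \Rightarrow \check{O}(x)\subseteq S$ is the \emph{only if} direction.
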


\begin{proof}
Since $O(x) \subseteq \check{O}(x) \subseteq \hat{O}(x)$, Lemma~\ref{lem:hat02} implies assertions.
\end{proof}

We have the following statements.

\begin{lemma}\label{lem:aw2cl}
For any points $x,y \in \mathrm{R}(v)$ with $\alpha(x) = \alpha(y)$ and $\omega(x) = \omega(y)$, we obtain $\overline{O(x)} = \overline{O(y)}$.
%
\end{lemma}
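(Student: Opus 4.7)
The plan is to show that for any non-closed recurrent point $z \in \mathrm{R}(v)$, the orbit closure $\overline{O(z)}$ coincides with $\alpha(z) \cup \omega(z)$; once this is established the conclusion is immediate from the hypothesis.

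First, I would apply Lemma~\ref{lem:decomp_limit} (available since $X$ is Hausdorff from the start of this section) to decompose
\[
\overline{O(x)} = \alpha(x) \cup O(x) \cup \omega(x), \qquad \overline{O(y)} = \alpha(y) \cup O(y) \cup \omega(y).
\]
Because $\alpha(x), \omega(x) \subseteq \overline{O(x)}$ by definition, the only issue is to absorb $O(x)$ into $\alpha(x) \cup \omega(x)$, and similarly for $y$.

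Next, since $x \in \mathrm{R}(v)$, the point $x$ is recurrent, so by definition of recurrence $x \in \alpha(x) \cup \omega(x)$. The sets $\alpha(x)$ and $\omega(x)$ are invariant (Lemma~\ref{lem:connectivity}(1)), hence whichever of them contains $x$ must contain the whole orbit $O(x)$; in particular $O(x) \subseteq \alpha(x) \cup \omega(x)$. Substituting into the decomposition above gives $\overline{O(x)} = \alpha(x) \cup \omega(x)$. Applying the same argument to $y \in \mathrm{R}(v)$ yields $\overline{O(y)} = \alpha(y) \cup \omega(y)$.

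Finally, combining these two equalities with the hypotheses $\alpha(x) = \alpha(y)$ and $\omega(x) = \omega(y)$,
\[
\overline{O(x)} = \alpha(x) \cup \omega(x) = \alpha(y) \cup \omega(y) = \overline{O(y)},
\]
which is the desired conclusion. There is no real obstacle: the argument is a direct combination of the Hausdorff decomposition of orbit closures with the invariance of limit sets and the definition of $\mathrm{R}(v)$. The only subtlety to be careful about is that recurrence only places $x$ in $\alpha(x) \cup \omega(x)$ (not necessarily in both), but invariance of each limit set individually is enough to conclude $O(x) \subseteq \alpha(x) \cup \omega(x)$ in either case.
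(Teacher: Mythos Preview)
Your proof is correct and uses essentially the same ingredients as the paper: recurrence places the point in one of its limit sets, and invariance/closedness then forces the orbit closure to lie in $\alpha(x)\cup\omega(x)$. The paper's version organizes this via a small case split (showing $\overline{O(x)}$ equals a \emph{single} limit set depending on whether $x\in\alpha(x)$ or $x\in\omega(x)$, and then matching cases for $y$), whereas you prove the symmetric identity $\overline{O(z)}=\alpha(z)\cup\omega(z)$ for every $z\in\mathrm{R}(v)$ and conclude in one line; your route is slightly cleaner and makes the appeal to Lemma~\ref{lem:decomp_limit} unnecessary (closedness of $\alpha(z)\cup\omega(z)$ already gives the needed containment).
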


\begin{proof}
By time reversion if necessary, we may assume that $x \in \alpha(x)$.
The invariance and closedness of $\alpha(x)$ imply that $\overline{O(x)} = \alpha(x)$.
If $y \in \alpha(y)$, then $\overline{O(y)} = \alpha(y)$ and so $\overline{O(x)} = \alpha(x) = \alpha(y) = \overline{O(y)}$.
Thus we may assume that $y \in \omega(y)$.
Then $\omega(x) \subseteq \overline{O(x)} = \alpha(x) = \alpha(y) \subseteq \overline{O(y)} = \omega(y) = \omega(x)$.
Therefore $\overline{O(x)} =\overline{O(y)}$.
\end{proof}

\begin{lemma}\label{lem:aw2cl02}
For any points $x \in \mathrm{R}(v)$, the abstract weak orbit $[x]$ is connected and  $[x] = \check{O} = \{ y \in \mathrm{R}(v) \mid  \alpha(x) = \alpha(y), \omega(x) = \omega(y) \}$.
\end{lemma}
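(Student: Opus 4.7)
The plan is to prove the two assertions separately: first the identification of $[x]$ with the set of recurrent points sharing the same $\alpha$- and $\omega$-limit sets as $x$, then its connectedness. Since $x \in \mathrm{R}(v)$, the defining formula gives $[x] = \check{O}(x)$ immediately, so the task reduces to showing that $\check{O}(x)$ equals the set on the right and is connected.

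For the set equality, I would argue both inclusions using only material already established. If $y \in \check{O}(x)$, then $\alpha(x) = \alpha(y)$ and $\omega(x) = \omega(y)$ by the very definition of $\check{O}(x)$, while $y \in \mathrm{R}(v)$ comes from Corollary~\ref{cor:check_eq}(3) applied to $x \in \mathrm{R}(v)$. Conversely, if $y \in \mathrm{R}(v)$ satisfies $\alpha(x) = \alpha(y)$ and $\omega(x) = \omega(y)$, then Lemma~\ref{lem:aw2cl} supplies the missing condition $\overline{O(x)} = \overline{O(y)}$, so all three defining conditions of $\check{O}(x)$ are met and $y \in \check{O}(x)$.

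For the connectedness assertion, the key observation is the chain of inclusions $O(x) \subseteq \check{O}(x) \subseteq \overline{O(x)}$, which is built into the definition of $\check{O}(x)$. Since $O(x)$ is the continuous image of $\R$ under $t \mapsto v_t(x)$, it is connected. I would then invoke the standard topological fact that any set squeezed between a connected set and its closure is itself connected; applying this with $A = O(x)$ and $B = \check{O}(x)$ yields the desired connectedness of $[x] = \check{O}(x)$.

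There is no real obstacle here: the lemma is essentially a bookkeeping synthesis of Lemma~\ref{lem:aw2cl} (which removes the orbit-closure condition from the definition of $\check{O}$ for recurrent points) with Corollary~\ref{cor:check_eq} (which keeps everything inside $\mathrm{R}(v)$), together with the elementary connectedness fact. The only subtle point worth noting in the write-up is that the Hausdorff hypothesis is invoked implicitly through Lemma~\ref{lem:aw2cl}, whose proof relied on the closedness of $\alpha$- and $\omega$-limit sets and on properties such as Lemma~\ref{lem:decomp_limit}.
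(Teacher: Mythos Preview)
Your proposal is correct and follows essentially the same approach as the paper: both use Corollary~\ref{cor:check_eq} to restrict $\check{O}(x)$ to $\mathrm{R}(v)$ and Lemma~\ref{lem:aw2cl} to drop the orbit-closure condition, then obtain connectedness from the sandwich $O(x) \subseteq [x] \subseteq \overline{O(x)}$ together with the standard fact about sets between a connected set and its closure. Your write-up simply spells out the two inclusions more explicitly than the paper does.
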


\begin{proof}
Fix points $x, y \in \mathrm{R}(v)$.
Corollary~\ref{cor:check_eq} and Lemma~\ref{lem:aw2cl} imply $[x] = \check{O}(x) = \{ y \in \mathrm{R}(v) \mid \overline{O(x)} = \overline{O(y)}, \alpha(x) = \alpha(y), \omega(x) = \omega(y) \} = \{ y \in \mathrm{R}(v) \mid  \alpha(x) = \alpha(y), \omega(x) = \omega(y) \}$.
By $O(x) \subseteq [x] \subseteq \overline{O(x)}$, since any subset between a connected subset and its closure is connected, the abstract orbit class $[x]$ is connected.
\end{proof}

%

\subsection{Properties of abstract weak orbits and abstract orbits of flows}

By definitions of abstract weak orbit space and abstract orbit space, Lemma~\ref{lem:orbit_cl} implies the following reductions as in Figure~\ref{Fig:reductions}.

\begin{proposition}\label{prop:quotient}
Let $v$ be a flow on a Hausdorff space $X$.
Then the abstract weak orbit space $X/[v]$ is a quotient space of the weak orbit class space $X/\check{v}$, and the abstract orbit space $X/\langle v \rangle$ is a quotient space of the orbit class space $X/\hat{v}$ and of the abstract weak orbit space $X/[v]$.
\end{proposition}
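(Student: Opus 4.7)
The plan is to reduce all three reduction claims to the statement that the corresponding equivalence relations are refinements of one another, and then invoke the standard fact that a continuous surjection induced by a refinement is automatically a quotient map. So it suffices to prove the three implications $\sim_{\check v}\Rightarrow\sim_{[v]}$, $\sim_{\hat v}\Rightarrow\sim_{\langle v\rangle}$, and $\sim_{[v]}\Rightarrow\sim_{\langle v\rangle}$.

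For the first implication, I would assume $\check O(x)=\check O(y)$, so that $y\in\check O(x)$, and split on whether $x\in\mathrm R(v)$. If $x\in\mathrm R(v)$, Corollary~\ref{cor:check_eq} yields $y\in\check O(x)\subseteq\mathrm R(v)$, and then $[x]=\check O(x)=\check O(y)=[y]$ by the definition of abstract weak orbit. If $x\in X-\mathrm R(v)=\Cv\sqcup\mathrm P(v)$, Lemma~\ref{lem:orbit_cl} gives $\check O(x)=O(x)$, so $y\in O(x)$; since singularity, periodicity, and non-closed properness are orbit invariants, $y$ lies in the same stratum as $x$ among $\Sv,\Pv,\mathrm P(v)$, so $x$ and $y$ lie in the same connected component of $[x]'$ intersected with that stratum, giving $[x]=[x]''=[y]''=[y]$. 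The second implication is handled in exactly the same way, substituting Lemma~\ref{lem:hat02} for Corollary~\ref{cor:check_eq} and using $\hat O(x)=O(x)$ from Lemma~\ref{lem:orbit_cl} in the non-recurrent case.

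For the third implication I would assume $[x]=[y]$. If $x\in\mathrm R(v)$, then $[x]=\check O(x)\subseteq\mathrm R(v)$ forces $y\in\mathrm R(v)$, and $y\in\check O(x)$ implies $\overline{O(y)}=\overline{O(x)}$, hence $\hat O(x)=\hat O(y)$, so $\langle x\rangle=\langle y\rangle$. If $x\in X-\mathrm R(v)$, then $[x]=[x]''$ sits inside a single one of the strata $\Sv,\Pv,\mathrm P(v)$, so $y\in [x]''$ also lies in $X-\mathrm R(v)$ and $[y]''=[x]''$, giving $\langle y\rangle=[y]''=[x]''=\langle x\rangle$.

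Finally, for the topological part, whenever an equivalence relation $\sim_1$ on $X$ refines $\sim_2$, the identity of $X$ descends through the quotient maps $X\to X/\!\sim_1$ and $X\to X/\!\sim_2$ to a well-defined continuous surjection $X/\!\sim_1\to X/\!\sim_2$ by the universal property of the quotient topology, and this surjection is itself a quotient map because $X\to X/\!\sim_2$ is one. Applying this to the three refinements produced above yields the three reductions asserted in the proposition. The only mildly delicate step is the consistent bookkeeping around $[x]''$: one has to remember that $[x]''$ is the connected component of $[x]'$ intersected with a single one of the three strata $\Sv,\Pv,\mathrm P(v)$, and that $y\in[x]''$ forces $[y]''=[x]''$; once this is in hand, every case split becomes routine.
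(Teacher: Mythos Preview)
Your proof is correct and follows essentially the same approach as the paper, which treats the proposition as an immediate consequence of the definitions together with Lemma~\ref{lem:orbit_cl} (the paper offers only the one-line justification ``By definitions of abstract weak orbit space and abstract orbit space, Lemma~\ref{lem:orbit_cl} implies the following reductions''). You have simply spelled out the case split and the refinement-of-equivalence-relations argument that the paper leaves implicit; the key input in both treatments is that $\check O(x)=\hat O(x)=O(x)$ for $x\notin\mathrm R(v)$, which collapses the non-recurrent case to orbit equality, while the recurrent case is handled directly by the definitions $[x]=\check O(x)$ and $\langle x\rangle=\hat O(x)$.
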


\begin{figure}
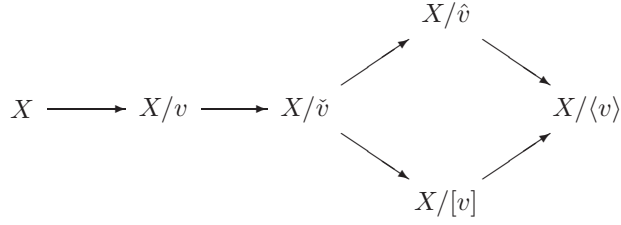

\[
\begin{diagram}
\node[4]{X/\hat{v}} \arrow[1]{se}\\
\node[1]{X} \arrow[1]{e} \node[1]{X/v} \arrow[1]{e} \node[1]{X/\check{v}} \arrow[1]{se} \arrow[1]{ne} \node[2]{X/\langle v \rangle}\\
 \node[4]{X/[v]} \arrow[1]{ne}
\end{diagram}
\]
\caption{Reductions of a Hausdorff space into the abstract orbit space.}
\label{Fig:reductions}
\end{figure}

We have the following statements.

\begin{lemma}\label{lem:awo}
The abstract orbit $\langle x \rangle = [x]$ of a singular {\rm (resp.} periodic {\rm)} point $x$ is the connected component of $\Sv$ {\rm (resp.} $\Pv$ {\rm)} containing $x$.
\end{lemma}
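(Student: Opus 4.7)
The plan is to unwind the definitions and exploit the fact that the $\alpha'$- and $\omega'$-limit sets vanish at closed orbits. Since $x \in \mathop{\mathrm{Cl}}(v) \subseteq X - \mathrm{R}(v)$, both $[x]$ and $\langle x \rangle$ reduce to $[x]''$ by definition, so it suffices to show that $[x]''$ equals the connected component of $\Sv$ (resp.\ $\Pv$) containing $x$.

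First I would observe that if $x$ is singular then $O(x) = \alpha(x) = \omega(x) = \{x\}$, and if $x$ is periodic then $O(x) = \alpha(x) = \omega(x)$ is the periodic orbit (using Lemma~\ref{lem:kc_closed} to know closed orbits are closed subsets); in either case $\alpha'(x) = \omega'(x) = \emptyset$. The same calculation applies to every singular (resp.\ periodic) point, so the defining set $S := \{ y \in X \mid \alpha'(x) = \alpha'(y),\ \omega'(x) = \omega'(y) \}$ contains all of $\Sv$ (resp.\ $\Pv$) uniformly.

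Next, let $K$ denote the connected component of $\Sv$ (resp.\ $\Pv$) containing $x$. Since $K$ is a connected subset of $S$ containing $x$, it lies in the connected component $[x]'$ of $S$ through $x$. Hence $K \subseteq [x]' \cap \Sv$ (resp.\ $K \subseteq [x]' \cap \Pv$), and being connected, $K$ is contained in the connected component of that intersection through $x$, which is $[x]''$. Conversely, $[x]''$ is by construction a connected subset of $\Sv$ (resp.\ $\Pv$) containing $x$, so $[x]'' \subseteq K$. Combining the inclusions gives $\langle x \rangle = [x] = [x]'' = K$.

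The step I would expect to be the main obstacle is minor and purely conceptual: one must recognize that the conditions defining $[x]'$ degenerate to the tautology $\alpha'(y) = \omega'(y) = \emptyset$ on all of $\Sv$ (or $\Pv$), so that the only genuine constraint carving $[x]''$ out of $\Sv$ (resp.\ $\Pv$) is connectedness. Once this is seen, the argument is purely a matter of unwinding definitions.
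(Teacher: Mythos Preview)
Your proof is correct and follows essentially the same approach as the paper's: both reduce to $[x]''$, observe that $\alpha'(y) = \omega'(y) = \emptyset$ for every closed orbit, identify the defining set with $\Sv$ (resp.\ $\Pv$), and conclude that $[x]''$ is the connected component through $x$. Your version is slightly more careful in handling the two-layer definition of $[x]''$ (first $[x]'$, then the connected component of $[x]' \cap \Sv$), whereas the paper tacitly collapses this to a single connected-component step; but this is a cosmetic difference, not a different argument.
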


\begin{proof}
Fix a point $x \in \Cv$.
Then $\langle x \rangle = [x] = [x]''$ and $\alpha' (x) = \omega' (x) = \emptyset$.
If $x \in \Sv$, then the abstract orbit $\langle x \rangle$ is the connected component of the subset $\{ y \in \Sv \mid \alpha' (x) = \alpha' (y), \omega'(x) = \omega' (y) \} = \{ y \in \Sv \mid \alpha' (y) = \emptyset = \omega' (y) \} = \Sv$ containing $x$.
If $x \in \Pv$, then the abstract orbit $\langle x \rangle$ is the connected component of the subset $\{ y \in \Pv \mid \alpha' (x) = \alpha' (y), \omega'(x) = \omega' (y) \} = \{ y \in \Pv \mid \alpha' (y) = \emptyset = \omega' (y) \} = \Pv$ containing $x$.
\end{proof}

\begin{lemma}\label{lem:ch_01}
The following statement holds:
\\
$(1)$ For any point $x \in \mathrm{P}(v)$, the abstract orbit $\langle x \rangle = [x]$ is the connected component of $\{ y \in \mathrm{P}(v) \mid \alpha(x) = \alpha(y), \omega(x) = \omega(y) \}$ containing $x$, and we have that $\alpha(x) = \alpha'(x)$ and $\omega(x) = \omega'(x)$.
\\
$(2)$ For any point $x \in \mathrm{R}(v)$, we have $[x] \subseteq \langle x \rangle \subseteq (\alpha(x) \cup \omega(x)) \cap \mathrm{R}(v)$.
\end{lemma}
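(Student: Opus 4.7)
The plan is to handle the two parts separately. For part (1), I first establish the auxiliary equalities $\alpha(y)=\alpha'(y)$ and $\omega(y)=\omega'(y)$ for every $y\in\mathrm{P}(v)$. Non-recurrence of $y$ gives $y\notin \alpha(y)\cup\omega(y)$, and the invariance of the limit sets (Lemma~\ref{lem:connectivity}(1)) forces $O(y)\cap(\alpha(y)\cup\omega(y))=\emptyset$; the definitions $\alpha'(y)=\alpha(y)\setminus O(y)$ and $\omega'(y)=\omega(y)\setminus O(y)$ then reduce to $\alpha(y)$ and $\omega(y)$ respectively. In particular this holds for $x$, giving the second assertion of (1).

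Since $x\in \mathrm{P}(v)\subseteq X-\mathrm{R}(v)$, the case split in the definitions yields $\langle x\rangle=[x]=[x]''$, where $[x]''$ is the connected component of $[x]'\cap\mathrm{P}(v)$ containing $x$, and $[x]'$ is the connected component of $S:=\{y\in X\mid \alpha'(x)=\alpha'(y),\ \omega'(x)=\omega'(y)\}$ containing $x$. Applying the previous equalities on $\mathrm{P}(v)$ identifies $S\cap\mathrm{P}(v)$ with $T:=\{y\in\mathrm{P}(v)\mid \alpha(x)=\alpha(y),\ \omega(x)=\omega(y)\}$. It remains to identify the connected component of $[x]'\cap\mathrm{P}(v)$ at $x$ with the connected component of $T$ at $x$: the former, call it $A$, is connected and contained in $[x]'\cap\mathrm{P}(v)\subseteq T$, so $A$ is contained in the latter $B$; conversely $B$ is connected and contained in $S$, so it lies in the unique component $[x]'$ of $S$ that meets it, whence $B\subseteq[x]'\cap\mathrm{P}(v)$ and $B\subseteq A$.

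For part (2), the containment $[x]\subseteq\langle x\rangle$ is immediate from the definitions $[x]=\check O(x)\subseteq\hat O(x)=\langle x\rangle$ when $x\in\mathrm{R}(v)$, and the inclusion $\hat O(x)\subseteq\mathrm{R}(v)$ is Lemma~\ref{lem:hat02}(3). For the limit-set containment, recurrence of $x$ means $x\in\alpha(x)\cup\omega(x)$, so by invariance $O(x)\subseteq\alpha(x)\cup\omega(x)$; closedness of $\alpha(x)\cup\omega(x)$ (Lemma~\ref{lem:connectivity}(1)) then upgrades this to $\overline{O(x)}\subseteq\alpha(x)\cup\omega(x)$, and the definition $\hat O(x)\subseteq\overline{O(x)}$ closes the chain. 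I do not anticipate a serious obstacle; the only delicate step is the two-sided identification of connected components in part (1), which hinges on the key observation $S\cap\mathrm{P}(v)=T$.
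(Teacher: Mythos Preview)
Your proof is correct and follows essentially the same approach as the paper's. In fact, your treatment of part~(1) is more complete: the paper only records the inclusion $[x]'' \subseteq \{y \in \mathrm{P}(v) \mid \alpha(x)=\alpha(y),\ \omega(x)=\omega(y)\}$ and the set equality with the primed version, whereas you carry out the two-sided connected-component argument (via $S\cap\mathrm{P}(v)=T$) needed to identify $[x]''$ with the full connected component of $T$ containing $x$; similarly, in part~(2) you make explicit the appeal to Lemma~\ref{lem:hat02}(3) for the containment in $\mathrm{R}(v)$, which the paper leaves implicit.
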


\begin{proof}
Suppose that $x \in \mathrm{P}(v)$.
Then $x \notin \alpha(x) \cup \omega(x)$.
The closedness and invariance of $\alpha(x)$ and $\omega(x)$ imply that $O(x) \cap (\alpha(x) \cup \omega(x)) = \emptyset$.
Therefore $\alpha(x) = \alpha'(x)$ and $\omega(x) = \omega'(x)$.
By definition of (abstract) weak orbit, we obtain $\langle x \rangle = [x] = [x]'' \subseteq \{ y \in \mathrm{P}(v) \mid \alpha' (x) = \alpha' (y), \omega'(x) = \omega' (y) \} = \{ y \in \mathrm{P}(v) \mid \alpha(x) = \alpha(y), \omega(x) = \omega(y) \}$.
Suppose that $x \in \mathrm{R}(v)$.
Then $x \in \alpha(x) \cup \omega(x)$.
The closedness and invariance of $\alpha(x)$ and $\omega(x)$ imply that $\check{O}(x) \subseteq \hat{O}(x) \subseteq \overline{O(x)} = \alpha(x) \cup \omega(x)$.
\end{proof}

The previous lemmas imply the following explicit characterizations of the abstract weak orbit and the abstract orbit of a flow on a Hausdorff space.

\begin{corollary}\label{cor:ch}
The following statement holds for a point $x$ in a Hausdorff space $X$:
\[
  [x] = \begin{cases}
    \text{the connected component of } \Sv \text{ containing } x & \text{if } x \in \mathop{\mathrm{Sing}}(v)  \\
    \text{the connected component of } \Pv \text{ containing } x & \text{if } x \in \mathop{\mathrm{Per}}(v)  \\
    \text{the connected component of } \\
     \{ y \in \mathrm{P}(v) \mid \alpha(x) = \alpha(y), \omega(x) = \omega(y) \}
     \text{ containing } x& \text{if } x \in \mathrm{P}(v) \\
    \{ y \in \mathrm{R}(v) \mid \alpha(x) = \alpha(y), \omega(x) = \omega(y) \} & \text{if } x \in \mathrm{R}(v)
  \end{cases}
\]
\end{corollary}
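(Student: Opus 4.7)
The plan is to proceed by a case split along the decomposition $X = \Sv \sqcup \Pv \sqcup \mathrm{P}(v) \sqcup \mathrm{R}(v)$ recorded in the preliminaries, and in each case invoke a preceding lemma that has already done the work. Since the four pieces are pairwise disjoint and jointly exhaustive, this yields the piecewise description of $[x]$ claimed in the corollary.

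First I would dispatch the two ``closed orbit'' cases. For $x \in \Sv$ or $x \in \Pv$, Lemma~\ref{lem:awo} identifies $[x]$ directly with the connected component of $\Sv$ (resp.\ $\Pv$) containing $x$. In particular there is nothing further to verify for the first two lines of the corollary.

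Next, for $x \in \mathrm{P}(v)$, I would apply Lemma~\ref{lem:ch_01}(1), which states exactly that $[x]$ is the connected component of $\{y \in \mathrm{P}(v) \mid \alpha(x)=\alpha(y),\ \omega(x)=\omega(y)\}$ through $x$. The mechanism there is that non-recurrence forces $O(x)\cap(\alpha(x)\cup\omega(x))=\emptyset$, hence $\alpha(x)=\alpha'(x)$ and $\omega(x)=\omega'(x)$, so the equivalence relation used in the definition of $[x]'$ (via primed limit sets) collapses to equality of ordinary $\alpha$- and $\omega$-limit sets when restricted to $\mathrm{P}(v)$. Combined with the definition $[x]=[x]''$ for $x\in X-\mathrm{R}(v)$, this gives the third line of the corollary.

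Finally, for $x \in \mathrm{R}(v)$, I would invoke Lemma~\ref{lem:aw2cl02}, which asserts $[x]=\check{O}(x)=\{y\in\mathrm{R}(v)\mid \alpha(x)=\alpha(y),\ \omega(x)=\omega(y)\}$ and moreover that this set is already connected, so no connected-component qualifier is needed in the fourth line. The only real subtlety — and it is minor — is this absence of the connectivity qualifier in the recurrent case, which is precisely what Lemma~\ref{lem:aw2cl02} supplies. Thus the corollary is essentially a bookkeeping aggregation of Lemmas~\ref{lem:awo}, \ref{lem:ch_01}(1), and \ref{lem:aw2cl02}, and I do not expect any substantive obstacle beyond correctly lining up each case with its corresponding lemma.
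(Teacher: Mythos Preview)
Your proposal is correct and matches the paper's approach exactly: the paper presents this corollary without a standalone proof, simply noting that ``the previous lemmas imply'' it, and the lemmas you have identified (Lemma~\ref{lem:awo} for the closed-orbit cases, Lemma~\ref{lem:ch_01}(1) for $\mathrm{P}(v)$, and Lemma~\ref{lem:aw2cl02} for $\mathrm{R}(v)$) are precisely the ones doing the work.
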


\begin{corollary}\label{cor:ch01}
The following statement holds for a point $x$ in a Hausdorff space $X$:
\[
  \langle x \rangle = \begin{cases}
    \text{the connected component of } \Sv \text{ containing } x & \text{if } x \in \mathop{\mathrm{Sing}}(v)  \\
    \text{the connected component of } \Pv \text{ containing } x & \text{if } x \in \mathop{\mathrm{Per}}(v)  \\
    \text{the connected component of } \\
     \{ y \in \mathrm{P}(v) \mid \alpha(x) = \alpha(y), \omega(x) = \omega(y) \}
     \text{ containing } x& \text{if } x \in \mathrm{P}(v) \\
    \{ y \in \mathrm{R}(v) \mid \overline{O(x)} = \overline{O(y)} \} & \text{if } x \in \mathrm{R}(v)
  \end{cases}
\]
\end{corollary}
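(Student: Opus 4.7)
The plan is to handle the four cases of the piecewise expression for $\langle x \rangle$ separately, in each case reducing to the explicit definition $\langle x \rangle = [x]''$ for $x \in \mathop{\mathrm{Cl}}(v) \sqcup \mathrm{P}(v)$ and $\langle x \rangle = \hat{O}(x)$ for $x \in \mathrm{R}(v)$, combined with lemmas already proved above. The proof closely parallels that of Corollary~\ref{cor:ch}; in fact only the last of the four cases involves any change.

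For the first three cases, where $x \in \mathop{\mathrm{Cl}}(v) \sqcup \mathrm{P}(v) = X - \mathrm{R}(v)$, we have by definition $\langle x \rangle = [x]'' = [x]$, so the abstract orbit coincides with the abstract weak orbit. The singular and periodic cases therefore follow immediately from Lemma~\ref{lem:awo}, which identifies $[x]$ with the connected component of $\mathop{\mathrm{Sing}}(v)$ or $\mathop{\mathrm{Per}}(v)$ through $x$. For $x \in \mathrm{P}(v)$ I would invoke Lemma~\ref{lem:ch_01}(1), whose conclusion is already phrased as the desired connected component description (the non-recurrence of $x$ yields $\alpha'(x) = \alpha(x)$ and $\omega'(x) = \omega(x)$, so the sets defined by coincidence of $\alpha'$ and $\omega'$ and by coincidence of $\alpha$ and $\omega$ agree on $\mathrm{P}(v)$).

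The only case that must be treated afresh is $x \in \mathrm{R}(v)$. Here by definition $\langle x \rangle = \hat{O}(x) = \{ y \in X : \overline{O(y)} = \overline{O(x)} \}$. I would then apply Lemma~\ref{lem:hat02}(3) to obtain the inclusion $\hat{O}(x) \subseteq \mathrm{R}(v)$, so the defining set may be restricted to $\mathrm{R}(v)$ without altering it, producing the claimed expression $\{ y \in \mathrm{R}(v) : \overline{O(x)} = \overline{O(y)} \}$.

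There is no real obstacle: the corollary is a short case split built on three previously established lemmas (Lemma~\ref{lem:awo}, Lemma~\ref{lem:ch_01}, Lemma~\ref{lem:hat02}), and the Hausdorff hypothesis enters only indirectly, through Lemma~\ref{lem:orbit_cl} (used in Lemma~\ref{lem:hat02}) and the proper/recurrent dichotomy of Corollaries~\ref{cor:t2_proper}--\ref{cor:proper} underlying Lemma~\ref{lem:ch_01}. In particular, Corollary~\ref{cor:ch01} differs from Corollary~\ref{cor:ch} only in its recurrent case, where the weak orbit class description $\{ y \in \mathrm{R}(v) : \alpha(x) = \alpha(y),\, \omega(x) = \omega(y)\}$ is replaced by the orbit class description $\{ y \in \mathrm{R}(v) : \overline{O(x)} = \overline{O(y)}\}$.
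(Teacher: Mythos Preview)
Your proposal is correct and matches the paper's approach: the paper does not give a separate proof of Corollary~\ref{cor:ch01} but presents it (together with Corollary~\ref{cor:ch}) as an immediate consequence of Lemma~\ref{lem:awo}, Lemma~\ref{lem:ch_01}, and Lemma~\ref{lem:hat02}, exactly as you outline. The only case differing from Corollary~\ref{cor:ch} is $x \in \mathrm{R}(v)$, and your use of Lemma~\ref{lem:hat02}(3) to restrict $\hat{O}(x)$ to $\mathrm{R}(v)$ is precisely the intended step.
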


We have the following properties of abstract (weak) orbits.

\begin{lemma}\label{lem:ch_02}
For any points $x, y \in X$, we have $[x] = [y]$ or $[x] \cap [y] = \emptyset$.
\end{lemma}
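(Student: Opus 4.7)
The plan is to use the explicit case-by-case description of $[x]$ given in Corollary~\ref{cor:ch} together with the decomposition $X = \mathop{\mathrm{Sing}}(v)\sqcup\mathop{\mathrm{Per}}(v)\sqcup\mathrm{P}(v)\sqcup\mathrm{R}(v)$. Suppose $[x]\cap[y]\neq\emptyset$ and pick a point $z\in[x]\cap[y]$. I will first observe that $[x]$ is entirely contained in exactly one of the four pieces of the decomposition (this is immediate from Corollary~\ref{cor:ch}), so from $z\in[x]\cap[y]$ both $x$ and $y$ must belong to the same piece as $z$. It then suffices to verify $[x]=[y]$ in each of the four cases separately, and in each case the argument reduces to showing the intermediate equality $[x]=[z]$ and $[y]=[z]$.

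In the singular (resp. periodic) case, $[x]$, $[y]$, $[z]$ are by definition the connected components of $\Sv$ (resp. $\Pv$) containing $x$, $y$, $z$ respectively. Since $z$ lies in both $[x]$ and $[y]$, all three components coincide. In the non-recurrent proper case, $[x]$ is the connected component of $\{u\in\mathrm{P}(v)\mid\alpha(x)=\alpha(u),\,\omega(x)=\omega(u)\}$ containing $x$; since $z\in[x]$ gives $\alpha(z)=\alpha(x)$ and $\omega(z)=\omega(x)$, the defining set for $[z]$ is the same set, and $[z]$ equals the connected component containing $z$, which is the one containing $x$, hence $[x]=[z]$. The identical argument with $y$ in place of $x$ gives $[y]=[z]$, so $[x]=[y]$. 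In the recurrent case I will invoke Lemma~\ref{lem:aw2cl02}, which rewrites $[x]=\check{O}(x)$ as $\{u\in\mathrm{R}(v)\mid\alpha(x)=\alpha(u),\,\omega(x)=\omega(u)\}$; this description is manifestly the equivalence class of $x$ under the equivalence relation $u\sim u'\iff(\alpha(u)=\alpha(u')\text{ and }\omega(u)=\omega(u'))$ on $\mathrm{R}(v)$, so $[x]\cap[y]\neq\emptyset$ forces $[x]=[y]$.

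There is no real obstacle here; the content is entirely bookkeeping between the four cases. The only point requiring a small amount of care is the non-recurrent proper case, where one must remember that membership $z\in[x]$ was defined via connected components of the level set of $(\alpha',\omega')$, so one has to check that the two level sets (for $x$ and for $z$) are literally equal before concluding the two connected components coincide; this follows because $\alpha(z)=\alpha(x)$ and $\omega(z)=\omega(x)$ by virtue of $z\in[x]$ together with $x,z\in\mathrm{P}(v)$ (where $\alpha=\alpha'$ and $\omega=\omega'$, as noted in Lemma~\ref{lem:ch_01}).
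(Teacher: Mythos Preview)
Your proof is correct and follows essentially the same approach as the paper: both argue by cases over the decomposition $X=\Sv\sqcup\Pv\sqcup\mathrm{P}(v)\sqcup\mathrm{R}(v)$, use the connected-component description in the first three cases, and reduce the recurrent case to the equivalence-class form of $\check{O}(x)$. The only cosmetic difference is that you route through an intermediate point $z$ and the identities $[x]=[z]=[y]$, while the paper argues $[x]=[y]$ directly.
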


\begin{proof}
By definition of abstract weak orbit space, any abstract weak orbit space is contained in either $\Sv$, $\Pv$, $\mathrm{P}(v)$, or $\mathrm{R}(v)$.
Fix points $x, y \in X$ with $[x] \cap [y] \neq \emptyset$.
If $x \in \Sv$, then both $[x]$ and $[y]$ are connected component of $\Sv$ and so $[x] = [y]$.
If $x \in \Pv$, then both $[x]$ and $[y]$ are connected component of $\Pv$ and so $[x] = [y]$.
Suppose that $x \in \mathrm{P}(v)$.
Fix a point $z \in [x] \cap [y]$.
Lemma~\ref{lem:ch_01} implies that $\alpha(x) = \alpha(y) = \alpha(z)$ and $\omega(x) = \omega(y) = \omega(z)$.
This means that $[x]$ and $[y]$ are the connected component of $\{ w \in \mathrm{P}(v) \mid \alpha(w) = \alpha(z), \omega(w) = \omega(z) \}$ containing $z$ and so $[x] = [y]$.
Thus we may assume that $x \in \mathrm{R}(v)$.
Fix a point $z \in [x] \cap [y]$.
By definition of weak orbit class, we obtain that $\alpha(x) = \alpha(y) = \alpha(z)$  and $\omega(x) = \omega(y) = \omega(z)$.
Then $[x] = \check{O}(x) = \{ w \in X \mid \alpha(w) = \alpha(z), \omega(w) = \omega(z) \} = \check{O}(y) = [y]$.
\end{proof}

\begin{lemma}\label{lem:ch_03}
For any points $x, y \in X$, we have $\langle x \rangle = \langle y \rangle$ or $\langle x \rangle \cap \langle y \rangle = \emptyset$.
\end{lemma}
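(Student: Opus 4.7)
The plan is to mimic the proof of Lemma~\ref{lem:ch_02}, splitting on which of the four strata $\Sv$, $\Pv$, $\mathrm{P}(v)$, $\mathrm{R}(v)$ the points $x$ and $y$ belong to, and using the explicit characterization of $\langle \cdot \rangle$ in Corollary~\ref{cor:ch01} instead of Corollary~\ref{cor:ch}. Assume $\langle x \rangle \cap \langle y \rangle \neq \emptyset$ and fix $z \in \langle x \rangle \cap \langle y \rangle$.

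First I would observe that any abstract orbit is entirely contained in exactly one of $\Sv$, $\Pv$, $\mathrm{P}(v)$, $\mathrm{R}(v)$: for $x \in X - \mathrm{R}(v)$ the abstract orbit is $[x]''$, which by construction lies in the stratum of $x$; for $x \in \mathrm{R}(v)$ the abstract orbit is $\hat{O}(x)$, which sits inside $\mathrm{R}(v)$ by Lemma~\ref{lem:hat02}. Hence the common point $z$ forces $x$, $y$, and $z$ to lie in the same stratum.

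Next I would dispatch the three ``non-recurrent'' cases exactly as in Lemma~\ref{lem:ch_02}, since in those cases $\langle \cdot \rangle$ and $[\cdot]$ coincide. If $x, y, z \in \Sv$ (resp.\ $\Pv$), then $\langle x \rangle$ and $\langle y \rangle$ are both the connected component of $\Sv$ (resp.\ $\Pv$) containing $z$, so they are equal. If $x, y, z \in \mathrm{P}(v)$, then by Lemma~\ref{lem:ch_01} we have $\alpha(x) = \alpha(z) = \alpha(y)$ and $\omega(x) = \omega(z) = \omega(y)$, so $\langle x \rangle$ and $\langle y \rangle$ are both the connected component of $\{w \in \mathrm{P}(v) \mid \alpha(w) = \alpha(z),\ \omega(w) = \omega(z)\}$ containing $z$, and hence coincide.

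The only step that is genuinely different from Lemma~\ref{lem:ch_02} is the recurrent case, but it is in fact simpler. If $x, y, z \in \mathrm{R}(v)$, then by Corollary~\ref{cor:ch01} we have $\langle x \rangle = \hat{O}(x) = \{w \in \mathrm{R}(v) \mid \overline{O(x)} = \overline{O(w)}\}$ and similarly for $y$. Since $z \in \langle x \rangle \cap \langle y \rangle$, we obtain $\overline{O(x)} = \overline{O(z)} = \overline{O(y)}$, whence $\hat{O}(x) = \hat{O}(y)$ and so $\langle x \rangle = \langle y \rangle$. I do not expect any serious obstacle; the only thing to be careful about is making sure the stratum assignment is unambiguous, which is handled in the first step via Lemma~\ref{lem:hat02}.
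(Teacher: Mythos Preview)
Your proof is correct and follows essentially the same approach as the paper: both first argue that abstract orbits lie in a single stratum, then reduce the non-$\mathrm{R}(v)$ cases to the already-proved Lemma~\ref{lem:ch_02} (the paper does this in one line by invoking $\langle x \rangle = [x]$ and $\langle y \rangle = [y]$, while you spell out the three sub-cases), and finally handle the $\mathrm{R}(v)$ case via $\overline{O(x)} = \overline{O(z)} = \overline{O(y)}$.
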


\begin{proof}
By definition of abstract orbit space, any abstract weak orbit space is contained in either $\Sv$, $\Pv$, $\mathrm{P}(v)$, or $\mathrm{R}(v)$.
Fix points $x, y \in X$ with $\langle x \rangle \cap \langle y \rangle \neq \emptyset$.
If $x \notin \mathrm{R}(v)$, then the assertion holds because $[x] = \langle x \rangle$ and $[y] = \langle y \rangle$.
Thus we may assume that $x \in \mathrm{R}(v)$.
Fix a point $z \in \langle x \rangle \cap \langle y \rangle$.
By definition of orbit class, we obtain that $\overline{O(x)} = \overline{O(y)} = \overline{O(z)}$.
Then $\langle x \rangle = \hat{O}(x) = \{ w \in X \mid \overline{O(w)} = \overline{O(z)} \} = \hat{O}(y) = \langle y \rangle$.
\end{proof}

\begin{lemma}\label{lem:closure_01}
The following properties hold for a point $x \in X$: \\
$(1)$ If $x \in \Cv$, then $\mathop{\downarrow}_{\leq_v} \langle x \rangle = \mathop{\downarrow}_{\leq_v} [x] = \bigcup_{y \in \langle x \rangle} \overline{O(y)} = \bigcup_{y \in [x]} \overline{O(y)} = [x] = \langle x \rangle$.
 \\
 $(2)$ If $x \in \mathrm{P}(v)$, then $\mathop{\downarrow}_{\leq_v} \langle x \rangle = \mathop{\downarrow}_{\leq_v} [x] = \bigcup_{y \in \langle x \rangle} \overline{O(y)} = \bigcup_{y \in [x]} \overline{O(y)} = (\alpha(x) \cup \omega(x)) \sqcup [x] = (\alpha(x) \cup \omega(x)) \sqcup \langle x \rangle = \overline{O(x)} \cup [x]  = \overline{O(x)} \cup \langle x \rangle$.
 \\
 $(3)$ If $x \in \mathrm{R}(v)$, then $\mathop{\downarrow}_{\leq_v} \langle x \rangle = \mathop{\downarrow}_{\leq_v} [x] = \bigcup_{y \in \langle x \rangle} \overline{O(y)} = \bigcup_{y \in [x]} \overline{O(y)} = \overline{O(x)} = \overline{[x]}  = \overline{\langle x \rangle} = \alpha(x) \cup \omega(x)$.
 \\
 In any case, we have $\mathop{\downarrow}_{\leq_v} \langle x \rangle = \mathop{\downarrow}_{\leq_v} [x] = \bigcup_{y \in \langle x \rangle} \overline{O(y)} = \bigcup_{y \in [x]} \overline{O(y)} = \alpha(x) \cup \omega(x) \cup [x] = \alpha(x) \cup \omega(x) \cup \langle x \rangle = \overline{O(x)} \cup [x] = \overline{O(x)} \cup \langle x \rangle$.
\end{lemma}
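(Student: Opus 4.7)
The plan is to reduce everything to the identity $\mathop{\downarrow}_{\leq_v} A = \bigcup_{y \in A} \overline{O(y)}$, which is immediate from the definition of the specialization order on $X$ (since $z \leq_v y$ means $O(z) \subseteq \overline{O(y)}$, and $\overline{O(y)}$ is saturated as the closure of a saturated set). After this reduction I would split on whether $x$ lies in $\Cv$, in $\mathrm{P}(v)$, or in $\mathrm{R}(v)$, use Corollary~\ref{cor:ch} and Corollary~\ref{cor:ch01} to describe $[x]$ and $\langle x \rangle$ in each case, and then apply Lemma~\ref{lem:decomp_limit} to expand $\overline{O(y)} = O(y) \cup \alpha(y) \cup \omega(y)$.

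In the case $x \in \Cv$ I would invoke Lemma~\ref{lem:orbit_cl}, which gives $O(x) = \check{O}(x) = \hat{O}(x)$, together with Lemma~\ref{lem:kc_closed}, so that every $y \in [x] = \langle x \rangle$ satisfies $\overline{O(y)} = O(y)$; since $[x]$ is a saturated connected component of $\Sv$ or $\Pv$ by Lemma~\ref{lem:awo}, the union over $y \in [x]$ just recovers $[x]$. For $x \in \mathrm{P}(v)$ I would use Lemma~\ref{lem:ch_01}(1) to replace $\alpha(y)$ and $\omega(y)$ uniformly by $\alpha(x)$ and $\omega(x)$ for all $y \in [x] = \langle x \rangle$, so that $\bigcup_{y \in [x]} \overline{O(y)} = [x] \cup \alpha(x) \cup \omega(x)$; the disjointness $[x] \cap (\alpha(x) \cup \omega(x)) = \emptyset$ then follows from $[x] \subseteq \mathrm{P}(v)$ and the fact that no $y \in [x]$ lies in $\alpha(y) = \alpha(x)$ or $\omega(y) = \omega(x)$, and the remaining equalities with $\overline{O(x)} \cup [x]$ are immediate by Lemma~\ref{lem:decomp_limit} together with $O(x) \subseteq [x]$. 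For $x \in \mathrm{R}(v)$ I would use the defining property of $\hat{O}(x)$: every $y \in \langle x \rangle = \hat{O}(x)$ satisfies $\overline{O(y)} = \overline{O(x)}$, so the union collapses to $\overline{O(x)}$; the same holds for $[x] \subseteq \langle x \rangle$. Then $O(x) \subseteq [x] \subseteq \langle x \rangle \subseteq \overline{O(x)}$ forces $\overline{[x]} = \overline{\langle x \rangle} = \overline{O(x)}$, and the recurrence $x \in \alpha(x) \cup \omega(x)$ combined with invariance and closedness of the limit sets forces $\overline{O(x)} = \alpha(x) \cup \omega(x)$.

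Finally I would verify the uniform formula in the last sentence by checking it case-by-case: when $x \in \Cv$, $\alpha(x) \cup \omega(x) = O(x) \subseteq [x]$, so adjoining $\alpha(x) \cup \omega(x)$ and $\overline{O(x)}$ adds nothing; when $x \in \mathrm{P}(v)$, the claim is exactly (2); and when $x \in \mathrm{R}(v)$, $[x] \subseteq \overline{O(x)} = \alpha(x) \cup \omega(x)$, so $\alpha(x) \cup \omega(x) \cup [x] = \overline{O(x)}$ as required.

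I do not expect a serious obstacle: the argument is an assembly of Lemma~\ref{lem:decomp_limit}, Lemma~\ref{lem:orbit_cl}, Lemma~\ref{lem:awo}, Lemma~\ref{lem:ch_01}, Corollary~\ref{cor:ch}, and Corollary~\ref{cor:ch01}. The most delicate bookkeeping is the disjointness assertion in the non-recurrent case (ensuring $[x]$ does not meet its own limit sets), which reduces to the observation that every $y \in [x] \subseteq \mathrm{P}(v)$ is non-recurrent and shares its $\alpha$- and $\omega$-limit sets with $x$.
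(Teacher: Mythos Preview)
Your proposal is correct and follows essentially the same approach as the paper: both arguments reduce to the identity $\mathop{\downarrow}_{\leq_v} A = \bigcup_{y \in A}\overline{O(y)}$, then split into the cases $x \in \Cv$, $x \in \mathrm{P}(v)$, $x \in \mathrm{R}(v)$, invoking Lemma~\ref{lem:awo} and Lemma~\ref{lem:kc_closed} in the closed case, Lemma~\ref{lem:ch_01} in the non-recurrent case, and the definition of $\hat{O}(x)$ together with closedness and invariance of limit sets in the recurrent case. The only cosmetic difference is that the paper treats $\Sv$ and $\Pv$ separately while you handle $\Cv$ at once; your bookkeeping of the disjointness $[x]\cap(\alpha(x)\cup\omega(x))=\emptyset$ in case~(2) is exactly what the paper uses implicitly via the $\sqcup$ in $(\alpha(y)\cup\omega(y))\sqcup O(y)$.
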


\begin{proof}
Suppose that $x \in \Sv$.
Lemma~\ref{lem:awo} implies that the abstract orbit $\langle x \rangle = [x]$ is the connected component of $\Sv$ containing $x$.
This means that $\mathop{\downarrow}_{\leq_v} [x] = \bigcup_{y \in [x]} \overline{O(y)} = \bigcup_{y \in [x]} O(y) = [x]$.
Suppose that $x \in \Pv$.
Lemma~\ref{lem:awo} implies that the abstract orbit $\langle x \rangle = [x]$ is the connected component of $\Pv$ containing $x$.
This means that $\mathop{\downarrow}_{\leq_v} [x] = \bigcup_{y \in [x]} \overline{O(y)} = \bigcup_{y \in [x]} O(y) = [x]$.
Suppose that $x \in \mathrm{P}(v)$.
Lemma~\ref{lem:ch_01} implies that $\mathop{\downarrow}_{\leq_v} [x] = \bigcup_{y \in [x]} \overline{O(y)} = \bigcup_{y \in [x]} ((\alpha(y) \cup \omega(y)) \sqcup O(y) )= (\bigcup_{y \in [x]} (\alpha(y) \cup \omega(y))) \sqcup [x] = (\alpha(x) \cup \omega(x)) \sqcup [x] = \overline{O(x)} \cup [x]$.
Suppose that $x \in \mathrm{R}(v)$.
Then $x \in \alpha(x) \cup \omega(x)$.
The closedness and invariance of $\alpha(x)$ and $\omega(x)$ imply that $\overline{O(x)} = \alpha(x) \cup \omega(x)$.
By definition of abstract (weak) orbit, we have
$\overline{[x]} = \overline{\check{O}(x)} = \overline{O(x)} =  \overline{\hat{O}(x)} = \overline{\langle x \rangle}$.
Therefore $\mathop{\downarrow}_{\leq_v} [x] = \bigcup_{y \in [x]} \overline{O(y)} = \overline{O(x)} = \bigcup_{y \in \langle x \rangle} \overline{O(y)} = \mathop{\downarrow}_{\leq_v} \langle x \rangle$.
This implies that the assertion $(3)$ holds.
\end{proof}

Recall that $\delta A =  A -  \mathrm{int} A$, $\partial_\perp A = \left( \bigcup_{z \in A} \overline{O(z)} \right) - A$, $\partial_\pitchfork A = \overline{A} - \bigcup_{z \in A} \overline{O(z)}$, and $\bp A = \partial_\perp A \sqcup \partial_\pitchfork A = \overline{A} - A$ for any subset $A$ of a Hausdorff space.
We have the following characterizations of coborders and vertical boundaries of abstract (weak) orbits of points.

\begin{corollary}\label{cor:closure_01}
The following properties hold for a point $x \in X$: \\
$(1)$ If $x \in \Sv$, then $\partial_+ \langle x \rangle = \partial_+ [x] = \emptyset$ {\rm(i.e.} $\partial \langle x \rangle = \partial [x] = \delta \langle x \rangle = \delta [x]$ {\rm)}.
 \\
$(2)$ If $x \in \Pv$, then $\partial_\perp \langle x \rangle = \partial_\perp [x] = \emptyset$ {\rm(i.e.} $\partial_+ \langle x \rangle = \partial_+ [x] = \partial_\pitchfork \langle x \rangle = \partial_\pitchfork [x]$ {\rm)}.
 \\
 $(3)$ If $x \in \mathrm{P}(v)$, then $\partial_\perp \langle x \rangle = \partial_\perp [x] = \alpha(x) \cup \omega(x) = \alpha'(x) \cup \omega'(x)$.
\\
$(4)$ If $x \in \mathrm{R}(v)$, then $\partial_\perp \langle x \rangle = \partial_+ \langle x \rangle = (\alpha(x) \cup \omega(x)) - \langle x \rangle$ and $\partial_\perp [x] = \partial_+ [x] = (\alpha(x) \cup \omega(x)) - [x]$.
In particular, $\partial_\pitchfork \langle x \rangle = \partial_\pitchfork [x] = \emptyset$ for any $x \in \mathrm{R}(v)$.
\end{corollary}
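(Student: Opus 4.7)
The plan is to read off each case of the corollary directly from the corresponding case of Lemma~\ref{lem:closure_01}, using that the horizontal and transverse boundaries are defined in terms of $\bigcup_{y\in A}\overline{O(y)}$ and $\overline{A}$ (compare with Lemma~\ref{lem:tr_bdry}). So for each of the four cases I would substitute the explicit description of $\mathop{\downarrow}_{\leq_v}[x]$ supplied by Lemma~\ref{lem:closure_01} into the definitions of $\partial_\perp$, $\partial_\pitchfork$, and $\bp$, and read off the conclusion. The corresponding statements for $\langle x\rangle$ follow since $\langle x\rangle=[x]$ when $x\notin\mathrm{R}(v)$ and $\overline{[x]}=\overline{\langle x\rangle}$ with $\bigcup_{y\in[x]}\overline{O(y)}=\bigcup_{y\in\langle x\rangle}\overline{O(y)}$ when $x\in\mathrm{R}(v)$ (again by Lemma~\ref{lem:closure_01}).

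First, for $x\in\Sv$, Lemma~\ref{lem:awo} identifies $[x]=\langle x\rangle$ with the connected component of $\Sv$ containing $x$. Since $X$ is Hausdorff and $v$ is continuous, $\Sv$ is closed in $X$, and hence every connected component of $\Sv$ is closed in $X$; therefore $\overline{[x]}=[x]$, so $\partial_+[x]=\emptyset$. Second, for $x\in\Pv$, Lemma~\ref{lem:closure_01}(1) gives $\bigcup_{y\in[x]}\overline{O(y)}=[x]$ because each $\overline{O(y)}=O(y)$ for periodic $y$ (Lemma~\ref{lem:kc_closed}); by Lemma~\ref{lem:tr_bdry}, $\partial_\perp[x]=\bigcup_{y\in[x]}\overline{O(y)}-[x]=\emptyset$, and then $\partial_+[x]=\partial_\pitchfork[x]$ follows from the decomposition $\partial_+[x]=\partial_\perp[x]\sqcup\partial_\pitchfork[x]$.

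Third, for $x\in\mathrm{P}(v)$, Lemma~\ref{lem:ch_01} yields $\alpha(x)=\alpha'(x)$ and $\omega(x)=\omega'(x)$, and Lemma~\ref{lem:closure_01}(2) gives
\[\bigcup_{y\in[x]}\overline{O(y)}=(\alpha(x)\cup\omega(x))\sqcup[x].\]
Subtracting $[x]$ yields $\partial_\perp[x]=\alpha(x)\cup\omega(x)=\alpha'(x)\cup\omega'(x)$, as desired. Fourth, for $x\in\mathrm{R}(v)$, the invariance and closedness of $\alpha(x)\cup\omega(x)$ together with $x\in\alpha(x)\cup\omega(x)$ give $\overline{O(x)}=\alpha(x)\cup\omega(x)$, so Lemma~\ref{lem:closure_01}(3) becomes $\bigcup_{y\in[x]}\overline{O(y)}=\overline{O(x)}=\overline{[x]}$. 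By definition of the boundaries, $\partial_\perp[x]=\overline{O(x)}-[x]=(\alpha(x)\cup\omega(x))-[x]$ and $\partial_+[x]=\overline{[x]}-[x]=\overline{O(x)}-[x]$ coincide, while $\partial_\pitchfork[x]=\overline{[x]}-\bigcup_{y\in[x]}\overline{O(y)}=\emptyset$; the same identities hold with $\langle x\rangle$ in place of $[x]$.

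The only non-bookkeeping step is the closedness of $\Sv$ invoked in the singular case; every other conclusion is a direct substitution. I expect the main care to be in organizing the four cases cleanly and in noting, in the recurrent case, that the identification $\overline{O(x)}=\alpha(x)\cup\omega(x)$ forces the horizontal and coboundary to coincide and the transverse boundary to vanish—an observation that is quick but is the conceptual content of part~(4).
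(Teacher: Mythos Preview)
Your proposal is correct and follows essentially the same approach as the paper: both proofs handle the four cases separately by reading off the structure of $\mathop{\downarrow}_{\leq_v}[x]$ from Lemma~\ref{lem:closure_01} (or the auxiliary Lemmas~\ref{lem:awo}, \ref{lem:kc_closed}, \ref{lem:ch_01}) and substituting into the definitions of $\partial_\perp$, $\partial_\pitchfork$, $\partial_+$. The one cosmetic difference is that in case~(3) you cite the disjoint-union decomposition $(\alpha(x)\cup\omega(x))\sqcup[x]$ directly from Lemma~\ref{lem:closure_01}(2), whereas the paper re-proves the disjointness $(\alpha(x)\cup\omega(x))\cap[x]=\emptyset$ inside the corollary; your citation is the cleaner choice since that disjointness was already established.
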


\begin{proof}
Suppose that $x \in \Sv$.
Then $[x] = \langle x \rangle$ is the connected component of $\Sv$ containing $x$ and so closed.
Therefore $\partial_+ [x] = \overline{[x]} - [x] = \emptyset$.
Suppose that $x \in \Pv$.
Then $[x] = \langle x \rangle$ is the connected component of $\Pv$ containing $x$.
Since any periodic orbits are closed, we have that $\partial_\perp [x] = \left( \bigcup_{z \in [x]} \overline{O(z)} \right) - [x] = [x] - [x] = \emptyset$.
Suppose that $x \in \mathrm{P}(v)$.
Lemma~\ref{lem:ch_01} implies that $\alpha(x) \cup \omega(x) = \alpha'(x) \cup \omega'(x)$, and that $[x] = \langle x \rangle$ is the connected component of $\{ y \in \mathrm{P}(v) \mid \alpha(x) = \alpha(y), \omega(x) = \omega(y) \}$ containing $x$.
We claim that $(\alpha(x) \cup \omega(x)) \cap [x] = \emptyset$.
Indeed, assume that $(\alpha(x) \cup \omega(x)) \cap [x] \neq \emptyset$.
Then there is a point $y \in (\alpha(x) \cup \omega(x)) \cap [x]$.
By definition of abstract weak orbit, Lemma~\ref{lem:ch_01} implies that $\alpha(y) = \alpha'(y) = \alpha'(x) = \alpha(x)$ and $\omega(y) = \omega'(y) = \omega'(x) = \omega(x)$.
Since $y \in \mathrm{P}(v)$, we obtain that $y \notin \alpha'(y) \cup \omega'(y) = \alpha'(x) \cup \omega'(x) = \alpha(x) \cup \omega(x)$, which contradicts $y \in \alpha(x) \cup \omega(x)$.
Therefore $\partial_\perp [x] = \left( \bigcup_{y \in [x]} \alpha(y) \cup \omega(y) \right) \setminus [x] = (\alpha(x) \cup \omega(x)) \setminus [x] = \alpha(x) \cup \omega(x) = \alpha'(x) \cup \omega'(x)$.
Suppose that $x \in \mathrm{R}(v)$.
Lemma~\ref{lem:closure_01} implies that $\overline{O(x)} = \overline{[x]} = \overline{\langle x \rangle} = \alpha(x) \cup \omega(x)$.
The definitions of abstract weak orbit and abstract orbit imply the assertion $(4)$.
\end{proof}

\subsection{Properties of binary relations}

We have the following equivalences.

\begin{lemma}\label{lem:equiv}
The following properties hold for any points $x, y \in X$:
\\
$(1)$ $[x] \leq_v [y]$ if and only if $[x] \cap \overline{O(y)} \neq \emptyset$.
\\
$(2)$ $[x] \leq_\alpha [y]$ if and only if $[x] = [y]$ or $[x] \cap \alpha(y) \neq \emptyset$.
\\
$(3)$ $[x] \leq_\omega [y]$ if and only if $[x] = [y]$ or $[x] \cap \omega(y) \neq \emptyset$.
\end{lemma}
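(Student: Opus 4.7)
The plan is to establish each biconditional by proving the backward direction uniformly and then handling the forward direction by a case analysis on the type of $y$, using the explicit description of abstract weak orbits from Corollary~\ref{cor:ch} together with Lemmas~\ref{lem:same_limit_set}, \ref{lem:decomp_limit}, \ref{lem:orbit_cl}, and \ref{lem:ch_02}.

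The backward direction of each statement is immediate. For (1), given $z \in [x] \cap \overline{O(y)}$, I would take $x_1 := z$ and $y_1 := y$; then $\overline{O(x_1)} = \overline{O(z)} \subseteq \overline{O(y)} = \overline{O(y_1)}$, witnessing $[x] \leq_v [y]$. For (2) and (3), either $[x] = [y]$ holds directly, or a witness $z \in [x] \cap \alpha(y)$ (respectively $[x] \cap \omega(y)$) together with $x_1 := z$ and $y_1 := y$ satisfies the defining condition of $\leq_\alpha$ (respectively $\leq_\omega$).

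For the forward direction of (1), assume $[x] \leq_v [y]$. If $[x] = [y]$, then $y \in [x] \cap O(y) \subseteq [x] \cap \overline{O(y)}$, so I may assume there exist $x_1 \in [x]$ and $y_1 \in [y]$ with $\overline{O(x_1)} \subseteq \overline{O(y_1)}$, giving $x_1 \in \overline{O(y_1)}$. I would then split on the type of $y$: when $y \in \Cv$, Lemma~\ref{lem:orbit_cl} yields $[y] = O(y)$ so $\overline{O(y_1)} = \overline{O(y)}$ and $x_1 \in [x] \cap \overline{O(y)}$; when $y \in \mathrm{R}(v)$, the definition of $\check{O}(y)$ gives $\overline{O(y_1)} = \overline{O(y)}$ directly; when $y \in \mathrm{P}(v)$, Lemmas~\ref{lem:same_limit_set} and \ref{lem:decomp_limit} combine to give $\overline{O(y_1)} = \alpha(y) \cup O(y_1) \cup \omega(y)$, so $x_1$ either lies in $\alpha(y) \cup \omega(y) \subseteq \overline{O(y)}$ (done) or in $O(y_1) \subseteq [y]$, in which case Lemma~\ref{lem:ch_02} forces $[x] = [y]$, reducing to the equality case already handled.

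For (2) and (3), by time reversal symmetry it suffices to treat $\leq_\alpha$. Assume $[x] \leq_\alpha [y]$ with $[x] \neq [y]$, so there are $x_1 \in [x]$ and $y_1 \in [y]$ with $x_1 \in \alpha(y_1)$. When $y \in X - \Cv$, Lemma~\ref{lem:same_limit_set} gives $\alpha(y_1) = \alpha(y)$, hence $x_1 \in [x] \cap \alpha(y)$. When $y \in \Cv$, either $y \in \Sv$ giving $\alpha(y_1) = \{y_1\} = O(y_1)$, or $y \in \Pv$ giving $\alpha(y_1) = O(y_1)$; in either case $O(y_1) \subseteq [y]$ since $[y]$ is a connected component of a saturated set, so $x_1 \in [x] \cap [y]$ and Lemma~\ref{lem:ch_02} forces $[x] = [y]$, contradicting our assumption. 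The main obstacle is the proper case of (1): the closures $\overline{O(y_1)}$ and $\overline{O(y)}$ agree only on their limit parts when $y \in \mathrm{P}(v)$, so one must carefully exploit Lemma~\ref{lem:ch_02} to handle the possibility that the witness point $x_1$ falls into the orbit component $O(y_1) \setminus O(y)$, collapsing to the equality case rather than yielding a direct witness.
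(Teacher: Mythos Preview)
Your overall strategy matches the paper's: dispose of the backward direction and the case $[x]=[y]$ first, then for the forward direction split according to whether $y$ lies in $\Cv$, $\mathrm{P}(v)$, or $\mathrm{R}(v)$. Your treatment of the $\mathrm{P}(v)$ and $\mathrm{R}(v)$ cases is correct, and in fact your handling of the possibility $x_1 \in O(y_1)$ in the $\mathrm{P}(v)$ case of (1) via Lemma~\ref{lem:ch_02} is more explicit than the paper's.

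There is, however, a genuine slip in the $\Cv$ case of (1). You write that Lemma~\ref{lem:orbit_cl} yields $[y]=O(y)$, but that lemma only gives $O(y)=\check O(y)=\hat O(y)$; for $y\in\Cv$ the abstract weak orbit $[y]=[y]''$ is the connected component of $\Sv$ (or $\Pv$) containing $y$, which is typically strictly larger than $O(y)$. Consequently one cannot conclude $\overline{O(y_1)}=\overline{O(y)}$: the orbits $O(y_1)$ and $O(y)$ may be distinct periodic orbits lying in the same component of $\Pv$. The correct argument here is exactly the one you give for the $\Cv$ case of (2): since $y_1\in[y]\subseteq\Cv$, Lemma~\ref{lem:kc_closed} gives $\overline{O(y_1)}=O(y_1)\subseteq[y]$, so $x_1\in[x]\cap[y]$, whence $[x]=[y]$ by Lemma~\ref{lem:ch_02}, and then $y\in[x]\cap\overline{O(y)}$. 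This is precisely how the paper handles it (deriving a contradiction to $[x]\neq[y]$). With this correction your proof is complete and essentially identical to the paper's.
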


\begin{proof}
If $[x] = [y]$, then $[x] \cap \overline{O(y)} \neq \emptyset$, $[x] \leq_v [y]$, $[x] \leq_\alpha [y]$, and $[x] \leq_\omega [y]$.
Thus we may assume that $[x] \neq [y]$.
If $[x] \cap \overline{O(y)} \neq \emptyset$ (resp. $[x] \cap \alpha(y) \neq \emptyset$, $[x] \cap \omega(y) \neq \emptyset$), then there is a point $x_1 \in [x]$ such that $\overline{O(x_1)} \subseteq \overline{O(y)}$ (resp. $x_1 \in \alpha (y)$, $x_1 \in \omega (y)$) and so that $[x] \leq_v [y]$ (resp. $[x] \leq_\alpha [y]$, $[x] \leq_\omega [y]$).
Conversely, suppose that $[x] \leq_v [y]$.
Then there are points  $x_1 \in [x]$ and $y_1 \in [y]$ such that $\overline{O(x_1)} \subseteq \overline{O(y_1)}$.
Assume that $y \in \mathop{\mathrm{Cl}}(v)$.
Then the abstract weak orbit $[y]$ is a connected component of $\Sv$ or $\Pv$ containing $y$ and so $[x] = [y]$, which contradicts $[x] \neq [y]$.
Thus $y \notin \mathop{\mathrm{Cl}}(v)$.
Suppose that $y \in \mathrm{P}(v)$.
Since $[y] \subseteq \mathrm{P}(v)$, we obtain $[y] \subseteq \{ z \in \mathrm{P}(v) \mid \alpha(z) = \alpha(y), \omega(z) = \omega(y) \}$.
Then $x_1 \in \bigcup_{z \in [y]} \alpha(z) \cup \omega(z) = \alpha(y) \cup \omega(y) \subseteq \overline{O(y)}$ and so $x_1 \in [x] \cap \overline{O(y)}$.
Suppose that $y \in \mathrm{R}(v)$.
Then $[y] = \check{O}(y) = \{ z \in X \mid \overline{O(z)} = \overline{O(y)}, \alpha(z) = \alpha(y), \omega(z) = \omega(y) \}$ and so $\overline{O(x_1)} \subseteq \overline{O(y_1)} = \overline{O(y)}$.
This means that $x_1 \in [x] \cap \overline{O(y)}$.
Suppose that $[x] \leq_{\alpha} [y]$.
Then there are points  $x_1 \in [x]$ and $y_1 \in [y]$ such that $x_1 \in \alpha(y_1)$.
As above, if $y \in \mathop{\mathrm{Cl}}(v)$, then the abstract weak orbit $[y]$ is a connected component of $\Sv$ or $\Pv$ containing $y$ and so $[x] = [y]$, which contradicts $[x] \neq [y]$.
Thus $y \notin \mathop{\mathrm{Cl}}(v)$.
Assume that $y \in \mathrm{P}(v)$.
Since $[y] \subseteq \mathrm{P}(v)$, we obtain $[y] \subseteq \{ z \in \mathrm{P}(v) \mid \alpha(z) = \alpha(y), \omega(z) = \omega(y) \}$.
Then $x_1 \in \bigcup_{z \in [y]} \alpha(z) = \alpha(y)$ and so $x_1 \in [x] \cap \alpha(y)$.
Suppose that $y \in \mathrm{R}(v)$.
Then $[y] = \check{O}(y) = \{ z \in \mathrm{R}(v) \mid \alpha(z) = \alpha(y), \omega(z) = \omega(y) \}$ and so $x_1 \in \bigcup_{z \in [y]} \alpha(z) = \alpha(y)$.
This means $x_1 \in [x] \cap \alpha(y)$.
Symmetry implies that the condition $[x] \leq_\omega [y]$ implies $[x] = [y]$ or $[x] \cap \omega (y) \neq \emptyset$.
\end{proof}

\begin{lemma}\label{lem:equiv02}
For any points $x, y \in X$, we have that $\langle x \rangle \leq_v \langle y \rangle$ if and only if $\langle x \rangle \cap \overline{O(y)} \neq \emptyset$.
\end{lemma}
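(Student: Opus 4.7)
The plan is to mimic the structure of the proof of Lemma~\ref{lem:equiv}(1), but using the characterization of the abstract orbit rather than the abstract weak orbit, with Lemma~\ref{lem:ch_03} playing the role that Lemma~\ref{lem:ch_02} plays there. First I would dispose of the trivial direction: if $\langle x \rangle \cap \overline{O(y)} \neq \emptyset$, pick a witness $x_1 \in \langle x \rangle$ with $x_1 \in \overline{O(y)}$, whence $\overline{O(x_1)} \subseteq \overline{O(y)}$, and since $y \in \langle y \rangle$ this immediately gives $\langle x \rangle \leq_v \langle y \rangle$. (The case $\langle x \rangle = \langle y \rangle$ is handled simultaneously because $y \in \langle y \rangle \cap \overline{O(y)}$.)

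For the forward direction, I assume $\langle x \rangle \leq_v \langle y \rangle$ with $\langle x \rangle \neq \langle y \rangle$ and fix witnesses $x_1 \in \langle x \rangle$, $y_1 \in \langle y \rangle$ with $\overline{O(x_1)} \subseteq \overline{O(y_1)}$. I then split along the four possibilities for $y$ given by the decomposition $X = \Sv \sqcup \Pv \sqcup \mathrm{P}(v) \sqcup \mathrm{R}(v)$. In the closed cases $y \in \Cv$, Lemma~\ref{lem:awo} says $\langle y \rangle = [y]$ is a connected component of $\Sv$ or $\Pv$ and $\overline{O(y_1)} = O(y_1)$ by Lemma~\ref{lem:kc_closed}, so $x_1 \in O(y_1) \subseteq \langle y \rangle$, which forces $\langle x \rangle = \langle y \rangle$ by Lemma~\ref{lem:ch_03}, contradicting the assumption. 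In the case $y \in \mathrm{P}(v)$, Lemma~\ref{lem:ch_01} gives $\alpha(y_1) = \alpha(y)$ and $\omega(y_1) = \omega(y)$, and Lemma~\ref{lem:decomp_limit} rewrites $\overline{O(y_1)} = O(y_1) \sqcup \alpha(y) \sqcup \omega(y)$; if $x_1 \in O(y_1)$ then again $\langle x \rangle = \langle y \rangle$, contradiction, so $x_1 \in \alpha(y) \cup \omega(y) \subseteq \overline{O(y)}$.

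The only genuinely different step from Lemma~\ref{lem:equiv} is the recurrent case, and this is where the statement differs from its weak analogue: when $y \in \mathrm{R}(v)$, we have $\langle y \rangle = \hat{O}(y) = \{ z \in X \mid \overline{O(z)} = \overline{O(y)} \}$, so $\overline{O(y_1)} = \overline{O(y)}$ holds automatically (not merely the weaker equality of limit sets used for $\check{O}$). Consequently $x_1 \in \overline{O(x_1)} \subseteq \overline{O(y_1)} = \overline{O(y)}$, and $x_1 \in \langle x \rangle \cap \overline{O(y)}$ witnesses the conclusion.

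There is no real obstacle in this proof; the only subtle point is ensuring that each case ends with $x_1 \in \overline{O(y)}$ (not just $\overline{O(y_1)}$), which is immediate for $\mathrm{R}(v)$ from the definition of $\hat{O}$, follows from Lemma~\ref{lem:ch_01} for $\mathrm{P}(v)$, and is vacuous in the closed cases. I would state the lemma and give the proof in roughly eight lines, essentially a shortened version of the proof of Lemma~\ref{lem:equiv}(1) tailored to the orbit class characterization of $\langle \cdot \rangle$ on $\mathrm{R}(v)$.
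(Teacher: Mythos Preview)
Your proof is correct and follows essentially the same approach as the paper's. The only organizational difference is that the paper splits according to whether $x$ and $y$ lie in $\mathrm{R}(v)$, reducing the case $x,y \notin \mathrm{R}(v)$ to the already-proved Lemma~\ref{lem:equiv}, whereas you split purely on the type of $y$ and handle each case directly; the mathematical content is the same.
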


\begin{proof}
If $\langle x \rangle = \langle y \rangle$, then $\langle x \rangle \cap \overline{O(y)} \neq \emptyset$ and $\langle x \rangle \leq_v \langle y \rangle$.
Thus we may assume that $\langle x \rangle \neq \langle y \rangle$.
If $\langle x \rangle \cap \overline{O(y)} \neq \emptyset$, then there is a point $x_1 \in \langle x \rangle$ such that $\overline{O(x_1)} \subseteq \overline{O(y)}$ and so that $\langle x \rangle \leq_v \langle y \rangle$.
Conversely, suppose that $\langle x \rangle \leq_v \langle y \rangle$.
Then $\overline{O(x_1)} \subseteq \overline{O(y_1)}$ for some $x_1 \in \langle x \rangle$ and $y_1 \in \langle y \rangle$.
Suppose that $x,y \notin \mathrm{R}(v)$.
Then $\langle x \rangle = [x]$ and $\langle y \rangle = [y]$.
Lemma~\ref{lem:equiv} implies that $\langle x \rangle \cap \overline{O(y)} \neq \emptyset$.
Suppose that $y \in \mathrm{R}(v)$.
Then $\langle y \rangle = \hat{O}(y) = \{ z \in X \mid \overline{O(z)} = \overline{O(y)} \}$ and so $x_1 \in \bigcup_{z \in \langle y \rangle} \overline{O(z)} = \overline{O(y)}$.
This implies that $x_1 \in \langle x \rangle \cap \overline{O(y)}$.
Thus we may assume that $y \notin \mathrm{R}(v)$ and $x \in \mathrm{R}(v)$.
Then $\langle y \rangle = [y]''$.
Lemma~\ref{lem:type} implies that either $[y]'' \subseteq \Cv$ or $[y]'' \subseteq \mathrm{P}(v)$.
Since $\overline{O(x_1)} \subseteq \overline{O(y_1)}$, we have $[y]'' \subseteq \mathrm{P}(v)$.
Then $\alpha(y_1) = \alpha'(y_1) = \alpha'(y) = \alpha(y)$ and $\omega(y_1) = \omega'(y_1) = \omega'(y) = \omega(y)$.
Therefore $x_1 \in \overline{O(y_1)} - O(y_1) = \alpha(y_1) \cup \omega(y_1) = \alpha(y) \cup \omega(y) = \overline{O(y)} - O(y)$ and so $x_1 \in \langle x \rangle \cap \overline{O(y)}$.
\end{proof}

We show that the pre-order relation $\leq_v$ on a Hausdorff space $X$ is the union of pre-orders $\leq_{\alpha}$ and $\leq_\omega$ on $X$ as a direct product.

\begin{lemma}\label{lem:hat}
The following statements are equivalent for any points $x, y \in X$:
\\
$(1)$ $x \leq_v y$.
\\
$(2)$ Either $x \leq_{\alpha} y$ or $x \leq_{\omega} y$.
\end{lemma}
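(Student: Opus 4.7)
The plan is to reduce everything to the Hausdorff decomposition $\overline{O(y)} = \alpha(y) \cup O(y) \cup \omega(y)$ already established in Lemma~\ref{lem:decomp_limit}. This decomposition is the whole engine of the statement: it says that a point lying in the orbit closure of $y$ must belong to one of the three pieces, and the pre-orders $\leq_\alpha$ and $\leq_\omega$ together with orbit equality exactly cover these three possibilities.

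For the direction $(2) \Rightarrow (1)$, I would argue as follows. Suppose $x \leq_\alpha y$. If $O(x) = O(y)$, then trivially $O(x) \subseteq \overline{O(y)}$, i.e. $x \leq_v y$. Otherwise $x \in \alpha(y)$, and since $\alpha(y)$ is closed and invariant (as noted before Lemma~\ref{lem:decomp_limit} and formally recorded in Lemma~\ref{lem:connectivity}(1)), we get $O(x) \subseteq \alpha(y) \subseteq \overline{O(y)}$, so $x \leq_v y$. The case $x \leq_\omega y$ is symmetric using invariance and closedness of $\omega(y)$.

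For the direction $(1) \Rightarrow (2)$, assume $x \leq_v y$, i.e. $O(x) \subseteq \overline{O(y)}$. Apply Lemma~\ref{lem:decomp_limit} to conclude that $x \in \overline{O(y)} = \alpha(y) \cup O(y) \cup \omega(y)$. If $x \in O(y)$, then $O(x) = O(y)$, which by definition yields both $x \leq_\alpha y$ and $x \leq_\omega y$. Otherwise $x \in \alpha(y) \cup \omega(y)$: if $x \in \alpha(y)$ then $x \leq_\alpha y$, and if $x \in \omega(y)$ then $x \leq_\omega y$. Either way, one of the two relations holds.

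There is no real obstacle here; the only subtlety is recognizing that the Hausdorff hypothesis, which has been standing since the start of \S 4, is precisely what allows the three-way decomposition of $\overline{O(y)}$ to be clean (recall the counterexample discussed right after Lemma~\ref{lem:decomp_limit}, where without Hausdorffness one can have $\overline{O(y)} \supsetneq \alpha(y) \cup O(y) \cup \omega(y)$ and the equivalence would fail). Once that decomposition is in hand, both implications are one-line case distinctions using only invariance and closedness of limit sets.
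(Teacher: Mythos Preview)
Your proof is correct and follows essentially the same approach as the paper: both directions hinge on the decomposition $\overline{O(y)} = \alpha(y) \cup O(y) \cup \omega(y)$ from Lemma~\ref{lem:decomp_limit}, with the case $O(x)=O(y)$ handled separately and the remaining cases read off directly from the three pieces. The paper's write-up is marginally terser (it does not explicitly invoke invariance of the limit sets for the $(2)\Rightarrow(1)$ direction, simply noting $x\in\omega(y)\subseteq\overline{O(y)}$), but the logical content is identical.
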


\begin{proof}
Notice that $\overline{O} = \alpha(O) \cup O \cup \omega(O)$ for any orbit $O$.
If $O(x) = O(y)$, then $x \leq_v y$, $x \leq_{\alpha} y$, and $x \leq_{\omega} y$.
Thus we may assume that $O(x) \neq O(y)$.
Suppose that $x \leq_v y$.
Then $O(x) \subseteq \overline{O(y)}$.
Since $O(x) \neq O(y)$, we have $O(x) \subseteq \overline{O(y)} - O(y) \subseteq  \alpha(y) \cup \omega(y)$.
Then either $x \in \alpha(y)$ or $x \in \omega(y)$.
This means that either $x \leq_{\alpha} y$ or $x \leq_{\omega} y$.
Conversely, suppose that $x \leq_{\omega} y$.
Then $x \in \omega(y) \subseteq \overline{O(y)}$ and so $\overline{O(x)} \subseteq \overline{O(y)}$.
Then $x \leq_v y$.
By symmetry, the condition $x \leq_{\alpha} y$ implies $x \leq_v y$.
\end{proof}

We have the following equivalence.

\begin{lemma}\label{lem:v_perp}
The following statements are equivalent for any points $x, y \in X$:
\\
$(1)$ $[x] \leq_v [y]$.
\\
$(2)$ Either $[x] \leq_{\alpha} [y]$ or $[x] \leq_{\omega} [y]$.
\\
$(3)$ $[x] \leq_\perp [y]$.
\\
$(4)$ $[x] \cap \overline{O(y)} \neq \emptyset$.
\\
$(5)$
Either $[x] = [y]$ or $[x] \cap ((\alpha(y) \cup \omega(y)) \setminus [y]) \neq \emptyset$.
\end{lemma}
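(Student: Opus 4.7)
The plan is to route everything through condition $(4)$. The two workhorses are Lemma~\ref{lem:equiv}, which already characterizes $\leq_v$, $\leq_\alpha$, $\leq_\omega$ in terms of intersections with $\overline{O(y)}$, $\alpha(y)$, and $\omega(y)$, and Lemma~\ref{lem:decomp_limit}, which gives the decomposition $\overline{O(y)} = \alpha(y) \cup O(y) \cup \omega(y)$ on any Hausdorff space. The third ingredient is the disjointness of distinct abstract weak orbits (Lemma~\ref{lem:ch_02}), which lets me trade inclusions in orbit closures for intersections with limit sets.

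The equivalence $(1) \Leftrightarrow (4)$ is exactly Lemma~\ref{lem:equiv}(1) and requires no separate argument. For $(4) \Leftrightarrow (5)$, the $[x]=[y]$ branch is trivial on both sides; assuming $[x]\neq [y]$, Lemma~\ref{lem:ch_02} gives $[x]\cap [y]=\emptyset$, and since $O(y)\subseteq [y]$ we obtain $[x]\cap O(y)=\emptyset$. Combined with Lemma~\ref{lem:decomp_limit} this yields $[x]\cap \overline{O(y)} = [x]\cap(\alpha(y)\cup\omega(y)) = [x]\cap((\alpha(y)\cup\omega(y))\setminus [y])$, settling the equivalence. For $(2) \Leftrightarrow (4)$, I would apply Lemma~\ref{lem:equiv}(2),(3) to rewrite $(2)$ as ``$[x]=[y]$ or $[x]$ meets $\alpha(y)\cup\omega(y)$,'' which is the same as $(5)$ and hence $(4)$.

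For $(2) \Leftrightarrow (3)$, I would unfold the definition of $\leq_\perp$ to ``$[x]=[y]$ or $[x]\cap(\alpha([y])\cup\omega([y]))\neq\emptyset$'' and then compare $\alpha([y])\cup\omega([y])$ with $\alpha(y)\cup\omega(y)$ according to the type of $y$. If $y\in\mathrm{P}(v)$, Lemma~\ref{lem:same_limit_set} gives $\alpha(z)=\alpha(y)$ and $\omega(z)=\omega(y)$ for every $z\in [y]$, hence $\alpha([y])\cup\omega([y])=\alpha(y)\cup\omega(y)$. If $y\in\mathrm{R}(v)$, the same identities follow directly from $[y]=\check{O}(y)$. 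If $y\in\Cv$, then every $z\in[y]$ is closed (Lemma~\ref{lem:awo}), so $\alpha(z)\cup\omega(z)=O(z)\subseteq [y]$, whence $\alpha([y])\cup\omega([y])=[y]$ and $\alpha(y)\cup\omega(y)\subseteq [y]$; the condition $[x]\neq[y]$ combined with Lemma~\ref{lem:ch_02} then forces both $[x]\cap(\alpha([y])\cup\omega([y]))$ and $[x]\cap(\alpha(y)\cup\omega(y))$ to be empty, so $(2)$ and $(3)$ agree (both collapse to $[x]=[y]$).

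The main obstacle I anticipate is the bookkeeping in $(2)\Leftrightarrow(3)$: one must ensure that the ``$[x]=[y]$ or $\ldots$'' clauses line up correctly in the closed-orbit case, where the nontrivial alternative on both sides fails vacuously for essentially the same reason (Lemma~\ref{lem:ch_02}), and that no case uses an asymmetry between $\alpha$ and $\omega$ that would not survive time reversion.
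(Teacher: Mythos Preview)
Your argument is correct and follows essentially the same path as the paper: both route through Lemma~\ref{lem:equiv} for $(1)\Leftrightarrow(4)$ and use Lemma~\ref{lem:ch_02} together with $\overline{O(y)}=\alpha(y)\cup O(y)\cup\omega(y)$ to handle the remaining equivalences. The only cosmetic difference is that the paper invokes Lemma~\ref{lem:hat} for $(1)\Leftrightarrow(2)$ and the packaged identity $\bigcup_{z\in[y]}(\alpha(z)\cup\omega(z))\setminus[y]=(\alpha(y)\cup\omega(y))\setminus[y]$ from Lemma~\ref{lem:closure_01} for $(1)\Leftrightarrow(3)\Leftrightarrow(5)$, whereas you inline the same content by case analysis on the type of $y$; the two routes are interchangeable.
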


\begin{proof}
Lemma~\ref{lem:hat} implies that the conditions $(1)$ and $(2)$ are equivalent.
By Lemma~\ref{lem:equiv}, the conditions $(1)$ and $(4)$ are equivalent.
If $[x] = [y]$, then $[x] \leq_v [y]$ and $[x] \leq_\perp [y]$.
Thus we may assume that $[x] \neq [y]$.
Lemma~\ref{lem:ch_02} implies $[x] \cap [y] = \emptyset$.
Therefore $[x] \leq_v [y]$ if and only if $[x] \cap ((\bigcup_{z \in [y]} \overline{O(z)}) - [y]) \neq \emptyset$.
Moreover, $[x] \leq_\perp [y]$ if and only if $[x] \cap (\bigcup_{z \in [y]} (\alpha(z) \cup \omega(z)) \setminus [y]) \neq \emptyset$.
Lemma~\ref{lem:closure_01} implies that $ \bigcup_{z \in [y]} (\alpha(z) \cup \omega(z)) \setminus [y] = (\bigcup_{z \in [y]} \overline{O(z)}) - [y]  = \overline{O(y)} \setminus [y] = (\alpha(y) \cup \omega(y)) \setminus [y]$.
This means that conditions $(1)$, $(3)$, and $(5)$ are equivalent.
%
\end{proof}

Similarly, we have the following statement.

\begin{lemma}\label{lem:v_perp_02}
The following statements are equivalent for any points $x, y \in X$:
\\
$(1)$ $\langle x \rangle \leq_v \langle y \rangle$.
\\
$(2)$ Either $\langle x \rangle \leq_{\alpha} \langle y \rangle$ or $\langle x \rangle \leq_{\omega} \langle y \rangle$.
\\
$(3)$ $\langle x \rangle \leq_\perp \langle y \rangle$.
\\
$(4)$ $\langle x \rangle \cap \overline{O(y)} \neq \emptyset$.
\\
$(5)$
Either $\langle x \rangle = \langle y \rangle$ or $\langle x \rangle \cap ((\alpha(x) \cup \omega(x)) \setminus \langle y \rangle) \neq \emptyset$.
\end{lemma}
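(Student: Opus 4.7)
The plan is to mirror the proof of Lemma~\ref{lem:v_perp} almost verbatim, substituting abstract orbits $\langle\cdot\rangle$ for abstract weak orbits $[\cdot]$ and invoking Lemma~\ref{lem:equiv02} in place of Lemma~\ref{lem:equiv}. I would first dispatch the trivial case $\langle x\rangle=\langle y\rangle$, in which all five conditions hold by definition; thereafter assume $\langle x\rangle\neq\langle y\rangle$, so that Lemma~\ref{lem:ch_03} supplies the disjointness $\langle x\rangle\cap\langle y\rangle=\emptyset$ that will be used repeatedly.

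The equivalence $(1)\iff(4)$ is exactly Lemma~\ref{lem:equiv02}. For $(1)\Leftrightarrow(2)$, the direction $(2)\Rightarrow(1)$ is immediate since $\alpha(y_1)\cup\omega(y_1)\subseteq\overline{O(y_1)}$; for $(1)\Rightarrow(2)$, I pick representatives $x_1\in\langle x\rangle$ and $y_1\in\langle y\rangle$ with $\overline{O(x_1)}\subseteq\overline{O(y_1)}$, and observe that $\langle x\rangle\neq\langle y\rangle$ forces $O(x_1)\neq O(y_1)$, so Lemma~\ref{lem:decomp_limit} yields $x_1\in\alpha(y_1)\cup\omega(y_1)$, giving one of the inequalities $\langle x\rangle\leq_\alpha\langle y\rangle$ or $\langle x\rangle\leq_\omega\langle y\rangle$.

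For $(1)\Leftrightarrow(3)$ and $(1)\Leftrightarrow(5)$, the key is to rewrite $\bigcup_{z\in\langle y\rangle}(\alpha(z)\cup\omega(z))$ by means of Lemma~\ref{lem:closure_01}. A case split on whether $y\in\Cv$, $y\in\mathrm{P}(v)$, or $y\in\mathrm{R}(v)$ yields in each case the identities $\mathop{\downarrow}_{\leq_v}\langle y\rangle=\bigcup_{z\in\langle y\rangle}\overline{O(z)}=(\alpha(y)\cup\omega(y))\cup\langle y\rangle$, so that, after removing $\langle y\rangle$ and using the disjointness $\langle x\rangle\cap\langle y\rangle=\emptyset$, the set-theoretic conditions (3), (4), and the intended $y$-version of (5) all reduce to the single statement $\langle x\rangle\cap(\alpha(y)\cup\omega(y))\neq\emptyset$. (I read the $x$ appearing inside $\alpha(x)\cup\omega(x)$ in (5) as a typographical slip for $y$, in parallel with Lemma~\ref{lem:v_perp}(5).)

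The main obstacle will be the recurrent case $y\in\mathrm{R}(v)$, where $\langle y\rangle=\hat{O}(y)$ can properly contain $\check{O}(y)$, so the $\alpha$- and $\omega$-limit sets of points $z\in\langle y\rangle$ need not all agree with those of $y$ a priori. Here I argue that every $z\in\hat{O}(y)$ satisfies $\overline{O(z)}=\overline{O(y)}$, and that recurrence, together with the invariance and closedness of $\alpha(y)$ and $\omega(y)$, forces $\overline{O(y)}=\alpha(y)\cup\omega(y)$; consequently $\alpha(z)\cup\omega(z)\subseteq\overline{O(z)}=\alpha(y)\cup\omega(y)$ uniformly in $z$, while taking $z=y$ supplies the reverse inclusion. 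This yields the required identity $\bigcup_{z\in\langle y\rangle}(\alpha(z)\cup\omega(z))=\alpha(y)\cup\omega(y)$ and closes the chain of equivalences; the remaining steps are routine bookkeeping that parallels the proof of Lemma~\ref{lem:v_perp} line for line.
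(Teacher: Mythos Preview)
Your proposal is correct and follows essentially the same route as the paper's own proof: invoke Lemma~\ref{lem:equiv02} for $(1)\Leftrightarrow(4)$, reduce to the case $\langle x\rangle\neq\langle y\rangle$ (hence disjoint), and then use Lemma~\ref{lem:closure_01} to identify $\bigcup_{z\in\langle y\rangle}(\alpha(z)\cup\omega(z))\setminus\langle y\rangle=(\bigcup_{z\in\langle y\rangle}\overline{O(z)})\setminus\langle y\rangle=(\alpha(y)\cup\omega(y))\setminus\langle y\rangle$, which collapses $(1)$, $(3)$, and $(5)$ into one condition. Your reading of the $x$ in $(5)$ as a typo for $y$ is confirmed by the paper's own proof.

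Two small remarks. First, for $(1)\Leftrightarrow(2)$ the paper simply cites Lemma~\ref{lem:hat} (the pointwise equivalence $x\leq_v y \iff x\leq_\alpha y$ or $x\leq_\omega y$), which lifts immediately to abstract orbits because all the relations are defined existentially over representatives; your direct argument via Lemma~\ref{lem:decomp_limit} reproves this lift and is fine. Second, your separate treatment of the recurrent case $y\in\mathrm{R}(v)$ is unnecessary: the ``in any case'' clause of Lemma~\ref{lem:closure_01} already gives $\bigcup_{z\in\langle y\rangle}\overline{O(z)}=\alpha(y)\cup\omega(y)\cup\langle y\rangle$ uniformly, so no case split is needed and the argument you sketch for $\hat{O}(y)$ is already packaged there.
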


\begin{proof}
Lemma~\ref{lem:hat} implies that the conditions $(1)$ and $(2)$ are equivalent.
By Lemma~\ref{lem:equiv02}, the conditions $(1)$ and $(4)$ are equivalent.
If $\langle x \rangle  = \langle y \rangle $, then $\langle x \rangle  \leq_v \langle y \rangle $ and $\langle x \rangle  \leq_\perp \langle y \rangle $.
Thus we may assume that $\langle x \rangle  \neq \langle y \rangle $.
Then $\langle x \rangle  \cap \langle y \rangle  = \emptyset$.
Therefore $\langle x \rangle  \leq_v \langle y \rangle $ if and only if $\langle x \rangle  \cap ((\bigcup_{z \in \langle y \rangle } \overline{O(z)}) - \langle y \rangle ) \neq \emptyset$.
Moreover, $\langle x \rangle  \leq_\perp \langle y \rangle $ if and only if $\langle x \rangle  \cap (\bigcup_{z \in \langle y \rangle } (\alpha(z) \cup \omega(z)) \setminus \langle y \rangle ) \neq \emptyset$.
Lemma~\ref{lem:closure_01} implies that $ \bigcup_{z \in \langle y \rangle } (\alpha(z) \cup \omega(z)) \setminus \langle y \rangle  = (\bigcup_{z \in \langle y \rangle } \overline{O(z)}) - \langle y \rangle   = \overline{O(y)} \setminus \langle y \rangle  = (\alpha(y) \cup \omega(y)) \setminus \langle y \rangle $.
This means that conditions $(1)$, $(3)$, and $(5)$ are equivalent.
\end{proof}

By definitions, Lemma~\ref{lem:decomp_limit}, Lemma~\ref{lem:v_perp}, and Lemma~\ref{lem:v_perp_02} imply the following observations.

\begin{lemma}
The following properties hold for points $x,y \in X$: \\
$(1)$ $[x] <_\pitchfork [y]$ if and only if $[x] \cap (\overline{[y]} - \bigcup_{z \in [y]} \overline{O(z)}) \neq \emptyset$ \, {\rm (i.e.} $[x] \cap \partial_\pitchfork \overline{[y]} \neq \emptyset$ {\rm)}.
\\
$(2)$ $\langle x \rangle <_\pitchfork \langle y \rangle$ if and only if $\langle x \rangle \cap (\overline{\langle y \rangle} - \bigcup_{z \in \langle y \rangle} \overline{O(z)}) \neq \emptyset$ \, {\rm (i.e.} $\langle x \rangle \cap \partial_\pitchfork \overline{\langle y \rangle} \neq \emptyset$ {\rm)}.
\end{lemma}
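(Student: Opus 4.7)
The plan is to derive both statements directly from the immediately preceding characterization of $\leq_\pitchfork$ together with Lemma~\ref{lem:tr_bdry}. First I would unpack the strict order: by definition, $[x] <_\pitchfork [y]$ is equivalent to $[x] \neq [y]$ together with $[x] \leq_\pitchfork [y]$. The preceding lemma states that $[x] \leq_\pitchfork [y]$ holds if and only if $[x] = [y]$ or $[x] \cap \partial_\pitchfork [y] \neq \emptyset$. Combining these, $[x] <_\pitchfork [y]$ is equivalent to the conjunction $[x] \neq [y]$ and $[x] \cap \partial_\pitchfork [y] \neq \emptyset$.

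Next I would show that the clause $[x] \neq [y]$ is redundant given the nonempty intersection. Since $\partial_\pitchfork [y] = (\overline{[y]} - [y]) \setminus (\alpha([y]) \cup \omega([y])) \subseteq X - [y]$, any point in $[x] \cap \partial_\pitchfork [y]$ lies outside $[y]$; hence $[x] \not\subseteq [y]$, and by Lemma~\ref{lem:ch_02} (disjointness of distinct abstract weak orbits) this forces $[x] \neq [y]$. Therefore $[x] <_\pitchfork [y]$ is equivalent to $[x] \cap \partial_\pitchfork [y] \neq \emptyset$. To conclude (1), I would invoke Lemma~\ref{lem:tr_bdry}: since $[y]$ is saturated (being a union of orbits), that lemma yields $\partial_\pitchfork [y] = \overline{[y]} - \bigcup_{z \in [y]} \overline{O(z)}$, and substituting gives exactly the displayed characterization.

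Statement (2) is obtained by the same argument applied to abstract orbits in place of abstract weak orbits: one uses that $\langle y \rangle$ is saturated, the preceding $\leq_\pitchfork$-characterization for $\langle \cdot \rangle$, and Lemma~\ref{lem:ch_03} in place of Lemma~\ref{lem:ch_02} to absorb the $\langle x \rangle \neq \langle y \rangle$ clause. I do not expect a real obstacle here; the statement is a formal translation of the $\leq_\pitchfork$-characterization into the language of orbit closures via Lemma~\ref{lem:tr_bdry}, the only mild point being the observation that a nonempty intersection with $\partial_\pitchfork [y]$ (resp.\ $\partial_\pitchfork \langle y \rangle$) is already incompatible with $[x] = [y]$ (resp.\ $\langle x \rangle = \langle y \rangle$).
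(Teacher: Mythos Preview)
Your proposal is correct and follows essentially the same route as the paper: both arguments unpack the definition of $<_\pitchfork$ and use the Hausdorff decomposition $\overline{O(z)} = \alpha(z) \cup O(z) \cup \omega(z)$ (you via Lemma~\ref{lem:tr_bdry}, the paper via Lemma~\ref{lem:decomp_limit} directly) to rewrite $\partial_\pitchfork [y]$ as $\overline{[y]} - \bigcup_{z \in [y]} \overline{O(z)}$. As a minor remark, you do not actually need Lemma~\ref{lem:ch_02} or Lemma~\ref{lem:ch_03} to absorb the inequality clause: a point of $[x]$ lying in $\partial_\pitchfork [y] \subseteq X - [y]$ already witnesses $[x] \neq [y]$ as sets.
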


\begin{proof}
Note that $\bigcup_{z \in [w]} (\alpha(z) \cup \omega(z)) \cup [w] = \bigcup_{z \in [w]} (\alpha(z) \cup O(z) \cup  \omega(z)) = \bigcup_{z \in [w]} \overline{O(z)}$ and $\bigcup_{z \in \langle w \rangle} (\alpha(z) \cup \omega(z)) \cup \langle w \rangle = \bigcup_{z \in \langle w \rangle} (\alpha(z) \cup O(z) \cup  \omega(z)) = \bigcup_{z \in \langle w \rangle} \overline{O(z)}$  for any $w \in X$.
Therefore $[x] <_\pitchfork [y]$ if and only if
there is a point $x_1 \in [x]$ such that $x_1 \in \overline{[y]} - \left( \bigcup_{z \in [y]} (\alpha(z) \cup \omega(z)) \cup [y] \right)$.
Such conditions are equivalent to $\emptyset \neq [x] \cap \left( \overline{[y]} - \left( \bigcup_{z \in [y]} (\alpha(z) \cup \omega(z)) \cup [y] \right) \right) = [x] \cap (\overline{[y]} - \bigcup_{z \in [y]} \overline{O(z)}) = [x] \cap \partial_\pitchfork \overline{[y]}$.
This implies the assertion $(1)$ holds.
Similarly, we have that $\langle x \rangle <_\pitchfork \langle y \rangle$ if and only if $\emptyset \neq \langle x \rangle \cap \left( \overline{\langle y \rangle} - \left( \bigcup_{z \in \langle y \rangle} (\alpha(z) \cup \omega(z)) \cup \langle y \rangle \right) \right) = \langle x \rangle \cap (\overline{\langle y \rangle} - \bigcup_{z \in \langle y \rangle} \overline{O(z)}) = \langle x \rangle \cap \partial_\pitchfork \overline{\langle y \rangle}$.
\end{proof}

\begin{lemma}\label{lem:equivalent}
The following conditions are equivalent for points $x,y \in X$: \\
$(1)$ $[x] \leq_\partial [y]$.
\\
$(2)$ $[x] \cap \overline{[y]} \neq \emptyset$.
\\
$(3)$ Either $[x] \leq_\perp [y]$ or $[x] \leq_\pitchfork [y]$.
\\
$(4)$ Either $[x] \leq_v [y]$ or $[x] \leq_\pitchfork [y]$.
\\
$(5)$ Either $[x] \leq_\alpha [y]$, $[x] \leq_\omega [y]$, or $[x] \leq_\pitchfork [y]$.
\end{lemma}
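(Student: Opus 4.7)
The plan is to establish the chain $(1) \Leftrightarrow (2) \Leftrightarrow (3)$ directly from the definitions, and then derive $(3) \Leftrightarrow (4) \Leftrightarrow (5)$ as an immediate consequence of Lemma~\ref{lem:v_perp}. The key ingredient is the set-theoretic decomposition $\overline{[y]} = [y] \sqcup \partial_\perp [y] \sqcup \partial_\pitchfork [y]$, which follows from the fact that $\bp[y] = \partial_\perp[y] \sqcup \partial_\pitchfork[y]$ (recorded just after the definitions of horizontal and transverse boundaries), combined with Lemma~\ref{lem:ch_02}, which guarantees that two distinct abstract weak orbits are disjoint.

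For $(1) \Leftrightarrow (2)$, I would unfold definitions. If $[x] = [y]$, then $[x] \subseteq \overline{[y]}$ and so $[x] \cap \overline{[y]} \neq \emptyset$. Otherwise, $[x] \leq_\partial [y]$ provides some $x_1 \in [x]$ with $x_1 \leq_\partial y$, which by definition is $x_1 \in \overline{[y]}$, giving $x_1 \in [x] \cap \overline{[y]}$. Conversely, any $x_1 \in [x] \cap \overline{[y]}$ satisfies $x_1 \leq_\partial y$, whence $[x] \leq_\partial [y]$ by definition of the induced binary relation on $X/[v]$.

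For $(2) \Leftrightarrow (3)$, I would apply the disjoint decomposition above. The case $[x] = [y]$ trivially satisfies both sides, so assume $[x] \neq [y]$. Lemma~\ref{lem:ch_02} then forces $[x] \cap [y] = \emptyset$, so the condition $[x] \cap \overline{[y]} \neq \emptyset$ reduces to $[x] \cap (\partial_\perp [y] \sqcup \partial_\pitchfork [y]) \neq \emptyset$, which by distributivity becomes $[x] \cap \partial_\perp[y] \neq \emptyset$ or $[x] \cap \partial_\pitchfork[y] \neq \emptyset$. Invoking the characterizations of $\leq_\perp$ and $\leq_\pitchfork$ recorded immediately after their definitions, this is exactly $[x] \leq_\perp [y]$ or $[x] \leq_\pitchfork [y]$.

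Finally, Lemma~\ref{lem:v_perp} asserts that the three conditions $[x] \leq_v [y]$, $[x] \leq_\perp [y]$, and "$[x] \leq_\alpha [y]$ or $[x] \leq_\omega [y]$" are mutually equivalent, from which $(3) \Leftrightarrow (4) \Leftrightarrow (5)$ follows by simply disjuncting with $[x] \leq_\pitchfork [y]$ on both sides. There is no real obstacle here; the argument is essentially bookkeeping. The only delicate point is making sure that the part $[x] \cap [y]$ of $[x] \cap \overline{[y]}$ is correctly absorbed into the case $[x] = [y]$ via Lemma~\ref{lem:ch_02}, which is where the Hausdorff hypothesis enters implicitly through the preceding structure theory of abstract weak orbits.
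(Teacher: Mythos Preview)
Your proposal is correct and follows essentially the same approach as the paper: both arguments obtain $(1)\Leftrightarrow(2)$ and $(1)\Leftrightarrow(3)$ directly from the definitions (you just spell out the role of the decomposition $\overline{[y]}=[y]\sqcup\partial_\perp[y]\sqcup\partial_\pitchfork[y]$ and of Lemma~\ref{lem:ch_02} more explicitly), and then invoke Lemma~\ref{lem:v_perp} to pass among $(3)$, $(4)$, $(5)$. The paper separately cites Lemma~\ref{lem:hat} for $(4)\Leftrightarrow(5)$, but since this equivalence is already contained in Lemma~\ref{lem:v_perp}, your single citation suffices.
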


\begin{proof}
By definition of binary relation $\leq_\partial$, the conditions $(1)$ and $(2)$ are equivalent.
By definitions of binary relations $\leq_\partial$, $\leq_\perp$, and $\leq_\pitchfork$, the conditions $(1)$ and $(3)$ are equivalent.
Lemma~\ref{lem:v_perp} implies that conditions $(3)$ and $(4)$ are equivalent.
Lemma~\ref{lem:hat} implies that conditions $(4)$ and $(5)$ are equivalent.
\end{proof}

\begin{lemma}\label{lem:equivalent02}
The following conditions are equivalent for points $x,y \in X$: \\
$(1)$ $\langle x \rangle \leq_\partial \langle y \rangle$.
\\
$(2)$ $\langle x \rangle \cap \overline{\langle y \rangle} \neq \emptyset$.
\\
$(3)$ Either $\langle x \rangle \leq_\perp \langle y \rangle$ or $\langle x \rangle \leq_\pitchfork \langle y \rangle$.
\\
$(4)$ Either $\langle x \rangle \leq_v \langle y \rangle$ or $\langle x \rangle \leq_\pitchfork \langle y \rangle$.
\\
$(5)$ Either $\langle x \rangle \leq_\alpha \langle y \rangle$, $\langle x \rangle \leq_\omega \langle y \rangle$, or $\langle x \rangle \leq_\pitchfork \langle y \rangle$.
\end{lemma}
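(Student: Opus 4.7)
The plan is to mirror the proof of the preceding Lemma~\ref{lem:equivalent} (the abstract weak orbit version), replacing $[\cdot]$ by $\langle\cdot\rangle$ throughout, and invoke Lemma~\ref{lem:v_perp_02} as the substitute for Lemma~\ref{lem:v_perp}. The content is essentially a bookkeeping exercise: the three binary relations $\leq_\perp$, $\leq_\pitchfork$, $\leq_\partial$ on $X/\langle v\rangle$ were defined using a tripartition $\overline{\langle y\rangle} = \langle y\rangle \sqcup \partial_\perp\langle y\rangle \sqcup \partial_\pitchfork\langle y\rangle$, and one simply needs to track each case through the definitions.

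First I would establish $(1)\iff(2)$ directly from the definition of $\leq_\partial$ on $X/\langle v\rangle$: the condition $\langle x\rangle\leq_\partial \langle y\rangle$ unfolds to either $\langle x\rangle=\langle y\rangle$ or the existence of a point $x_1\in\langle x\rangle$ with $x_1\in\overline{\langle y\rangle}$, which in either case is equivalent to $\langle x\rangle\cap\overline{\langle y\rangle}\neq\emptyset$ (using $\langle x\rangle\cap\langle y\rangle\neq\emptyset\Rightarrow\langle x\rangle=\langle y\rangle$ from Lemma~\ref{lem:ch_03}).

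Next I would prove $(1)\iff(3)$. Since $\bp\langle y\rangle=\partial_\perp\langle y\rangle\sqcup\partial_\pitchfork\langle y\rangle$ by definition, the condition $x_1\in\overline{\langle y\rangle}\setminus\langle y\rangle$ is equivalent to $x_1\in\partial_\perp\langle y\rangle$ or $x_1\in\partial_\pitchfork\langle y\rangle$. Combined with the analog of the earlier observation ($\langle x\rangle\leq_\perp\langle y\rangle\iff\langle x\rangle=\langle y\rangle$ or $\langle x\rangle\cap\partial_\perp\langle y\rangle\neq\emptyset$, and similarly for $\leq_\pitchfork$), this gives $(1)\iff(3)$. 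Then $(3)\iff(4)$ is immediate from Lemma~\ref{lem:v_perp_02}, which equates $\langle x\rangle\leq_v\langle y\rangle$ with $\langle x\rangle\leq_\perp\langle y\rangle$, and $(4)\iff(5)$ is the other equivalence in Lemma~\ref{lem:v_perp_02}, namely that $\langle x\rangle\leq_v\langle y\rangle$ is equivalent to $\langle x\rangle\leq_\alpha\langle y\rangle$ or $\langle x\rangle\leq_\omega\langle y\rangle$.

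There is no real obstacle here; the only potentially delicate point is to make sure the preliminary set-theoretic characterizations of $\leq_\perp$ and $\leq_\pitchfork$ on $X/\langle v\rangle$ (in terms of $\partial_\perp\langle y\rangle$ and $\partial_\pitchfork\langle y\rangle$) are correctly stated, which in turn relies on the fact that $\overline{\langle y\rangle}=\overline{[y]}$ (Lemma in the excerpt: $\overline{[x]}=\overline{\langle x\rangle}$) so that $\bp\langle y\rangle$ decomposes the same way as for $[y]$. Since every piece we invoke has already been proved in the excerpt, the argument reduces to a short reshuffling of definitions and one application of Lemma~\ref{lem:v_perp_02}.
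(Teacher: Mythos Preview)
Your proposal is correct and follows essentially the same route as the paper: the paper proves $(1)\Leftrightarrow(2)$ and $(1)\Leftrightarrow(3)$ directly from the definitions of $\leq_\partial,\leq_\perp,\leq_\pitchfork$, then obtains $(3)\Leftrightarrow(4)$ from Lemma~\ref{lem:v_perp_02} and $(4)\Leftrightarrow(5)$ by citing Lemma~\ref{lem:hat} (which is the underlying fact behind the $(1)\Leftrightarrow(2)$ equivalence of Lemma~\ref{lem:v_perp_02} that you invoke). The only cosmetic difference is that you cite Lemma~\ref{lem:v_perp_02} for both of the last two steps, which is arguably the cleaner reference.
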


\begin{proof}
By definition of binary relation $\leq_\partial$, the conditions $(1)$ and $(2)$ are equivalent.
By definitions of binary relations $\leq_\partial$, $\leq_\perp$, and $\leq_\pitchfork$, the conditions $(1)$ and $(3)$ are equivalent.
Lemma~\ref{lem:v_perp_02} implies that conditions $(3)$ and $(4)$ are equivalent.
Lemma~\ref{lem:hat} implies that conditions $(4)$ and $(5)$ are equivalent.
\end{proof}

Note that each of the binary relations $\leq_\partial$ on the abstract weak orbit spaces and on the abstract orbit spaces for flows on Hausdorff spaces is the union of $\leq_\alpha$, $\leq_\omega$, and $\leq_\pitchfork$ as a direct product.
We have the following statement.

\begin{lemma}\label{lem:trans_order}
Let
$\leq_{\tau_{[v]}}$ be the specialization order of the quotient topology $\tau_{[v]}$ on $X/[v]$.
For any $x, y \in X$, the order relation $[x] \leq_\partial [y]$ implies both $\langle x \rangle \leq_\partial \langle y \rangle$ and $[x] \leq_{\tau_{[v]}} [y]$.
\end{lemma}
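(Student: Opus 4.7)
The plan is to reduce both implications to the characterization of $\leq_\partial$ given in Lemma~\ref{lem:equivalent} (and Lemma~\ref{lem:equivalent02}), namely that $[x]\leq_\partial[y]$ is equivalent to $[x]\cap\overline{[y]}\neq\emptyset$. Throughout, I will rely on the previously stated inclusions $O(x)\subseteq[x]\subseteq\langle x\rangle$ and the identity $\overline{[x]}=\overline{\langle x\rangle}$ (both from the lemma preceding the definition of the abstract weak orbit space).

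First I would establish $\langle x\rangle\leq_\partial\langle y\rangle$. By Lemma~\ref{lem:equivalent} the hypothesis $[x]\leq_\partial[y]$ yields a point $p\in[x]\cap\overline{[y]}$. Since $[x]\subseteq\langle x\rangle$ and $\overline{[y]}=\overline{\langle y\rangle}$, the same point $p$ lies in $\langle x\rangle\cap\overline{\langle y\rangle}$, so this intersection is nonempty. Lemma~\ref{lem:equivalent02} then gives $\langle x\rangle\leq_\partial\langle y\rangle$.

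Next I would establish $[x]\leq_{\tau_{[v]}}[y]$, where the specialization order is defined by $[x]\leq_{\tau_{[v]}}[y]$ iff $[x]\in\overline{\{[y]\}}^{\tau_{[v]}}$, equivalently every open set in $X/[v]$ containing $[x]$ also contains $[y]$. Fix an open neighborhood $\mathcal{U}\subseteq X/[v]$ of $[x]$. By definition of the quotient topology, $U:=\pi_{[v]}^{-1}(\mathcal{U})$ is an open subset of $X$ which is saturated with respect to the abstract weak orbit decomposition (it is a union of abstract weak orbits). Using again the point $p\in[x]\cap\overline{[y]}$ from Lemma~\ref{lem:equivalent}, we have $p\in[x]\subseteq U$, and since $p\in\overline{[y]}$ every neighborhood of $p$ meets $[y]$; in particular $U\cap[y]\neq\emptyset$. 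Since $U$ is $[v]$-saturated, this forces $[y]\subseteq U$, so $[y]\in\mathcal{U}$. As $\mathcal{U}$ was arbitrary, $[x]\leq_{\tau_{[v]}}[y]$.

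I do not foresee a genuine obstacle: the only subtlety is recognizing that preimages of open sets under $\pi_{[v]}$ are $[v]$-saturated, so that hitting $[y]$ at a single point forces containment of the entire class $[y]$. This is precisely what bridges the set-theoretic condition $[x]\cap\overline{[y]}\neq\emptyset$ to the topological specialization order on the quotient, and it is the same mechanism that was used in Lemma~\ref{lem:quotient_top}. The proof is essentially two short applications of the characterization lemmas and the elementary fact $[x]\subseteq\langle x\rangle$, $\overline{[y]}=\overline{\langle y\rangle}$.
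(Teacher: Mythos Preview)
Your proof is correct and follows essentially the same approach as the paper. The only cosmetic difference is in the second implication: the paper argues via closed sets, using that continuity of $\pi_{[v]}$ gives $\overline{[y]}\subseteq\pi_{[v]}^{-1}\bigl(\overline{\{[y]\}}^{\tau_{[v]}}\bigr)$ so that $[x]\cap\overline{[y]}\neq\emptyset$ forces $[x]\in\overline{\{[y]\}}^{\tau_{[v]}}$, whereas you phrase the same fact dually via saturated open neighborhoods; both are the standard argument that a quotient map is continuous.
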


\begin{proof}
Let $\pi_{[v]} \colon X \to X/[v]$ be the quotient map.
Fix any classes $[x] \neq [y] \in X/[v]$.
Suppose that $[x] \leq_\partial [y]$.
Then there are points $x_1 \in [x]$ and $y_1 \in [y]$ such that $x_1 \leq_\partial y_1$.
Since $[x_1] \subseteq \langle x_1 \rangle$ and $[y_1] \subseteq \langle y_1 \rangle$, we have $\langle x \rangle \leq_\partial \langle y \rangle$.
By definition of $\leq_\partial $, we have $x_1 \in \overline{[y]}$.
This means that $[x] \cap \overline{[y]} \neq \emptyset$.
Since $\overline{[y]} \subseteq \pi_{[v]}^{-1}(\overline{[y]}^{\tau_{[v]}})$, we have $[x] \in \overline{[y]}^{\tau_{[v]}}$ in $X/[v]$ and so $[x] \leq_{\tau_{[v]}} [y]$.
\end{proof}

Notice that the order relation $[x] \leq_{\tau_{[v]}} [y]$ need not imply $[x] \leq_\partial [y]$ in general.
Indeed, let $v$ be a flow on a closed disk $\mathbb{D}$ as in the example in Figure~\ref{non_transitive}.
We show that $[d_3] \not\leq_\partial [d_2]$ and $[d_3] =_{\tau_{[v]}} [d_2]$.
Indeed, since $\emptyset = D_3 \cap \overline{D_2} = [d_3] \cap \overline{[d_2]}$, we have $[d_3] \not\leq_\partial [d_2]$.
On the other hand, by $\mathbb{D}/[v] = \overline{[d_2]}^{\tau_{[v]}} = \overline{[d_3]}^{\tau_{[v]}}$, we obtain $[d_3] =_{\tau_{[v]}} [d_2]$.

\subsubsection{Properties of pre-orders and orbit class}

The binary relation $\leq_v$ is a pre-order on the abstract weak orbit space $X/[v]$ and the abstract orbit space $X/\langle v \rangle$.

\begin{lemma}\label{lem:preorder}
The binary relation $\leq_v$ on the abstract weak orbit space $X/[v]$ of a flow $v$ on a Hausdorff space $X$ is a pre-order.
\end{lemma}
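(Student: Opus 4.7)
The plan is to verify reflexivity and transitivity of $\leq_v$ on $X/[v]$. Reflexivity is immediate from the definition, since the clause ``$[x]=[y]$'' of the defining disjunction gives $[x]\leq_v[x]$ for free. For transitivity, I would assume $[x]\leq_v[y]$ and $[y]\leq_v[z]$; if any two of these three classes coincide, the conclusion reduces to one of the hypotheses, so I may assume they are pairwise distinct. By Lemma~\ref{lem:equiv} I would pick representatives $x_1\in[x]\cap\overline{O(y)}$ and $y_1\in[y]\cap\overline{O(z)}$. Since $[x]\cap[y]=\emptyset$ by Lemma~\ref{lem:ch_02}, we have $x_1\notin O(y)$, and Lemma~\ref{lem:decomp_limit} places $x_1\in\alpha(y)\cup\omega(y)$; invariance and closedness of these limit sets then yield $\overline{O(x_1)}\subseteq\alpha(y)\cup\omega(y)$.

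The heart of the argument is to deduce $\alpha(y)\cup\omega(y)\subseteq\overline{O(y_1)}$, for then $\overline{O(x_1)}\subseteq\overline{O(y_1)}\subseteq\overline{O(z)}$, and Lemma~\ref{lem:equiv} delivers $[x]\leq_v[z]$. I would establish this by case analysis on the type of $y$. The subcase $y\in\mathop{\mathrm{Cl}}(v)$ is vacuous, since Lemma~\ref{lem:kc_closed} gives $\overline{O(y)}=O(y)\subseteq[y]$, contradicting $x_1\in\overline{O(y)}\setminus[y]$. If $y\in\mathrm{R}(v)$, then $y\in\alpha(y)\cup\omega(y)$ together with the invariance and closedness of these two sets forces $\overline{O(y)}=\alpha(y)\cup\omega(y)$, while $[y]=\check{O}(y)$ yields $\overline{O(y_1)}=\overline{O(y)}$, and the desired inclusion is immediate. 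If $y\in\mathrm{P}(v)$, then Lemma~\ref{lem:ch_01} gives $\alpha(y_1)=\alpha(y)$ and $\omega(y_1)=\omega(y)$, so $\alpha(y)\cup\omega(y)=\alpha(y_1)\cup\omega(y_1)\subseteq\overline{O(y_1)}$.

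The main obstacle is that, outside the recurrent regime, two different representatives $y$ and $y_1$ of the same class $[y]$ need not share the same orbit closure, so one cannot naively substitute $\overline{O(y_1)}$ for $\overline{O(y)}$. The case analysis above is precisely what bridges this gap: on the locus $\alpha(y)\cup\omega(y)$ where $\overline{O(x_1)}$ actually lives, either the full orbit closures of $y$ and $y_1$ coincide (recurrent case) or at least their limit-set parts coincide (non-closed proper case), and this is exactly enough to transport the inclusion through to $\overline{O(z)}$ and conclude by Lemma~\ref{lem:equiv}.
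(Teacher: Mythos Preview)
Your proof is correct and follows essentially the same route as the paper's: both verify transitivity by locating $x_1$ inside $\alpha(y)\cup\omega(y)$ via Lemma~\ref{lem:decomp_limit}, then transport this inclusion to a representative of $[y]$ lying in $\overline{O(z)}$ by a case split on whether $y\in\mathrm{R}(v)$. The only cosmetic difference is that you invoke Lemma~\ref{lem:equiv} up front to work with the fixed base points $y,z$ (avoiding the paper's separate representatives $y_1,y_2\in[y]$), which makes your write-up slightly cleaner.
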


\begin{proof}
By definition of binary relation $\leq_v$, reflexivity holds for the relation $\leq_v$.
Therefore it suffices to show transitivity.
Fix points $x,y,z \in X$ with $[x] \leq_v [y]$ and $[y] \leq_v [x]$.
We may assume that $[x] \neq [y]$ and $[y] \neq [z]$.
Then $O(x) \neq O(y)$ and $O(y) \neq O(z)$, and there are points $x_1 \in [x]$, $y_1, y_2 \in [y]$ and $z_2 \in [z]$ such that $\overline{O(x_1)} \subseteq \overline{O(y_1)}$ and $\overline{O(y_2)} \subseteq \overline{O(z_2)}$.
By $O(x_1) \neq O(y_1)$ and $\overline{O(y_1)} = \alpha(y_1) \cup O(y_1) \cup \omega(y_1)$, we have $x_1 \in \overline{O(y_1)} - O(y_1) = \alpha'(y_1) \cup \omega'(y_1)$.
Since $\alpha'(y_1) = \alpha'(y_2)$ and $\omega'(y_1) = \omega'(y_2)$ if $y \notin \mathrm{R}(v)$, and since $\alpha(y_1) = \alpha(y_2)$ and $\omega(y_1) = \omega(y_2)$ if $y \in \mathrm{R}(v)$, we obtain $x_1 \in \alpha'(y_1) \cup \omega'(y_1) \subseteq \alpha(y_2) \cup \omega(y_2) \subseteq \overline{O(y_2)}$ and so $x_1 \in \overline{O(x_1)} \subseteq \overline{O(y_2)} \subseteq \overline{O(z_2)} \subseteq \overline{[z]}$.
By $x_1 \in [x]$, we have $[x] \cap \overline{[z]} \neq \emptyset$.
Lemma~\ref{lem:equivalent} implies that $[x] \leq_v [z]$.
\end{proof}

\begin{lemma}\label{lem:preorders02}
The binary relation $\leq_v$ on the abstract orbit space $X/\langle v \rangle$ of a  flow $v$ on a Hausdorff space $X$ is a partial order.
\end{lemma}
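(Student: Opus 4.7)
My plan is to verify reflexivity, transitivity, and antisymmetry of $\leq_v$ on the abstract orbit space $X/\langle v\rangle$. Reflexivity is immediate from the definition. For transitivity, I would adapt the proof of Lemma~\ref{lem:preorder}: given $\langle x\rangle\leq_v\langle y\rangle\leq_v\langle z\rangle$ witnessed by $\overline{O(x_1)}\subseteq\overline{O(y_1)}$ and $\overline{O(y_2)}\subseteq\overline{O(z_2)}$, I split on whether $y$ is recurrent. When $y\in\mathrm{R}(v)$, the definition $\langle y\rangle=\hat O(y)$ forces $\overline{O(y_1)}=\overline{O(y)}=\overline{O(y_2)}$ and the chain closes at once. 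When $y\notin\mathrm{R}(v)$, Lemma~\ref{lem:hat02} together with Lemma~\ref{lem:ch_01} yields $\alpha(y_i)=\alpha(y)$ and $\omega(y_i)=\omega(y)$, and Lemma~\ref{lem:decomp_limit} gives $\overline{O(y_i)}=\alpha(y)\cup O(y_i)\cup\omega(y)$; the point $x_1$ either lies on $O(y_1)$ (in which case $\langle x\rangle=\langle y\rangle$ by Lemma~\ref{lem:ch_03} and the chain closes from $\langle y\rangle\leq_v\langle z\rangle$) or lies in $\alpha(y)\cup\omega(y)\subseteq\overline{O(y_2)}\subseteq\overline{O(z_2)}$, and closedness of $\overline{O(z_2)}$ propagates to $\overline{O(x_1)}$.

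Antisymmetry is the substantive point. Suppose both $\langle x\rangle\leq_v\langle y\rangle$ and $\langle y\rangle\leq_v\langle x\rangle$ hold, witnessed by $\overline{O(x_1)}\subseteq\overline{O(y_1)}$ and $\overline{O(y_2)}\subseteq\overline{O(x_2)}$. My plan is a case split on the recurrence types of $x$ and $y$, using Lemma~\ref{lem:hat02} and Corollary~\ref{cor:check_eq} to pin each of $\langle x\rangle,\langle y\rangle$ inside a single stratum of $X=\Sv\sqcup\Pv\sqcup\mathrm{P}(v)\sqcup\mathrm{R}(v)$. If both $x,y\in\mathrm{R}(v)$, then $\overline{O(x_i)}=\overline{O(x)}$ and $\overline{O(y_j)}=\overline{O(y)}$, so $\overline{O(x)}=\overline{O(y)}$, whence $\hat O(x)=\hat O(y)$ and $\langle x\rangle=\langle y\rangle$. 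In all other configurations the following dichotomy drives the argument: if one of $O(x_1)\cap O(y_1)$ or $O(y_2)\cap O(x_2)$ is nonempty, the corresponding orbits coincide and Lemma~\ref{lem:ch_03} forces $\langle x\rangle=\langle y\rangle$; otherwise $O(x_1)\subseteq\overline{O(y_1)}-O(y_1)\subseteq\alpha(y)\cup\omega(y)$ (using $y$ non-recurrent, so that $\overline{O(y_1)}-O(y_1)=\alpha(y)\cup\omega(y)$), and invariance together with closedness of $\alpha(y)$ and $\omega(y)$ applied to the connected orbit $O(x_1)$ localizes it, giving say $\overline{O(x_1)}\subseteq\alpha(y)$. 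Feeding this back into the reverse containment $\overline{O(y_2)}\subseteq\overline{O(x_2)}$ squeezes $O(y_2)$ into $\alpha(x)\cup\omega(x)\subseteq\alpha(y)$, which contradicts $O(y_2)\cap\alpha(y_2)=O(y_2)\cap\alpha(y)=\emptyset$ whenever $y\in\mathrm{P}(v)$; the subcase $y\in\mathop{\mathrm{Cl}}(v)$ is degenerate, since $\overline{O(y_1)}=O(y_1)$ at once forces $O(x_1)=O(y_1)$, and stratum-pinning rules out the mixed possibilities.

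The main obstacle will be the mixed case where exactly one of $x$ and $y$ is recurrent, since one must route a recurrent orbit through the closure of a non-recurrent orbit and back, extracting the contradiction from the non-recurrence identity $O(z)\cap(\alpha(z)\cup\omega(z))=\emptyset$ for $z\in\mathrm{P}(v)$. Careful bookkeeping of which of $\alpha(y)$ and $\omega(y)$ swallows each orbit, via the invariance argument, is the crux; once the stratification is imposed, the remainder is a clean case analysis mirroring the transitivity step.
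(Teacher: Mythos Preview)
Your proposal is correct. The transitivity argument is essentially the paper's: both split on whether the middle term $y$ lies in $\mathrm{R}(v)$, using $\overline{O(y_1)}=\overline{O(y_2)}$ in the recurrent case and $\alpha'(y_1)=\alpha'(y_2)$, $\omega'(y_1)=\omega'(y_2)$ otherwise.

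For antisymmetry the routes differ. The paper does not do a stratified case analysis; instead it invokes Lemma~\ref{lem:equiv02} (the representative-free characterization $\langle x\rangle\le_v\langle y\rangle\Leftrightarrow\langle x\rangle\cap\overline{O(y)}\neq\emptyset$) to pin the witnesses against the \emph{fixed} representatives $x,y$, argues $\overline{O(x)}=\overline{O(y)}$, and then observes that with $O(x)\neq O(y)$ each orbit sits in the other's limit set, forcing both into $\mathrm{R}(v)$ and hence $\langle x\rangle=\hat O(x)=\hat O(y)=\langle y\rangle$. Your approach bypasses Lemma~\ref{lem:equiv02} entirely and instead walks the raw witnesses $x_1,x_2,y_1,y_2$ through the decomposition $\overline{O(\cdot)}=\alpha(\cdot)\cup O(\cdot)\cup\omega(\cdot)$ and the stratum-preservation lemmas (Lemma~\ref{lem:hat02}, Lemma~\ref{lem:ch_01}, Lemma~\ref{lem:type}), extracting the contradiction directly from $O(z)\cap(\alpha(z)\cup\omega(z))=\emptyset$ for $z\in\mathrm{P}(v)$. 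The paper's route is shorter once Lemma~\ref{lem:equiv02} is in hand; yours is more self-contained and makes explicit where each hypothesis is used, at the cost of a longer case split.
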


\begin{proof}
By definition of binary relation $\leq_v$, reflexivity holds for the relation $\leq_v$.
Suppose that $x,y \in X$ with $\langle x \rangle \leq_v \langle y \rangle$ and $\langle y \rangle \leq_v \langle x \rangle$.
Assume that $\langle x \rangle \neq \langle y \rangle$.
Then $O(x) \neq O(y)$.
Lemma~\ref{lem:equiv02} implies that $\langle x \rangle \cap \overline{O(y)} \neq \emptyset$ and $\langle y \rangle \cap \overline{O(x)} \neq \emptyset$.
Then $\overline{O(x)} = \overline{O(y)}$.
Since $O(x) \neq O(y)$, we have $\overline{O(x)} \subseteq \alpha(y) \cup \omega(y) \subseteq \overline{O(y)}$ and $\overline{O(y)} \subseteq \alpha(x) \cup \omega(x) \subseteq \overline{O(x)}$.
Therefore $\overline{O(x)} = \overline{O(y)} = \alpha(x) \cup \omega(x) = \alpha(y) \cup \omega(y)$.
This means that $O(x)$ and $O(y)$ are non-closed recurrent orbits (i.e. $O(x) \sqcup O(x) \subseteq \mathrm{R}(v)$).
Then $\langle x \rangle = \hat{O}(x) = \hat{O}(y) = \langle y \rangle$, which contradicts $\langle x \rangle \neq \langle y \rangle$.
Thus $\langle x \rangle = \langle y \rangle$ and so the relation $\leq_v$ is  antisymmetric.
Therefore it suffices to show transitivity.
Fix points $x,y,z \in X$ with $\langle x \rangle \leq_v \langle y \rangle$ and $\langle y \rangle \leq_v \langle x \rangle$.
We may assume that $\langle x \rangle \neq \langle y \rangle$ and $\langle y \rangle \neq \langle z \rangle$.
Then $O(x) \neq O(y)$ and $O(y) \neq O(z)$, and there are points $x_1 \in \langle x \rangle$ and $y_1 \in \langle y \rangle$ such that $\overline{O(x_1)} \subseteq \overline{O(y_1)}$ and there are points $y_2 \in \langle y \rangle$ and $z_2 \in \langle z \rangle$ such that $\overline{O(y_2)} \subseteq \overline{O(z_2)}$.
By $O(x_1) \neq O(y_1)$ and $\overline{O(y_1)} = \alpha(y_1) \cup O(y_1) \cup \omega(y_1)$, we have $x_1 \in \overline{O(y_1)} - O(y_1) = \alpha'(y_1) \cup \omega'(y_1)$.
Since $\alpha'(y_1) = \alpha'(y_2)$ and $\omega'(y_1) = \omega'(y_2)$ if $y \notin \mathrm{R}(v)$, and since $\overline{O(y_1)} = \overline{O(y_2)}$ if $y \in \mathrm{R}(v)$, we obtain $\alpha'(y_1) \cup \omega'(y_1) \subseteq \overline{O(y_2)}$ and so $x_1 \in \overline{O(x_1)} \subseteq \overline{O(y_2)} \subseteq \overline{O(z_2)} \subseteq \overline{\langle z \rangle}$.
By $x_1 \in \langle x \rangle$, we have $\langle x \rangle \cap \overline{\langle z \rangle} \neq \emptyset$.
Lemma~\ref{lem:equivalent02} implies that $\langle x \rangle \leq_v \langle z \rangle$.
\end{proof}

We have the following triviality of $\leq_\pitchfork$ on $X/\hat{v}$.

\begin{lemma}
The binary relation $\leq_\pitchfork$ on the orbit class space $X/\hat{v}$ is identical.
\end{lemma}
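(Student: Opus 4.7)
The plan is to show that $\hat O \leq_\pitchfork \hat O'$ forces $\hat O = \hat O'$ by unpacking the definition and running a case analysis on the recurrence type of a representative $y \in \hat O'$. Unravelling the definition, I may assume there exist $x \in \hat O$ and $y \in \hat O'$ with $x \leq_\pitchfork y$ on $X$, i.e.\ either $[x] = [y]$ or $x \in \overline{[y]} - \bigl(\alpha([y]) \cup \omega([y])\bigr)$. My target in each branch is to deduce $\overline{O(x)} = \overline{O(y)}$, which is exactly $\hat O(x) = \hat O(y)$.

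I will first dispose of the recurrent case $y \in \mathrm{R}(v)$. Here $[y] = \check O(y)$, so every $z \in [y]$ satisfies $\alpha(z) = \alpha(y)$ and $\omega(z) = \omega(y)$, forcing $\alpha([y]) \cup \omega([y]) = \alpha(y) \cup \omega(y)$. Combined with Lemma~\ref{lem:closure_01}(3), which yields $\overline{[y]} = \alpha(y) \cup \omega(y)$, the subtracted set exhausts $\overline{[y]}$, and consequently $x \in \overline{[y]} - \bigl(\alpha([y]) \cup \omega([y])\bigr)$ is impossible. So the first clause must trigger, $[x] = [y]$, and then $x \in \check O(y) \subseteq \hat O(y)$ gives $\hat O = \hat O'$ immediately.

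The non-recurrent case $y \in \mathop{\mathrm{Cl}}(v) \sqcup \mathrm{P}(v)$ is the substantive one. Here Lemma~\ref{lem:orbit_cl} collapses $\hat O(y)$ to $O(y)$, so my aim is to show $x \in O(y)$. I will combine Lemma~\ref{lem:decomp_limit}, which gives $\overline{O(z)} = O(z) \cup \alpha(z) \cup \omega(z)$, with Corollary~\ref{cor:closure_01}'s description of $\partial_\perp [y]$ and $\partial_\pitchfork [y]$ to write the subtracted $\alpha([y]) \cup \omega([y])$ explicitly in terms of $\alpha(y) \cup \omega(y)$ in the transient stratum, and in terms of $[y]$ itself in the closed-orbit stratum. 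These formulas should pin $x$ down inside $O(y)$ in each sub-case, yielding $\hat O(x) = O(x) = O(y) = \hat O(y)$.

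The main obstacle I anticipate is the gap between the abstract weak orbit $[y]$, which may be a large connected component of $\mathop{\mathrm{Sing}}(v)$ or $\mathop{\mathrm{Per}}(v)$ containing many distinct orbits, and the orbit class $\hat O(y)$, which in the non-recurrent case is the single orbit $O(y)$. Bridging this mismatch so that the witness $x$ is forced into $O(y)$ rather than merely into $[y]$ is the delicate step; I expect to handle it by carefully tracking which points of $\overline{[y]}$ actually survive after removing $\alpha([y]) \cup \omega([y])$, leveraging the Hausdorff hypothesis through Lemma~\ref{lem:decomp_limit} and Lemma~\ref{lem:orbit_cl} to reduce the question to the orbit-class level.
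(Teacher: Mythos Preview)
Your proposal has a genuine gap rooted in a misreading of the definition. You interpret $\hat O \leq_\pitchfork \hat O'$ via the point-wise relation $x \leq_\pitchfork y$ on $X$, which is defined in terms of the abstract weak orbit $[y]$. Under that literal reading the lemma is simply false: take two distinct singular points $x \neq y$ lying in the same connected component of $\mathop{\mathrm{Sing}}(v)$. Then $[x] = [y]$ (Corollary~\ref{cor:ch}), so $x \leq_\pitchfork y$, hence $\hat O(x) \leq_\pitchfork \hat O(y)$; but $\hat O(x) = \{x\} \neq \{y\} = \hat O(y)$. The ``main obstacle'' you flag at the end is not a delicate step to be overcome---it is an actual obstruction, and no amount of case analysis will force the witness $x$ into $O(y)$.

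The paper's intended meaning (visible from its proof and from the phrase ``can be defined as weak orbit classes'', i.e.\ \emph{analogously}) replaces $[y]$ by $\hat O'$ throughout: $\hat O \leq_\pitchfork \hat O'$ means $\hat O = \hat O'$ or $\hat O \cap \bigl(\overline{\hat O'} - \bigcup_{z \in \hat O'}(\alpha(z)\cup\omega(z))\bigr) \neq \emptyset$. With that reading the argument collapses to a single line: since $O' \subseteq \hat O' \subseteq \overline{O'}$ one has $\overline{\hat O'} = \overline{O'}$, while Lemma~\ref{lem:decomp_limit} gives $\bigcup_{z\in\hat O'}(\alpha(z)\cup O(z)\cup\omega(z)) = \bigcup_{z\in\hat O'}\overline{O(z)} = \overline{O'}$, so the transverse part $\overline{\hat O'} - \bigcup_{z\in\hat O'}\overline{O(z)}$ is empty. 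No case split on the recurrence type of $y$ is needed; the key structural fact is that an orbit class has the same closure as any of its constituent orbits, which fails for abstract weak orbits and is exactly why your route cannot succeed.
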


\begin{proof}
Fix any orbit classes $\hat{O}, \hat{O}'$ with $\hat{O} \leq_\pitchfork \hat{O}'$.
Suppose $\hat{O} \neq \hat{O}'$.
Then $\hat{O} \cap \hat{O}' = \emptyset$.
Since $\overline{O'} = \alpha(O') \cup O' \cup \omega(O')$, we have $\emptyset \neq \hat{O} \cap \left( \overline{\hat{O}'} - \left( \bigcup_{y \in \hat{O}'} (\alpha(y) \cup \omega(y)) \right) \right) = \hat{O} \cap \left( \overline{\hat{O}'} - \left( \bigcup_{y \in \hat{O}'} (\alpha(y) \cup O(y) \cup \omega(y)) \right) \right) = \hat{O} \cap \left( \overline{\hat{O}'} - \left( \bigcup_{y \in \hat{O}'} \overline{O(y)} \right) \right) = \hat{O} \cap \left( \overline{\hat{O}'} - \overline{O'} \right) = \hat{O} \cap \left( \overline{O'} - \overline{O'}  \right)= \emptyset$, which is a contradiction.
Thus $\hat{O} = \hat{O}'$ if and only if $\hat{O} \leq_\pitchfork \hat{O}'$.
\end{proof}

\section{Fundamental properties of flows on compact Hausdorff spaces}
From now on, we assume that the Hausdorff space $X$ is compact unless otherwise stated.

\subsection{Coincidence between closedness of orbits and one of the subsets}

We have the following equivalence of closedness.

\begin{lemma}\label{lem:closed}
An orbit of a flow on a compact Hausdorff space is a closed orbit if and only if it is a closed subset.
\end{lemma}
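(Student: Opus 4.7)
The plan is to prove the two implications separately. The forward direction follows at once from Lemma~\ref{lem:kc_closed}, since a Hausdorff space is KC. For the converse, let $O := O(x)$ be an orbit that is a closed subset of the compact Hausdorff space $X$; then $O$ is itself compact. My strategy is first to establish that $x$ is recurrent, and then to eliminate the remaining non-periodic case by a Baire-category argument on the orbit map.

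For recurrence, Lemma~\ref{lem:connectivity}(2) gives $\omega(x) \neq \emptyset$, and Lemma~\ref{lem:decomp_limit} gives $\omega(x) \subseteq \overline{O(x)} = O$, so some $y = v_s(x)$ lies in $\omega(x)$. Applying the homeomorphism $v_{-s}$ to arbitrary open neighborhoods $W$ of $x$ (so that $v_s(W)$ is an open neighborhood of $y$) transfers the defining recurrence of $y$ into the statement $x \in \omega(x)$, since any $t$ with $v_t(x) \in v_s(W)$ produces $v_{t-s}(x) \in W$, and $t - s$ can be taken arbitrarily large.

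Now suppose for contradiction that $O$ is not a closed orbit; equivalently, $v_x \colon \R \to O$, $t \mapsto v_t(x)$, is an injective continuous bijection. Since $O$ is compact Hausdorff, hence a Baire space, and since $O = \bigcup_{n \in \Z_{>0}} v_x([-n, n])$ is a countable union of compact (hence closed in $O$) subsets, some $K_n := v_x([-n, n])$ must have nonempty interior $U$ in $O$. Pick $z \in U$; since $O$ is a single orbit, there is $t \in \R$ with $v_t(z) = x$, so $v_t(U)$ is an open neighborhood of $x$ in $O$ contained in $v_t(K_n) = v_x([t-n, t+n])$. The recurrence $x \in \omega(x)$ then produces arbitrarily large $\tau > 0$ with $v_\tau(x) \in v_t(U) \subseteq v_x([t-n, t+n])$, and injectivity of $v_x$ pins such $\tau$ into the bounded interval $[t-n, t+n]$, a contradiction.

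The principal obstacle is locating the nonempty open set $U$ inside a single compact arc $v_x([-n,n])$: this is where compactness (for Baire), Hausdorffness (so each compact $K_n$ is closed in $O$), and injectivity of $v_x$ (so that the exhaustion by $K_n$ genuinely constrains the range of the orbit map) all come together. Once $U$ is in hand, the single-orbit structure lets me translate $U$ to a neighborhood of $x$ by a flow homeomorphism $v_t$, after which recurrence and injectivity of $v_x$ collide immediately.
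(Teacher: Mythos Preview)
Your proof is correct and takes a genuinely different route from the paper's. Both establish the forward direction via Lemma~\ref{lem:kc_closed} and both reduce the converse to the recurrent case; your reduction is the cleaner one, since invariance of $\omega(x)$ (Lemma~\ref{lem:connectivity}(1)) already gives $x = v_{-s}(y) \in \omega(x)$ without the neighborhood-translation detour. The real difference is in eliminating the non-closed recurrent case. The paper builds, by induction on normality of the compact Hausdorff space, an increasing sequence of open neighborhoods $V'_n$ of $O(x)$ together with closed sets $K_n$ disjoint from $V'_n$ but meeting the orbit infinitely often, and then uses compactness of $O$ to extract a finite subcover and derive a contradiction. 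You instead invoke the Baire category theorem for compact Hausdorff spaces: writing $O = \bigcup_n v_x([-n,n])$ as a countable union of closed sets, some $K_n$ has nonempty interior in $O$, and after translating by a flow map you obtain an $O$-neighborhood of $x$ contained in a single arc $v_x([t-n,t+n])$, which recurrence and injectivity of $v_x$ immediately contradict. Your argument is shorter and more conceptual; the paper's argument is more self-contained in that it does not appeal to Baire category as a black box. One small point worth making explicit: the open set $v_t(U)$ you produce is open only in $O$, not in $X$, but since $v_\tau(x) \in O$ for all $\tau$, any $X$-open set $W$ with $W \cap O = v_t(U)$ witnesses $x \in \omega(x)$ exactly as needed.
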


\begin{proof}
Let $O$ be an orbit of a flow $v$ on a compact Hausdorff space $X$ and $x \in O$ a point.
Then $X$ is a normal space.
Lemma~\ref{lem:kc_closed} implies that any closed orbit is a closed subset.
Conversely, suppose that $O$ is a closed subset.
Since $X$ is compact, the orbit $O$ is compact.
If $O$ is a closed orbit, then the assertion holds.
Thus we may assume that $O$ is not a closed orbit.
Suppose that $O$ is not recurrent.
Then $O \cap (\alpha(x) \cup \omega(x)) = \emptyset$.
Since $X$ is compact, the finite intersection property of $X$ implies that
the $\alpha$-limit set and the $\omega$-limit set of a point is nonempty.
This means that $O \subsetneq O \sqcup (\alpha(x) \cup \omega(x)) = \overline{O}$, which contradicts the closedness of $O$.
Thus $O$ is recurrent.
By time reversion if necessary, we may assume that $x \in \alpha(x)$.
The invariance of $\alpha(x)$ implies that $O \subseteq \alpha(x)$.
Put $T_1 := -1$.
The Hausdorff separation axiom implies that there are disjoint open \nbds $V_1$ and $U_1$ of $x$ and $v_{T_1}(x)$ respectively.
Since $x \in \alpha(x) \cap V_1$ and $v_1(x) \in \alpha(x) \cap U_1$, the connectivity of $v(\R_{<t}, x)$ for any $t \in \R$ implies that $v(\R_{<t}, x) \not\subseteq V_1 \sqcup U_1$ and so that there is an unbounded decreasing sequence $(t_{1,m})_{m \in \Z_{>0}}$ with $v_{t_{1,3m}}(x) \notin V_1 \sqcup U_1$, $v_{t_{1,3m+1}}(x) \in V_1$, and $v_{t_{1,3m+2}}(x) \in U_1$.
Then the difference $K_1 := O \setminus (V_1 \sqcup U_1)$ contains $\{ v_{t_{1,3m}}(x) \}_{m \in \Z_{>0}}$ and $x \notin K_1$.
The regularity of $X$ implies that there are disjoint open \nbds $V'_1$ and $W_1$ of  the point $x$ and the closed subset $K_1$ respectively.
Then $\overline{V_1'} \cap W_1 = \emptyset$.
The difference $L'_1 := O \setminus W_1$ is a closed subset such that $x \in V'_1 \cap O \subseteq L'_1$.
Since the closed interval $I_1 := [-1,1]$ is compact and so the image $v_x(I_1)$ is compact, Hausdorff separation axiom implies that $v_x(I_1)$ is closed.
The union $L_1 := L'_1 \cup v_x(I_1) \cup \overline{V_1'}$ is closed, $x \in L_1$, and there is a large integer $N>0$ such that $v_{t_{1,3m}}(x) \in K_1 \setminus v_x(I_1) \subseteq W_1 \setminus v_x(I_1) = W_1 \setminus L_1$ for any $m >N$.
Put $I_n := [-n,n]$.
By induction, we define an open cover $\{ V'_n \}$ of $O$ with $V'_n \subseteq V'_{n+1}$ for any $n \in \Z_{>0}$, and define $T_n, V_n, U_n, (t_{n,m})_{m \in \Z_{>0}}, K_n, W_n, L'_n$ , and $L_n$ for any $n \in \Z_{>0}$ as follows.
Suppose that $T_n, V_n, U_n, (t_{n,m})_{m \in \Z_{>0}}, K_n, W_n, V'_n, L'_n$, and  $L_n$ are defined.
Fix any $T_{n+1} \leq - (n+1)$ such that $v_{T_{n+1}}(x) \in W_n \setminus L_n = W_n \setminus v_x(I_n)$.
Since $v_{T_{n+1}}(x) \notin L_n$, the regularity of $X$ implies that  there are disjoint open \nbds $V_{n+1}$ and $U_{n+1}$ of $L_n$ and $v_{T_{n+1}}(x)$ respectively.
Since $x \in \alpha(x) \cap L_n \subset  \alpha(x) \cap V_{n+1}$ and $v_{T_{n+1}}(x) \in \alpha(x) \cap U_{n+1}$, the connectivity of $v(\R_{<t}, x)$ for any $t \in \R$   implies that $v(\R_{<t}, x) \not\subseteq V_{n+1} \sqcup U_{n+1}$ and so that there is an unbounded decreasing sequence $(t_{n+1,m})_{m \in \Z_{>0}}$ with $v_{t_{n+1,3m}}(x) \notin V_{n+1} \sqcup U_{n+1}$, $v_{t_{n+1,3m+1}}(x) \in V_{n+1}$, and $v_{t_{n+1,3m+2}}(x) \in U_{n+1}$.
Then the difference $K_{n+1} := O \setminus (V_{n+1} \sqcup U_{n+1})$ is closed and contains $\{ v_{t_{n+1,3m}}(x) \}_{m \in \Z_{>0}}$ and $L_n \cap K_{n+1} = \emptyset$.
The normality of $X$ implies that there are disjoint open \nbds $V'_{n+1}$ and $W_{n+1}$ of $L_n$ and $K_{n+1}$ respectively.
Then $\overline{V_{n+1}'} \cap W_{n+1} = \emptyset$ and $\overline{V_n'} \subseteq L_n \subset V_{n+1}'$.
The difference $L'_{n+1} := O \setminus W_{n+1}$ is a closed subset with $L_n' \subseteq O \cap L_n \subseteq L'_{n+1}$ such that $x \in O \cap V'_{n+1} \subseteq L'_{n+1}$.
Since the closed interval $I_{n+1} := [-(n+1),n+1]$ is compact and so the image $v_x(I_{n+1})$ is compact, Hausdorff separation axiom implies that $v_x(I_{n+1})$ is closed.
The union $L_{n+1} := L'_{n+1} \cup v_x(I_{n+1}) \cup \overline{V_{n+1}'}$ is closed, $x \in L_n \subseteq  L_{n+1}$, and there is a large integer $N>0$ such that $v_{t_{n+1,3m}}(x) \in W_{n+1} \setminus L_{n+1} = W_{n+1} \setminus v_x(I_{n+1})$ for any $m >N$.
This completes the inductive construction.
In particular, we have an open cover $\{ V'_n \}$ of $O$ with $V'_n \subseteq V'_{n+1}$ for any $n \in \Z_{>0}$.
Since $O$ is compact, there is a finite subcover $\{ V'_{n_1}, V'_{n_2}, \ldots , V'_{n_k} \}$.
Since $V'_n \subseteq V'_{n+1}$ for any $n \in \Z_{>0}$, there is an open subset $V'_{n_l}$ such that $O \subseteq \bigcup_{i=1}^k V'_{n_i} = V'_{n_l}$.
Since $V'_{n} \cap K_{n} = \emptyset$ for any $n \in \Z_{>0}$, we have $V'_{n_l} \cap K_{n_l} = \emptyset$.
On the other hand, we obtain $\emptyset \neq \{ v_{t_{n_l,3m}}(x) \mid m \in \Z_{>0} \} \cap K_{n_l} \subseteq O \subseteq V'_{n_l}$ and so $V'_{n_l} \cap K_{n_l} \neq \emptyset$, which contradicts $V'_{n_l} \cap K_{n_l} = \emptyset$.
This means that an orbit which is a closed subset is closed as an orbit.
\end{proof}

%

%
%
%

The compactness is necessary.
Indeed, a flow $v$ on the Euclidean space $\R$ defined by $v(t,x) = x+t$ consists of one non-closed orbit $\R$ which is a closed subset.
Moreover, the Hausdorff separation axiom is necessary.
In fact, there is a flow on a compact space with a non-closed orbit which is a closed subset.
Indeed, let $X$ be the indiscrete topological space $\R$ and $v$ a flow on $X$ defined by $v(t,x) := t+x$.
Then $X$ consists of one orbit $X$ which is a closed subset but neither singular nor periodic.
Moreover, there is a flow such that any orbit is a non-closed subset and is singular.
Indeed, let $X$ be an indiscrete topological space which has at least two points, and $v$ the identical flow on $X$.
Then $X = \Sv$ consists of proper, recurrent singular points which are non-closed subsets.
In addition, there is a flow on a compact space such that any orbit is a non-closed subset and is periodic.
Indeed, let $X$ be an indiscrete topological space which has at least two points, $ \R/\Z$ a circle with the induced topology from the Euclidean topology, $Y := X \times \R/\Z$ the product topological space, and $v$ an $\R$-action on $Y$  defined by $v(t,(x,[y])) = (x,[y+t])$.
Lemma~\ref{cor:indiscrete} implies that $v$ is a flow on $Y$ such that $Y$ consists of non-proper periodic orbits which are not closed subsets.

\subsection{Characterizations of types of points of flows on compact Hausdorff spaces}
Since any Hausdorff space is KC, Lemma~\ref{lem:connectivity} implies the following known fact.

\begin{corollary}\label{cor:connectivity}
Any $\alpha$-limit set and any $\omega$-limit set of a point of a compact Hausdorff space are nonempty, invariant, closed, and connected.
\end{corollary}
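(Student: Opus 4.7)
The statement is an immediate specialization of Lemma~\ref{lem:connectivity}, so the plan is simply to verify that a compact Hausdorff space satisfies the hypotheses of all three parts of that lemma and then cite each part.

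First I would recall the standard fact, already asserted in the excerpt (``it is known that Hausdorff separation axiom implies KC''), that every Hausdorff space is KC, because any compact subset of a Hausdorff space is closed. Consequently, if $X$ is a compact Hausdorff space, then $X$ is compact KC.

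Next I would assemble the conclusions. Part~(1) of Lemma~\ref{lem:connectivity} gives, without any compactness or separation hypothesis, that $\alpha(x)$ and $\omega(x)$ are closed and invariant. Part~(2) applies because $X$ is compact, yielding $\alpha(x) \neq \emptyset$ and $\omega(x) \neq \emptyset$. Part~(3) applies because $X$ is compact KC, yielding that $\alpha(x)$ and $\omega(x)$ are connected. Combining these three conclusions gives the corollary.

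Since there is no real obstacle here (all the work was done in Lemma~\ref{lem:connectivity}), the only thing to be careful about is making the KC observation explicit so that the invocation of part~(3) is justified; otherwise the proof is a one-line citation.

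\begin{proof}
Since $X$ is Hausdorff, every compact subset of $X$ is closed, so $X$ is KC. As $X$ is also compact, $X$ is compact KC. Therefore the hypotheses of all three parts of Lemma~\ref{lem:connectivity} are satisfied. Part~(1) gives that $\alpha(x)$ and $\omega(x)$ are closed and invariant; part~(2) gives that they are nonempty; part~(3) gives that they are connected.
\end{proof}
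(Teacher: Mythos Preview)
Your proposal is correct and matches the paper's approach exactly: the paper simply remarks that Hausdorff implies KC and then cites Lemma~\ref{lem:connectivity}, which is precisely what you do (with the parts spelled out).
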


The compactness and Hausdorff separation axiom of $X$ implies the following statement.

\begin{lemma}\label{lem:char}
The following properties hold for any point $x \in X$:
\\
$(1)$ $x \in \mathop{\mathrm{Cl}}(v)$ if and only if $O(x) = \check{O}(x) = \hat{O}(x) = \overline{O(x)}$.
\\
$(2)$ If $x \in \mathrm{P}(v)$, then $O(x) = \check{O}(x) = \hat{O}(x)  \subsetneq \overline{O(x)}$.
\end{lemma}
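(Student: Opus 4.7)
The plan is to derive both statements by combining the standard inclusion chain $O(x) \subseteq \check{O}(x) \subseteq \hat{O}(x) \subseteq \overline{O(x)}$ with two previously established tools: Lemma~\ref{lem:closed}, which on a compact Hausdorff space equates topological closedness of an orbit with being a closed orbit, and Lemma~\ref{lem:orbit_cl}, which collapses $O(x) = \check{O}(x) = \hat{O}(x)$ whenever $x \notin \mathrm{R}(v)$.

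For assertion $(1)$, I would first suppose $x \in \mathop{\mathrm{Cl}}(v)$. Then Lemma~\ref{lem:kc_closed} immediately shows that $O(x)$ is a closed subset of $X$, so $\overline{O(x)} = O(x)$, and the inclusion chain forces equality throughout. Conversely, if $O(x) = \overline{O(x)}$, then $O(x)$ is a closed subset of the compact Hausdorff space $X$, and Lemma~\ref{lem:closed} upgrades this to closedness as an orbit, giving $x \in \mathop{\mathrm{Cl}}(v)$.

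For assertion $(2)$, assume $x \in \mathrm{P}(v)$. Since $\mathrm{P}(v) \subseteq X - \mathrm{R}(v)$, Lemma~\ref{lem:orbit_cl} already delivers the three equalities $O(x) = \check{O}(x) = \hat{O}(x)$. The remaining task is to exhibit the strict inclusion $O(x) \subsetneq \overline{O(x)}$. Compactness of $X$ together with Corollary~\ref{cor:connectivity} guarantees $\omega(x) \neq \emptyset$; and if some point of $\omega(x)$ lay in $O(x)$, then invariance of $\omega(x)$ would yield $O(x) \subseteq \omega(x)$, hence $x \in \omega(x)$, contradicting the non-recurrence of $x$. Thus $\omega(x) \subseteq \overline{O(x)} \setminus O(x)$, producing the strict inclusion.

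No essentially new obstacle is expected: the delicate analytic work (the covering argument exploiting normality and the connectivity of $v(\R_{<t},x)$) has already been carried out in the proof of Lemma~\ref{lem:closed}, and both parts of the present statement reduce to a short assembly of that lemma, Lemma~\ref{lem:orbit_cl}, and Corollary~\ref{cor:connectivity}. The only subtlety worth flagging is the disjointness $\omega(x) \cap O(x) = \emptyset$ under non-recurrence, which is handled by the invariance argument above rather than by any appeal to separation axioms on $X$.
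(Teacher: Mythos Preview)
Your proposal is correct and follows essentially the same route as the paper: both use Lemma~\ref{lem:kc_closed} and Lemma~\ref{lem:closed} for part~(1), and Lemma~\ref{lem:orbit_cl} together with nonemptiness of a limit set (the paper uses $\alpha(x)$, you use $\omega(x)$) and its disjointness from $O(x)$ by invariance for the strict inclusion in part~(2). Your treatment of the disjointness $\omega(x)\cap O(x)=\emptyset$ is slightly more explicit than the paper's, but the argument is the same.
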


\begin{proof}
If $x \in \mathop{\mathrm{Cl}}(v)$, then $O(x) = \check{O}(x) = \hat{O}(x)  = \overline{O(x)}$.
If $O(x) = \check{O}(x) = \hat{O}(x)  = \overline{O(x)}$, then $O(x)$ is closed and so is a closed subset because of Lemma~\ref{lem:closed}.
Suppose that $x \in \mathrm{P}(v)$.
Lemma~\ref{lem:orbit_cl} implies that $O(x) = \check{O}(x) = \hat{O}(x) $.
Since $\alpha(x) \subset \overline{O(x)}$ is nonempty and $O(x) \cap \alpha(x) = \emptyset$, we have $\emptyset \neq \alpha(x)  \subseteq \overline{O(x)} - O(x)$ and so $O(x) \subsetneq \overline{O(x)}$.
\end{proof}

We have the following non-minimality.

\begin{lemma}\label{lem:non_min}
A point in $\mathrm{P}(v)$ is non-minimal with respect to pre-orders $\leq_\alpha$ and $\leq_\omega$.
\end{lemma}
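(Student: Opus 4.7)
The plan is to exhibit, for any $x \in \mathrm{P}(v)$, a point $y \in X$ with $[y] <_\alpha [x]$; the corresponding statement for $\leq_\omega$ follows by time reversion. Since $X$ is assumed compact Hausdorff, Corollary~\ref{cor:connectivity} ensures $\alpha(x) \neq \emptyset$, so I would begin by picking any $y \in \alpha(x)$. By the definition of $\leq_\alpha$ on $X/[v]$, taking $y_1 := y \in [y]$ and $x_1 := x \in [x]$ gives $y_1 \in \alpha(x_1)$, hence $[y] \leq_\alpha [x]$ (and likewise $\langle y \rangle \leq_\alpha \langle x \rangle$); the only remaining task is to verify that $[y] \neq [x]$.

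To establish $[y] \neq [x]$, I would split into two cases according to the type of $y$. If $y \notin \mathrm{P}(v)$, then $y \in \mathop{\mathrm{Cl}}(v) \sqcup \mathrm{R}(v)$, whereas Corollary~\ref{cor:ch} gives $[x] \subseteq \mathrm{P}(v)$, so $[y] \neq [x]$. If instead $y \in \mathrm{P}(v)$, then non-recurrence of $y$ yields $y \notin \alpha(y)$, while by construction $y \in \alpha(x)$; combined with the invariance and closedness of $\alpha(x)$, which force $\alpha(y) \subseteq \overline{O(y)} \subseteq \alpha(x)$, this produces the strict inclusion $\alpha(y) \subsetneq \alpha(x)$. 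Since Lemma~\ref{lem:ch_01}(1) identifies $[x]$ as a connected component of $\{z \in \mathrm{P}(v) \mid \alpha(z) = \alpha(x),\, \omega(z) = \omega(x)\}$, every $z \in [x]$ has $\alpha(z) = \alpha(x)$, and so $y \notin [x]$.

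Finally, I would note that the very same witness $y$ handles the abstract orbit space $X/\langle v \rangle$: if $y \notin \mathrm{R}(v)$ then $\langle y \rangle = [y]$ and the first-case argument applies verbatim, while if $y \in \mathrm{R}(v)$ then $\langle y \rangle \subseteq \mathrm{R}(v)$ is disjoint from $\langle x \rangle \subseteq \mathrm{P}(v)$ (using Corollary~\ref{cor:ch01}); in the remaining subcase $y \in \mathrm{P}(v)$ the same $\alpha$-limit argument forces $\langle y \rangle \neq \langle x \rangle$. A symmetric argument starting from a point $y' \in \omega(x)$ furnishes $[y'] <_\omega [x]$ and $\langle y' \rangle <_\omega \langle x \rangle$, completing the proof.

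The main obstacle is the subcase $y \in \mathrm{P}(v) \cap \alpha(x)$, where a priori $y$ could share the abstract weak orbit of $x$; the key leverage is non-recurrence of $y$, which via $y \notin \alpha(y)$ and $\alpha(y) \subseteq \alpha(x)$ produces the strict descent $\alpha(y) \subsetneq \alpha(x)$, thereby separating $[y]$ from $[x]$ through their $\alpha$-limit sets.
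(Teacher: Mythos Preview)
Your argument is correct but aimed one level too high. The lemma concerns the pre-orders $\leq_\alpha,\leq_\omega$ on $X$ itself (equivalently on $X/v$), not on the abstract weak orbit space $X/[v]$ or the abstract orbit space $X/\langle v\rangle$. The paper's proof works directly with points: pick any $y\in\alpha(x)$ (nonempty since $X$ is compact Hausdorff), note that $x\in\mathrm{P}(v)$ gives $O(x)\cap\alpha(x)=\emptyset$ so $O(y)\neq O(x)$, and observe that closedness and invariance of $\alpha(x)$ force $\alpha(y)\subseteq\overline{O(y)}\subseteq\alpha(x)$, whence $x\notin\alpha(x)$ yields $x\notin\alpha(y)$; hence $y<_\alpha x$ at the point level.

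What you have carefully established --- that $[y]\neq[x]$ via the case split on the type of $y$, invoking Corollary~\ref{cor:ch} and Lemma~\ref{lem:ch_01} --- is precisely the content of the \emph{subsequent} Lemma~\ref{lem:non_min_abst}, which the paper derives as a consequence of the present one. Your core chain $y\in\alpha(x)\setminus\alpha(y)$ and $\alpha(y)\subsetneq\alpha(x)$ already contains the paper's point-level argument, so nothing is mathematically wrong; you have simply proved more than this lemma asks for, and packaged it at the wrong granularity.
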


\begin{proof}
Fix a point $x \in \mathrm{P}(v)$.
Lemma~\ref{lem:kc_proper} implies that $\overline{O(x)} - O(x) = \alpha(x) \cup \omega(x)$ is closed.
Fix a point $y \in \alpha(x)$.
Since $y \in \alpha(x) \subseteq \overline{O(x)} - O(x)$, we obtain $O(y) \neq O(x)$.
The closedness and invariance of $\alpha(x)$ imply that $\alpha(y) \subseteq \overline{O(y)} \subseteq \alpha(x) \subseteq \overline{O(x)} - O(x)$.
Since $x \notin \alpha(x)$, we have $x \notin \alpha(y)$.
This means that $y <_\alpha x$.
By symmetry, we have $z <_\omega x$ for some $z \in \omega(x)$.
\end{proof}

By definition of $\alpha'$ and $\omega'$, Lemma~\ref{lem:decomp_limit} and Lemma~\ref{lem:closed} imply the following observations.

\begin{lemma}\label{lem:empty}
The following are equivalent for a point $x \in X$:
\\
$(1)$ $x \in \mathop{\mathrm{Cl}}(v)$.
\\
$(2)$ $\alpha'(x) = \emptyset$.
\\
$(3)$ $\omega'(x) = \emptyset$.
\\
$(4)$ The orbit $O(x)$ is a closed subset.
\end{lemma}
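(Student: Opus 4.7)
The plan is to close the cycle $(1) \Rightarrow (2),(3) \Rightarrow (4) \Rightarrow (1)$, using Lemma~\ref{lem:kc_closed} and Lemma~\ref{lem:closed} to handle the equivalence of (1) and (4), and using nonemptiness and invariance of limit sets to go from (2) or (3) to (4).

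First I would handle $(1) \Leftrightarrow (4)$. Lemma~\ref{lem:kc_closed} already gives $(1) \Rightarrow (4)$ since a closed orbit of a flow on a Hausdorff space is a closed subset. For $(4) \Rightarrow (1)$, the statement is exactly Lemma~\ref{lem:closed}, which uses the compactness of $X$.

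Next, $(1) \Rightarrow (2)$ and $(1) \Rightarrow (3)$ are routine. If $x \in \mathop{\mathrm{Cl}}(v)$, then $O(x)$ is either a singleton $\{x\}$ or a periodic orbit; in either case $\overline{O(x)} = O(x)$ by Lemma~\ref{lem:kc_closed}, so $\alpha(x), \omega(x) \subseteq \overline{O(x)} = O(x)$ and therefore $\alpha'(x) = \omega'(x) = \emptyset$.

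The main direction is $(2) \Rightarrow (4)$ (and symmetrically $(3) \Rightarrow (4)$). Assume $\alpha'(x) = \emptyset$, i.e.\ $\alpha(x) \subseteq O(x)$. By Corollary~\ref{cor:connectivity}, the compactness of $X$ forces $\alpha(x) \neq \emptyset$, so there is some point $y \in \alpha(x) \cap O(x)$. By Corollary~\ref{cor:connectivity} the set $\alpha(x)$ is invariant, hence $O(x) = O(y) \subseteq \alpha(x)$, and combined with $\alpha(x) \subseteq O(x)$ we obtain $O(x) = \alpha(x)$. Since $\alpha(x)$ is closed (again by Corollary~\ref{cor:connectivity}), the orbit $O(x)$ is a closed subset, establishing (4). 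The argument for $(3) \Rightarrow (4)$ is identical with $\omega$ in place of $\alpha$. I do not anticipate any real obstacle; the only subtlety is that compactness is used precisely to guarantee $\alpha(x), \omega(x) \neq \emptyset$, without which a set with $\alpha'(x) = \emptyset$ could fail to be a closed orbit.
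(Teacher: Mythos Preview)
Your proof is correct and follows essentially the same route as the paper. The paper treats this lemma as an observation, citing only the definitions of $\alpha'$, $\omega'$, Lemma~\ref{lem:decomp_limit}, and Lemma~\ref{lem:closed}; your argument spells out the details using the same ingredients (closedness, invariance, and nonemptiness of limit sets via Corollary~\ref{cor:connectivity}, together with Lemma~\ref{lem:kc_closed} and Lemma~\ref{lem:closed}), and the logical structure matches.
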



\begin{lemma}\label{lem:sing}
The following are equivalent for a point $x \in X$:
\\
$(1)$ $x \in \Sv$.
\\
$(2)$ $\{ x \} = \alpha(x)$ and $\alpha'(x) = \emptyset$.
\\
$(3)$ $\{ x \} = \omega(x)$ and $\omega'(x) = \emptyset$.
\end{lemma}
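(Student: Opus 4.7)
The plan is to establish the cycle by proving the equivalence of (1) with (2), and then invoking time reversal to obtain the equivalence of (1) with (3). The forward direction (1)$\Rightarrow$(2) is immediate: if $x\in\Sv$, then $O(x)=\{x\}$, so $\{v_t(x)\mid t<n\}=\{x\}$ for every $n\in\R$; since $X$ is Hausdorff, singletons are closed, giving $\alpha(x)=\bigcap_n\overline{\{x\}}=\{x\}$ and hence $\alpha'(x)=\alpha(x)\setminus O(x)=\emptyset$.

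For the reverse direction (2)$\Rightarrow$(1), assume $\alpha(x)=\{x\}$ and $\alpha'(x)=\emptyset$. By Lemma~\ref{lem:empty}, the hypothesis $\alpha'(x)=\emptyset$ yields $x\in\Cv$, so $O(x)$ is closed as an orbit. It therefore suffices to rule out the periodic case. If $x$ were periodic of period $T>0$, then for every $n\in\R$ we would have $\{v_t(x)\mid t<n\}\supseteq O(x)$ (just run $t$ backward through all multiples of the period), so $\alpha(x)=\bigcap_n\overline{O(x)}=\overline{O(x)}$. Since Lemma~\ref{lem:kc_closed} tells us $\overline{O(x)}=O(x)$, we obtain $\alpha(x)=O(x)$, which combined with $\alpha(x)=\{x\}$ forces $O(x)=\{x\}$, contradicting $x\ne v_t(x)$ for $t\in(0,T)$. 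Hence $x$ is singular.

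The equivalence (1)$\Leftrightarrow$(3) follows from the same argument applied to the time-reversed flow $v^{-1}$ defined by $v^{-1}(t,y):=v(-t,y)$, under which $\alpha$ and $\omega$ are interchanged while $\Sv$ and $\Cv$ are preserved.

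The main potential obstacle is confirming the identity $\alpha(x)=O(x)$ for a closed orbit in this setting; but once one observes that a periodic orbit is swept out by any backward ray $\{v_t(x)\mid t<n\}$ and uses Lemma~\ref{lem:kc_closed} (or equivalently Lemma~\ref{lem:char}) to identify $\overline{O(x)}$ with $O(x)$, the proof reduces to routine bookkeeping with the definitions.
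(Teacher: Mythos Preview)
Your proof is correct and follows essentially the same approach as the paper. The paper does not give a separate proof for this lemma but states that it (together with the surrounding Lemmas~\ref{lem:empty}, \ref{lem:per}, \ref{lem:proper}, \ref{lem:rec}) follows directly from the definitions of $\alpha'$ and $\omega'$ together with Lemma~\ref{lem:decomp_limit} and Lemma~\ref{lem:closed}; your argument simply spells this out, invoking Lemma~\ref{lem:empty} (itself a consequence of those two lemmas) to reduce to the closed-orbit case and then ruling out periodicity via $\alpha(x)=O(x)$.
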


\begin{lemma}\label{lem:per}
The following are equivalent for a point $x \in X$:
\\
$(1)$ $x \in \Pv$.
\\
$(2)$ $\{ x \} \neq \alpha(x)$ and $\alpha'(x) = \emptyset$.
\\
$(3)$ $\{ x \} \neq \omega(x)$ and $\omega'(x) = \emptyset$.
\end{lemma}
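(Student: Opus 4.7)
The plan is to deduce this lemma directly from the two preceding observations (Lemma~\ref{lem:empty} and Lemma~\ref{lem:sing}) together with the partition $\Cv = \Sv \sqcup \Pv$, so no new dynamical work is needed.

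First I would prove (1)$\Rightarrow$(2). If $x \in \Pv$, then $O(x)$ is a closed orbit, so by Lemma~\ref{lem:closed} it is a closed subset, whence $\overline{O(x)} = O(x)$; together with the invariance and nonemptiness of $\alpha(x)$ one gets $\alpha(x) = O(x)$, so $\alpha'(x) = \alpha(x) \setminus O(x) = \emptyset$. Since a periodic point is by definition non-singular, $O(x)$ contains at least two points, hence $\alpha(x) = O(x) \neq \{x\}$. (Alternatively, Lemma~\ref{lem:empty} directly gives $\alpha'(x) = \emptyset$ from $x \in \mathop{\mathrm{Cl}}(v)$, and Lemma~\ref{lem:sing} supplies $\alpha(x) \neq \{x\}$ since $x \notin \Sv$.)

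Next, for (2)$\Rightarrow$(1), assume $\alpha'(x) = \emptyset$ and $\alpha(x) \neq \{x\}$. By Lemma~\ref{lem:empty} the vanishing of $\alpha'(x)$ forces $x \in \mathop{\mathrm{Cl}}(v) = \Sv \sqcup \Pv$. If $x$ were singular, then Lemma~\ref{lem:sing} would give $\alpha(x) = \{x\}$, contradicting the hypothesis. Therefore $x \in \Pv$. The equivalence (1)$\Leftrightarrow$(3) follows by the same argument applied under time reversion, using the $\omega$-analogues in Lemmas~\ref{lem:empty} and~\ref{lem:sing}.

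There is essentially no obstacle here: the lemma is a direct bookkeeping consequence of the characterizations already proved for closed and for singular orbits, so the only thing to watch is citing the right items in the right direction (in particular, using Lemma~\ref{lem:empty} to move between ``$\alpha'(x) = \emptyset$'' and ``closed orbit'', and Lemma~\ref{lem:sing} to separate the singular case from the periodic case).
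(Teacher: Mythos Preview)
Your proof is correct and follows essentially the same approach as the paper, which treats this lemma as an immediate observation deduced from Lemma~\ref{lem:decomp_limit} and Lemma~\ref{lem:closed} (via the intermediate Lemmas~\ref{lem:empty} and~\ref{lem:sing}) without giving a separate proof. Your write-up makes the bookkeeping explicit, but there is no substantive difference in method.
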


\begin{lemma}\label{lem:proper}
The following are equivalent for a point $x \in X$:
\\
$(1)$ $x \in \mathrm{P}(v)$.
\\
$(2)$ $\alpha'(x) = \alpha(x)$ and $\omega'(x) = \omega(x)$.
\\
$(3)$ $O(x) \cap (\alpha(x) \cup \omega(x)) = \emptyset$.
\end{lemma}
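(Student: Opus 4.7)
The plan is to verify the cycle (1) $\Rightarrow$ (3) $\Rightarrow$ (2) $\Rightarrow$ (1), exploiting the partition $X = \mathop{\mathrm{Sing}}(v) \sqcup \mathop{\mathrm{Per}}(v) \sqcup \mathrm{P}(v) \sqcup \mathrm{R}(v)$ recorded in the preliminaries and the preceding characterizations (Lemmas~\ref{lem:empty}--\ref{lem:per}).

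First I would dispatch the equivalence of (2) and (3), which is purely definitional: by $\alpha'(x) = \alpha(x) \setminus O(x)$ and $\omega'(x) = \omega(x) \setminus O(x)$, the identities $\alpha'(x) = \alpha(x)$ and $\omega'(x) = \omega(x)$ hold simultaneously if and only if $O(x) \cap \alpha(x) = \emptyset = O(x) \cap \omega(x)$, which is precisely (3). No use of Hausdorffness or compactness is required for this step.

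Next I would show (1) $\Rightarrow$ (3). If $x \in \mathrm{P}(v)$, then by definition $O(x)$ is a non-closed proper orbit, which by Corollary~\ref{cor:recurrent} is non-recurrent; equivalently $x \notin \alpha(x) \cup \omega(x)$. Since $\alpha(x)$ and $\omega(x)$ are $v$-invariant (Corollary~\ref{cor:connectivity}), the existence of any $t \in \R$ with $v_t(x) \in \alpha(x) \cup \omega(x)$ would, by applying $v_{-t}$, force $x$ itself into this set, contradicting non-recurrence. Hence $O(x) \cap (\alpha(x) \cup \omega(x)) = \emptyset$, which is (3).

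Finally, for (3) $\Rightarrow$ (1), I would rule out the three remaining classes in the partition. If $x \in \mathop{\mathrm{Sing}}(v)$, then $\alpha(x) = \{x\}$ by Lemma~\ref{lem:sing}, so $x \in O(x) \cap \alpha(x)$, contradicting (3). If $x \in \mathop{\mathrm{Per}}(v)$, then $O(x)$ is a closed orbit, so $O(x) = \overline{O(x)}$ by Lemma~\ref{lem:kc_closed}, and invariance plus non-emptiness of $\alpha(x)$ (Corollary~\ref{cor:connectivity}) give $\alpha(x) \subseteq \overline{O(x)} = O(x)$, hence $O(x) \cap \alpha(x) = \alpha(x) \neq \emptyset$, again contradicting (3). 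If $x \in \mathrm{R}(v)$, then by definition of $\mathrm{R}(v)$ the point $x$ is a non-closed recurrent point, so $x \in \alpha(x) \cup \omega(x)$, which directly contradicts (3). Therefore $x$ lies in the only remaining class $\mathrm{P}(v)$, establishing (1). There is no real obstacle: the argument is a bookkeeping exercise combining $v$-invariance of limit sets with the explicit descriptions of $\alpha(x), \omega(x)$ for singular, periodic, and recurrent points that have already been tabulated in Lemmas~\ref{lem:sing} and~\ref{lem:per}.
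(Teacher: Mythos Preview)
Your proof is correct and aligns with the paper's treatment: the paper presents this lemma as an immediate observation from the definitions of $\alpha'$, $\omega'$ together with Lemma~\ref{lem:decomp_limit} and Lemma~\ref{lem:closed}, and your argument simply spells out that bookkeeping via the partition $X = \mathop{\mathrm{Sing}}(v) \sqcup \mathop{\mathrm{Per}}(v) \sqcup \mathrm{P}(v) \sqcup \mathrm{R}(v)$. One cosmetic remark: in the paper $\mathrm{P}(v)$ is \emph{defined} as the union of non-recurrent orbits, so your step (1) $\Rightarrow$ (3) can begin directly from $x \notin \alpha(x) \cup \omega(x)$ without the detour through ``non-closed proper'' and Corollary~\ref{cor:recurrent}.
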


\begin{lemma}\label{lem:rec}
The following are equivalent for a point $x \in X$:
\\
$(1)$ $x \in \mathrm{R}(v)$.
\\
$(2)$ $O(x) \subsetneq \alpha(x) \cup \omega(x)$.
\\
$(3)$ $\emptyset \neq \alpha' (x) \subsetneq \alpha(x)$ or $\emptyset \neq \omega' (x) \subsetneq \omega(x)$.
\\
$(4)$ $\emptyset \neq \alpha' (x) \cup \omega' (x) \subsetneq \alpha(x) \cup \omega(x)$.
\end{lemma}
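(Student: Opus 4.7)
The plan is to prove the four conditions equivalent by a cyclic chain $(1)\Rightarrow(2)\Rightarrow(3)\Rightarrow(4)\Rightarrow(1)$, leaning on the invariance and closedness of $\alpha$- and $\omega$-limit sets (Corollary~\ref{cor:connectivity}), on Lemma~\ref{lem:decomp_limit} giving $\overline{O(x)} = \alpha(x)\cup O(x)\cup\omega(x)$, and on the characterization of closed orbits as closed subsets in the compact Hausdorff setting (Lemma~\ref{lem:closed}).

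For $(1)\Rightarrow(2)$, if $x\in\mathrm{R}(v)$ then $x$ is recurrent so by the time-reversal symmetry I may assume $x\in\alpha(x)$; invariance gives $O(x)\subseteq\alpha(x)$ and closedness gives $\overline{O(x)}\subseteq\alpha(x)$, hence $\overline{O(x)}=\alpha(x)$. Since $O(x)$ is non-closed, Lemma~\ref{lem:closed} says it is not a closed subset, so $O(x)\subsetneq\overline{O(x)}=\alpha(x)\subseteq\alpha(x)\cup\omega(x)$. For $(2)\Rightarrow(3)$, condition $(2)$ forces $x\in\alpha(x)\cup\omega(x)$, so $x$ is recurrent; by symmetry assume $x\in\alpha(x)$, yielding $O(x)\subseteq\alpha(x)$ and (using $\omega(x)\subseteq\overline{O(x)}=\alpha(x)$) the identity $\alpha(x)\cup\omega(x)=\alpha(x)$. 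Then strict inclusion in $(2)$ gives $\emptyset\neq\alpha(x)\setminus O(x)=\alpha'(x)$, and $x\in O(x)\cap\alpha(x)$ forces $\alpha'(x)\subsetneq\alpha(x)$.

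For $(3)\Rightarrow(4)$, suppose without loss of generality $\emptyset\neq\alpha'(x)\subsetneq\alpha(x)$; the second inequality picks a point in $\alpha(x)\cap O(x)$, and invariance of $\alpha(x)$ then yields $O(x)\subseteq\alpha(x)$, whence $(\alpha(x)\cup\omega(x))\cap O(x)\supseteq O(x)\neq\emptyset$, i.e. $\alpha'(x)\cup\omega'(x)=(\alpha(x)\cup\omega(x))\setminus O(x)\subsetneq\alpha(x)\cup\omega(x)$; nonemptiness follows from $\alpha'(x)\neq\emptyset$. For $(4)\Rightarrow(1)$, the assumption $\alpha'(x)\cup\omega'(x)\neq\emptyset$ together with Lemma~\ref{lem:decomp_limit} gives $\overline{O(x)}\setminus O(x)\neq\emptyset$, so $O(x)$ is not closed as a subset, hence not a closed orbit by Lemma~\ref{lem:closed}; strict inclusion in $(4)$ provides a point in $(\alpha(x)\cup\omega(x))\cap O(x)$ which, by invariance, forces $x\in\alpha(x)\cup\omega(x)$, i.e. recurrence; combining gives $x\in\mathrm{R}(v)$.

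I do not expect a real obstacle: the whole argument is a bookkeeping exercise once Lemma~\ref{lem:decomp_limit}, Lemma~\ref{lem:closed}, and the invariance of limit sets are in hand. The only subtle point is making sure the difference between \emph{nonempty} and \emph{proper} in conditions $(3)$ and $(4)$ is consistently translated into the two conjoined facts ``$x$ is recurrent'' (giving properness) and ``$O(x)$ is not a closed subset'' (giving nonemptiness and non-closedness of the orbit).
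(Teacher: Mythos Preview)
Your proof is correct and uses precisely the ingredients the paper invokes: the paper does not write out a proof of this lemma but states that it follows from the definitions of $\alpha'$ and $\omega'$ together with Lemma~\ref{lem:decomp_limit} and Lemma~\ref{lem:closed}, which is exactly your toolkit. Your cyclic argument $(1)\Rightarrow(2)\Rightarrow(3)\Rightarrow(4)\Rightarrow(1)$ is a clean way to organize what the paper leaves as an observation.
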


\subsection{On limit sets and classes}

We have the following property.

\begin{lemma}\label{lem:alpha}
For any points $x, y \in X$ with $O(x) \neq O(y)$ and $\alpha' (x) = \alpha' (y) \neq \emptyset$, we have that either $x \notin \alpha(x)=\alpha'(x)$ or $\hat{O}(x) = O(x) \subseteq \mathrm{R}(v)$.
\end{lemma}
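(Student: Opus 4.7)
The plan is to do a dichotomy on whether or not $x$ belongs to its own $\alpha$-limit set, and reduce the problem to the easy invariance/closedness facts about $\alpha(x)$ together with the hypothesis that $\alpha'(x)=\alpha'(y)\neq\emptyset$. First I would observe that $\alpha'(x)\neq\emptyset$ rules out $x\in\mathop{\mathrm{Cl}}(v)$, by Lemma~\ref{lem:empty}.

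If $x\notin\alpha(x)$, then by invariance of $\alpha(x)$ the orbit $O(x)$ is disjoint from $\alpha(x)$, so $\alpha(x)=\alpha(x)\setminus O(x)=\alpha'(x)$, delivering the first alternative. So assume $x\in\alpha(x)$. Invariance and closedness of $\alpha(x)$ force $\overline{O(x)}\subseteq\alpha(x)$, and the reverse inclusion is automatic, giving $\overline{O(x)}=\alpha(x)$ and $\alpha'(x)=\overline{O(x)}\setminus O(x)$. Since $O(x)$ is not closed (otherwise $x\in\mathop{\mathrm{Cl}}(v)$), this places $x$, and hence its saturation $O(x)$, in $\mathrm{R}(v)$.

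It then remains to show $\hat{O}(x)=O(x)$. I plan to do this by contradiction: pick $z\in\hat{O}(x)\setminus O(x)$, so that $\overline{O(z)}=\overline{O(x)}$ and $z\in\overline{O(x)}\setminus O(x)=\alpha'(x)=\alpha'(y)$. Here I would split on whether $y\in\alpha(y)$. If $y\in\alpha(y)$, the same argument as for $x$ gives $\overline{O(y)}=\alpha(y)$ and $\alpha'(y)=\overline{O(y)}\setminus O(y)$; from $z\in\alpha'(y)\subseteq\overline{O(y)}$ we get $\overline{O(x)}=\overline{O(z)}\subseteq\overline{O(y)}$, hence $x\in\overline{O(y)}=O(y)\cup\alpha'(y)=O(y)\cup\alpha'(x)$, but $x\notin O(y)$ (because $O(x)\neq O(y)$) and $x\notin\alpha'(x)=\alpha(x)\setminus O(x)$, a contradiction. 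If instead $y\notin\alpha(y)$, then invariance gives $\alpha'(y)=\alpha(y)$, so $z\in\alpha(y)$, and invariance/closedness yield $\overline{O(z)}\subseteq\alpha(y)=\alpha'(x)=\overline{O(x)}\setminus O(x)$; but $\overline{O(z)}=\overline{O(x)}\supseteq O(x)$, a contradiction.

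The argument is essentially bookkeeping; the only substantive point, and the place where the hypothesis that a second orbit $O(y)$ sharing $\alpha'$ with $O(x)$ exists is actually used, is in the recurrent case, where one needs $y$'s data to pin down the location of $z$ relative to $\overline{O(y)}$ or $\alpha(y)$ and extract the contradiction. The mild subtlety will be to keep straight the two different descriptions of $\alpha'(y)$ according to whether $y$ is itself in $\alpha(y)$; handling those two subcases uniformly is the only place where care is required.
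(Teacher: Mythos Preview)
Your argument is correct and follows essentially the same approach as the paper: the same dichotomy on $x\in\alpha(x)$, the same contradiction via a point $z\in\hat O(x)\setminus O(x)$ landing in $\alpha'(y)$. The paper's version avoids your subcase split on whether $y\in\alpha(y)$ by observing directly that $z\in\alpha'(y)\subseteq\alpha(y)$ forces $O(x)\subseteq\overline{O(z)}\subseteq\alpha(y)$, whence (using $O(x)\neq O(y)$) $O(x)\subseteq\alpha(y)\setminus O(y)=\alpha'(y)=\alpha'(x)$, contradicting $O(x)\cap\alpha'(x)=\emptyset$; but this is a cosmetic streamlining, not a different idea.
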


\begin{proof}
Suppose that $O(x) \neq O(y)$ and $\alpha' (x) = \alpha' (y) \neq \emptyset$.
Lemma~\ref{lem:empty} implies that $x, y \notin \mathop{\mathrm{Cl}}(v)$ and so $x,y \in \mathrm{P}(v) \sqcup \mathrm{R}(v) = X - \mathop{\mathrm{Cl}}(v)$.
Suppose that $x \notin \alpha(x)$.
Then the invariance of $\alpha(x)$ implies that $O(x) \cap \alpha(x) = \emptyset$ and so that $\alpha(x)=\alpha'(x)$.
Suppose that $x \in \alpha(x)$.
Then $x \in \mathrm{R}(v)$.
The closedness of $\alpha(x)$ implies $\overline{O(x)} = \alpha(x)$ and so $\overline{O(x)} - O(x) = \alpha' (x)$.
We claim that $\hat{O}(x) = O(x)$.
Indeed, assume that $\hat{O}(x) \neq O(x)$.
Fix a pont $z \in \hat{O}(x) - O(x)$.
Then $z \in \hat{O}(x) - O(x) \subseteq \overline{O(x)} - O(x) = \alpha' (x) = \alpha' (y) \subseteq \alpha (y)$.
The invariance and closedness of $\alpha(y)$ imply that $O(x) \subseteq \overline{O(x)} = \overline{O(z)} = \overline{\hat{O}(x) - O(x)} \subseteq \alpha(y)$.
Since $O(x) \neq O(y)$, we have $O(x) \subseteq \alpha(y) \setminus O(y) = \alpha'(y) = \alpha'(x) = \alpha(x) - O(x)$, which is a contradiction.
\end{proof}

By symmetry, we have the following property.

\begin{lemma}\label{lem:omega}
For any points $x, y \in X$ with $O(x) \neq O(y)$ and $\omega' (x) = \omega' (y) \neq \emptyset$, we have that either $x \notin \omega(x)=\omega'(x)$ or $\hat{O}(x) = O(x) \subseteq \mathrm{R}(v)$.
\end{lemma}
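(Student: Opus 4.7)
The plan is to observe that Lemma~\ref{lem:omega} is the exact time-reversed analogue of Lemma~\ref{lem:alpha}, and hence admits two equally natural approaches: either invoke time reversion directly, or mirror the proof of Lemma~\ref{lem:alpha} with $\omega$ in place of $\alpha$. I would state the proof in the latter form so that it is self-contained and stylistically consistent with the rest of the section.

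First I would fix points $x, y \in X$ with $O(x) \neq O(y)$ and $\omega'(x) = \omega'(y) \neq \emptyset$. Applying Lemma~\ref{lem:empty} to the nonempty $\omega'(x)$ (and $\omega'(y)$) immediately forces $x, y \notin \mathop{\mathrm{Cl}}(v)$, i.e., $x, y \in \mathrm{P}(v) \sqcup \mathrm{R}(v)$. Next I would split on the dichotomy $x \in \omega(x)$ versus $x \notin \omega(x)$. In the case $x \notin \omega(x)$, the invariance of $\omega(x)$ gives $O(x) \cap \omega(x) = \emptyset$, so $\omega(x) = \omega(x) \setminus O(x) = \omega'(x)$, which is the first alternative in the conclusion.

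It remains to treat the case $x \in \omega(x)$, which forces $x \in \mathrm{R}(v)$. Here I want to show $\hat{O}(x) = O(x)$. The closedness and invariance of $\omega(x)$ give $\overline{O(x)} \subseteq \omega(x) \subseteq \overline{O(x)}$, so $\overline{O(x)} = \omega(x)$ and therefore $\overline{O(x)} - O(x) = \omega'(x)$. Arguing by contradiction, I would suppose there exists a point $z \in \hat{O}(x) - O(x)$. Then $z \in \overline{O(x)} - O(x) = \omega'(x) = \omega'(y) \subseteq \omega(y)$, and the closedness and invariance of $\omega(y)$ yield
\[
O(x) \subseteq \overline{O(x)} = \overline{O(z)} \subseteq \omega(y).
\]
Combining with $O(x) \neq O(y)$ gives $O(x) \subseteq \omega(y) \setminus O(y) = \omega'(y) = \omega'(x) = \omega(x) - O(x)$, the desired contradiction. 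Hence $\hat{O}(x) = O(x) \subseteq \mathrm{R}(v)$, which is the second alternative.

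I do not anticipate any real obstacle: all the required ingredients (the identity $\overline{O(x)} = \omega(x)$ when $x$ is $\omega$-recurrent, the invariance and closedness of $\omega$-limit sets from Corollary~\ref{cor:connectivity}, and Lemma~\ref{lem:empty}) are already in hand, and the logical skeleton is dictated by Lemma~\ref{lem:alpha}. The only point that deserves minor care is verifying that $\overline{O(z)} = \overline{O(x)}$ (which holds because $z \in \hat{O}(x)$ by definition of the orbit class), so that one can transfer the closure inclusion into $\omega(y)$ without disturbing the chain of set inclusions.
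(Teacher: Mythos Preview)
Your proposal is correct and takes essentially the same approach as the paper: the paper dispatches this lemma with the single phrase ``By symmetry'' (i.e., time reversal applied to Lemma~\ref{lem:alpha}), and you have simply written out that mirrored argument in full, following the proof of Lemma~\ref{lem:alpha} line by line with $\omega$ in place of $\alpha$. Your explicit justification that $\overline{O(z)} = \overline{O(x)}$ follows from $z \in \hat{O}(x)$ is a welcome clarification of a step the paper leaves somewhat implicit.
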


We obtain the following properties.

\begin{lemma}\label{lem:P}
For any points $x, y \in X$ with $O(x) \neq O(y), \alpha' (x) = \alpha' (y)$, and $\omega'(x) = \omega' (y)$, we have $\hat{O}(x) = O(x)$.
\end{lemma}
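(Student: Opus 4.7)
The plan is to reduce the statement to Lemma~\ref{lem:orbit_cl}, which asserts $\hat{O}(z) = O(z)$ whenever $z \in X - \mathrm{R}(v) = \mathop{\mathrm{Cl}}(v) \sqcup \mathrm{P}(v)$. So the goal is to argue that $x$ is either closed or non-recurrent, unless the conclusion $\hat{O}(x) = O(x)$ already drops out of Lemma~\ref{lem:alpha} or Lemma~\ref{lem:omega} directly.

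First I would note, via Lemma~\ref{lem:empty}, that the three conditions $\alpha'(x) = \emptyset$, $x \in \mathop{\mathrm{Cl}}(v)$, and $\omega'(x) = \emptyset$ are pairwise equivalent. Hence either both $\alpha'(x)$ and $\omega'(x)$ are empty, or both are nonempty. In the empty case $x \in \mathop{\mathrm{Cl}}(v) \subseteq X - \mathrm{R}(v)$, and Lemma~\ref{lem:orbit_cl} immediately gives $\hat{O}(x) = O(x)$.

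In the nonempty case, apply Lemma~\ref{lem:alpha} to the pair $(x,y)$ with $\alpha'(x) = \alpha'(y) \neq \emptyset$: this yields either $\hat{O}(x) = O(x) \subseteq \mathrm{R}(v)$, in which case we are done, or $x \notin \alpha(x) = \alpha'(x)$. Symmetrically, Lemma~\ref{lem:omega} gives either $\hat{O}(x) = O(x)$ (done) or $x \notin \omega(x) = \omega'(x)$. If neither lemma directly provides the conclusion, then $x \notin \alpha(x) \cup \omega(x)$, so $x$ is non-recurrent, i.e., $x \in \mathrm{P}(v) \subseteq X - \mathrm{R}(v)$, and one last invocation of Lemma~\ref{lem:orbit_cl} finishes the argument.

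The proof is essentially a short case analysis that feeds into three preceding lemmas, and I do not expect any genuine obstacle. The one point worth flagging is the simultaneous vanishing of $\alpha'(x)$ and $\omega'(x)$ provided by Lemma~\ref{lem:empty}, which rules out mixed cases and keeps the argument from branching further.
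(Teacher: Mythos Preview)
Your proposal is correct and follows essentially the same case analysis as the paper's proof: split on whether $\alpha'(x)$ (equivalently $\omega'(x)$) is empty via Lemma~\ref{lem:empty}, handle the closed case directly, and in the nonempty case combine Lemma~\ref{lem:alpha} and Lemma~\ref{lem:omega} to force either $\hat{O}(x)=O(x)$ immediately or $x\notin\alpha(x)\cup\omega(x)$, hence $x\in\mathrm{P}(v)$. The only cosmetic difference is that you invoke Lemma~\ref{lem:orbit_cl} for the final step where the paper cites Lemma~\ref{lem:closed} and Lemma~\ref{lem:char}, but these yield the same conclusion.
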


\begin{proof}
Suppose that $\alpha' (x) = \emptyset$.
By Lemma~\ref{lem:empty}, we obtain  $x \in \mathop{\mathrm{Cl}}(v)$.
Lemma~\ref{lem:closed} implies that $O(x) = \hat{O}(x) = \overline{O(x)}$.
Suppose that $\alpha' (x) \neq \emptyset$.
Lemma~\ref{lem:empty} implies that $\omega' (x) \neq \emptyset$ and that the orbit $O(x)$ is neither a closed subset nor a closed orbit.
Lemma~\ref{lem:alpha} and Lemma~\ref{lem:omega} imply that either $x \notin \alpha(x) \cup \omega(x)$ or $\hat{O}(x) = O(x) \subseteq \mathrm{R}(v)$.
Thus we may assume that $x \notin \alpha(x) \cup \omega(x)$.
Then $x \notin \mathrm{R}(v)$.
This means that $x \in X - (\Cv \sqcup \mathrm{R}(v)) = \mathrm{P}(v)$.
Lemma~\ref{lem:char} implies that $\hat{O}(x) = O(x)$.
\end{proof}


%

The previous lemma implies the following statement.


%

\begin{lemma}\label{lem:quotient_sp}
If $\hat{O} \neq O$ for any orbit $O \subseteq \mathrm{R}(v)$, then $O(x) = [x]'' = [x]' \subsetneq \hat{O}(x)$ for any point $x \in \mathrm{R}(v)$.
\end{lemma}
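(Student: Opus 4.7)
The plan is to apply the contrapositive of Lemma~\ref{lem:P}, which fits the hypothesis essentially verbatim. Fix a point $x \in \mathrm{R}(v)$. From the definitions of $[x]'$ and $[x]''$ one has $O(x) \subseteq [x]'' \subseteq [x]'$: indeed every $y \in O(x)$ trivially satisfies $\alpha'(y) = \alpha'(x)$ and $\omega'(y) = \omega'(x)$, and $O(x)$ is connected as a continuous image of $\mathbb{R}$ and contained in $\mathrm{R}(v)$. Thus it suffices to establish $[x]' \subseteq O(x)$ together with the strict inclusion $O(x) \subsetneq \hat{O}(x)$.

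The strict inclusion is immediate: since $x \in \mathrm{R}(v)$ implies $O(x) \subseteq \mathrm{R}(v)$, the standing hypothesis gives $\hat{O}(x) \neq O(x)$, and the general containment $O(x) \subseteq \hat{O}(x)$ upgrades this to $O(x) \subsetneq \hat{O}(x)$. For the inclusion $[x]' \subseteq O(x)$, I would pick an arbitrary $y \in [x]'$, so that by the definition of $[x]'$ one has $\alpha'(y) = \alpha'(x)$ and $\omega'(y) = \omega'(x)$. If one additionally had $O(y) \neq O(x)$, then all three hypotheses of Lemma~\ref{lem:P} would be in force, concluding $\hat{O}(x) = O(x)$, contradicting what was just shown. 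Hence $O(y) = O(x)$, i.e., $y \in O(x)$.

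Putting these together, $[x]' = O(x)$, and therefore $[x]' \cap \mathrm{R}(v) = O(x)$, whose connected component containing $x$ is $O(x)$ itself; this yields $[x]'' = O(x)$ as well, completing the chain $O(x) = [x]'' = [x]' \subsetneq \hat{O}(x)$. I do not foresee any serious obstacle: the entire argument reduces to a single application of the contrapositive of Lemma~\ref{lem:P}, with the remainder being unpacking of the definitions of $[x]'$, $[x]''$, and the hypothesis on orbit classes in $\mathrm{R}(v)$.
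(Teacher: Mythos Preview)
Your proof is correct. It is in fact exactly the argument suggested by the paper's own lead-in sentence ``The previous lemma implies the following statement'': you apply the contrapositive of Lemma~\ref{lem:P} directly, which immediately forces $O(y)=O(x)$ for every $y\in[x]'$.

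Curiously, the paper's written proof does not explicitly invoke Lemma~\ref{lem:P}. Instead, it argues directly: assuming (after time reversal) $x\in\alpha(x)$, it uses the hypothesis to pick $y\in\hat O(x)-O(x)$ and observes $y\in\alpha'(x)$, hence $\overline{\alpha'(x)}=\overline{O(y)}=\overline{O(x)}=\alpha(x)$; then for any $z\in[x]'$ it deduces $\alpha(x)=\overline{\alpha'(z)}\subseteq\alpha(z)$, and from $\alpha(z)\setminus O(z)=\alpha'(z)=\alpha(x)\setminus O(x)\subseteq\alpha(z)\setminus O(x)$ concludes $O(x)\subseteq O(z)$, so $O(z)=O(x)$. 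This is essentially a hands-on recomputation of the relevant half of Lemma~\ref{lem:P} specialized to the recurrent case. Your route is shorter and cleaner precisely because Lemma~\ref{lem:P} has already packaged this work; the paper's direct argument, on the other hand, is self-contained and makes visible exactly how the hypothesis $\hat O(x)\neq O(x)$ is used (to get $\overline{\alpha'(x)}=\alpha(x)$).
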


\begin{proof}
Fix a point $x \in \mathrm{R}(w)$.
Since $x \in \alpha(x) \cup \omega(x)$, by time reversion if necessary, we may assume that $x \in \alpha(x)$.
Then $\overline{O(x)} = \alpha(x)$.
Fix a point $y \in \hat{O}(x) - O(x)$.
Since $y \in \overline{O(y)} - O(x) = \overline{O(x)} - O(x) = \alpha(x) - O(x) = \alpha'(x)$, we have $x \in \overline{O(x)} = \overline{O(y)} = \overline{\alpha'(x)}$.
Fix a point $z \in [x]'$.
Since $\alpha'(x) = \alpha'(z)$, we obtain $\alpha(x) = \overline{O(x)} = \overline{\alpha'(x)} = \overline{\alpha'(z)} \subseteq \alpha(z)$ and so $\alpha(z) - O(z) = \alpha'(z) = \alpha'(x) = \alpha(x) - O(x) \subseteq \alpha(z) - O(x)$.
Therefore $\alpha(z) - O(z) \subseteq \alpha(z) - O(x)$.
Since $O(z) = O(x)$ or $O(z) \cap O(x) = \emptyset$, the inclusion $O(z) \cup O(x) \subseteq \alpha(z)$ implies that $O(z) = O(x)$.
This means that $O(x) = [x]'' = [x]'$.
\end{proof}

Recall that a topological space is paracompact if any open cover has a locally finite open refinement.
We have the follow statement.

\begin{corollary}\label{cor:quotient_sp}
Let $w$ be a flow on a compact manifold $M$.
The following properties hold for any point $x \in M$:
\\
$(1)$ $x \in \mathop{\mathrm{Cl}}(w)$ if and only if $O(x) = \check{O}(x) = \hat{O}(x) = \overline{O(x)}$.
\\
$(2)$ $x \in \mathrm{P}(w)$ if and only if $O(x) = \check{O}(x) = \hat{O}(x)  \subsetneq \overline{O(x)}$.
\\
$(3)$ $x \in \mathrm{R}(w)$ if and only if $O(x) = [x]'' = [x]' \subsetneq \hat{O}(x)$.
\end{corollary}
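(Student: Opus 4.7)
The plan is to derive this corollary by assembling three previously-established facts and invoking the disjoint decomposition $M = \mathop{\mathrm{Cl}}(w) \sqcup \mathrm{P}(w) \sqcup \mathrm{R}(w)$. Since any compact manifold is a compact Hausdorff space, Lemma~\ref{lem:char} applies directly and yields the forward implications in parts (1) and (2): if $x \in \mathop{\mathrm{Cl}}(w)$, then $O(x) = \check{O}(x) = \hat{O}(x) = \overline{O(x)}$, and if $x \in \mathrm{P}(w)$, then $O(x) = \check{O}(x) = \hat{O}(x) \subsetneq \overline{O(x)}$.

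The nontrivial step is the forward direction of (3), where I must verify the hypothesis of Lemma~\ref{lem:quotient_sp}, i.e.\ that $\hat{O} \neq O$ for every orbit $O \subseteq \mathrm{R}(w)$. Here I would use that a compact manifold is paracompact, and invoke the cited characterization from \cite{yokoyama2019properness} stating that on a paracompact manifold an orbit is proper if and only if $O = \hat{O}$. Combined with Corollary~\ref{cor:t2_nonclosed_recurrent}, which says that any non-closed recurrent orbit is non-proper, this gives $\hat{O}(x) \neq O(x)$ for every $x \in \mathrm{R}(w)$. Having verified the hypothesis, Lemma~\ref{lem:quotient_sp} then yields $O(x) = [x]'' = [x]' \subsetneq \hat{O}(x)$.

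The reverse implications of all three parts follow at once from the fact that $M$ is the disjoint union $\mathop{\mathrm{Cl}}(w) \sqcup \mathrm{P}(w) \sqcup \mathrm{R}(w)$. Indeed, the forward implications assign a distinguishing equality/strict-inclusion pattern to each of the three types: closed orbits satisfy $O(x) = \overline{O(x)}$; non-closed proper orbits satisfy $O(x) = \hat{O}(x) \subsetneq \overline{O(x)}$; non-closed recurrent orbits satisfy $O(x) \subsetneq \hat{O}(x)$. Since these patterns are mutually exclusive and $x$ belongs to exactly one of the three sets, each pattern characterizes its corresponding type.

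There is essentially no serious obstacle; the corollary is a repackaging of Lemmas~\ref{lem:char} and \ref{lem:quotient_sp} for the manifold setting. The only point requiring care is the invocation of the paracompact-manifold result from \cite{yokoyama2019properness} to verify the hypothesis of Lemma~\ref{lem:quotient_sp}, since otherwise the chain $O(x) = [x]'' = [x]' \subsetneq \hat{O}(x)$ is not automatic on an arbitrary compact Hausdorff space.
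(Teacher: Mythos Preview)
Your proposal is correct and follows essentially the same approach as the paper: invoke Lemma~\ref{lem:char} for parts (1) and (2), use the paracompact-manifold result from \cite{yokoyama2019properness} to verify the hypothesis of Lemma~\ref{lem:quotient_sp} for part (3), and conclude via the disjoint decomposition. The only cosmetic difference is that the paper cites \cite[Lemma~3.1]{yokoyama2019properness} directly (non-closed recurrent iff $\hat{O}\neq O$), whereas you reach the same conclusion in two steps via properness and Corollary~\ref{cor:t2_nonclosed_recurrent}.
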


\begin{proof}
\cite[Lemma 3.1]{yokoyama2019properness} states that an orbit $O$ of a flow on a paracompact manifold is non-closed recurrent (i.e. $O \subseteq \mathrm{R}(w)$) if and only if $\hat{O} \neq O$.
Lemma~\ref{lem:char} and Lemma~\ref{lem:quotient_sp} imply the assertion.
\end{proof}

Compactness is necessary.
In fact, there is a flow on a metrizable space $X$ consisting of one orbit such that $X = \hat{O} = O = \mathrm{R}(v)$.
Indeed, consider an irrational rotation flow $w$ on a torus.
Fix an orbit $O$.
Then the restriction $w|_O$ is desired.
The author would like to know whether a non-closed orbit $O$ of a flow on a compact Hausdorff space is recurrent if and only if $\hat{O} \neq O$.
In other words, the author would like to know an answer to the following question.
\begin{question}
For a recurrent non-closed orbit $O$ of a flow on a compact Hausdorff space, does an equality $\hat{O} \neq O$ hold?
\end{question}

%
%
%

\subsection{Properties of classes}

%

We have the following equivalences.

\begin{lemma}\label{lem:018}
The following property holds for any point $x \in X$:
\\
$(1)$ $x \in \mathop{\mathrm{Cl}}(v)$ if and only if $[x]' \subseteq \mathop{\mathrm{Cl}}(v)$.
\\
Moreover, if $\hat{O} \neq O$ for any orbit $O \subseteq \mathrm{R}(v)$, then the following properties hold:
\\
$(2)$ $x \in \mathrm{P}(v)$ if and only if $[x]' \subseteq \mathrm{P}(v)$.
\\
$(3)$ $x \in \mathrm{R}(v)$ if and only if $[x]' \subseteq \mathrm{R}(v)$.
\end{lemma}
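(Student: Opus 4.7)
The plan is to deduce all three parts from the characterizations of closed, non-closed proper, and non-closed recurrent points already established (Lemma~\ref{lem:empty} and Lemma~\ref{lem:quotient_sp}), together with the identity $[x]' = [y]'$ for $y \in [x]'$ which is built into the definition of $[x]'$.

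For part $(1)$, the ``only if'' direction is immediate: Lemma~\ref{lem:empty} gives $x \in \mathop{\mathrm{Cl}}(v)$ iff $\alpha'(x) = \emptyset$, and every $y \in [x]'$ satisfies $\alpha'(y) = \alpha'(x)$ by construction, so $y \in \mathop{\mathrm{Cl}}(v)$ as well. The ``if'' direction is trivial since $x \in [x]'$. No extra hypothesis is needed here, so part $(1)$ stands alone.

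For parts $(2)$ and $(3)$, assume the hypothesis that $\hat{O} \neq O$ for every orbit $O \subseteq \mathrm{R}(v)$; I will first argue part $(3)$ and then deduce part $(2)$ from the trichotomy $X = \mathop{\mathrm{Cl}}(v) \sqcup \mathrm{P}(v) \sqcup \mathrm{R}(v)$. If $x \in \mathrm{R}(v)$, Lemma~\ref{lem:quotient_sp} yields $[x]' = O(x) \subseteq \mathrm{R}(v)$, giving the ``only if'' direction; the converse is again trivial because $x \in [x]'$. For part $(2)$, suppose $x \in \mathrm{P}(v)$ and fix any $y \in [x]'$. If $y \in \mathop{\mathrm{Cl}}(v)$, then part $(1)$ forces $[x]' = [y]' \subseteq \mathop{\mathrm{Cl}}(v)$, contradicting $x \in \mathrm{P}(v)$. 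If $y \in \mathrm{R}(v)$, then part $(3)$ (which I will have established) gives $[x]' = [y]' \subseteq \mathrm{R}(v)$, again contradicting $x \in \mathrm{P}(v)$. Hence $y \in \mathrm{P}(v)$, so $[x]' \subseteq \mathrm{P}(v)$; the converse is immediate.

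The only nontrivial step is invoking Lemma~\ref{lem:quotient_sp} to collapse $[x]'$ to the single orbit $O(x)$ when $x \in \mathrm{R}(v)$, which is exactly the use of the hypothesis $\hat{O} \neq O$ for orbits $O \subseteq \mathrm{R}(v)$. Everything else is bookkeeping via the equality $[x]' = [y]'$ for $y \in [x]'$ and the characterizations of the three point-types given in Lemma~\ref{lem:empty}, Lemma~\ref{lem:proper}, and Lemma~\ref{lem:rec}. I anticipate no technical obstacle beyond verifying that the order of arguments (prove $(1)$, then $(3)$, then $(2)$) avoids any circularity.
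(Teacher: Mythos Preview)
Your proof is correct and follows essentially the same route as the paper: part~(1) via Lemma~\ref{lem:empty}, part~(3) via Lemma~\ref{lem:quotient_sp}, and part~(2) by the trichotomy $X = \mathop{\mathrm{Cl}}(v) \sqcup \mathrm{P}(v) \sqcup \mathrm{R}(v)$ combined with~(1) and~(3). The order $(1) \to (3) \to (2)$ and the reliance on $[x]' = [y]'$ for $y \in [x]'$ are exactly what the paper does.
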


\begin{proof}
Suppose that $x \in \mathop{\mathrm{Cl}}(v)$.
Then $[x]'$ is the connected component of $\{ y \in X \mid \alpha' (x) = \alpha' (y), \omega'(x) = \omega' (y) \}$ containing $x$.
Lemma~\ref{lem:empty} implies that $\{ y \in X \mid \alpha' (x) = \alpha' (y), \omega'(x) = \omega' (y) \} = \{ y \in X \mid \emptyset = \alpha' (y) = \omega' (y) \} = \mathop{\mathrm{Cl}}(v)$ and so that the subset $[x]'$ is the connected component of $\mathop{\mathrm{Cl}}(v)$.
This implies that the assertion $(1)$ holds.
Assume that $\hat{O} \neq O$ for any orbit $O \subseteq \mathrm{R}(v)$.
Suppose that $x \in \mathrm{R}(v)$.
Corollary~\ref{lem:quotient_sp} implies that $[x]' = O(x) \subseteq \mathrm{R}(v)$.
Suppose that $x \in \mathrm{P}(v)$.
Assume that there is a point $y \in [x]' \cap (\mathop{\mathrm{Cl}}(v) \sqcup \mathrm{R}(v))$.
The statements $(1)$ and $(3)$ imply that $[x]' = [y]' \subseteq \mathop{\mathrm{Cl}}(v) \sqcup \mathrm{R}(v) = X - \mathrm{P}(v)$, which contradicts $[x]' \subseteq \mathrm{P}(v)$.
\end{proof}

The previous lemma and \cite[Lemma 3.1]{yokoyama2019properness} imply the following statement.

\begin{corollary}\label{cor:018}
Let $w$ be a flow on a compact manifold $M$.
The following properties hold for any point $x \in M$:
\\
$(1)$ $x \in \mathop{\mathrm{Cl}}(w)$ if and only if $[x]' \subseteq \mathop{\mathrm{Cl}}(v)$.
\\
$(2)$ $x \in \mathrm{P}(w)$ if and only if $[x]' \subseteq \mathrm{P}(v)$.
\\
$(3)$ $x \in \mathrm{R}(w)$ if and only if $[x]' \subseteq \mathrm{R}(v)$.
\end{corollary}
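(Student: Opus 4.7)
The plan is straightforward: this corollary is a direct application of Lemma~\ref{lem:018} once we verify the hypothesis ``$\hat{O} \neq O$ for any orbit $O \subseteq \mathrm{R}(v)$'' under the added assumption that $M$ is a compact manifold. So the main task is to invoke the already-cited external result \cite[Lemma 3.1]{yokoyama2019properness}, which asserts that an orbit $O$ of a flow on a paracompact manifold is non-closed recurrent (i.e.\ $O \subseteq \mathrm{R}(w)$) if and only if $\hat{O} \neq O$.

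First I would note that every compact manifold is paracompact (in fact metrizable), so the cited lemma applies to the flow $w$ on $M$. In particular, for every orbit $O \subseteq \mathrm{R}(w)$ we have $\hat{O} \neq O$. This is exactly the hypothesis required by Lemma~\ref{lem:018}(2)(3), while statement (1) of Lemma~\ref{lem:018} requires no such hypothesis at all. Therefore, applying Lemma~\ref{lem:018} (with the compact Hausdorff space $X$ taken to be $M$ and $v$ taken to be $w$) yields each of the three equivalences verbatim.

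There is essentially no obstacle here — the content of the corollary is already contained in Lemma~\ref{lem:018} together with the cited characterization of non-closed recurrent orbits on paracompact manifolds. The only thing worth flagging is a minor consistency check: the excerpt's Lemma~\ref{lem:018} is stated for a flow $v$ on a compact Hausdorff space $X$, and a compact manifold is indeed compact Hausdorff, so the ambient hypotheses of Lemma~\ref{lem:018} are automatically satisfied, and the hypothesis on orbit classes of recurrent orbits is supplied by \cite[Lemma 3.1]{yokoyama2019properness}. Hence the proof reduces to the one-line combination: compactness gives paracompactness, paracompactness plus \cite[Lemma 3.1]{yokoyama2019properness} gives the orbit-class hypothesis, and Lemma~\ref{lem:018} then yields assertions (1), (2), and (3).
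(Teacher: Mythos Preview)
Your proposal is correct and matches the paper's own proof exactly: the paper derives the corollary as an immediate consequence of Lemma~\ref{lem:018} together with \cite[Lemma 3.1]{yokoyama2019properness}, which supplies the hypothesis $\hat{O}\neq O$ for orbits in $\mathrm{R}(w)$ on a (paracompact, hence in particular compact) manifold.
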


By definition of $[x]''$, we have the following statement.

\begin{lemma}\label{lem:type}
The following properties hold for any point $x \in X$:
\\
$(1)$ $x \in \Sv$ if and only if $[x]'' \subseteq \Sv$.
\\
$(2)$ $x \in \Pv$ if and only if $[x]'' \subseteq \Pv$.
\\
$(3)$ $x \in \mathrm{P}(v)$ if and only if $[x]''  \subseteq \mathrm{P}(v)$.
\\
$(4)$ $x \in \mathrm{R}(v)$ if and only if $[x]'' \subseteq \mathrm{R}(v)$.
\end{lemma}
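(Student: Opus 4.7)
The plan is to prove this lemma directly by unwinding the definition of $[x]''$, since all four statements are essentially definitional once we observe the decomposition of $X$ into disjoint strata.

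First I would recall that, by the definition of $\Sv$, $\Pv$, $\mathrm{P}(v)$, and $\mathrm{R}(v)$, we have the decomposition
\[
X = \Sv \sqcup \Pv \sqcup \mathrm{P}(v) \sqcup \mathrm{R}(v)
\]
as pairwise disjoint saturated subsets. Consequently, every point $x \in X$ lies in exactly one of these four strata, and the definition of $[x]''$ given earlier picks exactly one of the four clauses to apply.

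For the forward implications, in each case the set $[x]''$ is defined as a connected component of $[x]' \cap S$, where $S$ is the unique stratum among $\Sv, \Pv, \mathrm{P}(v), \mathrm{R}(v)$ containing $x$. Hence $[x]'' \subseteq [x]' \cap S \subseteq S$, giving (1)--(4) in the ``only if'' direction. For the converse direction, note that $[x]''$ is taken to be the connected component \emph{containing $x$}, so in all four cases $x \in [x]''$. Thus if $[x]'' \subseteq S$ for $S$ one of the four strata, then $x \in S$; by the mutual disjointness of the strata, $x$ cannot simultaneously belong to any other stratum, yielding the ``if'' direction of each equivalence.

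There is no substantial obstacle here: the lemma is essentially a bookkeeping observation about the piecewise definition of $[x]''$, relying only on the fact that $\Sv$, $\Pv$, $\mathrm{P}(v)$, $\mathrm{R}(v)$ form a partition of $X$. No properties of the flow beyond this decomposition, and no topological assumptions beyond what is needed to make $[x]'$ and its connected components well-defined, are used in the argument.
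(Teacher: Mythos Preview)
Your proposal is correct and matches the paper's approach: the paper simply prefaces the lemma with ``By definition of $[x]''$, we have the following statement'' and gives no further proof, so your unwinding of the piecewise definition together with the partition $X = \Sv \sqcup \Pv \sqcup \mathrm{P}(v) \sqcup \mathrm{R}(v)$ is exactly what is intended.
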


\subsubsection{Reduction to the Morse graph}

The Morse graph of a flow can be obtained as a quotient space of the abstract  orbit space as follows.

\begin{theorem}\label{th:Morse_reduction}
Let $v$ be a flow on a compact metric space $X$.
Then the Morse graph $G_v$ of a flow is a quotient space of the abstract orbit space $X/\langle v \rangle$ with pre-orders $\leq_\alpha$ and $\leq_\omega$.
In particular, the underlying space of the Morse graph $G_v$ of a flow is a quotient space of the abstract orbit space $X/\langle v \rangle$.
\end{theorem}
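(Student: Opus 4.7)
The plan is to construct a canonical surjection $q \colon X/\langle v \rangle \to G_v$ that collapses each Morse set to its vertex and each connecting orbit set $D_{j,k}$ (with $j \neq k$) to its edge, and then to verify that this surjection is a quotient map that identifies $\leq_\alpha$ and $\leq_\omega$ with the directed edge relations of the graph. Concretely, I would send $\langle x \rangle$ to the Morse set $M_i$ containing $x$ when $x \in \mathop{CR}(v)$, and to the edge $D_{j,k}$ when $x \notin \mathop{CR}(v)$, where $\alpha(x) \subseteq M_j$ and $\omega(x) \subseteq M_k$. The canonical Morse-graph projection $\pi_{G_v}\colon X \to G_v$ should then factor as $\pi_{G_v} = q \circ \pi_{\langle v \rangle}$, where $\pi_{\langle v \rangle}\colon X \to X/\langle v \rangle$ is the abstract orbit projection.

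The central task is to verify that $q$ is well defined, i.e.\ that every point $y \in \langle x \rangle$ lands in the same Morse set or the same connecting orbit set as $x$. Using the explicit characterization of $\langle x \rangle$ in Corollary~\ref{cor:ch01}, I would check the four cases separately. If $x \in \Sv$ or $x \in \Pv$, then $\langle x \rangle$ is a connected component of $\Sv$ or $\Pv$; these sets lie in $\mathop{CR}(v)$, so their connected components are contained in a single connected component $M_i$ of $\mathop{CR}(v)$. If $x \in \mathrm{R}(v)$, then $\langle x \rangle = \hat{O}(x) \subseteq \overline{O(x)}$, and since $\mathop{CR}(v)$ is closed and invariant and contains $x$, the connected set $\overline{O(x)}$ lies in the Morse set $M_i$ through $x$. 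If $x \in \mathrm{P}(v)$, then all points $y \in \langle x \rangle$ satisfy $\alpha(y) = \alpha(x)$ and $\omega(y) = \omega(x)$, so they belong to the same $D_{j,k}$ when $j \neq k$; for $j=k$, one shows $y$ itself lies in $M_j$ using that a point whose $\alpha$- and $\omega$-limits both lie in a single Morse set is chain recurrent with respect to that set.

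Once well-definedness is settled, surjectivity is immediate since every Morse set and every nonempty $D_{j,k}$ contains points. To conclude that $G_v$ is a quotient space of $X/\langle v \rangle$, I would invoke the universal property: the equality $\pi_{G_v} = q \circ \pi_{\langle v \rangle}$ together with the fact that both $\pi_{G_v}$ and $\pi_{\langle v \rangle}$ are quotient maps forces $q$ to be a quotient map as well. For the directed structure, I would check that $\langle x \rangle <_\omega \langle y \rangle$ (with representatives satisfying $x \in \omega(y)$) corresponds, after applying $q$, to the head-of-edge relation: if $q(\langle y \rangle) = D_{j,k}$ or $M_k$, then $\omega(y) \subseteq M_k$ forces $q(\langle x \rangle) = M_k$; symmetrically $<_\alpha$ produces the tail relation. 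Thus $\leq_\alpha$ and $\leq_\omega$ descend to the vertex-edge incidence relations defining $G_v$.

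The anticipated main obstacle is the proper case with $j=k$, namely verifying that a non-recurrent orbit whose $\alpha$- and $\omega$-limit sets both lie in a single Morse set $M_i$ is itself in $M_i$; this requires a short chain-construction argument combining the fact that $M_i$ is a chain-recurrent component with the approachability of $\alpha(x)$ and $\omega(x)$ by the trajectory through $x$, so that an $(\varepsilon,T)$-chain from $x$ to itself can be built via $M_i$. The other technical point worth highlighting is to ensure the topology on $G_v$ inherited from the Morse decomposition of $X$ agrees with the quotient topology from $X/\langle v \rangle$, which follows automatically from the factorization $\pi_{G_v} = q \circ \pi_{\langle v \rangle}$ of quotient maps.
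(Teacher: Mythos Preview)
Your proposal is correct and follows essentially the same route as the paper's proof: both arguments reduce to showing that each Morse set $M_i$ and each connecting orbit set $D_{j,k}$ is a union of abstract orbits, proceed by the four-case analysis from Corollary~\ref{cor:ch01}, and identify the non-recurrent case with $\alpha(x),\omega(x)$ in the same Morse set as the one step requiring a chain-recurrence argument. The paper phrases things as ``each $M_i$ and $D_{j,k}$ is a union of abstract orbits'' rather than building the map $q$ explicitly, but this is only a difference in packaging; your factorization $\pi_{G_v}=q\circ\pi_{\langle v\rangle}$ and the paper's partition-refinement statement are equivalent.
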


\begin{proof}
Since the direction of edges are determined by pre-orders $\leq_\alpha$ and $\leq_\omega$, it suffices to show that the underlying space of the Morse graph  of a flow is a quotient space of the abstract orbit space.
By Corollary~\ref{cor:connectivity}, any $\omega$-limit set and any $\alpha$-limit set of a point in $X$ are nonempty connected closed invariant subsets.
Let $G_v = (V, D)$ be the Morse graph of $v$ with $V = \{ M_i \}$ and $D = \{ (M_j, M_k) \mid D_{j,k} \neq \emptyset \}$.
Here $D_{j,k} = \{ x \in M - \bigsqcup_i M_i \mid \alpha(x) \subseteq M_j, \omega(x) \subseteq M_k \}$.
Then $X = \bigsqcup_i M_i \sqcup \bigsqcup_{j,k}D_{j,k}$.
By definition of Morse graph, the union of Morse set $\bigsqcup V = \bigsqcup_i M_i$ is the chain recurrent set $\mathop{CR}(v)$ (i.e. $\bigsqcup_i M_i = \mathop{CR}(v)$).
By \cite[Theorem~3.3B]{Conley1988}, the chain recurrent set $\mathop{CR} (v)$ is closed and invariant and contains the non-wandering set $\Omega (v)$.
By the decomposition $X = \Cv \sqcup \mathrm{P}(v) \sqcup \mathrm{R}(v)$, we have $X - \mathrm{P}(v)= \mathcal{R}(v)$.
Since any recurrent point is non-wandering, we have that $\Cv \sqcup \mathrm{R}(v) = X - \mathrm{P}(v)= \mathcal{R}(v) \subseteq \Omega(v) \subseteq \mathop{CR}(v) = \bigsqcup_i M_i$.
Moreover \cite[Lemma~3.1B]{Conley1988} implies that connected components of $\mathop{CR} (v)$ are equivalence classes of the relation $\approx_{\mathop{CR}}$ on $\mathop{CR} (v)$, where $x \approx_{\mathop{CR}} y$ if $x \sim_{\mathop{CR}} y$ and $y \sim_{\mathop{CR}} x$.
Since Morse sets are connected components of $\mathop{CR} (v)$, Morse sets are equivalence classes of the relation $\approx_{\mathop{CR}}$ on $\mathop{CR} (v)$.
We show that any Morse set is a union of abstract orbits.
Indeed, fix a point $x \in X$.
If $x$ of $\Sv$ (resp. $\Pv$), then Lemma~\ref{lem:awo} implies that the abstract orbit $\langle x \rangle = [x]''$ is the connected component of $\Sv$ (resp. $\Pv$) containing $x$ and so is contained in some Morse set $M_i$, because Morse sets are connected components of $\mathop{CR} (v)$ and $\Cv \subseteq \mathop{CR} (v)$.
Suppose that $x \in \mathrm{R}(v)$.
Then $O(x) \subseteq \langle x \rangle = \hat{O}(x) \subseteq \overline{O(x)} \cap \mathrm{R}(v) \subseteq \mathcal{R}(v) \subseteq \mathop{CR}(v) = \bigsqcup_i M_i$.
The invariance and closedness of $\mathop{CR}(v)$ imply that $\overline{O(x)} \subseteq \mathop{CR}(v)$.
Since the closure of a connected subset is connected, the closure $\overline{O(x)}$ is connected and so is contained in some Morse set $M_i$ for some $i$.
Then $\langle x \rangle \subseteq \overline{O(x)} \subseteq M_i$.
Suppose that $x \in \mathrm{P}(v) \cap \bigsqcup_i M_i$.
Since $\bigsqcup_i M_i = \mathop{CR}(v)$, we have $x \in \mathrm{P}(v) \cap \mathop{CR}(v)$.
The invariance and closedness of $\mathop{CR}(v)$ imply that $\overline{O(x)} \subseteq \mathop{CR}(v)$.
Since the closure of a connected subset is connected, the closure $\overline{O(x)}$ is connected and so is contained in some Morse set $M_i$.
Then $\alpha'(x) \cup \omega'(x) = \alpha(x) \cup \omega(x) \subseteq \overline{O(x)} \subseteq M_i$.
Lemma~\ref{lem:type} implies that $\langle x \rangle = [x]'' \subseteq \{ y \in \mathrm{P}(v) \mid \alpha' (x) = \alpha' (y), \omega'(x) = \omega' (y) \} =
 \{ y \in \mathrm{P}(v) \mid \alpha (x) = \alpha (y), \omega(x) = \omega (y) \}$.
Fix a point $y \in \langle x \rangle$.
For any $\varepsilon > 0$ and $T>0$, there are an $(\varepsilon, T)$-chain from a point $\alpha$ in $\alpha (x) = \alpha (y) \subseteq M_i$ to $y$ and an $(\varepsilon, T)$-chain from $y$ to a point $\omega$ in $\omega (x) = \omega (y) \subseteq M_i$.
In other words, we obtain $\alpha \sim_{\mathop{CR}} y$ and $y \sim_{\mathop{CR}} \omega$.
By \cite[Theorem~3.3C]{Conley1988}, we have $w \sim_{\mathop{CR}} z$ and $z \sim_{\mathop{CR}} w$ for any points $w,z \in M_i$.
Since $\alpha, x,  \omega \in M_i$, we obtain $x \sim_{\mathop{CR}} \alpha$ and $\omega \sim_{\mathop{CR}} x$.
The transitivity of $\sim_{\mathop{CR}}$ implies that $x \sim_{\mathop{CR}} y$ and $y \sim_{\mathop{CR}} x$.
In particular, we obtain $\overline{O(y)} \subset \mathop{CR}(v)$.
Since $\alpha(y) = \alpha(x) \subset M_i$ is connected, and since $M_i$ is a connected component of $\mathop{CR}(v)$, we have $\overline{O(y)} \subset M_i$.
This means that $\langle x \rangle \subseteq M_i$.
Thus any Morse set is a union of abstract orbits.
Since the union $\bigsqcup_i M_i$ of Morse sets is a union of abstract orbits, so is the complement $\bigsqcup_{j,k}D_{j,k} = X - \bigsqcup_i M_i$.
By $X - \mathrm{P}(v) = \Cv \sqcup \mathrm{R}(v) = \mathcal{R}(v) \subseteq \mathop{CR}(v) = \bigsqcup_i M_i = X - \bigsqcup_{j,k}D_{j,k}$, we have $\mathrm{P}(v) \setminus \mathop{CR}(v) = \bigsqcup D_{j,k}$.
We show that any $D_{j,k}$ consists of abstract orbits.
Indeed, fix a point $x \in D_{j,k}$.
Then $\langle x \rangle \subseteq \mathrm{P}(v) \setminus \mathop{CR}(v)$, $\alpha(x) \subseteq M_j$, and $\omega(x) \subseteq M_k$.
Therefore $\langle x \rangle = [x]'' \subseteq \{ y \in \mathrm{P}(v) \setminus \mathop{CR}(v) \mid \alpha' (x) = \alpha' (y), \omega'(x) = \omega' (y) \} = \{ y \in \mathrm{P}(v)  \setminus \mathop{CR}(v) \mid \alpha (x) = \alpha (y) \subseteq M_j, \omega(x) = \omega (y) \subseteq M_k \} \subseteq D_{j,k}$.
This means that $D_{j,k}$ consists of abstract orbits.
Therefore the Morse graph of $v$ is a quotient space of the abstract orbit space $X/\langle v \rangle$.
\end{proof}

The previous theorem, Proposition~\ref{prop:quotient}, and Lemma~\ref{prop:quotient} imply the following reduction.

\begin{corollary}\label{cor:Morse_reduction_weak}
Let $v$ be a flow on a compact metric space $X$.
Then the Morse graph $G_v$ of a flow is a quotient space of the abstract weak orbit space $X/[ v ]$, of the weak orbit class space $X/\check{v}$, and of the orbit class space $X/\hat{v}$ with pre-orders $\leq_\alpha$ and $\leq_\omega$.
\end{corollary}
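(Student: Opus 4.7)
The plan is to derive this corollary essentially by composition from Theorem~\ref{th:Morse_reduction} together with the chain of quotient maps supplied by Proposition~\ref{prop:quotient}. Theorem~\ref{th:Morse_reduction} already gives that the Morse graph $G_v$ is a quotient of the abstract orbit space $X/\langle v \rangle$, with directed edges determined by $\leq_\alpha$ and $\leq_\omega$. Proposition~\ref{prop:quotient} (visualized in Figure~\ref{Fig:reductions}) gives surjective quotient maps
\[
X/\check{v}\;\twoheadrightarrow\;X/[v]\;\twoheadrightarrow\;X/\langle v\rangle,\qquad X/\hat{v}\;\twoheadrightarrow\;X/\langle v\rangle.
\]
Since the composition of quotient maps is a quotient map, composing either route with the quotient $X/\langle v\rangle\twoheadrightarrow G_v$ from Theorem~\ref{th:Morse_reduction} immediately identifies $G_v$ as a quotient of $X/[v]$, of $X/\check{v}$, and of $X/\hat{v}$.

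The only genuinely non-trivial thing to verify is that the induced maps are compatible with the pre-orders $\leq_\alpha$ and $\leq_\omega$, i.e.\ that the directed-edge structure of $G_v$ (whose edges are exactly the nonempty connecting orbit sets $D_{j,k}$) is recovered from these quotients, not merely their underlying spaces. For this I would argue as in the proof of Theorem~\ref{th:Morse_reduction}: each fibre of $X\twoheadrightarrow G_v$ is a union of abstract orbits, hence also a union of abstract weak orbits, weak orbit classes, and orbit classes, since by Lemma~\ref{lem:orbit_cl} and the inclusion chain $O(x)\subseteq\check{O}(x)\subseteq\hat{O}(x)\subseteq\overline{O(x)}$ these finer equivalence classes refine the abstract orbit classes. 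Then the relations $[x]\leq_\alpha[y]$ and $[x]\leq_\omega[y]$ (and their $\check{v}$- and $\hat{v}$-analogues) descend to the same relations on $X/\langle v\rangle$, which in turn descend to the directed edge relation of $G_v$.

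The main (though modest) obstacle is bookkeeping: one must check that the various binary relations are genuinely well-defined on each intermediate quotient and that the quotient map in each step respects them. That is essentially content of Lemma~\ref{lem:preorders}, Lemma~\ref{lem:preorders02}, and the trivially verified fact that a finer equivalence refining a coarser one makes a pre-order descend iff it is constant on fibres. Concretely, given $x,y\in X$ with $[x]\leq_\alpha[y]$, pick witnesses $x_1\in[x]$, $y_1\in[y]$ with $x_1\in\alpha(y_1)$; since $[x_1]\subseteq\check{O}(x_1)\subseteq\hat{O}(x_1)\subseteq\langle x_1\rangle$, the same witnesses show $\check{O}(x)\leq_\alpha\check{O}(y)$, $\hat{O}(x)\leq_\alpha\hat{O}(y)$, and $\langle x\rangle\leq_\alpha\langle y\rangle$, and analogously for $\leq_\omega$.

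In summary, the proof is a short one-paragraph argument: apply Proposition~\ref{prop:quotient} to realize $X/[v]$, $X/\check{v}$, and $X/\hat{v}$ as intermediate quotients lying over $X/\langle v\rangle$, then compose with the quotient $X/\langle v\rangle\twoheadrightarrow G_v$ of Theorem~\ref{th:Morse_reduction}, and observe that the pre-orders $\leq_\alpha,\leq_\omega$ descend because each fibre of the composed map is a union of equivalence classes on which the limit sets are constant up to the coarser Morse-set equivalence.
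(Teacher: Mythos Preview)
Your approach is essentially the same as the paper's: the paper simply notes that the corollary follows from Theorem~\ref{th:Morse_reduction} together with Proposition~\ref{prop:quotient}, and you have spelled out exactly that composition of quotient maps. One small correction: the inclusion chain you wrote, $[x_1]\subseteq\check{O}(x_1)\subseteq\hat{O}(x_1)\subseteq\langle x_1\rangle$, has its first inclusion reversed---in fact $\check{O}(x_1)\subseteq[x_1]$ (Proposition~\ref{prop:quotient} says $X/[v]$ is a quotient of $X/\check{v}$, so the $\check{O}$-classes are finer)---but this does not affect your argument, since what you actually need is that $\check{O}(x)$, $\hat{O}(x)$, and $[x]$ are each contained in $\langle x\rangle$, so that any witness $x_1\in\alpha(y_1)$ serves for all four quotients.
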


\section{Properties of binary relations of flows on compact Hausdorff spaces}

Let $v$ be a flow on a compact Hausdorff space $X$.
We have the following non-minimality.

\begin{lemma}\label{lem:non_min_abst}
The abstract weak orbit {\rm(resp.} abstract orbit {\rm)} of a point in $\mathrm{P}(v)$ is non-minimal with respect to pre-orders $\leq_\alpha$ and $\leq_\omega$ on $X/[v]$ {\rm(resp.} $X/\langle v \rangle$ {\rm)}.
\end{lemma}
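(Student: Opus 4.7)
The plan is to lift Lemma~\ref{lem:non_min} from points to abstract (weak) orbits. Fix $x \in \mathrm{P}(v)$ and observe that, by the definitions of abstract orbit and abstract weak orbit for $x \notin \mathrm{R}(v)$, we have $[x] = \langle x \rangle = [x]''$, so the two conclusions will coincide on this single class. By Corollary~\ref{cor:connectivity}, the $\alpha$-limit set $\alpha(x)$ is nonempty; I would pick any $y \in \alpha(x)$. Taking $y$ and $x$ as representatives of $[y]$ and $[x]$, the inequality $[y] \leq_\alpha [x]$ on $X/[v]$ (and analogously $\langle y \rangle \leq_\alpha \langle x \rangle$ on $X/\langle v \rangle$) is immediate from the definitions of these pre-orders, since $y \in \alpha(x)$ by choice.

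The one genuine step is to verify that $[y] \neq [x]$ (equivalently $\langle y \rangle \neq \langle x \rangle$). For this I would invoke Lemma~\ref{lem:ch_01}, which yields the inclusion
\[
[x] \subseteq \{\, w \in \mathrm{P}(v) \mid \alpha(w) = \alpha(x),\ \omega(w) = \omega(x) \,\}.
\]
If $y$ were to lie in $[x]$, then $y \in \mathrm{P}(v)$ together with $\alpha(y) = \alpha(x)$ would force $y \in \alpha(x) = \alpha(y)$, contradicting the non-recurrence built into $\mathrm{P}(v)$. Hence $y \notin [x]$; Lemma~\ref{lem:ch_02} then gives $[y] \cap [x] = \emptyset$ (and Lemma~\ref{lem:ch_03} does the same for the abstract-orbit partition), so $[y] <_\alpha [x]$ in $X/[v]$ and $\langle y \rangle <_\alpha \langle x \rangle$ in $X/\langle v \rangle$.

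For the pre-order $\leq_\omega$, I would either rerun the argument verbatim starting from some $z \in \omega(x)$ (again nonempty by Corollary~\ref{cor:connectivity}), or appeal to the symmetric role of $\alpha$ and $\omega$ under time reversal. I do not anticipate a real obstacle; the only subtle point is the realization that the explicit description of $[x]$ furnished by Lemma~\ref{lem:ch_01}, combined with the non-recurrence that characterises $\mathrm{P}(v)$, already forbids any point of $\alpha(x) \cup \omega(x)$ from lying in $[x]$, which is precisely what separates $[y]$ from $[x]$.
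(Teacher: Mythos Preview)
Your proof is correct and follows essentially the same approach as the paper: pick $y\in\alpha(x)$, use the description of $[x]=\langle x\rangle$ for $x\in\mathrm{P}(v)$ from Lemma~\ref{lem:ch_01} to derive the contradiction $y\in\alpha(y)$ if $y\in[x]$, and finish by symmetry for $\leq_\omega$. The only cosmetic difference is that the paper packages the choice of $y$ through Lemma~\ref{lem:non_min}, whereas you invoke Corollary~\ref{cor:connectivity} directly for nonemptiness of $\alpha(x)$; the contradiction step is identical.
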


\begin{proof}
Fix a point $x$ in $\mathrm{P}(v)$.
Then $[x] = \langle x \rangle \subseteq \mathrm{P}(v)$ is the connected component of the subset $\{ y \in X \mid \alpha' (x) = \alpha' (y), \omega'(x) = \omega' (y) \} = \{ y \in \mathrm{P}(v) \mid \alpha(x) = \alpha(y), \omega(x) = \omega(y) \}$ containing $x$.
By Lemma~\ref{lem:non_min}, there are point $y, z \in X$ with $y <_\alpha x$ and $z <_\omega x$.
Then $O(x) \neq O(y)$, $y \in \alpha(x)$, and $x \notin \alpha(y)$.
Since $y \in \alpha(x)$, we have $[y] \leq_\alpha [x]$ and $\langle y \rangle \leq_\alpha \langle x \rangle$.
We claim that $[x] \neq [y]$ and $\langle y \rangle \neq \langle x \rangle$.
Indeed, assume either $[x] = [y]$ or $\langle y \rangle = \langle x \rangle$.
Since $[x] = \langle x \rangle = [x]'' \subseteq \mathrm{P}(v)$, we have $[y] = \langle y \rangle = [y]'' \subseteq \mathrm{P}(v)$.
Then $[x] = [y]$ and so $\alpha(x) = \alpha(y)$.
On the other hand, since $y \in \mathrm{P}(v)$, we have $y \in \alpha(x) - \alpha(y) = \emptyset$, which is a contradiction.
Therefore $[y] <_\alpha [x]$ and so $\langle y \rangle <_\alpha \langle x \rangle$.
By symmetry, we have $[z] <_\omega [x]$ and $\langle y \rangle <_\omega \langle x \rangle$.
\end{proof}

We have the following characterization of a trivial abstract (weak) orbit space.

\begin{lemma}
The following statements are equivalent for a flow $v$ on a compact connected Hausdorff space $X$:
\\
$(1)$ The abstract weak orbit space $X/[v]$ is a singleton.
\\
$(2)$ The abstract orbit space $X/\langle v \rangle$ is a singleton.
\\
$(3)$ One of the following conditions holds:

$(i)$ The flow $v$ is identical $(\mathrm{i.e.} \,\, X = \Sv )$.

$(ii)$ The flow $v$ is pointwise periodic $(\mathrm{i.e.} \,\, X = \Pv )$.

$(iii)$ The flow $v$ is minimal.
\\
In the case $(iii)$, if $v$ is neither identical nor pointwise periodic, then $X = \mathrm{R}(v)$.
\end{lemma}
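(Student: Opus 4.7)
The plan is to prove the three-way equivalence via the cycle $(1) \Rightarrow (2) \Rightarrow (3) \Rightarrow (1)$ and then dispatch the final clause. The direction $(1) \Rightarrow (2)$ is immediate from Proposition~\ref{prop:quotient}, since $X/\langle v\rangle$ is a quotient of $X/[v]$ and any quotient of a singleton is a singleton.

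For $(2) \Rightarrow (3)$, I fix $x \in X$ with $\langle x\rangle = X$. By construction, $\langle x\rangle$ lies entirely in one of $\Sv$, $\Pv$, $\mathrm{P}(v)$, or $\mathrm{R}(v)$, so $X$ coincides with one of these four sets. The subcases $X = \Sv$ and $X = \Pv$ yield conditions (i) and (ii) of (3) directly. The subcase $X = \mathrm{P}(v)$ I will exclude as follows: Corollary~\ref{cor:connectivity} gives $\alpha(x) \neq \emptyset$, and Corollary~\ref{cor:ch01} implies that every point of $X = \langle x\rangle$ has the same $\alpha$-limit set as $x$; hence any $p \in \alpha(x)$ satisfies $\alpha(p) = \alpha(x) \ni p$, making $p$ recurrent and contradicting $p \in X = \mathrm{P}(v)$. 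In the remaining subcase $X = \mathrm{R}(v)$, Corollary~\ref{cor:ch01} gives $\hat{O}(x) = \{y \in \mathrm{R}(v) \mid \overline{O(y)} = \overline{O(x)}\} = X$, so every $y \in X$ satisfies $y \in \alpha(y) \cup \omega(y) \subseteq \overline{O(y)} = \overline{O(x)}$; thus $\overline{O(x)} = X$ for every $x$, i.e.\ $v$ is minimal.

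For $(3) \Rightarrow (1)$, I treat the three subcases of (3) using Corollary~\ref{cor:ch}. If $X = \Sv$ or $X = \Pv$, then $[x]$ is the connected component of $\Sv$ (resp.\ $\Pv$) containing $x$, which by connectedness of $X$ equals $X$. Suppose instead $v$ is minimal. If $X$ contains a fixed point or a periodic orbit, then by minimality $X$ equals that orbit and we are back in the previous subcases; otherwise every orbit is non-closed. In that last situation, compactness makes $\omega(x)$ a nonempty closed invariant subset, so minimality forces $\omega(x) = X$ and likewise $\alpha(x) = X$; in particular $x \in \omega(x)$, so $X = \mathrm{R}(v)$, and Corollary~\ref{cor:ch} gives $[x] = \{y \in \mathrm{R}(v) \mid \alpha(y) = \alpha(x),\ \omega(y) = \omega(x)\} = \{y \in \mathrm{R}(v) \mid \alpha(y) = \omega(y) = X\} = X$.

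For the final clause, if $v$ is minimal yet neither identical nor pointwise periodic, then $X$ contains no fixed points and no periodic orbits by the argument just given; combined with $\omega(x) = X \ni x$ for every $x$, every point is recurrent and non-closed, i.e.\ $X = \mathrm{R}(v)$. The only genuinely delicate step is excluding $X = \mathrm{P}(v)$ in $(2) \Rightarrow (3)$, which requires combining compactness (nonempty $\alpha$-limit sets), the shared-$\alpha$-limit-set characterization of an abstract orbit from Corollary~\ref{cor:ch01}, and the defining non-recurrence of $\mathrm{P}(v)$; the remaining steps are routine bookkeeping from the explicit formulas of Corollary~\ref{cor:ch} and Corollary~\ref{cor:ch01}.
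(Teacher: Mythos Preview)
Your proof is correct and follows essentially the same route as the paper's. The only cosmetic difference is that, to exclude the case $X=\mathrm{P}(v)$, the paper invokes Lemma~\ref{lem:non_min_abst} (non-minimality of abstract orbits in $\mathrm{P}(v)$), whereas you inline the same argument directly via Corollary~\ref{cor:ch01} and the recurrence contradiction; and in $(3)\Rightarrow(1)$ you split the minimal case into the single-closed-orbit subcases more explicitly than the paper does.
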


\begin{proof}
Since the abstract orbit space $X/\langle v \rangle$ is a quotient space of the abstract weak orbit space $X/[v]$, if $X/[v]$ is a singleton, then so is $X/\langle v \rangle$.
Suppose that the abstract orbit space $X/\langle v \rangle$ is a singleton.
Fix a point $x \in X$.
Lemma~\ref{lem:type} and Corollary~\ref{cor:check_eq} imply that $X = \langle x \rangle$ is either $\Sv$, $\Pv$, $\mathrm{P}(v)$, or $\mathrm{R}(v)$.
By Lemma~\ref{lem:non_min_abst}, the abstract weak orbit $\langle x \rangle$ is either $\Sv$, $\Pv$, or $\mathrm{R}(v)$.
If $\langle x \rangle \subseteq \Sv$ {(resp.} $\langle x \rangle \subseteq \Pv$ {\rm)}, then $X = \langle x \rangle =[x] = \Sv$ {(resp.} $X = \langle x \rangle =[x] = \Pv$ {\rm)}.
Thus we may assume that $X = \langle x \rangle = \mathrm{R}(v)$.
Then $\langle x \rangle = \hat{O}(x)$ and so $X = \overline{O(x)} = \overline{O(y)}$ for any point $y \in \langle x \rangle = X$.
This means that each minimal set is the whole space $X$ and so $X = \langle y \rangle = [y] = \overline{O(y)}$ for any $y \in X$.
Conversely, suppose that $X = \Sv$.
Then $[x] = \langle x \rangle$ for any point $x \in X$ is the connected component of $\Sv$ containing $x$ and so $X$.
Suppose that $X = \Pv$.
Then $[x] = \langle x \rangle$ for any point $x \in X$ is the connected component of $\Pv$ containing $x$ and so $X$.
Suppose that $v$ is minimal.
Then each nonempty invariant set is $X$ and so $X = \alpha(x) = \omega(x) = \overline{O(x)}$ for any point $x \in X$.
This means that $X = \check{O}(x) = \langle x \rangle = [x]$ for any point $x \in X$.
\end{proof}


We have the following triviality of Morse graphs and non-triviality of abstract (weak) orbit spaces.

\begin{proposition}\label{prop:refine}
Let $v$ be a flow on a compact connected Hausdorff space $X$.
Suppose that $X = \mathop{CR}(v)$ and that $v$ is neither identical, pointwise periodic, nor minimal.
Then the Morse graph of $v$ is a singleton but the abstract {\rm(}weak {\rm)} orbit space is not a singleton.
\end{proposition}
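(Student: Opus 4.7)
My plan is to dispose of the two assertions essentially separately, using the Morse graph characterization of Theorem~\ref{th:Morse_reduction} for the first, and the lemma immediately preceding this proposition (characterizing when the abstract (weak) orbit space is a singleton) for the second.

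For the triviality of the Morse graph, I would argue as follows. By hypothesis $X = \mathop{CR}(v)$, so the union of Morse sets coincides with $X$. Since Morse sets are the connected components of $\mathop{CR}(v)$ and $X$ is connected, there is a unique Morse set, namely $M = X$ itself. Consequently $X - \bigsqcup_i M_i = \emptyset$, so every connecting orbit set $D_{j,k}$ between distinct Morse sets is empty. Thus the Morse graph $G_v$ has a single vertex (possibly with a self-loop, since every vertex $M_k$ counts as a connecting orbit set from $M_k$ to $M_k$), so $G_v$ is a singleton in the sense of this paper.

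For the non-triviality of the abstract (weak) orbit space, the key input is the characterization (the lemma stated just before this proposition) that $X/[v]$ is a singleton iff $X/\langle v \rangle$ is a singleton iff $v$ is identical, pointwise periodic, or minimal. Since by hypothesis $v$ satisfies none of these three conditions, the contrapositive immediately gives that neither $X/[v]$ nor $X/\langle v\rangle$ is a singleton. Combining the two parts yields the claim.

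I do not foresee a substantive obstacle: both steps are direct applications of results already proved in the excerpt, and the only mild subtlety is the conventional point that the single vertex of the Morse graph may legitimately be viewed as a self-connecting orbit set, which the paper has already flagged in its definition of Morse graph. The content of the proposition is really the observation that the hypotheses \textbf{exactly} rule out the three degenerate cases of the preceding lemma while simultaneously collapsing the Morse graph, thereby exhibiting a concrete gap between the coarseness of the Morse graph and the finer invariants $X/[v]$ and $X/\langle v\rangle$.
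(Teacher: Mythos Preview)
Your proposal is correct and matches the paper's approach: the paper states Proposition~\ref{prop:refine} without an explicit proof, immediately after the lemma characterizing when $X/[v]$ (equivalently $X/\langle v\rangle$) is a singleton, so the intended argument is precisely the one you give---connectedness of $X=\mathop{CR}(v)$ forces a single Morse set (hence a singleton Morse graph), while the contrapositive of the preceding lemma rules out the abstract (weak) orbit space being a singleton. Your reference to Theorem~\ref{th:Morse_reduction} is not actually needed for the first part (the definition of Morse graph and connectedness suffice), but this does no harm.
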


\section{Flows of finite type on compact manifolds}

\subsection{Topological characterization of types of orbits}

A point $x \in X$  is $T_D$ \cite{aull1962separation} if the derived set $\overline{\{ x \}} - \{ x \}$ of $\{ x\}$ is a closed subset.
A point $x$ in $X$ is $S_D$ if the point $\hat{x}$ in the $T_0$-tification $\hat{X}$ is $T_D$.
In other words, a point $x \in X$ is $S_{D}$ if and only if $\overline{\{ \hat{x} \}} - \{ \hat{x} \} \subseteq \hat{X}$ is closed in $\hat{X}$.
Lemma~\ref{lem:quotient_top} implies that a point $x \in X$ is $S_{D}$ if and only if $\overline{x} - \hat{x} \subseteq X$ is closed in $X$.
We have the following characterization of properness.

\begin{lemma}\label{lem:proper_orbit}
The following conditions are equivalent for an orbit $O$ of a flow $v$ on a paracompact manifold $M$:
\\
$(1)$ The orbit $O$ is proper.
\\
$(2)$ $O \subseteq \Cv \sqcup \mathrm{P}(v)$.
\\
$(3)$ The orbit $O$ is $T_D$ as a point in the orbit space $M/v$.
\\
$(4)$ The orbit $O$ has an open neighborhood in which $O$ is a closed subset.
\\
$(5)$ $O = \hat{O}$.
\\
$(6)$ The coborder $\bp O = \overline{O} - O$ is closed.
\end{lemma}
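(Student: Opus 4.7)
The plan is to establish the six equivalences by combining the blocks $(1) \Leftrightarrow (2) \Leftrightarrow (5)$ and $(3) \Leftrightarrow (6) \Leftrightarrow (4)$, then tying them together via $(2) \Rightarrow (6)$ and $(6) \Rightarrow (5)$. The first block is already in hand: the equivalence $(1) \Leftrightarrow (2)$ is exactly Corollary~\ref{cor:proper} (a paracompact manifold is Hausdorff), and $(1) \Leftrightarrow (5)$ is the cited \cite[Corollary 3.4]{yokoyama2019properness}.

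For the second block, the crucial observation is that the coborder $\bp O = \overline{O} - O$ is invariant, hence saturated, because both $O$ and $\overline{O}$ are invariant under the flow. Writing $\pi_v \colon M \to M/v$ for the quotient map, I will verify that the derived set of the point $O \in M/v$ equals $\pi_v(\bp O)$; by the definition of the quotient topology, this is closed in $M/v$ iff its $\pi_v$-preimage $\bp O$ is closed in $M$, which gives $(3) \Leftrightarrow (6)$. The equivalence $(4) \Leftrightarrow (6)$ is immediate: from $(6)$, the open set $U := M - \bp O$ contains $O$ and satisfies $\overline{O} \cap U = O$; conversely, any such $U$ from $(4)$ yields $\bp O = \overline{O} \setminus U$, which is closed.

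It remains to prove the linking implications $(2) \Rightarrow (6)$ and $(6) \Rightarrow (5)$. For $(2) \Rightarrow (6)$, I will split on cases: if $O \subseteq \Cv$, then $\overline{O} = O$ by Lemma~\ref{lem:kc_closed}, so $\bp O = \emptyset$; if $O \subseteq \mathrm{P}(v)$, then Lemma~\ref{lem:kc} applies directly, since paracompact manifolds are Hausdorff and therefore KC. For $(6) \Rightarrow (5)$, suppose $\bp O$ is closed and fix $y \in \hat{O}$, so that $\overline{O(y)} = \overline{O}$; if we had $y \notin O$, then $y \in \overline{O} - O = \bp O$, and the closed invariant set $\bp O$ would contain $\overline{O(y)} = \overline{O}$, contradicting $\overline{O} \supsetneq \bp O$ (which holds because $O \neq \emptyset$). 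Hence $y \in O$, so $\hat{O} = O$. The only slightly delicate point is verifying saturation of $\bp O$ for the step $(3) \Leftrightarrow (6)$; every remaining implication is routine bookkeeping with closures and invariance.
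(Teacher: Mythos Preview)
Your proof is correct and follows essentially the same route as the paper: both use Corollary~\ref{cor:proper} and \cite[Corollary 3.4]{yokoyama2019properness} for the block $(1)\Leftrightarrow(2)\Leftrightarrow(5)$, and both obtain $(3)\Leftrightarrow(6)$ via the quotient topology (the paper cites Lemma~\ref{lem:quotient_top}, while you spell out the saturation of $\bp O$ directly) and $(4)\Leftrightarrow(6)$ by the elementary closure argument. The only minor difference is in the linking step: the paper closes the cycle via $(1)\Rightarrow(4)\Rightarrow(6)\Rightarrow(2)$, invoking \cite[Lemma 3.1]{yokoyama2019properness} to handle the recurrent case in $(6)\Rightarrow(2)$, whereas your $(6)\Rightarrow(5)$ is a self-contained argument (if $y\in\hat O\setminus O$ then $\overline{O}=\overline{O(y)}\subseteq\bp O$, contradiction) that needs no extra input beyond invariance and closedness of $\bp O$.
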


\begin{proof}
Fix an orbit $O$.
If $O$ is a closed orbit, then all conditions hold.
Thus we may assume $O \subseteq M - \Cv = \mathrm{P}(v) \sqcup \mathrm{R}(v)$.
Corollary~\ref{cor:proper} implies that the conditions $(1)$ and $(2)$ are equivalent.
\cite[Corollary 3.4]{yokoyama2019properness} implies that the conditions $(1)$ and $(5)$ are equivalent.
By Lemma~\ref{lem:kc_proper}, the condition $(1)$ implies the condition $(4)$.
Lemma~\ref{lem:quotient_top} implies that the conditions $(3)$ and $(6)$ are equivalent.
%
If $O$ has an open neighborhood $U$ in which $O$ is a closed subset, then $\overline{O} \cap (X - U) = \overline{O} - O$ is closed.
This means that the condition $(4)$ implies the condition $(6)$.
It suffices to show that the condition $(6)$ implies the condition $(2)$.
Suppose that the coborder $\overline{O} - O$ is closed.
Assume that $O \subseteq \mathrm{R}(v)$.
\cite[Lemma 3.1]{yokoyama2019properness}
implies $O \subsetneq \hat{O} \subseteq \overline{O}$ and so $\emptyset \neq \hat{O} - O \subseteq \overline{O} - O$.
Then $\overline{\hat{O} - O} = \overline{\overline{O} - O} = \overline{O}$.
Therefore the coborder $\overline{O} - O \subsetneq \overline{O} = \overline{\overline{O} - O}$ is not closed as a subset of $M$, which contradicts that the coborder $\overline{O} - O$ is closed as a subset of $M$.
Thus $x \in M -  \mathrm{R}(v) = \Cv \sqcup \mathrm{P}(v)$.
\end{proof}

Let $v$ be a flow on a compact manifold $M$, $\tau_v$ be the quotient topology on the orbit space $M/v$.
Recall that the orbit space $M/v$ is a decomposition $\{ O : \text{orbit} \}$ as a set.
Define the induced topology $\tau_v(M)$ on $M$ of $\tau_v$ by $\tau_v(M) := \{ \bigcup  \mathcal{U} \mid \mathcal{U} \in \tau_v \}$.
Note $\bigcup  \mathcal{U} = \bigcup_{O \in \mathcal{U}} O = \{ x \in M \mid x \in O \in \mathcal{U} \}$ for any open subset $\mathcal{U} \in \tau_v(M)$.
A point $x$ in $X$ is $S_1$ if the point $\hat{x}$ in the $T_0$-tification $\hat{X}$ is $T_1$.
In other words, a point $x \in X$ is $S_{1}$ if and only if $\{ \hat{x} \} \subseteq \hat{X}$ is closed in $\hat{X}$.
Lemma~\ref{lem:quotient_top} implies that a point $x \in X$ is $S_{1}$ if and only if $\hat{x} \subseteq X$ is closed in $X$.
We have the following topological characterization of types of orbits.

\begin{lemma}
Let $v$ be a flow on a compact manifold $M$.
The following statements hold with respect to the induced topology $\tau_v(M)$ on $M$ for any point $x \in M$:
\\
$(1)$ $x \in \Sv$ if and only if $x$ is $T_1$.
\\
$(2)$ $x \in \Cv$ if and only if $x$ is $S_1$.
\\
$(3)$ $x \in \Cv \sqcup \mathrm{P}(v)$ if and only if $x$ is $S_D$.
\end{lemma}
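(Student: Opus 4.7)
The approach is to translate each separation condition on $(M,\tau_v(M))$ into a concrete statement about orbits and orbit closures in $M$, and then invoke Corollary~\ref{cor:quotient_sp}, Lemma~\ref{lem:quotient_top}, and Lemma~\ref{lem:proper_orbit}. The key preliminary is the identification $\overline{\{x\}}^{\tau_v(M)} = \overline{O(x)}$: the closed subsets of $(M,\tau_v(M))$ are exactly the $v$-saturated closed subsets of $M$, and $\overline{O(x)}$ is the smallest such set containing $x$. Two points of $M$ then have the same closure in $\tau_v(M)$ iff their orbit closures coincide, so the $T_0$-tification class $\hat{x}$ of $x$ agrees with the orbit class $\hat{O}(x)$.

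Item (1) follows immediately: $x$ is $T_1$ iff $\{x\}=\overline{O(x)}$, and since $M$ is Hausdorff this reduces to $\{x\}=O(x)$, that is, $x\in\Sv$. For item (2), Lemma~\ref{lem:quotient_top} converts the $S_1$ condition into the requirement that $\hat{O}(x)$ be closed in $M$; since $\overline{\hat{O}(x)}=\overline{O(x)}$, this amounts to $\hat{O}(x)=\overline{O(x)}$. Corollary~\ref{cor:quotient_sp} then handles each component of the decomposition $M=\Cv\sqcup\mathrm{P}(v)\sqcup\mathrm{R}(v)$ separately, and the equality is seen to hold exactly when $x\in\Cv$ (with $\hat{O}(x)=O(x)=\overline{O(x)}$); in the $\mathrm{P}(v)$ case $\hat{O}(x)=O(x)\subsetneq\overline{O(x)}$ and in the $\mathrm{R}(v)$ case one uses \cite[Lemma 3.1]{yokoyama2019properness} to obtain $O(x)\subsetneq\hat{O}(x)$ together with a separation argument to conclude $\hat{O}(x)\subsetneq\overline{O(x)}$.

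For item (3), Lemma~\ref{lem:quotient_top} reduces the $S_D$ condition to the closedness of $\overline{O(x)}-\hat{O}(x)$ in $M$. If $x\in\Cv$ this difference is empty; if $x\in\mathrm{P}(v)$, Corollary~\ref{cor:quotient_sp} gives $\hat{O}(x)=O(x)$, and the difference becomes the coborder $\bp O(x)$, which is closed by Lemma~\ref{lem:proper_orbit} since $O(x)$ is proper. Conversely, when $x\in\mathrm{R}(v)$, I would combine Corollary~\ref{cor:quotient_sp} and \cite[Lemma 3.1]{yokoyama2019properness} with the failure of properness for $O(x)$ supplied by Lemma~\ref{lem:proper_orbit} to deduce that $\overline{O(x)}-\hat{O}(x)$ is not closed in $M$.

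The principal obstacle will be the recurrent case in (2) and (3): one must rule out $\hat{O}(x)=\overline{O(x)}$, and correspondingly the closedness of $\overline{O(x)}-\hat{O}(x)$, whenever $x\in\mathrm{R}(v)$. This is the subtle point where the paracompactness of the compact manifold $M$ and the input of \cite[Lemma 3.1]{yokoyama2019properness} must be combined with a careful analysis of limit sets and orbit closures to guarantee the required strict inclusion and non-closedness.
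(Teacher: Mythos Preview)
Your framework is correct and actually more careful than the paper's: you rightly compute that in $(M,\tau_v(M))$ one has $\overline{\{x\}}=\overline{O(x)}$, so the $T_0$-class is $\hat x=\hat O(x)$ and the $T_0$-tification of $(M,\tau_v(M))$ is $M/\hat v$, not $M/v$. Item~(1) goes through. However, the obstacle you flag in the recurrent case is not merely subtle but fatal. Take $x\in\mathrm R(v)$ with $\overline{O(x)}$ minimal---for instance any point of an irrational linear flow on $\mathbb T^2$. Every $y\in\overline{O(x)}$ then satisfies $\overline{O(y)}=\overline{O(x)}$, whence $\hat O(x)=\overline{O(x)}$. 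Thus $\hat O(x)$ is closed and $\overline{O(x)}-\hat O(x)=\emptyset$ is closed, so $x$ is both $S_1$ and $S_D$ while $x\notin\Cv$ and $x\notin\Cv\sqcup\mathrm P(v)$. No appeal to paracompactness or to \cite[Lemma~3.1]{yokoyama2019properness} can rule this out; assertions (2) and (3) are false as formulated.

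The paper's own argument proceeds differently and more tersely: it asserts that $x$ is $S_1$ in $(M,\tau_v(M))$ exactly when $O(x)$ is closed as a point of $M/v$, and deduces (3) from Lemma~\ref{lem:proper_orbit} ``by definition of $S_D$''. In effect the paper treats $M/v$ as the $T_0$-tification of $(M,\tau_v(M))$, which is only legitimate when $M/v$ is already $T_0$, i.e.\ when $\hat O=O$ for every orbit, i.e.\ when $\mathrm R(v)=\emptyset$ (Corollary~\ref{cor:quotient_sp}). The intended content is presumably the corollary that immediately follows, which is phrased for $O(x)$ as a point of $M/v$ and \emph{is} correct via Lemma~\ref{lem:proper_orbit} and Lemma~\ref{lem:closed}. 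Your careful unpacking of $S_1$ and $S_D$ has in fact exposed a misstatement in the lemma itself.
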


\begin{proof}
By definition of $S_D$, Lemma~\ref{lem:proper_orbit} implies that the assertion (3).
Definitions of $T_1$ and $S_1$ imply that a point $x \in M$ is $T_1$ if and only if $\{ x \}$ is closed with respect to $\tau_v(M)$, and that it is $S_1$ if and only if $O(x)$ is closed with respect to the quotient topology $\tau_v$ on the orbit space $M/v$.
This means that the assertions $(1)$ and $(2)$ hold.
\end{proof}

The previous lemma implies the following characterizations.

\begin{corollary}
Let $v$ be a flow on a compact manifold $M$.
The following statements hold for any orbit $O \subseteq M$:
\\
$(1)$ $O \subseteq \Cv$ if and only if $O$ is $T_1$ as a point in the orbit space $M/v$.
\\
$(2)$ $O \subseteq \mathrm{P}(v)$ if and only if $O$ is not $T_1$ but $T_D$ as a point in  the orbit space $M/v$.
\\
$(3)$ $O \subseteq \mathrm{R}(v)$ if and only if $O$ is non-$T_D$ as a point in  the orbit space $M/v$.
\end{corollary}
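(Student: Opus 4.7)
The plan is to derive the corollary by unpacking what $T_1$ and $T_D$ mean for an orbit $O$ viewed as a point of $M/v$, and then appealing to Lemma~\ref{lem:closed} and Lemma~\ref{lem:proper_orbit}. Since $M$ is a compact manifold, it is both compact Hausdorff and paracompact, so both lemmas are available without further hypotheses.

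For (1), I would first observe that by definition of the quotient topology on $M/v$, the singleton $\{O\}$ is closed in $M/v$ if and only if $\pi_v^{-1}(\{O\}) = O$ is closed in $M$. Hence $O$ is $T_1$ as a point of $M/v$ iff $O$ is a closed subset of $M$. Lemma~\ref{lem:closed} then tells us that, on the compact Hausdorff space $M$, an orbit is a closed subset exactly when it is a closed orbit, i.e.\ when $O \subseteq \Cv$. This proves (1).

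For (2), Lemma~\ref{lem:proper_orbit} yields directly that on the paracompact manifold $M$, an orbit $O$ is $T_D$ as a point of $M/v$ if and only if $O$ is proper, and both are equivalent to $O \subseteq \Cv \sqcup \mathrm{P}(v)$. Combining this with (1), the orbit $O$ lies in $\mathrm{P}(v) = (\Cv \sqcup \mathrm{P}(v)) \setminus \Cv$ precisely when it is $T_D$ but not $T_1$ as a point of $M/v$, which is assertion (2). Finally, (3) follows from the partition $M = \Cv \sqcup \mathrm{P}(v) \sqcup \mathrm{R}(v)$: the orbit $O$ lies in $\mathrm{R}(v)$ iff $O \not\subseteq \Cv \sqcup \mathrm{P}(v)$, which by the $T_D$-equivalence just used is exactly the failure of $T_D$ for $O$ as a point of $M/v$.

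There is essentially no serious obstacle; the corollary is a direct repackaging of the two cited lemmas. The only point requiring a bit of care is the bookkeeping between the separation properties on $(M, \tau_v(M))$ used in the preceding lemma and the corresponding properties of an orbit viewed as a point of $(M/v, \tau_v)$. However, since $\{O\}$ being closed in $M/v$ is literally the same assertion as $O$ being a closed saturated subset of $M$, the two viewpoints coincide at the orbit level and no extra argument is needed to bridge them.
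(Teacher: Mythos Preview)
Your proof is correct and follows essentially the same route as the paper. The paper derives the corollary from the preceding lemma about $(M,\tau_v(M))$, whose proof in turn rests on Lemma~\ref{lem:proper_orbit} (for the $T_D$/$S_D$ part) and, implicitly, Lemma~\ref{lem:closed} (for the $S_1$ part); you simply bypass that intermediate lemma and invoke Lemma~\ref{lem:closed} and Lemma~\ref{lem:proper_orbit} directly, which amounts to the same argument.
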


\subsection{Fundamental concepts and typical classes of dynamical systems}
Recall several concepts to state finiteness of flows.

\subsubsection{Topological equivalence and locally topological equivalence}

A flow $v$ on a topological space $X$ is topologically equivalent to a flow $w$ on a topological space $Y$ if there is a homeomorphism $h \colon X \to Y$ whose image of any orbit of $v$ is an orbit of $w$ and which preserves the direction of the orbits.
A point $x \in X$ is locally topologically equivalent to a point $y \in Y$ if
there are \nbds $U_x$ and $U_y$ of $x$ and $y$ respectively, and a homeomorphism $h \colon U_x \to U_y$ such that the images of connected components of the intersection of an orbit and $U_x$ are those of the intersection of an orbit and $U_y$ (i.e. $h(C_p) = C_{h(p)}$ for any point $p \in U_x$, where $C_p$ is the connected component of $O(p) \cap U_x$ containing $p$ and $C_{h(p)}$ is the connected component of $O(h(p)) \cap U_y$ containing $h(p)$), and that $h$ preserves the direction of the orbits.

\subsubsection{Limit cycles and limit circuits}
A cycle is a periodic orbit.
A limit cycle $O$ is a periodic orbit which is an $\omega$-limit set or $\alpha$-limit set (i.e. $O = \omega(x)$ or $O = \alpha(x)$) of a point $x \notin O$.
%
A trivial circuit is a singleton.
A non-trivial circuit is an immersed image of a circle.
A circuit is either trivial or non-trivial circuit.
A non-trivial circuit $\gamma$ of a flow $v$ is limit if it is either a limit cycle or a finite union of orbits in $\Sv \sqcup \mathrm{P}(v)$ which is an $\omega$-limit set or $\alpha$-limit set (i.e. $\gamma = \omega(x)$ or $\gamma = \alpha(x)$) of a point $x \notin \gamma$.
A limit circuit $\gamma$ is repelling (resp. attracting) if there is a \nbd $U$ of $\gamma$ such that $\alpha(x) = \gamma$ (resp. $\omega(x) = \gamma$) for any $x \in U - \gamma$.
A limit circuit $\gamma$ is semi-repelling (resp. semi-attracting) if there are a \nbd $U$ of $\gamma$ and a connected component $A$ of $U - \gamma$ such that $\alpha(x) = \gamma$ (resp. $\omega(x) = \gamma$) for any $x \in A$.

\subsubsection{Flow of finite type}

A singular point is quasi-nondegenerate if it has a neighborhood which intersects at most finitely many abstract weak orbits.
A flow $v$ on a topological space $X$ is a flow of finite type on the topological space if it satisfies the following conditions:
\\
$(1)$ Any singular point is quasi-nondegenerate.
\\
$(2)$ There are at most finitely many limit cycles.
\\
$(3)$ Any recurrent orbit is closed (i.e. $X = \mathop{\mathrm{Cl}}(v) \sqcup \mathrm{P}(v)$).
\\
When the whole space $X$ is a surface, we require quasi-regularity of singular points for definition of ``of finite type on a surface'' (see the details in \ref{def:2dflows}).
Lemma~\ref{lem:w_s} implies the following statement.

\begin{corollary}\label{cor:w_s}
Let $v$ be a flow of finite type on a topological space $X$.
Then the abstract orbit and the abstract weak orbit of a point of coincide with each other.
Moreover, we obtain $X/[v] = X/\langle v \rangle$.
\end{corollary}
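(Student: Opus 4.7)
The plan is to reduce this corollary immediately to Lemma~\ref{lem:w_s}, which states that for any flow $v$ on a topological space $X$ with $\mathrm{R}(v) = \emptyset$, the abstract orbit and the abstract weak orbit of each point coincide, and consequently $X/[v] = X/\langle v \rangle$. So the only thing I need to verify is that a flow of finite type satisfies $\mathrm{R}(v) = \emptyset$.

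This follows directly from condition (3) in the definition of a flow of finite type, which reads $X = \mathop{\mathrm{Cl}}(v) \sqcup \mathrm{P}(v)$. From the preliminaries, the base space admits the decomposition $X = \mathop{\mathrm{Sing}}(v) \sqcup \mathop{\mathrm{Per}}(v) \sqcup \mathrm{P}(v) \sqcup \mathrm{R}(v) = \mathop{\mathrm{Cl}}(v) \sqcup \mathrm{P}(v) \sqcup \mathrm{R}(v)$, so the equality $X = \mathop{\mathrm{Cl}}(v) \sqcup \mathrm{P}(v)$ is equivalent to $\mathrm{R}(v) = \emptyset$.

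First I would just cite condition (3) of the definition of ``flow of finite type'' to conclude $\mathrm{R}(v) = \emptyset$. Then I would invoke Lemma~\ref{lem:w_s} to obtain both assertions (coincidence pointwise, and equality of the two quotient spaces) in one line. There is no real obstacle here, since the corollary is essentially a restatement of Lemma~\ref{lem:w_s} under a hypothesis that is explicitly built into the definition; the only thing worth making explicit is the translation between ``every recurrent orbit is closed'' (condition (3)) and ``$\mathrm{R}(v) = \emptyset$'' (the hypothesis of Lemma~\ref{lem:w_s}), which is immediate from the definition of $\mathrm{R}(v)$ as the union of \emph{non-closed} recurrent orbits.
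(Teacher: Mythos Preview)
Your proposal is correct and matches the paper's approach exactly: the paper simply states that Lemma~\ref{lem:w_s} implies the corollary, and your argument unpacks precisely why---condition~(3) of the definition of finite type gives $\mathrm{R}(v)=\emptyset$, which is the hypothesis of Lemma~\ref{lem:w_s}.
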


\subsubsection{Chaotic flows in the sense of Devaney}
A flow is topologically transitive if, for any pair of nonempty open subsets $U$ and $V$, there is a number $t >1$ such that $v_t(U) \cap V \neq \emptyset$.
A topologically transitive flow $w$ on a compact metric space $M$ with $\overline{\mathop{\mathrm{Cl}}(w)} = M$ is chaotic in the sense of Devaney if it is sensitive to initial conditions (i.e. there is a positive number $\delta > 0$ such that for any $x \in M$ and any neighborhood $U$ of $x$, there are $y \in U$ and $T > 0$ such that ${\displaystyle d(w^T(x), w^T(y))> \delta}$).



\subsubsection{Unstable subsets and stable subsets}
For a subset $A$ of $X$, the unstable set $W^u(A)$ and the stable set $W^s(A)$) of $A$ are defined by $W^u(A) := \{ x \in X \mid \alpha(x) \subseteq A \}$ and $W^s(A) := \{ x \in X \mid \omega(x) \subseteq A \}$.
The unstable set $W^u(A)$ is the unstable manifold if it is an immersed manifold (i.e. the inclusion mapping $W^u(A) \to X$ is an injective immersion).
Here an immersion is a mapping between manifolds whose derivative at any point  is injective.
Similarly, the stable set $W^s(A)$ is the stable manifold if there is an immersion $W^s(A) \to X$ between manifolds.

\subsubsection{Structurally stability}
For a subset $\chi$ of the set $\chi^r(M)$ of $C^r$ vector fields for any $r \in \Z_{>0}$ on a manifold $M$, a vector field $X \in \chi$ is structurally stable with respect to $\chi$ if there is a $C^1$ \nbd $\mathcal{U} \subseteq \chi$ of $X$ such that any vector field $Y$ in $\mathcal{U}$ is topologically equivalent to $X$ (i.e. there is a homeomorphism $h \colon M \to M$ whose image of any orbit of $Y$ is an orbit of $X$ and which preserves the direction of the orbits).

\subsubsection{Gradient flows and Morse-Smale flows}
A $C^r$ vector field $X$ for any $r \in \Z_{\geq0}$ on a Riemannian manifold $(M,g)$ is gradient if there is a $C^{r+1}$ function $h \colon  M \to \R$ such that $X = - \mathrm{grad} (h)$, where the gradient of $h$ is defined by $dh = g(\mathrm{grad}(h), \cdot)$.
In other words, locally the gradient vector field $X$ is defined by $X :=  \sum_{i,j}g^{ij} (\partial_i h) \partial_j$ for any local coordinate system $(x_1, \ldots , x_n)$ of a point $p \in M$, where $\partial_i := \partial/\partial x_i$, $g_{ij} := g(\partial_i, \partial_j)$, and $(g^{ij}) := (g_{ij})^{-1}$.
A flow is gradient if it is topologically equivalent to a flow generated by a gradient  vector field.
A $C^r$ vector field $X$ for any $r \in \Z_{\geq0}$ on a closed manifold is Morse-Smale if $(1)$ the non-wandering set $\Omega(X)$ consists of finitely many hyperbolic closed orbits; $(2)$ any point in the intersection of the stable and unstable manifolds of closed orbits are transversal (i.e $W^s(O) \pitchfork W^u(O')$ for any orbits $O, O' \subset \Omega(X)$, where $W^s(O)$ is the stable manifold of $O$ and $W^u(O')$ is the unstable manifold of $O'$).
Here the transversality of submanifolds $A$ and $B$ on a manifold $M$ means that the submanifolds $A$ and $B$ span the tangent spaces for $M$ (i.e. $T_{A\cap B} M = T_{A\cap B} A + T_{A\cap B} B$).
Similarly, under two generic conditions for differentials to guarantee structural stability, Labarca and Pacifico defined a Morse-Smale vector field on a compact manifold as follows \cite{labarca1990stability}.
A $C^\infty$ vector field $X$ on a compact manifold is Morse-Smale  if $(\mathrm{MS}1)$ the non-wandering set $\Omega(X)$ consists of finitely many hyperbolic closed orbits; $(\mathrm{MS}2)$ the restriction $X|_{\partial M}$ is Morse-Smale; $(\mathrm{MS}3)$ for any orbits $O, O' \subset \Omega(X)$ and for any non-transversal point $x \in W^s(O) \cap W^u(O')$, we have $x \in \partial M$ and either $O$ or $O'$ is singular with respect to $X$ (i.e. $O \subseteq \mathop{\mathrm{Sing}}(X)$ or $O' \subseteq \mathop{\mathrm{Sing}}(X)$).
Therefore we say that a flow is Morse-Smale if it is topologically equivalent to a flow generated by a vector field satisfying conditions $(\mathrm{MS}1)$--$(\mathrm{MS}3)$.
A flow is Morse if it is a Morse-Smale flow without limit cycles.
Note that the two generic conditions for differentials form an open dense subset of the set of $C^\infty$ vector fields and that they are stated as follows: any closed orbit is $C^2$ linearizable, and the weakest contraction (resp. expansion) at any closed orbit is defined.
Here the weakest contraction at a singular (resp. periodic) point $p$ is defined if the contractive eigenvalue with biggest real part among the contractive eigenvalues of $DX(p)$ (resp. $DX_f(p)$, where $X_f$ is  the Poincar\'e map) is simple. Dually we can define that the weakest expansion at p is defined.
Note that a Morse-Smale flow on a compact surface satisfies the $C^2$ linearizable condition and the eigenvalue conditions up to topological equivalence.
Indeed, any hyperbolic closed orbit is either a sink, $\partial$-sink, a source, a $\partial$-source, a saddle, a $\partial$-saddle, and a limit cycle which is either attracting on each side or repelling on each side.
Since any limit cycle is either attracting on each side or repelling on each side, it can be perturbed into a $C^2$ linearizable cycle.
Any small \nbd of a hyperbolic singular point can be identified with one of a hyperbolic singular points for a gradient flow and so can be perturbed into a $C^2$ linearizable singular point satisfying the eigenvalue conditions.
Palis and Smale showed that a Morse-Smale $C^r$ vector field on a closed manifold is structurally stable with respect to the set of $C^r$ vector fields~\cite{palis1969morse,palis2000structural}.
Similarly, the assertion also holds for Morse-Smale vector fields on compact manifolds under the $C^2$ linearizable condition and the eigenvalue conditions \cite{labarca1990stability}.

\subsection{Refinements of CW decompositions of unstable manifolds of singular points of Morse flows}

It is known that the set of the unstable manifolds of singular points of a Morse flow on a closed manifold is a finite CW decomposition~\cite[Theorem~1]{abbondandolo2020stable}.
We show that the abstract weak orbit spaces of Morse flows on closed manifold are refinements of CW decompositions of unstable manifolds of singular points. 

\begin{theorem}
The CW decomposition of unstable manifolds of singular points of a Morse flow on a closed manifold is a quotient space of the abstract weak orbit space. 
Moreover, the set of connecting orbit sets of singular points is also a quotient space of the abstract weak orbit space. 
\end{theorem}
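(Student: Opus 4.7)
The plan is to reduce the statement to verifying that each cell of the CW decomposition, and each connecting orbit set, is a union of abstract weak orbits; this is precisely the coarsening criterion used in the proof of Theorem~\ref{th:Morse_reduction} to realize a partition as a quotient of $M/[v]$.

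First I would record the structural simplifications that hold for a Morse flow $v$ on a closed manifold $M$. Because $v$ is Morse-Smale without limit cycles, its non-wandering set consists of finitely many hyperbolic singular points; attracting or repelling periodic orbits would be limit cycles, so they are absent. Hence $\Omega(v) = \mathop{\mathrm{Sing}}(v)$, and therefore $\mathrm{R}(v) = \mathop{\mathrm{Per}}(v) = \emptyset$, yielding the decomposition $M = \mathop{\mathrm{Sing}}(v) \sqcup \mathrm{P}(v)$. Each singular point is isolated in $\mathop{\mathrm{Sing}}(v)$ by hyperbolicity, so $[p] = \{p\}$ for every $p \in \mathop{\mathrm{Sing}}(v)$ by the first clause of Corollary~\ref{cor:ch}. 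For any $x \in \mathrm{P}(v)$, Corollary~\ref{cor:connectivity} gives that $\alpha(x)$ and $\omega(x)$ are nonempty, connected, closed, invariant subsets of $\Omega(v) = \mathop{\mathrm{Sing}}(v)$; since this ambient set is finite and discrete, each limit set must collapse to a single singular point, say $\alpha(x) = \{p\}$ and $\omega(x) = \{q\}$.

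For the first assertion, the CW decomposition of \cite{abbondandolo2020stable} consists of the cells $W^u(p) = \{x \in M \mid \alpha(x) = \{p\}\}$ indexed by $p \in \mathop{\mathrm{Sing}}(v)$, and these partition $M$. If $x \in W^u(p) \cap \mathrm{P}(v)$ and $y \in [x]$, the third clause of Corollary~\ref{cor:ch} forces $\alpha(y) = \alpha(x) = \{p\}$, so $y \in W^u(p)$; the singular point $p$ itself is its own abstract weak orbit and lies in $W^u(p)$. Hence every cell is a union of abstract weak orbits, so $\{W^u(p)\}_{p \in \mathop{\mathrm{Sing}}(v)}$ is a coarsening of the abstract weak orbit decomposition and, equipped with the quotient topology, is a quotient space of $M/[v]$.

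For the second assertion, the connecting orbit sets form the partition $\{D_{p,q}\}_{p \neq q} \cup \{\{p\}\}_{p \in \mathop{\mathrm{Sing}}(v)}$, where $D_{p,q} = W^u(p) \cap W^s(q) \subseteq \mathrm{P}(v)$. Since the pair $(\alpha(x), \omega(x)) = (\{p\}, \{q\})$ is constant on $[x]$ by Corollary~\ref{cor:ch}, each $D_{p,q}$ is again a union of abstract weak orbits, and the same coarsening argument yields the desired quotient. The only real technical point is the collapse of the limit sets of orbits in $\mathrm{P}(v)$ to singletons; once this is secured by connectivity plus isolation of singular points, both assertions follow mechanically, and no subtlety about the quotient topology arises because the abstract weak orbit space already dominates the partition in the sense of coarsening.
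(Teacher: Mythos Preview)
Your proof is correct and follows essentially the same route as the paper: both reduce to showing that each connecting orbit set $W^u(p)\cap W^s(q)$ (and hence each unstable manifold $W^u(p)=\{p\}\sqcup\bigsqcup_{q\neq p} W^u(p)\cap W^s(q)$) is a union of abstract weak orbits, which follows once one knows $\alpha(x)$ and $\omega(x)$ are single singular points for every $x\in\mathrm{P}(v)$. The only cosmetic difference is that the paper extracts this last fact from the stable/unstable CW decompositions of \cite{abbondandolo2020stable}, whereas you obtain it from Corollary~\ref{cor:connectivity} together with the discreteness of $\mathop{\mathrm{Sing}}(v)$; your dismissal of periodic orbits should also cover saddle-type hyperbolic cycles, but these are likewise limit cycles in the paper's sense, so the conclusion stands.
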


\begin{proof}
Let $v$ be a Morse flow on a closed manifold $M$. 
\cite[Theorem~1]{abbondandolo2020stable} implies that both the set $\{ W^u(x) \mid x \in \Sv \}$ of unstable manifolds of singular points and the set $\{ W^s(x) \mid x \in \Sv \}$ of stable manifolds of singular points are finite CW decompositions of $M$. 
Then $M = \bigsqcup_{x \in \Sv} W^u(x) = \bigsqcup_{x \in \Sv} W^s(x) = \bigsqcup_{x,y \in \Sv} W^u(x) \cap W^s(y)$. 
In particular, we obtain $W^u(x) = \bigsqcup_{y \in \Sv} W^u(x) \cap W^s(y)$ for any $x \in \Sv$. 
This implies that the $\omega$-limit sets and the $\alpha$-limit set of any points are singular points. 
Then $W^u(\alpha(x)) \cap W^s(\omega(x)) = \{ x \}$ for any $x \in \Sv$. 
This implies that $M = \Sv \sqcup \bigsqcup_{x \neq y \in \Sv} W^u(x) \cap W^s(y)$. 
For any $x \in \Sv$, we obtain $W^u(x) = \{x \} \sqcup \bigsqcup_{y \neq x \in \Sv} W^u(x) \cap W^s(y)$. 
By definition of Morse flow, we have $\Cv \subseteq M - \mathrm{P}(v) \subseteq \mathcal{R}(v) \subseteq \Omega(v) = \Cv$ and so $M = \Sv \sqcup \mathrm{P}(v)$. 
Therefore $\mathrm{P}(v) = \bigsqcup_{x \neq y \in \Sv} W^u(x) \cap W^s(y)$. 
Fix any pair $x \neq y \in \Sv$. 
Then $W^u(x) \cap W^s(y) = \{ z \in \mathrm{P}(v) \mid \alpha(z) = W^u(x), \omega(x) = W^s(y) \}$. 
For any $z  \in W^u(x) \cap W^s(y)$, the abstract weak orbit $[z]$ is the connected component of $\{ z \in \mathrm{P}(v) \mid \alpha(z) = W^u(x), \omega(x) = W^s(y) \} = W^u(x) \cap W^s(y)$ containing $z$. 
This means that the connecting orbit set $W^u(x) \cap W^s(y)$ is a union of abstract weak orbits.  
Since the unstable manifold of a singular point is a union of connecting obit sets, the unstable manifold of a singular point is a union of abstract weak orbits.  
\end{proof}

\subsection{Finiteness of Morse-Smale flows}

We have the following finiteness of Morse-Smale flows.

\begin{proposition}\label{prop:fin_MS}
The following properties hold for a Morse-Smale flow $v$ on a compact manifold $M$: \\
$(1)$ $M/[v] = M/\langle v \rangle = M/[v]_k = M/\langle v \rangle_k$ for any $k \in \Z_{>0}$.
\\
$(2)$ The abstract orbit space $M/\langle v \rangle$ with the partial order $\leq_v$ is an abstract multi-graph with finite vertices such that $M_0 = \Cv$ and $M_1 = \mathrm{P}(v)$, where $M_i$ is the set of point of height $i$ with respect to the partial order $\leq_v$.
\\
$(3)$ The abstract multi-graph $M/\langle v \rangle$ is finite if and only if the flow $v$ is of finite type.
\\
$(4)$ If the flow $v$ is $C^\infty$, then any abstract orbits are embedded submanifolds.
\\
$(5)$ If the flow $v$ is $C^\infty$ and is of finite type, the abstract orbit space $M/\langle v \rangle$ has a stratification $\emptyset = S_{-1} \subseteq S_0 \subseteq \cdots \subseteq S_n = M$ such that $S_i$ is the finite union of abstract orbits whose dimensions are less than $i+1$.
\end{proposition}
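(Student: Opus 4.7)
The plan is to exploit two structural features that the Morse--Smale hypothesis puts at our disposal: (a) the non-wandering set $\Omega(v)$ is a finite union of hyperbolic closed orbits, so $\mathrm{R}(v)=\emptyset$ and $M=\Cv\sqcup\mathrm{P}(v)$; (b) every $\alpha$- and $\omega$-limit set is a single closed orbit, which is already an abstract orbit by Lemma~\ref{lem:awo}. Almost every clause of the proposition will then be read off from the lemmas already proved in Sections~3 and~4.

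For (1), the vanishing $\mathrm{R}(v)=\emptyset$ lets Lemma~\ref{lem:w_s} collapse $M/[v]$ onto $M/\langle v\rangle$. Since each $\alpha(x)$ and $\omega(x)$ is an abstract orbit by (a)--(b), Corollary~\ref{cor:k_th} gives the two $k$-th equalities. For (2), Corollary~\ref{cor:closure_01} shows that each $\langle x\rangle\subseteq\Cv$ has $\partial_+\langle x\rangle=\emptyset$, hence height $0$, whereas each $\langle x\rangle\subseteq\mathrm{P}(v)$ has $\partial_\perp\langle x\rangle=\alpha(x)\cup\omega(x)$, a union of at most two closed orbits already lying in $M_0$. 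So every point of $\mathrm{P}(v)$ has height $1$ with $|\mathop{\downarrow}\langle x\rangle|\le 3$, giving the multi-graph-like structure, while finiteness of $M_0$ is exactly finiteness of $\Omega(v)$.

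For (3), conditions (ii) and (iii) of the finite-type definition are automatic for Morse--Smale flows, so ``finite type'' reduces to quasi-nondegeneracy at every singular point; finiteness of the multi-graph in turn reduces to finiteness of $M_1=\mathrm{P}(v)/\langle v\rangle$. By Corollary~\ref{cor:ch01}, each abstract orbit in $\mathrm{P}(v)$ is a connected component of some $W^u(O)\cap W^s(O')$, and since only finitely many closed orbits $O,O'$ occur, global finiteness reduces to local finiteness of abstract orbits near each closed orbit. Hyperbolicity and the $\lambda$-lemma yield local finiteness near periodic orbits automatically, so the equivalence collapses to local finiteness near singular points, i.e.\ quasi-nondegeneracy.

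For (4) and (5), the $C^\infty$ Morse--Smale hypothesis provides embedded $C^\infty$ stable and unstable manifolds $W^{u/s}(O)$ that are pairwise transverse (away from the boundary, by (MS3)); hence each $W^u(O)\cap W^s(O')$ is an embedded $C^\infty$ submanifold invariant under $v$, and by Corollary~\ref{cor:ch01} every abstract orbit in $\mathrm{P}(v)$ is a connected component thereof, proving (4). For (5), letting $S_i$ be the union of abstract orbits of dimension at most $i$, the finite-type hypothesis makes each $S_i$ a finite union of embedded submanifolds; the standard Morse--Smale closure relation $\overline{W^u(O)}\setminus W^u(O)\subseteq\bigcup_{\dim W^u(O')<\dim W^u(O)}W^u(O')$ then yields $\overline{S_i}\setminus S_i\subseteq\bigsqcup_{j<i}S_j$. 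The main obstacle will be the reverse implication in (3): turning local quasi-nondegeneracy data at each singular point into global finiteness of the multi-graph requires a $\lambda$-lemma argument to control how unstable manifolds of higher-index singular points slice across neighborhoods of lower-index ones, with additional care for $\partial$-saddles and limit cycles allowed by (MS2)--(MS3).
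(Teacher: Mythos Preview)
Your outline follows the paper's proof closely: the same reduction $\mathrm{R}(v)=\emptyset$ to get $M/[v]=M/\langle v\rangle$ via Lemma~\ref{lem:w_s}, the same appeal to Corollary~\ref{cor:k_th} for the $k$-th spaces, and the same identification of abstract orbits in $\mathrm{P}(v)$ with connected components of connecting orbit sets $W^u(\alpha(x))\cap W^s(\omega(x))$.  Two places where you part from the paper deserve comment.

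For (4), you assert that the $C^\infty$ Morse--Smale hypothesis ``provides embedded $C^\infty$ stable and unstable manifolds''.  A priori the stable manifold theorem only gives injectively \emph{immersed} $W^{u/s}(O)$; the paper closes this gap by invoking \cite[Theorem~1]{meyer1968energy} (existence of a smooth Lyapunov function for a $C^\infty$ Morse--Smale flow), which forces the invariant manifolds to be embedded.  Without that citation your argument for (4) is incomplete.

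For (3), you insert a claim the paper does not make: that ``hyperbolicity and the $\lambda$-lemma yield local finiteness near periodic orbits automatically''.  The paper argues more directly: since stable manifolds of closed orbits cover $M$, the saturation of neighborhoods of the (finitely many) closed orbits is all of $M$, and quasi-nondegeneracy then gives finiteness of $M/\langle v\rangle$.  It does not separate the singular and periodic cases and does not invoke the $\lambda$-lemma.  Your $\lambda$-lemma assertion is not obviously true as stated --- the inclination lemma controls $C^1$-accumulation of disks onto $W^u(\gamma)$, not the number of connected components of a transverse intersection $W^u(O)\cap W^s(O')$ near $\gamma$ --- so if you keep that route you owe a genuine argument.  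For the converse direction (finite $\Rightarrow$ finite type) you and the paper agree: finiteness of $M/\langle v\rangle$ immediately gives quasi-nondegeneracy at every singular point, and the other two finite-type conditions are automatic for Morse--Smale flows.
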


\begin{proof}
Since the $\alpha$-limit sets and $\omega$-limit set of any point are hyperbolic closed orbits, Corollary~\ref{cor:k_th} implies that $M/[v]  = M/[v]_k$ and $M/\langle v \rangle = M/\langle v \rangle_k$ for any $k \in \Z_{>0}$.
Since any recurrent point is non-wandering, we have that $M - \mathrm{P}(v)= \mathcal{R}(v) \subseteq \Omega(v)$.
Hyperbolicity implies that $\Omega(v) = \mathop{\mathrm{Cl}}(v)$ consists of finitely many orbits and so that $M = \Cv \sqcup \mathrm{P}(v)$.
By definitions of abstract weak orbit and abstract orbit, we obtain $M/[v] = M/\langle v \rangle$.
Notice that the $\alpha$-limit set and the $\omega$-limit set of any point are contained in the non-wandering set $\Omega(v) = \mathop{\mathrm{Cl}}(v)$ which is the finite union of hyperbolic orbits.
The connectivity of the $\alpha$-limit set and the $\omega$-limit set implies that the $\alpha$-limit set and the $\omega$-limit set of any point are hyperbolic closed orbits.
This implies that both the finite union of stable manifolds of closed orbits and the finite union of unstable manifolds of closed orbits are the whole manifold.
Moreover, there are at most finitely many $\omega$-limit sets and $\alpha$-limit sets.
Fix a point $x \in \mathrm{P}(v)$.
Then the abstract orbit $\langle x \rangle = [x]$ is the connected component of the connecting orbit set $W^u(\alpha(x)) \cap W^s(\omega(x))$ containing $x$.
By definitions, unstable manifolds and stable manifolds are immersed submanifolds.
Then the intersection $W^u(\alpha(x)) \cap W^s(\omega(x))$ is transverse at any point in the interior $M - \partial M$, and the restriction $(W^u(\alpha(x)) \cap W^s(\omega(x))) \cap \partial M$ is transverse in the boundary $\partial M$.
Therefore the intersection $W^u(\alpha(x)) \cap W^s(\omega(x))$ is also an immersed submanifold.
Since the intersection $W^u(\alpha(x)) \cap W^s(\omega(x))$ is the connecting orbit set from the Morse set $W^u(\alpha(x))$ to the Morse set $W^s(\omega(x))$, the abstract orbit $\langle x \rangle$ corresponds to a connected component of the connecting orbit set from the Morse set $W^u(\alpha(x))$ to the Morse set $W^s(\omega(x))$.
This means that the abstract orbit space $M/\langle v \rangle$ with the partial order $\leq_v$ is an abstract multi-graph with finite vertices such that $M_0 = \Cv$ and $M_1 = \mathrm{P}(v)$.
Since the finite union of stable manifolds of closed orbits is the whole manifold $M$, if $v$ is of finite type, then the saturation of the union of \nbds of closed orbits is $M$ and so the quasi-nondegeneracy implies that $M/\langle v \rangle$ is finite.
Since any recurrent orbit is closed, the non-existence of limit cycles implies the converse.
Suppose that $v$ is $C^\infty$.
\cite[Theorem~1]{meyer1968energy} implies the existence of a Lyapunov function of $v$ and so unstable manifolds and stable manifolds are embedded.
Since the intersections of unstable manifolds and stable manifolds are also immersed submanifolds, the intersections are embedded.
This means any abstract orbits are submanifolds.
Suppose that $v$ is of finite type.
Then $M$ consists of finitely many abstract orbits.
Let $S_i$ be the union of abstract orbits whose dimensions are less than $i+1$.
Then a filtration $\emptyset = S_{-1} \subseteq S_0 \subseteq \cdots \subseteq S_n = M$ is a stratification.
\end{proof}

Notice that the connected components of connecting orbit sets are not finite in general.
In fact, there is a smooth Morse-Smale flow $v$ on a closed three dimensional manifold whose abstract orbit space with the partial order $\leq_v$ is an abstract multi-graph with directed edges which have infinitely many connected components.
Indeed, consider a vector field $X_0 = (-x, -y, -z)$ on a unit closed ball $M_0 := \mathbb{D}^3$ in $\R^3$ and a vector field $Y_1 = (-x, -y, z)$ on a solid cylinder $C := [0,1] \times \mathbb{D}^2$ as in Figure~\ref{fig:ball_cyclinder}.
\begin{figure}
\begin{center}
\includegraphics[scale=0.3]{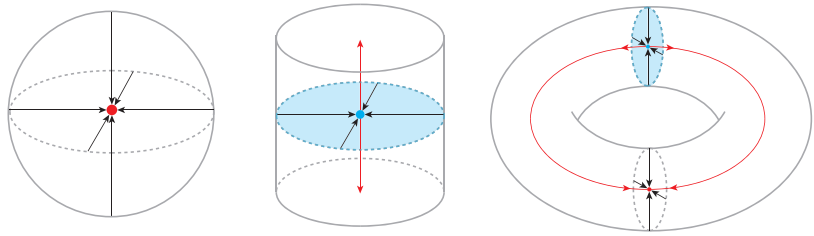}
\end{center}
\caption{Left, the vector field $X_0$; middle, the vector field $Y_1$; right, the vector field $X_1$.}
\label{fig:ball_cyclinder}
\end{figure}
Attaching the handle $C$ to the ball $\mathbb{D}^3$, the resulting space is a solid torus $M_1 := M_0 \cup C$.
Identify $M_1$ with $\R/\Z \times \mathbb{D}^2$.
Smoothing the resulting vector field on $M_1$, we can obtain the resulting vector field $X_1 = (\sin (x/2\pi), -y, -z)$ on $M_1$ as on the right of  Figure~\ref{fig:ball_cyclinder}.
Replacing the saddle with the index one of $Y_1$ by a pair of a source and a hyperbolic periodic orbit $\gamma$ with the index one, denote by $Y'_1$  the resulting vector field as on the left of Figure~\ref{fig:ball_cyclinder02}.
\begin{figure}
\begin{center}
\includegraphics[scale=0.3]{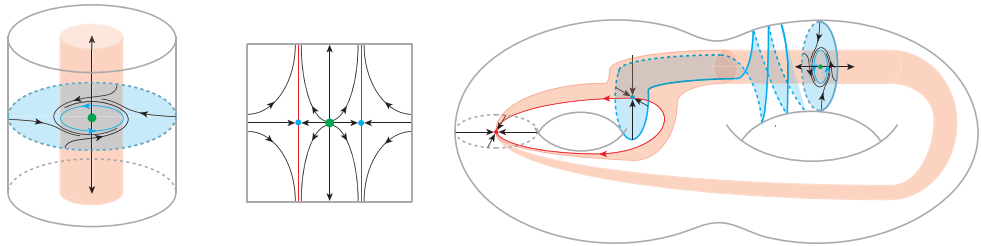}
\end{center}
\caption{Left, the vector field $Y'_1$; middle, the projection of the vector field $Y'_1$ into the $x$-$y$ plane; right, the vector field $X_2$.}
\label{fig:ball_cyclinder02}
\end{figure}
Attaching a copy $C'$ of the handle $C$ with the vector field $Y_1$ to the solid torus $M_1$, the resulting space is denote by $M_2 := M_0 \cup C \cup C'$ as in Figure~\ref{fig:ball_cyclinder02+}.
\begin{figure}
\begin{center}
\includegraphics[scale=0.3]{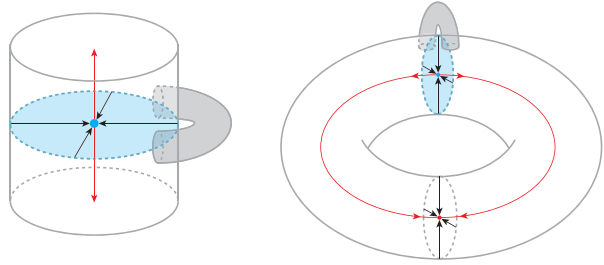}
\end{center}
\caption{Left, attaching a copy $C'$ of the handle $C$ with the vector field $Y_1$; right, attaching a copy $C'$ of the solid torus $M_0 \cup C$ with the vector field $X_1$.}
\label{fig:ball_cyclinder02+}
\end{figure}
Smoothing the resulting vector field on $M_2$, we can obtain the resulting vector field $X_2$ on $M_2$ as on the right of Figure~\ref{fig:ball_cyclinder02}.
Let $s_1$ be the saddle with index one of $X_2$ and $O$ the hyperbolic periodic orbit of $X_2$.
Then the stable manifold $W^s(s_1)$ and the unstable manifold $W^u(O)$ intersect transversally.
Here the transversality of a pair of differential submanifolds means that the sum of the tangent spaces of $W^s(s_1)$ and $W^u(O)$ at a point $x$ in the intersection $W^s(s_1) \cap W^u(O)$ is the tangent space of one of the whole space $M_2$ (i.e. $T_x W^s(s_1) + T_x W^u(O) = T_x M_2$).
On the other hand, attaching two copies of the handle $C$ with the vector field $-Y_1$ to a copy of the ball $\mathbb{D}^3$ with the vector field $-X_0$ as in Figure~\ref{fig:ball_cyclinder05+}, the resulting space is denoted by $M'_1$.
\begin{figure}
\begin{center}
\includegraphics[scale=0.275]{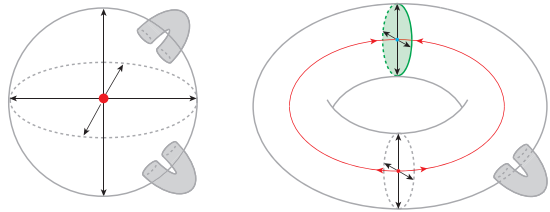}
\end{center}
\caption{Left, attaching two copies of $C$ to a copy of the ball $\mathbb{D}^3$ with the vector field $-X_0$; right, attaching a copy of $C$ to the union of a copy of the ball $\mathbb{D}^3$ with the vector field $-X_0$ and a copy of $C$ with the vector field $-Y_1$.}
\label{fig:ball_cyclinder05+}
\end{figure}
Smoothing the resulting vector field on $M'_1$, we can obtain the resulting vector field $X'_1$ on $M'_1$ as on the right of Figure~\ref{fig:ball_cyclinder05}.
Denote by $s_2$ a saddle of index two of $X'_1$ on $M'_1$.
\begin{figure}
\begin{center}
\includegraphics[scale=0.275]{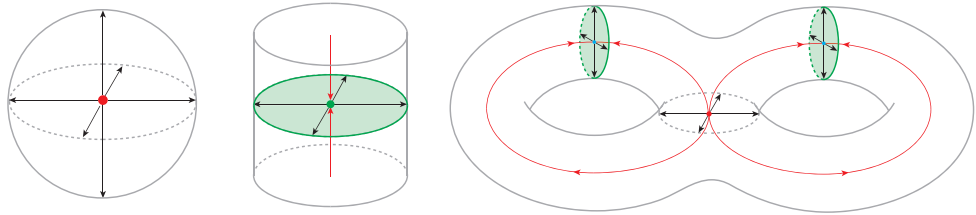}
\end{center}
\caption{Left, the vector field $-X_0$; middle, the vector field $-Y_1$; right,  the vector field $X'_1$.}
\label{fig:ball_cyclinder05}
\end{figure}
Let $\Sigma_2$ be an orientable closed surface of genus two and $f \colon \Sigma_2 \to \Sigma_2$ be a $C^\infty$ diffeomorphism which maps two simple closed curves as in Figure~\ref{fig:diffeo_s2s}.
\begin{figure}
\begin{center}
\includegraphics[scale=0.4]{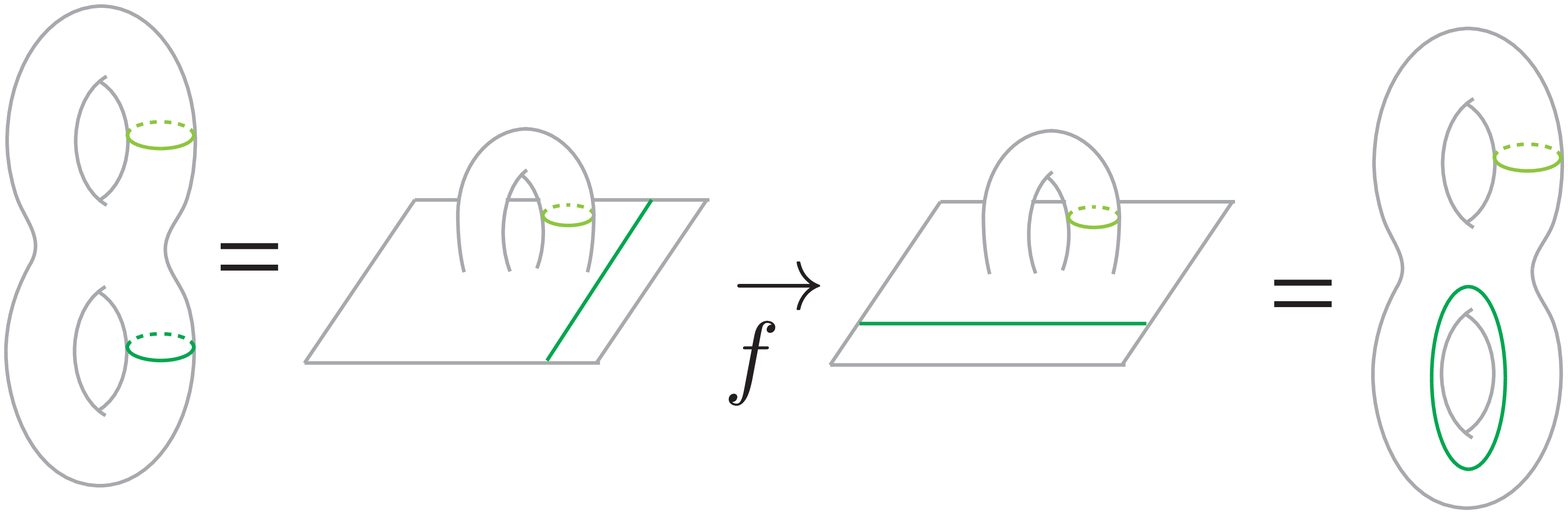}
\end{center}
\caption{A $C^\infty$ diffeomorphism on $\Sigma_2$.}
\label{fig:diffeo_s2s}
\end{figure}
Pasting $M_2$ and $M'_1$ by the diffeomorphism $f$ such that $W^u(s_2)$ and $W^s(s_1)$ intersect transversally and infinitely many times, and smoothing the resulting vector field on the resulting manifold $M := M_2 \cup M'_1$ as in Figure~\ref{fig:ball_cyclinder04}, the resulting manifold $M$ is a three-dimensional closed manifold and the resulting vector field $X$ is a $C^\infty$ Morse-Smale vector field with connecting orbit sets which have infinitely many connected components such that the $\Omega$-limit set is the set of closed orbit which consists of one sinks, three saddles, one limit cycle, and two source $\gamma$, because the unstable manifold $W^u(s_2)$ and the stable manifold $W^s(s_1)$ intersect transversally on the boundary $\partial M_2 = \partial M'_1$ and the vector field $X$ is transverse to $\partial M_2 = \partial M'_1$.
\begin{figure}
\begin{center}
\includegraphics[scale=0.3]{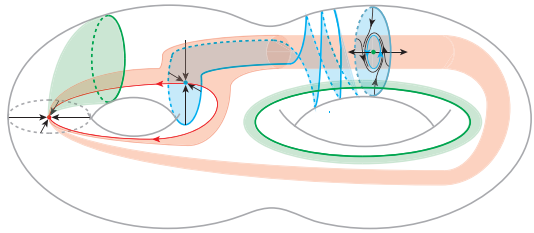}
\end{center}
\caption{A schematic picture of the intersection of the stable manifolds of the saddle $s_1$ and the limit cycle $\gamma$ and the unstable manifolds of the three saddles and the limit cycle $\gamma$ in $M_2$.}
\label{fig:ball_cyclinder04}
\end{figure}

On the other hand, as mentioned above, it is known that the set of the unstable manifolds of singular points of a Morse flow on a closed manifold is a finite CW decomposition~\cite[Theorem~1]{abbondandolo2020stable}.
Therefore the author would like to know whether any Morse flow on a closed manifold $M$ whose abstract orbit space $M/\langle v \rangle$ with the partial order $\leq_v$ is a finite abstract multi-graph.
In other words, does any connecting orbit set between distinct Morse sets of a Morse flow on a closed manifold consists of finitely many connected components?
Equivalently, the author would like to know an answer to the following question.
\begin{question}
Dose the intersection of the unstable manifold of any closed orbit and of the stable manifold of any closed orbit of a Morse flow on a closed manifold consists of finitely many connected components?
\end{question}

\subsection{Finiteness of gradient flows}

We show the following statement.

\begin{proposition}
The following properties hold for a gradient flow with nondegenerate singular points on a compact manifold $M$: \\
$(1)$ $M/[v] = M/\langle v \rangle$
\\
$(2)$ The abstract orbit space $M/\langle v \rangle$ with the partial order $\leq_v$ is an abstract multi-graph with finite vertices such that $M_0 = \Sv$, and $M_1 = \mathrm{P}(v)$, where $M_i$ is the set of point of height $i$ with respect to the partial order $\leq_v$.
\\
$(3)$
The abstract multi-graph $M/\langle v \rangle$ is finite if and only if the flow $v$ is of finite type.
\end{proposition}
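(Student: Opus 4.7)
The plan is to mirror the proof of Proposition~\ref{prop:fin_MS}, exploiting that a gradient flow $v = -\mathrm{grad}\,h$ on a compact manifold has $h$ strictly decreasing along non-singular orbits. First I would observe that this strict-decrease property, combined with the compactness of $M$ and Corollary~\ref{cor:connectivity}, forces $\alpha(x)$ and $\omega(x)$ to be connected subsets of $\Sv$; the nondegeneracy hypothesis makes $\Sv$ a finite discrete subset of $M$, so $\alpha(x)$ and $\omega(x)$ are individual singular points for every $x \in M$. Consequently $\mathcal{R}(v) = \Sv = \Cv$, $\mathrm{R}(v) = \emptyset$, and $M = \Sv \sqcup \mathrm{P}(v)$.

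Part~(1) then follows immediately from Lemma~\ref{lem:w_s}, since $\mathrm{R}(v) = \emptyset$. For part~(2) I would use Lemma~\ref{lem:awo} together with the discreteness of $\Sv$ to conclude $\langle s \rangle = \{s\}$ for every $s \in \Sv$, giving $M_0 = \Sv$; these elements have height $0$ by Corollary~\ref{cor:closure_01}(1). For $x \in \mathrm{P}(v)$ with $\alpha(x) = \{p\}$ and $\omega(x) = \{q\}$, Lemma~\ref{lem:decomp_limit} and Lemma~\ref{lem:closure_01}(2) yield $\overline{O(x)} = O(x) \sqcup \{p, q\}$ and $\mathop{\downarrow}_{\leq_v}\langle x \rangle = \{\langle x \rangle, \{p\}, \{q\}\}$, so $|\mathop{\downarrow}\langle x \rangle| \leq 3$ and $\mathop{\mathrm{ht}}(\langle x \rangle) = 1$. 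Hence $M_1 = \mathrm{P}(v)$ and $M/\langle v \rangle$ is a multi-graph-like poset with finite vertex set $\Sv$.

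For part~(3), the implication ``$M/\langle v \rangle$ finite $\Rightarrow v$ is of finite type'' is immediate: every neighborhood of every singular point can meet at most the finitely many elements of $M/[v]$, so quasi-nondegeneracy holds; there are no limit cycles as there are no periodic orbits; and $\mathrm{R}(v) = \emptyset$ makes every recurrent orbit closed. Conversely, assuming $v$ of finite type, I would for each $s \in \Sv$ pick an open neighborhood $U_s$ meeting only finitely many abstract weak orbits; since every orbit $O(x)$ satisfies $\omega(x) = \{s\}$ for some $s \in \Sv$, it eventually enters $U_s$ and hence meets $U := \bigcup_{s \in \Sv} U_s$. Finiteness of $\Sv$ then forces $U$ to intersect only finitely many abstract weak orbits, and so $M/[v] = M/\langle v \rangle$ is finite.

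The main obstacle will be the precise verification in~(2) that no orbit lies strictly between $\langle x \rangle$ and $\{p\}$ or $\{q\}$ in $\leq_v$, which reduces to the identification $\overline{O(x)} \setminus O(x) = \alpha(x) \cup \omega(x) = \{p, q\}$ supplied by Lemma~\ref{lem:decomp_limit} together with isolation of nondegenerate equilibria; the remainder is routine assembly of the cited lemmas and the reuse of arguments from Proposition~\ref{prop:fin_MS}.
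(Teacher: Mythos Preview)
Your proposal is correct and follows essentially the same approach as the paper: gradient property gives $M=\Sv\sqcup\mathrm{P}(v)$ with $\mathrm{R}(v)=\emptyset$, nondegeneracy makes $\Sv$ finite and discrete so all limit sets are singletons, and abstract orbits of points in $\mathrm{P}(v)$ are connected components of connecting orbit sets between distinct singular points. Your treatment of part~(3) is in fact more explicit than the paper's (which compresses the ``finite type $\Rightarrow$ finite'' direction into one sentence and implicitly refers back to the argument of Proposition~\ref{prop:fin_MS}), but the underlying idea---covering by quasi-nondegenerate neighborhoods of the finitely many singular points---is the same.
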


\begin{proof}
Gradient property implies that each orbit is either singular or non-recurrent.
Then $\mathop{CR}(v) = \Omega(v) = \Sv$ and so $X = \Sv \sqcup \mathrm{P}(v)$.
By definitions of abstract weak orbit and abstract orbit, we obtain $M/[v] = M/\langle v \rangle$.
Non-degeneracy implies that $\Sv$ consists of finitely many orbits.
Since the $\alpha$-limit set and the $\omega$-limit set of any point are connected and is contained in the non-wandering set $\Omega(v) = \Sv$, the $\alpha$-limit set and the $\omega$-limit set are singular points.
The finiteness of singular points implies the finite existence of $\omega$-limit and $\alpha$-limit sets, and so $M/\langle v \rangle$ is finite if and only if $v$ is of finite type.
As the proof of Proposition~\ref{prop:fin_MS}, the abstract orbit of a non-singular point corresponds to a connected component of the connecting orbit set between distinct singular points.
This means that the abstract orbit space $M/\langle v \rangle$ with the partial order $\leq_v$ is an abstract multi-graph with finite vertices such that $M_0 = \Sv$ and $M_1 = \mathrm{P}(v)$.
\end{proof}

\section{Properties of surface flows}

\subsection{Notion of surface flows}
Recall several concepts to state properties of surface flows.

\subsubsection{Flow of finite type on a surface}\label{def:2dflows}
Let $w$ be a flow on a surface $S$.
A singular point is nondegenerate if it is either a saddle, a $\partial$-saddle, a sink, a $\partial$-sink, a source, a $\partial$-source, or a center.
In other words, a singular point is nondegenerate if and only if it is locally topologically equivalent to an isolated singular point $p$ of a flow generated by a $C^2$ vector field $X$ such that the determinant  of the Hesse matrix $(X_{ij})$ is non-zero (i.e.~$X_{11}X_{22} - X_{12}X_{21}\neq 0$), where $(x_1, x_2)$ is a local coordinate system and $X_{ij} := \partial^2 X(p)/\partial {x_i} \partial {x_j}$.
A flow is quasi-regular if any singular point either is locally topologically equivalent to a nondegenerate singular point or is a multi-saddle as in Figure~\ref{quasi_reg_sing}.
Here a multi-saddle is an isolated singular point as in Figure~\ref{quasi_reg_sing} (see \cite{yokoyama2017decompositions} for details of the definition of multi-saddle).
\begin{figure}
\begin{center}
\includegraphics[scale=0.3]{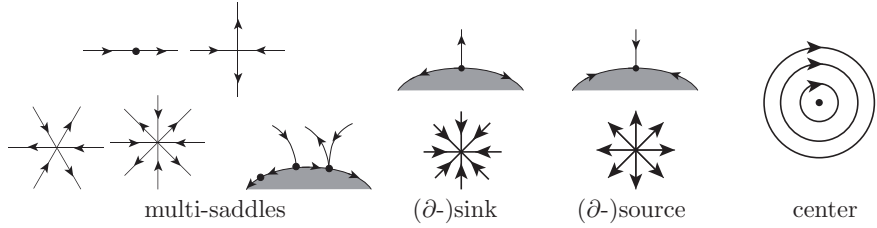}
\end{center}
\caption{The list of singular points appeared in quasi-regular flows.}
\label{quasi_reg_sing}
\end{figure}
A flow $v$ on a surface $S$ is a flow of finite type on a surface if it satisfies the following conditions:
\\
$(1)$ The flow $v$ is quasi-regular.
\\
$(2)$ There are at most finitely many limit cycles.
\\
$(3)$ Any recurrent orbit is closed
(i.e. $S = \mathop{\mathrm{Cl}}(w) \sqcup \mathrm{P}(w)$).
\\
Notice that concepts of ``of finite type'' on topological spaces and on surfaces are different from each other.
Gutierrez's smoothing theorem \cite{gutierrez1986smoothing} implies that each flow of finite type on a compact surface is topologically equivalent to a $C^{\infty}$-flow.
We have the following observation.
\begin{lemma}
A flow $v$ of finite type on a compact surface $S$ is also of finite type as a topological space {\rm (i.e.} $S$ consists of finitely many abstract weak orbits {\rm)}.
In particular, any singular point of $v$ is quasi-nondegenerate.
\end{lemma}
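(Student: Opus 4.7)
The plan is to prove that $S/[v]$ is finite; the quasi-nondegeneracy of each singular point then follows immediately, since every neighbourhood of a singular point meets at most $|S/[v]|$ abstract weak orbits. First I observe that $\Sv$ is finite: quasi-regularity forces every singular point to be either nondegenerate or a multi-saddle, and each such point is isolated, so compactness of $S$ gives $|\Sv|<\infty$. Since each multi-saddle has only finitely many hyperbolic sectors, the collection of separatrices emanating from multi-saddles is also finite. Combined with the finitely many limit cycles assumed in the definition of finite type on a surface, this produces a finite ``multi-saddle connection diagram'' $\mathcal{D}\subseteq S$ consisting of singular points, limit cycles, and separatrices of multi-saddles.

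Next I would decompose the complement $S\setminus\mathcal{D}$ using the classical structure theorem for quasi-regular flows of finite type on compact surfaces (cf.\ \cite{ma2005geometric,yokoyama2017decompositions}). Because $\mathrm{R}(v)=\emptyset$ and there are only finitely many limit cycles, the Poincar\'e--Bendixson theorem for quasi-regular flows forces every $\alpha$- and $\omega$-limit set to be either a singular point, a limit cycle, or a polycycle built from multi-saddles and their separatrices; hence there are only finitely many possible limit sets. The complement $S\setminus\mathcal{D}$ is then the disjoint union of finitely many open invariant regions, each being either a periodic annulus or a ``parallel region'' on which all orbits share common $\alpha$- and $\omega$-limit sets.

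Using the explicit description of abstract weak orbits from Corollary~\ref{cor:ch}, each singular point (a connected component of $\Sv$), each periodic annulus (a connected component of $\Pv$), each separatrix in $\mathcal{D}\setminus(\Sv\cup\{\text{limit cycles}\})$, and each parallel region contributes exactly one abstract weak orbit. Since all these contributions are finite in number, $|S/[v]|<\infty$, as desired. The main obstacle is ensuring in the second step that each parallel region is a \emph{single} connected component of $\{y\in\mathrm{P}(v):\alpha(y)=\alpha(x),\omega(y)=\omega(x)\}$, rather than an arbitrary union of such components; this amounts to verifying that $\mathcal{D}$ already captures every boundary across which limit behaviour changes, which uses transverse flow-box arguments along $\mathcal{D}$ together with the absence of non-closed recurrent orbits preventing orbits from wandering between prospective parallel regions.
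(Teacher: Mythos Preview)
Your proposal is correct and follows essentially the same route as the paper: both arguments rest on the decomposition theory for quasi-regular flows of finite type on compact surfaces developed in \cite{yokoyama2017decompositions}. The paper's proof is simply a one-line citation of \cite[Lemma~13.2]{yokoyama2017decompositions}, which already states that $S$ decomposes into finitely many abstract weak orbits of the types you list (singular points, limit cycles, semi-multi-saddle separatrices, trivial flow boxes, periodic annuli, transverse annuli); your version unpacks that lemma by hand, building the multi-saddle connection diagram and invoking Poincar\'e--Bendixson to classify the complementary regions. The ``main obstacle'' you flag---that each parallel region is a single abstract weak orbit rather than a union---is precisely the content hidden in the cited lemma and is worked out explicitly later in the paper as Proposition~\ref{prop:five}; so your sketch is on the right track, but in the paper's organization this verification is deferred rather than repeated here.
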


\begin{proof}
Let $v$ be a flow of finite type on a compact surface $S$.
\cite[Lemma 13.2]{yokoyama2017decompositions} implies that $S$ consists of finitely many abstract weak orbits such that each abstract weak orbit is either a  singular point, a limit cycle, a semi-multi-saddle separatrix, a trivial flow box, a periodic annulus, or an open transverse annulus.
\end{proof}



\subsubsection{Transversality for continuous flows on surfaces}

Recall that a curve (or arc) on a surface $S$ is a continuous mapping $C: I \to S$ where $I$ is a nondegenerate connected subset of a circle $\mathbb{S}^1$.
An orbit arc is an arc contained in an orbit.
A curve is simple if it is injective.
We also denote by $C$ the image of a curve $C$.
Denote by $\partial C := C(\partial I)$ the boundary of a curve $C$, where $\partial I$ is the boundary of $I \subset \mathbb{S}^1$. Put $\mathrm{int} C := C \setminus \partial C$.
A simple curve is a simple closed curve if its domain is $\mathbb{S}^1$ (i.e. $I = \mathbb{S}^1$).
A simple closed curve is also called a loop.
A curve $C$ is transverse to $v$ at a point $p \in \mathrm{int} C$ if there are a small neighborhood $U$ of $p$ and a homeomorphism $h:U \to [-1,1]^2$ with $h(p) = 0$ such that $h^{-1}([-1,1] \times \{t \})$ for any $t \in [-1, 1]$ is an orbit arc and $h^{-1}(\{0\} \times [-1,1]) = C \cap U$.
A curve $C$ is transverse to $v$ at a point $p \in \partial C \cap \partial S$ (resp. $p \in \partial C \setminus \partial S$) if there are a small neighborhood $U$ of $p$ and a homeomorphism $h:U \to [-1,1] \times [0,1]$ (resp. $h:U \to [-1,1]^2$) with $h(p) = 0$ such that $h^{-1}([-1,1] \times \{t \})$ for any $t \in [0, 1]$ (resp. $t \in [-1, 1]$) is an orbit arc and $h^{-1}(\{0\} \times [0,1]) = C \cap U$ (resp. $h^{-1}(\{0\} \times [-1,1]) = C \cap U$).
A simple curve $C$ is transverse to $v$ if so is it at any point in $C$.
A simple curve $C$ is transverse to $v$ is called a transverse arc.
Notice that we can also define transversality using tangential spaces, because each flow on a compact surface is topologically equivalent to a $C^1$-flow by the Gutierrez's smoothing theorem.

\subsubsection{Fundamental structures of flows on surfaces}
Let $w$ be a flow on a surface $S$.
An open saturated annulus is a transverse annulus if it consists of non-recurrent orbit which is topologically equivalent to a flow as on the right of Figure~\ref{flow-boxes}.
\begin{figure}
\begin{center}
\includegraphics[scale=0.4]{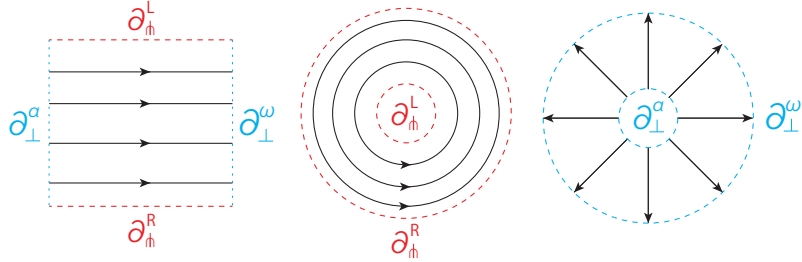}
\end{center}
\caption{A trivial flow box, an open periodic annulus, and an open transverse annulus}
\label{flow-boxes}
\end{figure}
A non-singular orbit of $w$ is a semi-multi-saddle separatrix if it is a separatrix from or to a multi-saddle.
A semi-multi-saddle separatrix of $w$ is a multi-saddle separatrix if it is a separatrix between multi-saddles.
A semi-multi-saddle separatrix of $w$ is an ss-separatrix if it is not between multi-saddles.
Notice that a semi-multi-saddle separatrix of $w$ is an ss-separatrix if and only if it is not a multi-saddle separatrix.
An orbit of the flow $w$ of finite type is an ss-component if it is either a sink, a $\partial$-sink, a source, a $\partial$-source, or a limit circuit.
In other words, an orbit is an ss-component if and only if it is either a semi-repelling $\alpha$-limit set or a semi-attracting $\omega$-limit set.
Here an $\omega$-limit set is semi-attracting if it is either a source, a $\partial$-source, or an attracting limit circuit.
An $\alpha$-limit set is semi-repelling if it is either a sink, a $\partial$-sink, or a  repelling limit circuit.
The union of multi-saddles, semi-multi-saddle separatrices, and ss-components of the flow $w$ of finite type is called the multi-saddle connection diagram and denote by $D(v)$.
A connected component of the multi-saddle connection diagram is called a multi-saddle connection.
Note that the multi-saddle connection diagram of a Hamiltonian vector field is also called the ss-multi-saddle connection diagram and denote by $D_{\mathrm{ss}}(v)$ in \cite{yokoyama2013word}.
A subset of $S$ which is either a torus, a Klein bottle, an open annulus, or an open M\"obius band is periodic if it consists of periodic orbits.
A flow box is homeomorphic to a rectangle $(0,1)^2$, $(0,1) \times [0, 1)$, or $(0,1) \times [0,1]$ such that any open orbit arc is of the form $(0,1) \times \{ y \}$.
By the flow box theorem for a continuous flow on a surface (cf. \cite[Theorem~1.1, p.45]{aranson1996introduction}), for any point, there is its open \nbd which is a flow box.
A flow box is a trivial flow box if it is a saturated disk to which the orbit space of the restriction of the flow is an interval as on the left of Figure~\ref{flow-boxes}.
In the same way, an $\omega'$-limit set is semi-attracting if it is either a source, a $\partial$-source, or a semi-repelling limit circuit.
An $\alpha'$-limit set is semi-repelling if it is either a sink, a $\partial$-sink, or a semi-attracting limit circuit.
Denote by $\partial_{\mathrm{P}(v)}$ the union of separatrices in $\partial S$ from $\partial$-sources to $\partial$-sinks.
Notice that, for a flow of finite type on a surface, a separatrix from a multi-saddle is an ss-separatrix if and only if it is to the semi-attracting $\omega$-limit set, and that a separatrix to a multi-saddle is an ss-separatrix if and only if it is from the semi-repelling $\alpha$-limit set.
A separatrix from or to a saddle is self-connected if each separatrix from and to the saddle.
A separatrix from or to a $\partial$-saddle is self-connected if each separatrix connecting two $\partial$-saddles on a boundary component of the surface (see Figure~\ref{self_conn}).
\begin{figure}
\begin{center}
\includegraphics[scale=0.15]{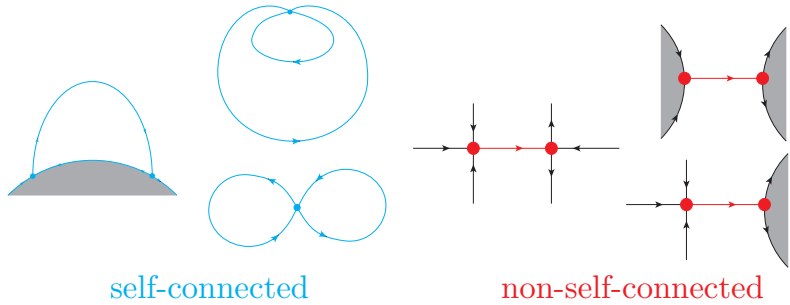}
\end{center}
\caption{Self-connected separatrices and non-self-connected separatrices.}
\label{self_conn}
\end{figure}

\subsubsection{Extended orbit spaces of flows on surfaces}
An extended orbit of a flow on a surface is an equivalence class of an equivalence relation $\sim_{w_{\mathrm{ex}}}$ defined by $x \sim_{w_{\mathrm{ex}}}y$ if they are contained in either an orbit or a multi-saddle connection, and that the extended orbit space of a flow $w$ on a surface $S$ is a quotient space $S/\sim_{w_{\mathrm{ex}}}$ and is denoted by $S/v_{\mathrm{ex}}$.
Note that an extended orbit is an analogous concept of  ``demi-caract\'eristique'' in the sense of Poincar\'e \cite{poincare1881memoire}.
In particular, an extended orbit for a Hamiltonian flow on a compact surface corresponds to a ``demi-caract\'eristique'' in the sense of Poincar\'e.
Moreover, the concept of ``extended positive orbit'' for a flow of finite type on a surface corresponds to one of  ``demi-caract\'eristique'' in the sense of Poincar\'e.

\subsubsection{Extended weak orbit spaces of flows on topological spaces}
We also define a generalization of extended orbits, called extended weak orbits as follows.
A closed connected invariant subset $\mathcal{S}$ consisting of finitely many abstract weak orbits is a quasi-saddle if $(1)$ there are points $x_\alpha, x_\omega \notin \mathcal{S}$ such that $\alpha(x_\alpha) \subseteq \mathcal{S}$ and $\omega(x_\omega) \subseteq \mathcal{S}$, and $(2)$ $|\{ [x] \mid \alpha(x) \subseteq \mathcal{S} \text{ or } \omega(x) \subseteq \mathcal{S} \}|< \infty$.
Then such an abstract weak orbit $[x]$ in the definition of quasi-saddle are called a quasi-saddle-separatrix, and the union of quasi-saddles and quasi-saddle-separatrices is called the quasi-saddle connection diagram.
The connected component of the quasi-saddle connection diagram is called a quasi-saddle connection.
Lemma~\ref{lem:same_limit_set} implies that the condition $(1)$ in the definition of quasi-saddle is equivalent to the following condition: $(1)'$ there are abstract weak orbits $[x_\alpha]$ and $[x_\omega]$ outside of $\mathcal{S}$ such that $\alpha([x_\alpha]) = \bigcup_{y \in [x_\alpha]} \alpha(y) \subseteq \mathcal{S}$ and $\omega([x_\omega]) = \bigcup_{z \in [x_\omega]}\omega(z) \subseteq \mathcal{S}$.
We define the extended weak orbit $[x]_{\mathrm{ex}}$ of a point $x$ by the equivalence class of $\sim_{[w]_{\mathrm{ex}}}$ containing $x$, where $\sim_{[w]_{\mathrm{ex}}}$ is an equivalence class defined by $y \sim_{[w]_{\mathrm{ex}}} z$ if either $[y] = [z]$ or there is a quasi-saddle connection which is closed and contains both $[y]$ and $[z]$.
Then the quotient space $X/\sim_{[w]_{\mathrm{ex}}}$ is denoted by $X/[w]_{\mathrm{ex}}$ and is called the extended weak orbit space of a flow $w$ on a topological space $X$.
Note that the extended weak orbit space of a flow on a topological space is a quotient space of the abstract weak orbit space $X/[w]$.
Moreover,  we will show that the extended weak orbit space of a Hamiltonian flow with finitely many singular points on a compact surface is a quotient space of the extended orbit space (see Lemma~\ref{lem:abs_ext}).


\subsection{Properties of gradient flows on a compact surface}


We have the following statement.

\begin{lemma}\label{lem:sep_multi}
Let $v$ be a flow with finitely many singular points on a compact surface $S$.
Then a point $x$ in a semi-multi-saddle separatrix is not recurrent if and only if $[x] = \langle x \rangle = O(x)$.
\end{lemma}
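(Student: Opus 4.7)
I plan to prove each direction separately. For the forward direction, suppose $x$ lies on a semi-multi-saddle separatrix $O(x)$ and is non-recurrent. Because $O(x)$ is non-singular by the definition of a separatrix, cannot be periodic (a periodic orbit is its own $\alpha$- and $\omega$-limit, not a singular point), and is non-recurrent by hypothesis, we have $x \in \mathrm{P}(v)$. Corollary~\ref{cor:ch} and Corollary~\ref{cor:ch01} then identify both $[x]$ and $\langle x \rangle$ with the connected component through $x$ of
\[
C := \{y \in \mathrm{P}(v) \mid \alpha(y) = \alpha(x),\ \omega(y) = \omega(x)\}.
\]
Reversing time if necessary, I may assume $\alpha(x) = \{s\}$ for some multi-saddle $s$. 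By the definition of multi-saddle (see Figure~\ref{quasi_reg_sing} and \cite{yokoyama2017decompositions}), only finitely many separatrices $O_1, \ldots, O_k$ issue from $s$, and any orbit with $\alpha$-limit $\{s\}$ is either $\{s\}$ itself or one of these; hence $C \subseteq O_1 \sqcup \cdots \sqcup O_k$.

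The next step will be to show that each orbit $O_j \subseteq C$ is clopen in $C$, forcing the connected component of $x$ to be $O(x)$. Lemma~\ref{lem:decomp_limit} gives $\overline{O_j} = O_j \cup \{s\} \cup \omega(x)$. Since $s \in \Sv$ lies outside $\mathrm{P}(v) \supseteq C$, we have $\{s\} \cap C = \emptyset$. If some $O_i \subseteq C$ met $\omega(x)$, invariance of $\omega$-limits would force $O_i \subseteq \omega(x) = \omega(O_i)$, making every point of $O_i$ recurrent and contradicting $O_i \subseteq \mathrm{P}(v)$; hence $\omega(x) \cap C = \emptyset$ as well. Therefore $\overline{O_j} \cap C = O_j$, so $O_j$ is closed in $C$ and, being a member of a finite family whose union is $C$, also open in $C$. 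Thus each $O_j$ is a connected component of $C$, which yields $[x] = \langle x \rangle = O(x)$.

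For the converse, suppose $[x] = \langle x \rangle = O(x)$ and assume for contradiction that $x$ is recurrent. The separatrix $O(x)$ is neither a singular orbit (non-singular by definition) nor a periodic orbit (same argument as above), so $x$ must lie in $\mathrm{R}(v)$. Then Corollary~\ref{cor:quotient_sp}(3) yields $O(x) \subsetneq \hat{O}(x) = \langle x \rangle$, contradicting $\langle x \rangle = O(x)$. The subtle point is the topological separation of the orbits in $C$ in the forward direction: distinct separatrices from $s$ could a priori accumulate on one another, but the non-recurrence built into $\mathrm{P}(v)$ rules this out, so the finiteness of separatrices from a multi-saddle together with the defining constraint $O_i \subseteq \mathrm{P}(v)$ provides the required disconnection.
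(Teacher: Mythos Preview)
Your proof is correct and follows the same overall strategy as the paper: for the backward direction both you and the paper invoke the fact that a non-closed recurrent orbit on a compact manifold satisfies $O(x)\subsetneq\hat O(x)$; for the forward direction both reduce to showing $O(x)$ is a connected component of the set $C=\{y\in\mathrm{P}(v)\mid\alpha(y)=\alpha(x),\ \omega(y)=\omega(x)\}$, which is a finite union of separatrices issuing from the multi-saddle.

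The one genuine difference is in how you establish that $O(x)$ is a connected component of $C$. The paper first shows $\overline{O(x)}\cap C=O(x)$ and then, to obtain openness of $O(x)$ in $C$, builds by hand a neighbourhood $U=\bigcup_n U_{t_n}\cup U_{s_n}$ of $O(x)$ using a Riemannian metric and a sequential/Fr\'echet--Urysohn argument to rule out accumulation of $C\setminus O(x)$ on $O(x)$. Your route is shorter: once every orbit $O_j\subseteq C$ is shown to be closed in $C$ (via $\overline{O_j}=O_j\cup\{s\}\cup\omega(x)$ and $\{s\},\omega(x)$ disjoint from $C\subseteq\mathrm{P}(v)$), the finiteness of the partition $C=\bigsqcup_j O_j$ immediately makes each $O_j$ clopen, hence a connected component. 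This avoids the metric construction entirely and is a legitimate simplification.
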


\begin{proof}
If $x \in \mathrm{R}(v)$, then \cite[Corollary 3.4]{yokoyama2019properness} implies that $O \neq \hat{O} = \langle x \rangle$.
This means that $x$ is not recurrent if $[x] = \langle x \rangle = O(x)$.
Conversely, suppose that $x \in S - \mathrm{R}(v)$.
The orbit $O(x)$ is not closed but $x \in \mathrm{P}(v)$.
By time reversion if necessary, we may assume that $\alpha(x)$ is a multi-saddle $y$.
Since there are at most finitely many singular points on a compact surface, there are at most finitely many separatrices from multi-saddles.
By Lemma~\ref{lem:type}, we have $\langle x \rangle = [x] = [x]'' \subseteq \mathrm{P}(v)$.
Since the subset $C := \{ z \in \mathrm{P}(v) \mid \alpha' (x) = \alpha' (z), \omega'(x) = \omega' (z) \} = \{ z \in \mathrm{P}(v) \mid y = \alpha(z), \omega(x) = \omega(z) \}$ is contained in the set of separatrices from the multi-saddle $y$, the subset $C$ is a finite union of orbits.
For any point $z \in C - O(x)$, we have that $\overline{O(x)} \cap O(z) = (\alpha'(x) \cup O(x) \cup  \omega'(x)) \cap O(z) = (\alpha'(x) \cup \omega'(x)) \cap O(z) = (\alpha'(z) \cup \omega'(z)) \cap O(z) = \emptyset$.
Therefore $\overline{O(x)} \cap C = O(x)$.
We claim that $O(x)$ is a connected component of $C$.
Indeed, fix a Riemannian metric and denote by $d$ the distance function and by $B_r(y) := \{ z \in S \mid d(y,z) \leq r \}$ for any $r > 0$ and $y \in X$.
For any $t \in \R$, there is a closed \nbd $U_t \subseteq B_{1/(|t|+1)}(v_t(x))$ of $v_t(x)$ such that $U_t \cap C \subseteq O(x)$.
Since $O(x)$ is $\sigma$-compact, there are an unbounded increasing sequence $(t_n)_{n \in \Z_{\geq 0}}$ and an unbounded decreasing sequence $(s_n)_{n \in \Z_{\geq 0}}$ with $t_0 = s_0 = 0$ such that the union $U := \bigcup_{n \in \Z_{\geq 0}} U_{t_n} \cup U_{s_n}$ is a \nbd of $O(x)$ with $U \cap C = O(x)$.
Assume that $\overline{U} \cap (C - O(x)) \neq \emptyset$.
Fix a point $y \in \overline{U} \cap (C - O(x))$.
Then $y \in \overline{\bigcup_{n \in \Z_{\geq 0}} U_{t_n}}$ or $y \in \overline{\bigcup_{n \in \Z_{\geq 0}} U_{s_n}}$.
By time reversion if necessary, we may assume that $y \in \overline{\bigcup_{n \in \Z_{\geq 0}} U_{t_n}}$.
The closedness of $U_t$ implies that $y \notin \bigcup_{n = 0}^N U_{t_n}= \overline{\bigcup_{n = 0}^N U_{t_n}}$ for any non-negative integer $N \in \Z_{\geq 0}$.
Then $y \in \bigcap_{N \in \Z_{\geq 0}} \overline{\bigcup_{n \in \Z_{\geq N}} U_{t_n}}$.
Since a metrizable space is Fr\'echet-Urysohn, there is a convergence sequence $(y_n)_{n \in \Z_{\geq 0}}$ of $y_n \in U_{t_n}$ whose limit is $y$.
Since $\lim_{n \to \infty} d(y_n, v_{t_n}(x)) = 0$, we have $y = \lim_{n \to \infty} v_{t_n}(x)$.
This means that $y \in \omega(x) \cap C \subseteq \overline{O(x)} \cap C = O(x)$, which contradicts $y \notin O(x)$.
Therefore $\overline{U} \cap (C - O(x)) = \emptyset$ and so $\overline{U} \cap C = O(x)$.
This implies that $O(x)$ is a connected component of $C$ and so that the connected component $[x]''$ of the subset $C$ containing $x$ is the separatrix $O(x)$ from the multi-saddle $y$.
\end{proof}

Let $v$ be a flow on a compact surface $S$.
Recall that $v$ is a flow of finite type on a surface if it satisfies the following conditions: $(1)$ $v$ is quasi-regular; $(2)$ There are at most finitely many limit cycles; $(3)$ Any recurrent orbit is closed (i.e. $S = \mathop{\mathrm{Cl}}(v) \sqcup \mathrm{P}(v)$).
Note that we require quasi-regularity for this definition because the topological space $X$ is a surface.
We have the following observation.

\begin{lemma}\label{lem:2dg}
Any gradient flow on a compact surface is quasi-regular if and only if it is of finite type.
\end{lemma}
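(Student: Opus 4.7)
The equivalence is asymmetric in difficulty: the implication ``of finite type $\Rightarrow$ quasi-regular'' is immediate from the definition, since quasi-regularity is condition $(1)$ in the definition of a flow of finite type on a surface. So the content is in the converse.

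For the converse, I plan to show that for any gradient flow $v$ on a compact surface, conditions $(2)$ and $(3)$ in the definition of finite-type flow already hold automatically, regardless of quasi-regularity. The point is that for gradient flows, both the existence of any periodic orbit and the existence of a non-closed recurrent orbit are ruled out by the monotonicity of the Lyapunov function along orbits. Concretely, since $v$ is by definition topologically equivalent to a flow $\widetilde{v}$ generated by $-\mathrm{grad}(h)$ for some $C^1$ function $h \colon S \to \R$, and topological equivalence preserves the orbit structure (in particular the classes $\Sv$, $\Pv$, $\mathrm{P}(v)$, $\mathrm{R}(v)$), it suffices to verify the two conditions for $\widetilde{v}$.

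First I would eliminate periodic orbits. Along any non-singular orbit of $\widetilde{v}$ one has $\tfrac{d}{dt} h(\widetilde{v}_t(x)) = -\|\mathrm{grad}(h)(\widetilde{v}_t(x))\|^2 < 0$, so $h$ is strictly decreasing along non-singular orbits. A periodic orbit would force $h$ to return to its initial value after a positive period, contradicting strict monotonicity. Hence $\mathop{\mathrm{Per}}(\widetilde{v}) = \emptyset$ and a fortiori there are no limit cycles, so condition $(2)$ holds vacuously. Next I would eliminate non-closed recurrent orbits. Suppose $x$ is non-singular with $x \in \omega(x)$ (the case $x \in \alpha(x)$ is symmetric via time reversal). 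Then there is a sequence $t_n \to \infty$ with $\widetilde{v}_{t_n}(x) \to x$, so $h(\widetilde{v}_{t_n}(x)) \to h(x)$ by continuity of $h$. On the other hand, fixing any $t_0 > 0$, for all $t_n > t_0$ we have $h(\widetilde{v}_{t_n}(x)) \leq h(\widetilde{v}_{t_0}(x)) < h(x)$, a contradiction. Thus $\mathrm{R}(\widetilde{v}) = \emptyset$, so every recurrent orbit is singular and in particular closed, giving condition $(3)$. Together with the assumed quasi-regularity (condition $(1)$), this shows $v$ is of finite type.

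The only mild subtlety is that ``gradient'' is defined up to topological equivalence rather than as a specific gradient system, so one must be careful that the arguments above, which use $h$, really transfer to $v$. This is fine because periodicity and recurrence (and hence the vanishing of $\mathop{\mathrm{Per}}$, $\mathrm{R}$, and the set of limit cycles) are invariants of the orbit-preserving, direction-preserving homeomorphism that realizes the topological equivalence. I do not anticipate any serious obstacle; the proof is essentially a direct application of the Lyapunov property of $h$ plus a one-line use of the definition of finite-type.
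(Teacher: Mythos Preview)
Your proposal is correct and follows essentially the same approach as the paper: observe that one direction is trivial from the definition, and for the other show that conditions $(2)$ and $(3)$ hold automatically for any gradient flow, leaving only quasi-regularity. The paper's proof is terser---it simply asserts ``by definition, any gradient flow consists of singular points and non-recurrent orbits and so $S = \Sv \sqcup \mathrm{P}(v)$''---whereas you supply the standard Lyapunov argument that justifies this claim; but the logical structure is identical.
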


\begin{proof}
Let $v$ be a gradient flow on a compact surface $S$.
By definition, any gradient flow consists of singular points and non-recurrent orbits and so $S = \Sv \sqcup \mathrm{P}(v)$.
This means that there are no limit cycles.
Thus the assertion holds.
\end{proof}

\begin{lemma}\label{lem:three_pts}
Any quasi-regular gradient flow on a closed surface has at least three abstract weak orbits. 
Moreover, the following conditions are equivalent for a quasi-regular gradient flow $v$ on a connected closed surface $S$: \\
$(1)$ The abstract weak orbit space $S/[v]$ consists of three elements. 
\\
$(2)$ The surface $S$ is a sphere which consists of a sink $s_+$, a source $s_-$ and non-recurrent orbits between them. 
\\ 
$(3)$ $D(v) = \emptyset$. 
\end{lemma}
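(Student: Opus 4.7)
My plan is to establish the inequality $|S/[v]|\geq 3$ first, and then to prove the equivalences $(1)\Leftrightarrow(2)\Leftrightarrow(3)$ via the sphere configuration.

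For the lower bound, Lemma~\ref{lem:2dg} upgrades quasi-regularity to ``of finite type,'' and the gradient hypothesis yields $S=\Sv\sqcup\mathrm{P}(v)$ with no limit cycles (any Lyapunov function is strictly decreasing along non-singular orbits). Compactness of $S$ forces a Lyapunov function $h$ with $v$ topologically equivalent to $-\mathrm{grad}\,h$ to attain its extrema at singular points, and quasi-regularity makes the global maximum a nondegenerate source $s_-$ and the global minimum a nondegenerate sink $s_+$. By Lemma~\ref{lem:awo} the singletons $\{s_-\}$ and $\{s_+\}$ are distinct abstract weak orbits, and any small deleted neighborhood of $s_-$ lies in $\mathrm{P}(v)$, contributing a third abstract weak orbit.

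For $(2)\Rightarrow(1)$, on $S=S^2$ with only one source $s_-$ and one sink $s_+$, every $x\in\mathrm{P}(v)=S^2\setminus\{s_-,s_+\}$ satisfies $\alpha(x)=\{s_-\}$ and $\omega(x)=\{s_+\}$, so $\{y\in\mathrm{P}(v):\alpha(y)=\alpha(x),\omega(y)=\omega(x)\}=\mathrm{P}(v)$ is a connected cylinder, and Corollary~\ref{cor:ch} gives $[x]=\mathrm{P}(v)$ with $S/[v]=\{\{s_-\},\{s_+\},\mathrm{P}(v)\}$. For $(1)\Rightarrow(2)$, if $|S/[v]|=3$ then equality in the lower-bound count forces exactly one source, one sink, no (multi-)saddle (each would be an additional abstract weak orbit by Lemma~\ref{lem:awo}), and exactly one non-recurrent class; the Poincar\'e--Hopf identity
\[
\chi(S)=\#\,\text{sources}+\#\,\text{sinks}-\sum_{\sigma\in\text{multi-saddles}}(k_\sigma-1),
\]
where $2k_\sigma$ counts the separatrices at $\sigma$, then yields $\chi(S)=2$ and hence $S\cong S^2$.

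For $(2)\Leftrightarrow(3)$, the sphere configuration has no multi-saddles, no semi-multi-saddle separatrices, and no limit circuits, so $D(v)=\emptyset$; conversely $D(v)=\emptyset$ eliminates all multi-saddles and limit circuits, and the same Poincar\'e--Hopf identity forces $S\cong S^2$ with a single source and sink. The main obstacle is a careful reading of the definition of $D(v)$ to confirm that the isolated sink and source on the sphere contribute no non-trivial component to the diagram (i.e., that the ss-component clause collapses to $\emptyset$ in the absence of any attached separatrix structure); the remainder is Euler-characteristic bookkeeping combined with the explicit descriptions of abstract weak orbits from Lemma~\ref{lem:awo} and Corollary~\ref{cor:ch}.
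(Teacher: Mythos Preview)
Your argument is correct and mirrors the paper's proof: both use the Lyapunov/height function to produce a sink, a source, and a non-singular orbit for the lower bound, and both invoke Poincar\'e--Hopf to pin down $\chi(S)=2$ for the equivalences (the paper runs the cycle $(1)\Rightarrow(2)\Rightarrow(3)\Rightarrow(1)$, but with the same ingredients you use). On your flagged obstacle about ss-components, the paper simply asserts $D(v)=\emptyset$ for the sphere configuration without further comment, so the convention operative in this lemma evidently does not count the isolated sink and source toward $D(v)$; your instinct there matches what the paper does.
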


\begin{proof}
Let $v$ be a quasi-regular gradient flow on a connected closed surface $S$.
The existence of a height function with maximal values, minimal values, and non-extremum values implies that there are a sink, a source, and non-singular orbits. 
This means that there are at least three abstract weak orbits. 
Suppose that $S/[v]$ consists of three elements. 
Since $v$ is a quasi-regular gradient flow on a closed surface, there are a sink $s_-$ and a source $s_+$.
Then the third element is the complement $\mathbb{A} := S - \{ s_-, s_+ \}$ of the union of the sink and source.
By Poincar\'e-Hopf theorem for continuous flows with finitely many singular points on compact surfaces (cf. \cite[Lemma 8.2]{yokoyama2017decompositions}), the Euler characteristic of the surface $S$ is two and so $S$ is a sphere.
Therefore the complement $\mathbb{A}$ is an open annulus which consists of non-recurrent orbits connecting $s_-$ and $s_+$. 
This means that the condition $(1)$ implies the condition $(2)$. 
If the surface $S$ is a sphere which consists of a sink $s_+$, a source $s_-$ and non-recurrent orbits between them, then $D(v) = \emptyset$. 
Suppose that $D(v) = \emptyset$. 
By Poincar\'e-Hopf theorem for continuous flows with finitely many singular points on compact surfaces (cf. \cite[Lemma 8.2]{yokoyama2017decompositions}), the non-existence of multi-saddles and existence of sinks and sources imply that the Euler characteristic of the connected closed surface $S$ is positive and so $S$ is a sphere.
Since the Euler characteristic of the sphere $S$ is two, the singular point set $\Sv$ consists of a sink $s_-$ and a source $s_+$. 
The complement $S - \Sv = \mathrm{P}(v)$ is a transverse annulus $\{ x \in S \mid \alpha (x) = s_+, \omega(x) = s_- \}$ which is an abstract weak orbit. 
This means that $S/[v]$ consists of three elements. 
\end{proof}

Recall that denote by $D(v)$ the multi-saddle connection diagram of a flow $v$.
We have the following characterization of non-existence of multi-saddle connection diagram of a quasi-regular gradient flow on a connected compact surface with nonempty boundary.

\begin{lemma}\label{lem:three_pts_bd}
The following conditions are equivalent for a quasi-regular gradient flow $v$ on a connected compact surface $S$ with nonempty boundary: \\
$(1)$ The abstract weak orbit space $S/[v]$ consists of three elements. 
\\
$(2)$ The surface $S$ is a closed disk $\mathbb{D}^2$ and the flow $v$ consists of a $\partial$-sink $s_-$, a $\partial$-source $s_+$ and non-recurrent orbits between them.
\\ 
$(3)$ $D(v) = \emptyset$. 
\end{lemma}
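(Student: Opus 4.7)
The plan is to mirror closely the argument for Lemma~\ref{lem:three_pts}, with the essential new ingredient being the analysis of the induced gradient flow on the nonempty boundary $\partial S$. By Lemma~\ref{lem:2dg}, a quasi-regular gradient flow on a compact surface is automatically of finite type, so the decomposition $S = \Sv \sqcup \mathrm{P}(v)$ holds with only finitely many singular points; by Lemma~\ref{lem:awo}, each singular-point orbit is its own abstract weak orbit, so the three elements of $S/[v]$ must be distributed between singular points and connected components of $\mathrm{P}(v)$.

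For the implication $(1) \Rightarrow (2)$, any Lyapunov function for $v$ attains both its maximum and minimum on the compact surface $S$, producing at least one source-type and one sink-type singular point. Since $\partial S \neq \emptyset$ is flow-invariant, each boundary circle carries a gradient flow on a circle, which forces at least one $\partial$-source and one $\partial$-sink on each boundary component. Given that $|S/[v]|=3$, there must be exactly two singular points and exactly one abstract weak orbit inside $\mathrm{P}(v)$. Hence both singular points lie on the boundary, one is a $\partial$-source $s_+$ and the other a $\partial$-sink $s_-$, and $\partial S$ is a single circle. Applying the Poincar\'e--Hopf theorem for continuous flows with finitely many singular points on compact surfaces (cf.~\cite[Lemma~8.2]{yokoyama2017decompositions}) with the index $1/2$ for each $\partial$-type singular point gives $\chi(S) = 1$; a connected compact surface with nonempty boundary and Euler characteristic one is the disk $\mathbb{D}^2$, and the remaining class $\mathrm{P}(v) = \mathbb{D}^2 - \{s_+, s_-\}$ consists of non-recurrent orbits between $s_+$ and $s_-$.

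The implication $(2) \Rightarrow (3)$ is immediate, since on such a disk the only singular points are a nondegenerate $\partial$-source and $\partial$-sink; there are no multi-saddles and no semi-multi-saddle separatrices, so $D(v)=\emptyset$ by the same convention used in Lemma~\ref{lem:three_pts}. For $(3) \Rightarrow (2)$, from $D(v)=\emptyset$ there are no multi-saddles, and together with the gradient property (which rules out centers and limit cycles) every singular point is of sink, source, $\partial$-sink, or $\partial$-source type. I would then rule out interior sinks or sources by a basin-of-attraction argument: without saddle separatrices or limit cycles, distinct sink basins cannot share a frontier, so in the connected surface $S$ there can be at most one sink and at most one source, and if either were interior the boundary flow on $\partial S$ would still force additional boundary sinks or sources, again producing a forbidden basin frontier. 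Hence all singular points lie on $\partial S$, and on each boundary circle the restriction is a gradient flow on a circle, giving equal numbers $k_i \geq 1$ of alternating $\partial$-sources and $\partial$-sinks. Poincar\'e--Hopf then yields $\chi(S) = \sum_i k_i$; combined with $\chi(S) \leq 1$ for a connected compact surface with nonempty boundary, we force exactly one boundary component with $k_1 = 1$, so $S = \mathbb{D}^2$ with a single $\partial$-source $s_+$ and a single $\partial$-sink $s_-$. Corollary~\ref{cor:ch} then identifies $\mathrm{P}(v) = \mathbb{D}^2 - \{s_+, s_-\}$ (which is connected) as a single abstract weak orbit, giving $|S/[v]| = 3$.

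The hard part will be the step ruling out interior sinks and sources under $D(v)=\emptyset$: while the Poincar\'e--Hopf count tightly constrains the total index, one must ensure that no interior extremum can coexist with purely boundary singularities without producing a saddle separatrix or limit cycle in the frontier of its basin. The cleanest route I see is to argue that the basin of an interior sink is open with nonempty compact invariant topological frontier in $S$, and that quasi-regularity together with the gradient property forces this frontier either to contain a saddle or to be a limit cycle, both of which are excluded by $D(v)=\emptyset$. All remaining steps then reduce to the index theorem and to the explicit characterization of abstract weak orbits in Corollary~\ref{cor:ch}.
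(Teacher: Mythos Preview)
Your proof is correct and close in spirit to the paper's, but you take an unnecessarily roundabout path in the implication $(3)\Rightarrow(2)$. You single out the basin-of-attraction argument ruling out interior sinks and sources as ``the hard part'', but in fact it is dispensable: the Poincar\'e--Hopf count alone does all the work, and the paper's proof proceeds that way. Since $D(v)=\emptyset$ and the flow is gradient, every singular point is a sink, source, $\partial$-sink, or $\partial$-source, each of index at least $1/2$; each boundary circle contributes at least one $\partial$-source and one $\partial$-sink, so the index sum is at least the number of boundary components, hence $\chi(S)\geq 1$. Combined with $\chi(S)\leq 1$ for a connected compact surface with nonempty boundary, this immediately forces $\chi(S)=1$, a single boundary circle carrying exactly one $\partial$-source and one $\partial$-sink, and \emph{no} interior singular points (any such would push the index sum past $1$). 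So $S=\mathbb{D}^2$ with $\Sv=\{s_+,s_-\}$, and condition~$(2)$ follows. You actually invoke this same Poincar\'e--Hopf step at the end of your argument, so the basin analysis preceding it is redundant.

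For the record, the paper also organizes the cycle differently, proving $(3)\Rightarrow(2)\Rightarrow(1)\Rightarrow(3)$ rather than your $(1)\Rightarrow(2)\Rightarrow(3)\Rightarrow(2)$, but this is cosmetic; your $(1)\Rightarrow(2)$ is fine and in fact contains the same index computation.
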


\begin{proof}
Let $v$ be a quasi-regular gradient flow on a compact surface $S$ with nonempty boundary. 
Suppose that the multi-saddle connection diagram $D(v)$ is empty. 
Since each boundary component contains at least two singular points, 
there are one $\partial$-sink and one $\partial$-source. 
By Poincar\'e-Hopf theorem for continuous flows with finitely many singular points on compact surfaces, the non-existence of multi-saddles implies that the Euler characteristic of $S$ is at least one. 
Since $\partial S \neq \emptyset$, the surface $S$ is a closed disk. 
The existence of a height function with maximal values, minimal values, and non-extremum values implies that there are a $\partial$-sink $s_-$, a $\partial$-source $s_+$, and non-singular orbits. 
Therefore $\Sv$ consists of the $\partial$-sink and the $\partial$-source. 
This means that the condition $(2)$ holds. 
Suppose that $S = \mathbb{D}^2$ and the flow $v$ consists of a $\partial$-sink $s_-$, a $\partial$-source $s_+$ and non-recurrent orbits between them.
Then the complement of the union of the $\partial$-sink and the $\partial$-source is 
$\{ x \in \mathrm{P}(v) \mid \alpha(x) = s_-, \omega(x) = s_+ \}$ which is an abstract weak orbit. 
This means that there are exactly three abstract weak orbits. 
Suppose that $S/[v]$ consists of three elements. 
Since $v$ is a gradient flow, there are either $\partial$-sinks or sinks, and there are $\partial$-sources or sources. 
Since each boundary component contains at least two singular points, there are one $\partial$-sink $s_-$ and one $\partial$-source $s_+$. 
Then the complement $S - \{s_-, s_+ \}$ is an abstract weak orbit. 
This means that $D(v) = \emptyset$. 
\end{proof}

We have the following stratifications.

\begin{proposition}\label{prop:01}
The following statements hold for a quasi-regular gradient flow $v$ on a compact connected surface $S$:
\\
$(1)$ The binary relation $\leq_\partial$ for the abstract weak orbit space $S/[v]$ is a partial order.
\\
$(2)$
The quotient map $S \to S/[v]$ is a finite poset-stratification with respect to $\leq_\partial$.
\\
$(3)$ The filtration $\emptyset \subset S_{\leq 0} \subseteq S_{\leq 1}  \subsetneq S_{\leq 2}$ is a stratification with respect to $\leq_\partial$, where $S_{\leq 0} = \Sv$, $S_{\leq 1} = \Sv \cup D(v)$, and $S_{\leq 2} - S_{\leq 1} =  \mathrm{P}(v) \setminus D(v)$.
\\
$(4)$ If there is a multi-saddle, then $S_{\leq i}$ is the set of elements of height at least $i$ with respect to $\leq_\partial$.
\end{proposition}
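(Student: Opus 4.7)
The plan is to exploit that a quasi-regular gradient flow $v$ on a compact surface has no limit cycles and no non-closed recurrent orbits (a gradient function strictly decreases along non-singular orbits), so by Lemma~\ref{lem:2dg} the flow $v$ is of finite type on a surface. By the classification of abstract weak orbits for surface flows of finite type (\cite[Lemma~13.2]{yokoyama2017decompositions}), each abstract weak orbit belongs to one of three types: (a) a singular point; (b) a single semi-multi-saddle separatrix orbit; or (c) an open saturated region (a trivial flow box or a transverse annulus) whose orbits share a common source $\alpha(z)$ and sink $\omega(z)$, necessarily non-multi-saddle singular points. For part~(1) I will verify antisymmetry and transitivity of $\leq_\partial$ by case analysis over these three types. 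The decisive observation is that any type-(c) class is open in $S$, so it is disjoint from $\overline{[x]}$ for every distinct class $[x]$: a point in an open class has a neighborhood missing $[x]$, so it cannot be a limit of $[x]$. Thus if $[y]$ is of type~(c) and $[y] \cap \overline{[x]} \neq \emptyset$, then $[y] = [x]$. Combined with the identity $\overline{O(z)} = O(z) \cup \alpha(z) \cup \omega(z) \subseteq O(z) \cup \Sv$ for $z \in \mathrm{P}(v)$, antisymmetry drops out immediately, and transitivity follows because $\overline{[z]}$ is closed and invariant, so orbit closures of points in $\overline{[z]}$ remain in $\overline{[z]}$.

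For part~(2) I will use that $S/[v]$ is finite (hence automatically Alexandroff) and that the specialization order $\leq_{\tau_{[v]}}$ coincides with $\leq_\partial$. One direction is Lemma~\ref{lem:trans_order}; for the converse, one observes that $\overline{[y]} \subseteq S$ is already a union of full abstract weak orbits, because any of its limit points that is not in $[y]$ is either an isolated singular point (its own class) or lies on a single semi-multi-saddle separatrix on the topological boundary of $[y]$ (each such separatrix being an individual orbit and hence a class). Therefore $\pi_{[v]}^{-1}(\overline{\{[y]\}}^{\tau_{[v]}}) = \overline{[y]}$, and together with antisymmetry from~(1) this gives $S/[v]$ the structure of a finite $T_0$ Alexandroff space whose specialization order is $\leq_\partial$, completing~(2). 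For part~(3), the three strata of the filtration are $\Sv$ (a finite, $0$-dimensional set), $D(v) \setminus \Sv$ (a union of $1$-dimensional orbits), and $\mathrm{P}(v) \setminus D(v)$ (a union of open $2$-dimensional type-(c) regions). The closure conditions $\overline{S_{\leq i} \setminus S_{\leq i-1}} \setminus (S_{\leq i} \setminus S_{\leq i-1}) \subseteq S_{\leq i-1}$ follow because limits of separatrices are singular points, and limits of type-(c) regions lie in $\Sv \cup (D(v) \setminus \Sv) = S_{\leq 1}$ (other type-(c) regions being open and disjoint).

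For part~(4) I first read off heights in $\leq_\partial$: singular points are minimal (height $0$); each semi-multi-saddle separatrix $y$ has height $1$, via $\{s, y\}$ with $s \in \alpha(y) \cup \omega(y)$, and nothing in $\mathrm{P}(v)$ lies strictly below $y$ because $\overline{O(y)} \subseteq O(y) \cup \Sv$; and any type-(c) class has height at most $2$, since nothing in $\mathrm{P}(v) \setminus D(v)$ lies strictly below another point. Hence the filtration by height matches $S_{\leq 0} \subseteq S_{\leq 1} \subseteq S_{\leq 2}$ once one verifies that every type-(c) class attains height $2$. The main obstacle is to establish this under the hypothesis that a multi-saddle exists; equivalently, to show that some semi-multi-saddle separatrix lies in $\overline{[z]}$ for every type-(c) class $[z]$. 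I will argue by contradiction: assume $\overline{[z]} \cap (D(v) \setminus \Sv) = \emptyset$. Then $\overline{[z]} \cap \Sv \subseteq \{\alpha(z), \omega(z)\}$, both non-multi-saddle (otherwise $z \in D(v)$). A local analysis at $\alpha(z)$, after passing to a $C^\infty$ representative via Gutierrez's smoothing theorem and using the flow-box description at a nondegenerate source or $\partial$-source, shows that the circle (or half-circle) of emerging directions decomposes into open sectors of orbits tending to various sinks, separated by separatrices flowing into multi-saddles. Any such separator would lie in $\overline{[z]}$, contradicting the assumption; hence every emerging direction at $\alpha(z)$ sends its orbit into $[z]$, so a punctured neighborhood of $\alpha(z)$ is contained in $[z]$, making $\alpha(z)$ interior to $\overline{[z]}$. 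The same at $\omega(z)$ makes $\overline{[z]}$ both open and closed in the connected surface $S$, forcing $\overline{[z]} = S$ and hence $\Sv = \{\alpha(z), \omega(z)\}$, which contains no multi-saddle---a contradiction. The delicate point will be making this local direction-sphere analysis fully rigorous in the continuous quasi-regular setting, including at boundary singular points.
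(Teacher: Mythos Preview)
Your argument is correct, and the route is genuinely different from the paper's in two places.

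For parts~(1)--(2), the paper does not verify antisymmetry and transitivity of $\leq_\partial$ by a direct case analysis as you do. Instead, it first computes the height of every abstract weak orbit (height $0$ for singular points, height $1$ for semi-multi-saddle separatrices, height $2$ for the remaining open pieces) and then reads off that $\leq_\partial$ is a partial order and coincides with the specialization order from this height stratification together with finiteness of $S/[v]$. Your explicit verification via the observation that type-(c) classes are open (hence maximal) and that $\overline{[y]}$ is invariant is cleaner and more self-contained; the paper's version is terser but leans on the height computation already being in hand.

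For part~(4), the paper avoids your local direction-sphere analysis at $\alpha(z)$ entirely. It first disposes of the multi-saddle-free case via Lemmas~\ref{lem:three_pts} and~\ref{lem:three_pts_bd} (the sphere with one sink and one source, or the disk with one $\partial$-sink and one $\partial$-source), and then, assuming a multi-saddle exists, invokes \cite[Corollary~11.3]{yokoyama2017decompositions} to assert directly that $S - (\Sv \cup D(v))$ is a finite union of trivial flow boxes each of whose boundary contains a semi-multi-saddle separatrix $\gamma$; the relation $\gamma <_\pitchfork B$ then gives height $2$ immediately. Your connectedness argument (if $\overline{[z]}$ misses all separatrices then it is open and closed, hence all of $S$, forcing $\Sv = \{\alpha(z),\omega(z)\}$) reaches the same conclusion without that external structural result, at the cost of the local analysis you flag as delicate. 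One small point you glossed over: you should note that a multi-saddle $s \in \overline{[z]}$ would force a separatrix from $s$ into $\overline{[z]}$ by the hyperbolic-sector description near $s$, so ruling out separatrices already rules out multi-saddles in $\overline{[z]} \cap \Sv$; after that your claim $\overline{[z]} \cap \Sv \subseteq \{\alpha(z),\omega(z)\}$ follows from local attractivity of sinks and repulsivity of sources.
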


\begin{proof}
By definition of gradient flow, any gradient flow consists of singular points and non-recurrent orbits and so $S = \Sv \sqcup \mathrm{P}(v)$.
By Lemma~\ref{lem:three_pts}, if $v$ is a spherical flow consisting of one sink $s_\omega$ and one source $s_\alpha$ and one transverse annulus on a sphere, then the assertions hold. 
Lemma~\ref{lem:three_pts_bd} implies that if $S$ is a closed disk $\mathbb{D}^2$ and the flow $v$ consists of a $\partial$-sink $s_-$, a $\partial$-source $s_+$ and non-recurrent orbits between them, then the assertions hold. 
Thus we may assume that there is a multi-saddle.
Lemma~\ref{lem:equivalent} and Lemma~\ref{lem:non_min_abst} imply that $S_{\leq 0} = \Sv$ is the set of height zero points with respect to $\leq_\partial$.
Then the finite union $\Sv \cup D(v)$ of orbits is a closed subset and the union of orbits of height at most one with respect to $\leq_\partial$.
\cite[Corollary 11.3]{yokoyama2017decompositions} implies that the complement $S - (\Sv \cup D(v)) \subseteq \mathrm{P}(v)$ is a finite disjoint union of trivial flow boxes.
The boundary of such a trivial flow box $B$ contains a semi-multi-saddle separatrix $\gamma$ and is a finite union of separatrices and singular points.
Then $\gamma <_\pitchfork B$.
This implies that $S_{\leq 2} - S_{\leq 1} = \mathrm{P}(v) \setminus D(v)$ is the union of orbits of height two with respect to $\leq_\partial$.
Then the binary relation $\leq_\partial$ on $S/[v]$ is the partial order and also the specialization order of $S/[v]$.
The finiteness of $S/[v]$ implies that the quotient map $S \to S/[v]$ is a finite poset-stratification with respect to $\leq_\partial$.
\end{proof}


\subsection{Properties of Hamiltonian flows on surfaces}

Recall several concepts for Hamiltonian flows on surfaces to state a reduction of the abstract weak orbit space of a Hamiltonian flow into the Reeb graph.

\subsubsection{Fundamental notion of Hamiltonian flows on a compact surface}

A $C^r$ vector field $X$ for any $r \in \Z_{\geq0}$ on an orientable surface $S$ is Hamiltonian if there is a $C^{r+1}$ function $H \colon S \to \mathbb{R}$ such that $dH= \omega(X, \cdot )$ as a one-form, where $\omega$ is a volume form of $S$.
In other words, locally the Hamiltonian vector field $X$ is defined by $X = (\partial H/ \partial x_2, - \partial H/ \partial x_1)$ for any local coordinate system $(x_1,x_2)$ of a point $p \in S$.
Note that a volume form on an orientable surface is a symplectic form.
It is known that a $C^r$ ($r \geq 1$) Hamiltonian vector field on a compact surface is structurally stable with respect to the set of $C^r$ Hamiltonian vector fields if and only if both each singular point is nondegenerate and each separatrix is self-connected (see  \cite[Theorem 2.3.8, p. 74]{ma2005geometric}).
A flow is Hamiltonian if it is topologically equivalent to a flow generated by a Hamiltonian vector field.
Note that the multi-saddle connection diagram $D(v)$ of a Hamiltonian flow $v$ on a compact surface is the union of multi-saddles and separatrices between them.
We show the following statements.

\begin{lemma}\label{lem:ham_trivial}
The following statement holds for a structurally stable Hamiltonian vector field $v$ on a compact connected surface $S$:
\\
$(1)$ If the multi-saddle connection $D(v)$ is empty, then the abstract weak orbit space $S/[v]$ consists of at most three elements.
\\
$(2)$ $|S/[v]| = 1$ if and only if $S$ is a closed periodic annulus.
\\
$(3)$ $|S/[v]| = 2$ if and only if $S$ is a closed center disk.
\\
$(4)$ $|S/[v]| = 3$ if and only if $S$ is a rotating sphere {\rm(i.e.} consists of two centers and one open periodic annulus {\rm)}.
\end{lemma}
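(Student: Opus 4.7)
The plan is to treat all four statements simultaneously by characterising the compact connected surfaces that admit a structurally stable Hamiltonian vector field with empty multi-saddle connection diagram, and then simply counting abstract weak orbits in each case. The key observation is that structural stability of a Hamiltonian vector field on a compact surface forces every singular point to be nondegenerate; since a Hamiltonian flow preserves a volume form, the nondegenerate singular points can only be centers or (multi-)saddles. The hypothesis $D(v) = \emptyset$ immediately eliminates multi-saddles, so $\mathop{\mathrm{Sing}}(v)$ consists entirely of centers.

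The next step is to show that every non-singular orbit is periodic. Locally $v$ is tangent to the level sets of a Hamiltonian $H$, and in the absence of saddles every critical point of $H$ is a local extremum, so every level set is either a finite set of isolated centers or a disjoint union of regular circles. A non-singular orbit is therefore a connected component of a regular level set and hence a periodic orbit, which gives the decomposition $S = \mathop{\mathrm{Sing}}(v) \sqcup \mathop{\mathrm{Per}}(v)$. By Corollary~\ref{cor:ch}, the abstract weak orbit of a center is its singleton, and the abstract weak orbit of a periodic point is the connected component of $\mathop{\mathrm{Per}}(v)$ containing it; since the boundary circles of such a component must be either centers or other non-closed orbits and the latter are excluded, the structure is completely determined by the number of centers and the combinatorics of how periodic annuli fit between them.

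Next I would invoke Poincar\'e-Hopf (using \cite[Lemma 8.2]{yokoyama2017decompositions} in the quasi-regular continuous-flow formulation already used in the proofs of Lemma~\ref{lem:three_pts} and Lemma~\ref{lem:three_pts_bd}, noting that boundary components must be periodic orbits because they would otherwise contain a $\partial$-saddle): the sum of indices of singular points equals $\chi(S)$, and each center contributes $+1$, so the number of centers equals $\chi(S) \geq 0$. Among compact connected orientable surfaces this leaves exactly three possibilities: a sphere with two centers, a closed disk with one center, or a closed annulus with no singular points. A direct inspection of each model yields $|S/[v]| = 3$ (two center points and the open periodic annulus between them), $|S/[v]| = 2$ (the center and the surrounding open periodic annulus bounded by a periodic boundary circle), and $|S/[v]| = 1$ (a single periodic annulus), respectively. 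This proves (1) and the ``only if'' direction of (2)--(4); the ``if'' directions are immediate from the same case analysis, since in each of the three model flows the abstract weak orbit space is directly computed.

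The only place requiring genuine care will be ensuring that the listed surfaces are really all possibilities: in particular, that a boundary component of $S$ cannot be an exceptional object under the $D(v) = \emptyset$ hypothesis. The point is that a boundary component is an invariant $1$-submanifold, hence a union of orbits, and in a structurally stable Hamiltonian flow with only centers it cannot contain any singular point (centers are isolated interior extrema), so each boundary component is a single periodic orbit. Once this is in hand, the Poincar\'e-Hopf count is rigorous and the enumeration is complete.
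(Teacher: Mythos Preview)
Your approach is correct and departs from the paper's in the classification step. Both you and the paper begin identically: structural stability forces nondegenerate singular points, the Hamiltonian condition rules out sinks and sources, and $D(v)=\emptyset$ then leaves only centers, with every boundary component a periodic orbit. From there the paper argues directly that the complement of the centers is a single open periodic annulus $U$ and uses the generalized Poincar\'e--Bendixson theorem to identify $\overline{U}-U$ as one or two singular points; you instead invoke Poincar\'e--Hopf to equate the number of centers with $\chi(S)$ and then read off the surface from the short list of compact connected orientable surfaces with $\chi\geq 0$. Your route makes the topological constraint on $S$ explicit and is arguably cleaner; the paper's route avoids naming the surface until the end and ties more directly into the periodic-annulus description used in the subsequent Lemma~\ref{lem:ham}.

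One small gap: your enumeration ``sphere, closed disk, closed annulus'' silently omits the torus, which also has $\chi=0$. You need one extra line to exclude it: on a closed surface the global Hamiltonian $H\colon S\to\R$ must attain a maximum and a minimum, giving at least two centers, hence $\chi(S)\geq 2$; so among closed surfaces only the sphere survives. With that sentence added your argument is complete.
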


\begin{proof}
Let $v$ be a flow generated by a structurally stable Hamiltonian vector field on a compact surface $S$.
\cite[Theorem 2.3.8, p. 74]{ma2005geometric} implies that each singular point is nondegenerate and each separatrix is self-connected.
Since $v$ is generated by a Hamiltonian vector field, we have $S = \mathop{\mathrm{Cl}}(v) \sqcup \mathrm{P}(v)$ and it has neither sinks, $\partial$-sinks, sources, nor $\partial$-sources, and so each singular point is either a center, a saddle, or a $\partial$-saddle.
This implies that each boundary component which is not a periodic orbit contains at least two $\partial$-saddles and two multi-saddle separatrices, and so contains at least four abstract weak orbits.
Moreover, since $S$ is a surface, the flow $v$ contains at least one periodic annulus. 
Therefore if the multi-saddle connection $D(v)$ is empty, then the abstract weak orbit space $S/[v]$ consists of one periodic annulus and at most two centers.
If $S$ is a closed periodic annulus $U$ (resp. closed center disk, rotating sphere), then $|S/[v]| = 1$ (resp. 2, 3).
Conversely, suppose that $|S/[v]| \leq 3$.
Then each boundary component is a periodic orbit.
if $|S/[v]| = 1$, then $S = U$ is a closed periodic annulus.
If $|S/[v]| = 2$ (resp $3$), then the generalization of the Poincar\'e-Bendixson theorem for a flow with finitely many singular points implies that $\overline{U} - U$ is a singular point (resp. two singular points) and so one abstract weak orbit (resp. two abstract weak orbits).
\end{proof}

\begin{lemma}\label{lem:ham}
Any Hamiltonian flow $v$ on a compact connected surface $S$ has at most finitely many singular points if and only if it is of finite type.
In any case, the following statements hold:
\\
$(1)$ Each singular point is either a center or a multi-saddle.
\\
$(2)$
The multi-saddle connection diagram $D(v)$ is a finite union of non-recurrent orbits and of multi-saddles.
\\
$(3)$ The complement $S - (\Sv \cup D(v)) = \Pv$ consists of finitely many periodic annuli each of which is an abstract weak orbit.
\\
$(4)$ The binary relation $\leq_\partial$ for the abstract weak orbit space $S/[v]$ is a partial order.
\\
$(5)$
The quotient map $S \to S/[v]$ is a finite poset-stratification with respect to $\leq_\partial$.
\\
$(6)$ The filtration $\emptyset \subseteq S_{\leq 0} \subseteq S_{\leq 1}  \subsetneq S_{\leq 2}$ is a stratification with respect to $\leq_\partial$, where $S_{\leq 0} = \Sv$, $S_{\leq 1} = \Sv \cup D(v)$, and $S_{\leq 2} - S_{\leq 1}  =  \Pv$.
\\
$(7)$ If there is a multi-saddle, then $S_{\leq i}$ is the set of elements of height at least $i$ with respect to $\leq_\partial$.
\end{lemma}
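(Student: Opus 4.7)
The plan is to first prove the biconditional by observing that, for a Hamiltonian flow, being ``of finite type on a surface'' and having finitely many singular points are driven by the same structural constraint. The forward implication is easy: if $v$ is of finite type, then by definition $S$ is quasi-regular and $S = \Cv \sqcup \mathrm{P}(v)$, and by \cite[Lemma~13.2]{yokoyama2017decompositions} $S$ has finitely many abstract weak orbits; since each singular point is its own abstract weak orbit by Lemma~\ref{lem:awo}, $\Sv$ is finite. For the reverse implication, representing $v$ up to topological equivalence by a Hamiltonian vector field with Hamiltonian $H$, I use area preservation three times: (a) there are no sinks, sources, $\partial$-sinks, $\partial$-sources, nor one-sided attracting or repelling limit cycles, because each would collapse a positive-area neighborhood under iteration; (b) this together with the classical local classification of isolated critical points of $H$ forces every singular point to be either a center, a saddle, or a higher-order multi-saddle, yielding~(1) and quasi-regularity; (c) no orbit can be non-closed recurrent, since a Maier-type quasi-minimal set on a compact surface would produce a return map on a transverse arc incompatible with area preservation. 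Hence $v$ satisfies all three defining clauses of ``of finite type on a surface''.

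Assuming the equivalent hypothesis, (2) is immediate: $D(v)$ is the union of the finite set of multi-saddles with their finitely many separatrices, hence a closed finite union of orbits. For~(3), the Poincar\'e-Bendixson theorem for surface flows with finitely many singular points, combined with the absence of limit cycles and of non-closed recurrence, shows that both the $\alpha$- and $\omega$-limit sets of any $x \in \mathrm{P}(v)$ are multi-saddles; hence every non-closed non-singular orbit is a multi-saddle separatrix, so $\mathrm{P}(v) \subseteq D(v)$. Conversely, no orbit in $S - (\Sv \cup D(v))$ can be non-closed recurrent or a non-multi-saddle separatrix, so the complement consists entirely of periodic orbits; that is, $S - (\Sv \cup D(v)) = \Pv$. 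The finite embedded graph $\Sv \cup D(v)$ cuts the complement into finitely many connected saturated open pieces, each a maximal annulus (or non-orientable analogue) of periodic orbits, and by Lemma~\ref{lem:awo} each such connected component of $\Pv$ is exactly one abstract weak orbit.

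Assertions (4)--(7) then follow the blueprint of Proposition~\ref{prop:01}. The finite decomposition just obtained --- centers and multi-saddles (height $0$), multi-saddle separatrices (height $1$ by Lemma~\ref{lem:sep_multi}), and periodic annuli (height $2$, since the coborder of each periodic annulus contains separatrices and singular points on its topological boundary, producing a strict $\leq_\pitchfork$-comparison) --- gives a strict three-level stratification. Lemma~\ref{lem:non_min_abst} supplies the base step, and the purely dimensional character of the hierarchy makes $\leq_\partial$ automatically antisymmetric (no two distinct abstract weak orbits can lie in each other's closures when they belong to strata of the same height) and transitive (a putative chain strictly decreases height). Finiteness of $S/[v]$ makes it Alexandroff for free, and the antisymmetry just proved gives $T_0$-ness, establishing~(4) and~(5) via Lemma~\ref{lem:trans_order}. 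Assertion~(6) is the cumulative identification of $S_{\leq i}$ with the union of strata of height at most $i$, and~(7) is the same identification read from above, valid as soon as a multi-saddle exists (otherwise the height-$1$ stratum is empty and the filtration degenerates, which is exactly the content of Lemma~\ref{lem:ham_trivial}).

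The principal obstacle is the reverse direction of the equivalence --- ruling out limit cycles and non-closed recurrent orbits from area preservation in the topological, rather than smooth, framework. This requires a careful invocation of Gutierrez's smoothing together with the classical Maier no-recurrence theorem for area-preserving surface flows. A secondary obstacle is that the specialization order on the quotient topology $\tau_{[v]}$ need not coincide with $\leq_\partial$ in general (cf.\ the example following Lemma~\ref{lem:trans_order}), so one must verify that the dimensional purity of the present stratification forces the two orders to agree in the Hamiltonian case.
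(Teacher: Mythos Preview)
Your overall architecture matches the paper's, but argument~(c) has a genuine gap. You write that ``a Maier-type quasi-minimal set on a compact surface would produce a return map on a transverse arc incompatible with area preservation,'' and later identify this as the principal obstacle requiring ``the classical Maier no-recurrence theorem for area-preserving surface flows.'' There is no such theorem: area-preserving surface flows \emph{can} have non-closed recurrent orbits --- the irrational linear flow on $\mathbb{T}^2$ is area-preserving and every orbit is dense. The first-return map to a transversal of a quasi-minimal set is (after smoothing) an interval exchange transformation, which is perfectly compatible with an invariant area form. So area preservation alone cannot rule out $\mathrm{R}(v)\neq\emptyset$.

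What does the work is not area preservation but the existence of the first integral $H$ itself. The paper's argument is simply that each orbit lies in a connected component of a level set $H^{-1}(c)$; since $S$ is two-dimensional, such a component is a closed $1$-manifold (a point or a circle), hence every orbit is proper, and by Corollary~\ref{cor:proper} there are no non-closed recurrent orbits. This is the missing ingredient you need for~(c). Once properness is in hand, the paper obtains the center/multi-saddle dichotomy not from a local analysis of critical points of $H$ (which would require regularity you have not assumed) but by citing \cite[Theorem~3]{cobo2010flows} for non-wandering flows with finitely many singular points; your appeal to ``classical local classification'' is vaguer and would need justification in the merely continuous setting. The remainder of your outline for (2)--(7) is sound and parallels the paper's treatment.
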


\begin{proof}
By definition, a flow of finite type has at most finitely many singular points.
Suppose that a Hamiltonian flow on a compact surface has at most finitely many singular points.
Since the whole space is two dimensional and any connected closed $1$-manifold is a circle, the existence of Hamiltonian implies each orbit is proper.
Since a Hamiltonian flow on a compact surface is topologically equivalent to a divergence-free flow, divergence-free property implies the non-existence of limit cycles.
Divergence-free flows on a compact manifold have no non-wandering domains and so are non-wandering.
By  \cite[Theorem 3]{cobo2010flows}, each singular point of a non-wandering flow with finitely many singular points on a compact surface is either a center or a multi-saddles.
This implies that the flow is quasi-regular and so of finite type.
Moreover, the multi-saddle connection diagram $D(v)$ contains the union $\mathrm{P}(v)$ of non-recurrent orbits and consists of finitely many orbits.
Therefore the complement $S - (\Sv \cup D(v)) = \Pv$ consists of finitely many periodic annuli each of which is an abstract weak orbit (see Figure~\ref{flow-boxes}).
%
Then its boundary as a subset consists of centers, multi-saddles, and separatrices.
Suppose that the boundary $\partial \Pv$ consists of at most two centers $c_i$.
Lemma~\ref{lem:ham_trivial} implies that $D(v) = \emptyset$ and that $c_i <_\pitchfork \mathbb{A}$ if $c_i$ exists.
Suppose that the boundary $\partial \mathbb{A}$ contains multi-saddles or separatrices.
Since any separatrix is a multi-saddle separatrix, the boundary $\partial \mathbb{A}$ contains multi-saddles and separatrices.
Then $S_{\leq 0} = \Sv$ is the set of height zero points with respect to $\leq_\partial$, $S_{\leq 1} = \Sv \cup D(v)$ is the set of points of height at most one with respect to $\leq_\partial$, and $S_{\leq 2} - S_{\leq 1} = \Pv$ is the set of height two points with respect to $\leq_\partial$.
Moreover, the binary relation $\leq_\partial$ on $S/[v]$ is the partial order and also the specialization order of $S/[v]$.
The finiteness of $S/[v]$ implies that the quotient map $S \to S/[v]$ is a finite poset-stratification with respect to $\leq_\partial$.
\end{proof}

We have the following reduction by collapsing open periodic annuli into singletons

\begin{lemma}\label{lem:abs_ext}
The extended weak orbit space of a Hamiltonian flow with finitely many singular points on a compact surface is a quotient space of the extended orbit space.
Moreover, the extended weak orbit space corresponds to the abstract graph of the extended orbit space.
\end{lemma}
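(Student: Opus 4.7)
By Lemma~\ref{lem:ham}, for a Hamiltonian flow $v$ with finitely many singular points on a compact surface $S$, each singular point is a center or a multi-saddle, the multi-saddle connection diagram $D(v)$ is a finite union of multi-saddles and of separatrices joining multi-saddles, and the complement $\Pv = S \setminus (\Sv \cup D(v))$ decomposes into finitely many periodic annuli. Lemma~\ref{lem:sep_multi} together with Corollary~\ref{cor:ch} show that each singular point, each multi-saddle separatrix, and each periodic annulus is one abstract weak orbit, so $S/[v]$ is finite.

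My plan is first to identify the closed quasi-saddle connections of $v$ with the connected components of $D(v)$. A single multi-saddle $M$ is a quasi-saddle: it is closed, connected, invariant, a single abstract weak orbit, its attached separatrices give condition~$(1)$, and condition~$(2)$ follows from the finiteness of $D(v)$. Because Hamiltonian flows admit no sinks, sources, or limit cycles, every non-recurrent orbit joins two multi-saddles; no periodic orbit or center can serve as a quasi-saddle-separatrix for $M$, since a periodic orbit has $\alpha = \omega =$ itself inside its own annulus and a center is a singleton whose $\alpha$- and $\omega$-limit sets are itself. Hence the quasi-saddle-separatrices of $M$ are precisely the multi-saddle separatrices incident to $M$, and the quasi-saddle connection diagram coincides with $D(v)$. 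Its connected components are closed in $S$ (being finite unions of multi-saddles and of their separatrices, whose closures add only multi-saddles already in the same component) and are exactly the multi-saddle connections.

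It follows that $\sim_{[v]_{\mathrm{ex}}}$ identifies two abstract weak orbits iff they lie in a common multi-saddle connection, precisely matching the identifications of $\sim_{v_{\mathrm{ex}}}$ on orbits. Since the abstract weak orbit space $S/[v]$ is itself a quotient of $S/v$ collapsing each periodic annulus to one point, the composition $S \to S/v \to S/v_{\mathrm{ex}}$ factors through $S/[v]_{\mathrm{ex}}$, yielding by the universal property of quotient topologies the continuous surjection $S/v_{\mathrm{ex}} \to S/[v]_{\mathrm{ex}}$ asserted in the first statement.

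For the second assertion, I equip $S/[v]_{\mathrm{ex}}$ with the abstract multi-graph structure $(V, E, r)$ in the sense of Section~2.1, where $V$ consists of the centers together with the (collapsed) multi-saddle connections, $E$ is the set of periodic annuli, and $r(A)$ is the unordered pair of boundary components of $A$ inside $\Sv \cup D(v)$ (coincident vertices producing a loop when the two sides of $A$ lie in one multi-saddle connection). By Lemma~\ref{lem:ham}(6), the partial order $\leq_\partial$ on $S/[v]_{\mathrm{ex}}$ has height one, with vertices in height $0$ and edges in height $1$, matching the specialization order of this multi-graph. The passage from $S/v_{\mathrm{ex}}$, which retains each periodic annulus as a one-parameter family of orbits, to $S/[v]_{\mathrm{ex}}$ is then exactly the further collapse of each such family to a single edge, realizing $S/[v]_{\mathrm{ex}}$ as the abstract multi-graph of $S/v_{\mathrm{ex}}$. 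The main obstacle is verifying that $r$ is well-defined for annuli adjacent to $\partial S$ and for annuli whose two boundary components lie in a single multi-saddle connection, as well as excluding closed quasi-saddle components larger than the multi-saddle connections themselves; both verifications rely on Lemma~\ref{lem:ham} and the explicit form of abstract weak orbits from Corollary~\ref{cor:ch}.
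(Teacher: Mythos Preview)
Your approach mirrors the paper's: both invoke Lemma~\ref{lem:ham} for the structural decomposition into centers, multi-saddles, multi-saddle separatrices, and periodic annuli; both then argue that the quasi-saddle connection diagram coincides with $D(v)$, so that multi-saddle connections are exactly the closed quasi-saddle connections; and both read off the quotient map and the abstract-graph correspondence from this identification by collapsing each periodic annulus to a point. The paper's proof is terser---it asserts in one line that ``any quasi-saddle is a multi-saddle and so the quasi-saddle connection diagram corresponds to the multi-saddle connection diagram''---while you unpack the forward inclusion (a multi-saddle is a quasi-saddle, its quasi-saddle-separatrices are exactly the incident multi-saddle separatrices, since periodic orbits and centers have $\alpha=\omega=$ themselves) and spell out the $(V,E,r)$ multi-graph structure explicitly.

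The obstacle you flag at the end---ruling out quasi-saddles strictly larger than a single multi-saddle, e.g.\ the closure of a periodic annulus whose boundary meets a multi-saddle that also bounds external separatrices---is exactly the step the paper's ``therefore'' passes over without argument. So on that point your proof and the paper's are at the same level of justification; your exposition is more explicit about where the work lies, but neither fully discharges it from the stated definitions alone.
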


\begin{proof}
Let $v$ be a Hamiltonian flow with finitely many singular points on a compact surface $S$.
Lemma~\ref{lem:ham} implies that each singular point is either a center or a multi-saddle, and that the complement of the multi-saddle connection diagram $D(v)$ consists of periodic annuli.
Then such periodic annuli are abstract weak orbits.
Therefore it suffices to show that any multi-saddle connection is both an extended orbit and an extended weak orbit.
Then the multi-saddle connection diagram consists of finitely many closed multi-saddle connections.
Therefore any quasi-saddle is a multi-saddle and so the quasi-saddle connection diagram corresponds to the multi-saddle connection diagram.
Since any multi-saddle connections are closed, any extended weak orbit corresponds to an extended orbit.
Then the assertion holds, by collapsing connected components of $S - D(v)$, which are periodic annuli, into singletons.
\end{proof}

We show that the Reeb graph of a Hamiltonian flow with finitely many singular points on a compact surface is a reduction of the abstract weak orbit space as an abstract graph.

\begin{theorem}
The Reeb graph of a Hamiltonian flow with finitely many singular points on a compact surface corresponds to its extended weak orbit space as an abstract graph.
\end{theorem}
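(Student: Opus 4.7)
The plan is to combine the structural decomposition of Lemma~\ref{lem:ham} with the identification in Lemma~\ref{lem:abs_ext}, and then match each piece with the corresponding feature of the Reeb graph by reading off the level sets of the Hamiltonian.

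First I would fix a Hamiltonian $H \colon S \to \R$ with $dH = \omega(X_H,\cdot)$. Since $dH(X_H) = \omega(X_H, X_H) = 0$, the function $H$ is constant along every orbit. By continuity, together with the fact that each separatrix in $D(v)$ has a multi-saddle as an $\alpha$- or $\omega$-limit, $H$ is constant on each connected component of $D(v)$, i.e.\ on each closed multi-saddle connection. By Lemma~\ref{lem:ham}, $S = \Sv \sqcup D(v) \sqcup \Pv$, where $\Sv$ consists of centers and multi-saddles, $D(v)$ is a finite union of closed multi-saddle connections, and $\Pv$ is a finite union of periodic annuli (each being an abstract weak orbit).

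Next I would classify the connected components of level sets of $H$: (i) each center $c$ is a local extremum, so $\{c\}$ is a connected component of $H^{-1}(H(c))$; (ii) each closed multi-saddle connection $K$ lies in $H^{-1}(H(K))$ and is a connected component of that level set; (iii) at every value $c$ that is not attained on $\Sv$, every connected component of $H^{-1}(c)$ is a periodic orbit lying inside exactly one periodic annulus of $\Pv$. The trichotomy exhausts all components because the decomposition above is exhaustive, and item (iii) follows from the fact that, away from $\Sv$, the level curves of $H$ are $1$-dimensional submanifolds transverse to nothing and invariant under $v$, hence unions of orbits, which by Lemma~\ref{lem:ham} (no non-closed recurrent orbits) are periodic.

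With this classification in hand, the Reeb graph $S/\sim_{\mathrm{Reeb}}$ has the following abstract-graph structure: its vertices correspond bijectively to the isolated critical level components, namely centers and closed multi-saddle connections; its edges correspond bijectively to the periodic annuli of $\Pv$ (each annulus $\mathbb{A}$ being swept out by a one-parameter family of periodic orbits, one for each value of $H|_{\mathbb{A}}$), and a vertex $K$ is incident to an edge $\mathbb{A}$ if and only if $K \subseteq \overline{\mathbb{A}} - \mathbb{A}$. By Lemma~\ref{lem:abs_ext} the extended weak orbit space has the same underlying set of classes (centers, closed multi-saddle connections, periodic annuli), and its incidence via $\leq_\partial$ is precisely $K \leq_\partial \mathbb{A}$ iff $K \subseteq \overline{\mathbb{A}} - \mathbb{A}$. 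The identity map on classes therefore yields the desired graph isomorphism.

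The hard part will be the careful verification of step (ii), namely that a closed multi-saddle connection $K$ is already a whole connected component of the level set $H^{-1}(H(K))$, not just a subset of one. To handle this I would argue by contradiction: any additional orbit at that level would, by step (iii) and the classification of orbits, have to be either another multi-saddle separatrix (but all such separatrices at this level are already collected into components of $D(v)$), a center (already isolated at its own level by (i)), or a periodic orbit in some annulus $\mathbb{A}$ whose closure meets $K$; the last possibility forces an orbit of $\mathbb{A}$ to accumulate on a multi-saddle, which contradicts the description of $\partial \mathbb{A}$ as a union of orbits of $D(v)$ provided by Lemma~\ref{lem:ham}.
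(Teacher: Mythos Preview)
Your approach is essentially the paper's: both arguments combine Lemma~\ref{lem:ham} (to decompose $S$ into centers, the multi-saddle connection diagram $D(v)$, and periodic annuli) with Lemma~\ref{lem:abs_ext} (to identify extended weak orbits with centers, closed multi-saddle connections, and periodic annuli), and both reduce the theorem to the classification of connected components of level sets of the Hamiltonian. The paper differs only cosmetically: it first assumes that distinct critical points have distinct critical values, and it routes the identification through the extended \emph{orbit} space $S/v_{\mathrm{ex}}$ (showing the Reeb graph is isomorphic to $S/v_{\mathrm{ex}}$ as a graph, then invoking Lemma~\ref{lem:abs_ext} to pass to the abstract graph $S/[v]_{\mathrm{ex}}$). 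Your direct matching of vertices and edges is equivalent. One minor slip: the decomposition should read $S = \Sv \cup D(v) \sqcup \Pv$, not $\Sv \sqcup D(v) \sqcup \Pv$, since the multi-saddles lie in both $\Sv$ and $D(v)$.

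Your justification of step (ii), however, does not work as written. You claim that a periodic orbit in an annulus $\mathbb{A}$ whose closure meets $K$ would ``accumulate on a multi-saddle, which contradicts the description of $\partial\mathbb{A}$ as a union of orbits of $D(v)$''. But that description is not contradicted at all: $\partial\mathbb{A}$ \emph{is} a union of multi-saddles and separatrices. The correct argument is either local or via monotonicity. Locally: near any point of $K$ (whether a multi-saddle or a regular point on a separatrix), the level set $H^{-1}(H(K))$ coincides with $K$, so $K$ is open in its level set; being also closed and connected, it is a full connected component. Alternatively, via monotonicity: since $dH \neq 0$ on each periodic annulus $\mathbb{A}$, the restriction of $H$ to any transverse arc in $\mathbb{A}$ is strictly monotone, so $H$ takes only values in the open interval strictly between the two boundary levels; hence no periodic orbit inside $\mathbb{A}$ can lie at the level $H(K)$ when $K \subseteq \partial\mathbb{A}$, and if $K \not\subseteq \partial\mathbb{A}$ then $K \cap \overline{\mathbb{A}} = \emptyset$ and the periodic orbit is in a separate component anyway.
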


\begin{proof}
Let $v$ be a Hamiltonian flow with finitely many singular points on a compact surface $S$ and $H$ the Hamiltonian generating $v$.
We may assume that each critical point has distinct critical value.
Lemma~\ref{lem:ham} implies that each singular point is either a center or a multi-saddle.
Then the multi-saddle connection diagram consists of finitely many orbits and any multi-saddle connections are closed.
This means that a connected component of the inverse image $H^{-1}(c)$ for any $c \in \R$ is either a periodic orbit or a closed multi-saddle connection.
Therefore the Reeb graph of $v$ corresponds to the extended orbit space $S/v_{\mathrm{ex}}$ as a graph.
On the other hand, Lemma~\ref{lem:abs_ext} implies that the extended weak orbit space $S/[v]_{\mathrm{ex}}$
is the abstract graph of the extended orbit space $S/v_{\mathrm{ex}}$.
This means that the abstract graph of the Reeb graph of $v$ corresponds to the extended weak orbit space $S/[v]_{\mathrm{ex}}$.
\end{proof}


\section{Flows of finite type on compact surfaces}
In this section, assume that $v$ is a flow of finite type on a compact surface $S$.
Recall that $\partial_{\mathrm{P}(v)}$ is the union of separatrices in $\partial S$ from $\partial$-sources to $\partial$-sinks.
By \cite[Corollary 10.2]{yokoyama2017decompositions}, the strict border point set $\mathop{\mathrm{Bd}}(v)$ of a flow $v$ of finite type on a compact surface is the union of the multi-saddle connection diagram $D(v)$, centers, periodic orbits on the boundaries, and $\partial_{\mathrm{P}(v)}$.
We have the following statements.

\begin{proposition}\label{prop:five}
The abstract weak orbit space of a flow of finite type on a compact surface is a finite topological space such that any element is one of the five following subsets:
\\
$(1)$ A singular point.
\\
$(2)$ A semi-multi-saddle separatrix.
\\
$(3)$ A connected component $C$ of $\Pv$ such that the transverse boundary $\partial_\pitchfork C = \overline{C} - C$ consists of centers and non-trivial circuits in the multi-saddle connection diagram $D(v)$.
\\
$(4)$ A trivial flow box $\mathbb{D}$ which is a connected component of $S - D(v)$ 
such that $\omega(x) = \omega(y)$ and $\alpha(x) = \alpha(y)$ for any points $x, y \in \mathbb{D}$ and that the $\omega$-limit sets are semi-repelling and the $\alpha$-limit sets are semi-attracting.
Moreover, the transverse boundary $\partial_\pitchfork \mathbb{D} = \overline{\mathbb{D}} - \mathop{\downarrow}_{\leq_v}\mathbb{D} = \overline{\mathbb{D}} - (\bigcup_{x \in \mathbb{D}} \overline{O(x)})$ consists of multi-saddles and semi-multi-saddle separatrices.
\\
$(5)$ A transverse annulus $\mathbb{A}$ which is a connected component of $S - D(v)$ such that $\omega(x) = \omega(y)$ and $\alpha(x) = \alpha(y)$ for any points $x, y \in \mathbb{A}$ and that the $\omega$-limit sets are semi-repelling and the $\alpha$-limit sets are semi-attracting. Moreover, $\overline{\mathbb{A}} = \mathbb{A} \sqcup(\alpha(x) \cup \omega(x))$ for any $x \in \mathbb{A}$.
\end{proposition}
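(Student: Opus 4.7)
The plan is to bootstrap from the decomposition theorem \cite[Lemma 13.2]{yokoyama2017decompositions}, which says that for a flow of finite type on a compact surface the whole space $S$ partitions into finitely many connected saturated pieces, each of which is one of: a singular point, a limit cycle, a semi-multi-saddle separatrix, a trivial flow box, a periodic annulus, or an open transverse annulus. Since $v$ is of finite type, $S = \mathop{\mathrm{Cl}}(v) \sqcup \mathrm{P}(v)$, so no point lies in $\mathrm{R}(v)$ and the fourth clause of Corollary~\ref{cor:ch} is vacuous; the finiteness of the abstract weak orbit space then follows from finiteness of the decomposition.

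Next I identify each piece with an abstract weak orbit by applying Corollary~\ref{cor:ch} case by case. Quasi-regularity isolates each singular point $p$ in $\Sv$, so its component equals $\{p\}$, yielding case (1). For a semi-multi-saddle separatrix the orbit $O(x)$ lies in $\mathrm{P}(v)$, so Lemma~\ref{lem:sep_multi} gives $[x] = \langle x \rangle = O(x)$, yielding case (2). A periodic annulus is by construction a component of $\Pv$, and an isolated limit cycle is also a periodic orbit that forms its own component of $\Pv$; in both situations Corollary~\ref{cor:ch} forces $[x]$ to equal that component, which absorbs the ``limit cycle'' type from the decomposition into case (3). Finally, trivial flow boxes and open transverse annuli consist entirely of non-closed proper orbits sharing a common $\alpha$- and $\omega$-limit set (this is part of the definition of ``trivial flow box'' and ``transverse annulus'' for flows of finite type), so Corollary~\ref{cor:ch} identifies the whole piece with a single abstract weak orbit, giving cases (4) and (5).

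For the boundary data I will invoke Corollary~\ref{cor:closure_01}. In case (3), $\partial_\perp [x] = \emptyset$ by Lemma~\ref{lem:ham_trivial}-style considerations for periodic orbits, and $\partial_\pitchfork C = \overline{C}-C$ equals the coborder of the periodic annulus; the generalized Poincar\'e--Bendixson theorem for flows of finite type (\cite[\S 10]{yokoyama2017decompositions}) forces this coborder to consist of centers and non-trivial circuits built out of multi-saddles together with their separatrices, hence to lie in $\Sv \cup D(v)$. In case (4) the common $\alpha$- and $\omega$-limits contribute to $\partial_\perp \mathbb{D}$ and are semi-attracting and semi-repelling respectively by the finite-type classification of limit sets of non-recurrent orbits; the remainder of $\overline{\mathbb{D}} = \mathop{\downarrow}_{\leq_v}\mathbb{D} \cup \partial_\pitchfork \mathbb{D}$ can meet only pieces of $D(v)$, since the decomposition of $S - D(v)$ into trivial flow boxes, periodic annuli, and open transverse annuli is disjoint. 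In case (5), the defining topology of a transverse annulus forces the two circle-boundary components of $\overline{\mathbb{A}}$ to be respectively the single shared $\alpha$-limit set and the single shared $\omega$-limit set, so the transverse part $\partial_\pitchfork \mathbb{A}$ vanishes and $\overline{\mathbb{A}} = \mathbb{A} \sqcup (\alpha(x) \cup \omega(x))$.

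The main obstacle will be the verification, in cases (4) and (5), that the common $\alpha$- and $\omega$-limits are exactly of the semi-attracting and semi-repelling types and lie in $D(v)$ (including showing that no stray recurrent orbit can appear on the boundary). This amounts to a careful Poincar\'e--Bendixson argument for flows of finite type, which is essentially the content of \cite[Proposition~10.1 and Corollary~10.2]{yokoyama2017decompositions}; the routine but slightly delicate step is to distinguish when the ambient component of $S - D(v)$ is disk-like (case (4)) versus annulus-like (case (5)) and to read off the correct shape of $\bp$ accordingly.
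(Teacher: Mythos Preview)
Your approach is essentially the same as the paper's: both start from the structure theorems in \cite{yokoyama2017decompositions} to get a finite saturated decomposition of $S$, then identify each piece with an abstract weak orbit via Corollary~\ref{cor:ch}/Lemma~\ref{lem:awo}/Lemma~\ref{lem:sep_multi}, and finally read off the boundary data. The paper works from the more primitive ingredients (Corollary~10.2, Lemma~8.28, Theorem~11.2 of the reference) rather than Lemma~13.2 directly, but the logic is the same.

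Two small points to tighten. First, the fact that all orbits in a trivial flow box or transverse annulus from this decomposition share the same $\alpha$- and $\omega$-limit is not part of the \emph{definition} of those objects; it is a consequence of \cite[Theorem~11.2]{yokoyama2017decompositions}, which you should cite. Second, and more substantively, the step ``so Corollary~\ref{cor:ch} identifies the whole piece with a single abstract weak orbit'' needs more than common limit data: you must check that $U$ is a full connected \emph{component} of $\{y\in\mathrm{P}(v)\mid \alpha(y)=\alpha(x),\ \omega(y)=\omega(x)\}$, i.e.\ that no point of $\partial U$ lies in this set. The paper handles this explicitly by a short case analysis showing that points on $\partial U$ are either singular, periodic, or semi-multi-saddle separatrices (hence have a multi-saddle as one limit), so their limit data cannot match that of $x$. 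You correctly flag this as the main obstacle, and the argument you sketch via \cite[\S 10]{yokoyama2017decompositions} is the right one; just be sure to actually carry out that boundary check rather than leaving it implicit. (Also, Lemma~\ref{lem:ham_trivial} concerns Hamiltonian flows; for $\partial_\perp C=\emptyset$ in case~(3) you want Corollary~\ref{cor:closure_01}(2).)
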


\begin{proof}
Let $v$ be a flow of finite type on a compact surface $S$ and $\mathop{\mathrm{Sing}}(v)_c$ the set of centers. 
Then $S = \Sv \sqcup \Pv \sqcup \mathrm{P}(v)$.
Since $\partial_{\mathrm{P}(v)}$ is the union of separatrices in $\partial S$ from $\partial$-sources to $\partial$-sinks, we obtain $\partial_{\mathrm{P}(v)} \subset \mathrm{int} \mathrm{P}(v)$ and so that each orbit in $\partial_{\mathrm{P}(v)}$ is contained in the boundary of some trivial flow box. 
By \cite[Corollary 10.2]{yokoyama2017decompositions}, we have  $\mathop{\mathrm{Bd}}(v) = \mathop{\mathrm{Sing}}(v)_c \sqcup D(v) \sqcup (\mathrm{int} \Pv \cap \partial S) \sqcup \partial_{\mathrm{P}(v)}$.
By \cite[Lemma 8.28]{yokoyama2017decompositions}, the strict border point set $\mathop{\mathrm{Bd}}(v)$ of the flow $v$ of finite type is closed and becomes the finite disjoint union of orbits.
Therefore the complement $S - \mathop{\mathrm{Bd}}(v)$ has finitely many connected components.
\cite[Theorem 11.2]{yokoyama2017decompositions} implies that the complement $S - \mathop{\mathrm{Bd}}(v)$ consists of periodic tori, periodic Klein bottles, periodic open annuli, periodic open M\"obius bands, trivial flow boxes, and transverse annuli. 
Moreover, $\omega$-limit $(\mathrm{resp.}$ $\alpha$-limit$)$ sets of points in one of trivial flow boxes and transverse annuli are semi-attracting $(\mathrm{resp.}$ semi-repelling) and correspond to each other.
In addition, the boundary of any connected component of the complement $S - \mathop{\mathrm{Bd}}(v)$ is a finite union of closed orbits and separatrices.
Since any singular point in $\partial \Pv$ is either a center or a multi-saddle, the boundary of any connected component of $(S - \mathop{\mathrm{Bd}}(v))$ which is contained in $\partial \Pv$ consists of centers and non-trivial circuits in $D(v)$.
On the other hand, the difference $\mathop{\mathrm{Bd}}(v) - D(v)$ is a finite union of centers, periodic orbits on the boundaries, and $\partial_{\mathrm{P}(v)}$ (i.e. $\mathop{\mathrm{Bd}}(v) - D(v) = \mathop{\mathrm{Sing}}(v)_c \sqcup (\mathrm{int} \Pv \cap \partial S) \sqcup \partial_{\mathrm{P}(v)}$). 
Then the complement $S - D(v)$ is a finite union of periodic tori, periodic Klein bottles, center disks, periodic annuli, periodic M\"obius bands, trivial flow boxes, and transverse annuli. 
Fix a point $x \in X$.
Suppose that $x$ is singular.
Lemma~\ref{lem:awo} implies that the saturated subset $[x]$ is the connected component of $\Sv$ containing $x$.
Since $v$ is of finite type, the abstract weak orbit $[x]$ is the singleton $\{ x \}$ and so is in the case $(1)$.
Suppose that $x$ is periodic.
Lemma~\ref{lem:awo} implies that the saturated subset $[x]$ is the connected component of $\Pv$ containing $x$.
\cite[Corollary 10.3]{yokoyama2017decompositions} implies that
the transverse boundary $\partial_\pitchfork [x] = [x] - [x] \subseteq \mathop{\mathrm{Bd}}(v) \setminus \Pv = \mathop{\mathrm{Sing}}(v)_c \sqcup D(v) \sqcup \partial_{\mathrm{P}(v)}$ consists of centers and non-trivial circuits in $D(v)$.
Therefore the abstract weak orbit $[x]$ is in the case $(3)$.
Thus we may assume that $x \in S - \mathop{\mathrm{Cl}}(v) = \mathrm{P}(v)$.
By definition, Lemma~\ref{lem:type} implies that $O(x) \subseteq [x] = [x]''$.
Suppose that $O(x)$ is a separatrix from a multi-saddle $y$.
Then $\alpha(x) = \alpha'(x) = y$.
Since $y$ is a multi-saddle, the subset $\{ z \in \mathrm{P}(v) \mid \alpha' (x) = \alpha' (z), \omega'(x) = \omega' (z) \} = \{ z \in \mathrm{P}(v) \mid y = \alpha(z), \omega(x) = \omega (z) \}$ is contained in the finite disjoint union of separatrices  each of whose connected components is one separatrix.
This means that $[x]'' = O(x)$ is a separatrix from $y$.
Therefore the abstract weak orbit $[x]$ is a semi-multi-saddle separatrix and so is in the case $(2)$.
By symmetry, the abstract weak orbit of a separatrix to a multi-saddle is the orbit.
This means that the abstract weak orbit $[x]$ is in the case $(2)$.
Thus we may assume that $x$ is not a semi-multi-saddle separatrix and so $x \in (S - (\mathop{\mathrm{Bd}}(v) - \partial_{\mathrm{P}(v)})) \cap \mathrm{P}(v) = S - D(v)$.
Then there is a connected component $U \subseteq \mathrm{P}(v)$ of $S - (\mathop{\mathrm{Bd}}(v) - \partial_{\mathrm{P}(v)}) = S - (\mathop{\mathrm{Sing}}(v)_c \sqcup D(v) \sqcup (\mathrm{int} \Pv \cap \partial S))$ which contains $x$ and is an open subset which is either a trivial flow box or a transverse annulus such that, for any $y \in U$, the $\omega$-limit sets $\omega(x) = \omega(y)$ are semi-attracting and the $\alpha$-limit sets $\alpha(x) = \alpha(y)$ are semi-repelling. 
Since centers are contained in $\mathrm{init} \Cv$, the open subset $U \subseteq \mathrm{P}(v)$ is a connected component of $S - D(v)$. 
Then for any $y \in U$ there is no point $z \in S - O(y)$ with $y \in \overline{O(z)}$.
By definition of $U$, if $U \cap \partial_{\mathrm{P}(v)}$, then $\alpha(U) = \bigcup_{y \in U} \alpha(y)$ is a $\partial$-source and $\omega(U) = \bigcup_{y \in U} \omega(y)$ is a $\partial$-sink.
This means that $\partial_\pitchfork U \cap \partial_{\mathrm{P}(v)} = \emptyset$.
Suppose that $U$ is a transverse annulus.
Then $\omega(x) = \omega'(x)$ is either a sink or a limit circuit and $\alpha(x) = \alpha'(x)$ is either a source or a limit circuit.
In other words, the $\omega$-limit sets $\omega(x)= \omega'(x)$ are semi-attracting and the $\alpha$-limit sets $\alpha(x) = \alpha'(x)$ are semi-repelling.
This implies that the open subset $U$ is a connected component of $S - D(v)$ and that $\partial_\perp U = \partial U = \alpha(x) \cup \omega(x) \subseteq D(v)$ and so $\overline{U} = U \sqcup (\alpha(x) \cup \omega(x))$.
Since the intersection $\alpha(D(v) \cap \mathrm{P}(v))$ consists of multi-saddles, we obtain $U \cap \partial U = \emptyset$. 
Therefore $U \subseteq \{ y \in \mathrm{P}(v) \mid \omega'(x) = \omega'(y), \alpha'(x) = \alpha'(y) \} = \{ y \in \mathrm{P}(v) \mid \omega(x) = \omega(y), \alpha(x) = \alpha(y) \}$ and $\{ y \in \mathrm{P}(v) \mid \omega'(x) = \omega'(y), \alpha'(x) = \alpha'(y) \} \cap \partial U = \emptyset$. 
This means that the open subset $U$ is a connected component of $\{ y \in \mathrm{P}(v) \mid \omega'(x) = \omega'(y), \alpha'(x) = \alpha'(y) \}$ containing $x$ and so $U = [x]'' = [x]$.
Therefore the abstract weak orbit $[x]$ is in the case $(5)$.
Thus we may assume that $U$ is a trivial flow box.
Then $\omega(x) = \omega'(x)$ is either a sink, a $\partial$-sink, or a limit circuit and $\alpha(x) = \alpha'(x)$ is either a source, $\partial$-source, or a limit circuit.
In other words, the $\omega'$-limit sets $\omega'(x)= \omega(x)$ are semi-attracting and the $\alpha'$-limit sets $\alpha'(x) = \alpha(x)$ are semi-repelling.
In particular, each of the $\omega'$-limit sets $\omega(x)= \omega'(x)$ and the $\alpha'$-limit sets $\alpha(x) = \alpha'(x)$ neither is empty nor is a multi-saddle.
We claim that $\{ y \in \mathrm{P}(v) \mid \omega'(x) = \omega'(y), \alpha'(x) = \alpha'(y) \} \cap \partial_\perp U = \emptyset$.
Indeed, assume that there is a point $z \in \{ y \in \mathrm{P}(v) \mid \omega'(x) = \omega'(y), \alpha'(x) = \alpha'(y) \} \cap \partial_\perp U = \{ y \in \mathrm{P}(v) \mid \omega(x) = \omega(y), \alpha(x) = \alpha(y) \} \cap (\alpha(x) \cup \omega(x))$.
A generalization of the Poincar\'e-Bendixson theorem for a flow with finitely many singular points (cf.  \cite[Theorem 2.6.1]{nikolaev1999flows} and  \cite[Corollary 6.6]{yokoyama2017decompositions}) implies that $\alpha(x)$ and $\omega(x)$ are either singular points or limit circuits.
If $z \in \Cv$, then $z \in \alpha(x) \cup \omega(x) = \alpha'(x) \cup \omega'(x) =  \alpha'(z) \cup \omega'(z) = \emptyset$, which is a contradiction.
Thus the non-singular point $z$ is contained in a limit circuit, and so $z \in \mathrm{P}(v)$ and either $\alpha(x)$ or $\omega(x)$ is a limit circuit.
Then $\alpha(x) \cup \omega(x) = \alpha'(x) \cup \omega'(x) = \alpha'(z) \cup \omega'(z) \subset \Sv$ contains no limit circuits, which contradicts the existence of limit circuits contained in $\alpha(x) \cup \omega(x)$.
Suppose that $\overline{U} = U \sqcup (\alpha(x) \cup \omega(x)) = U \sqcup \partial_\perp U$.  
The claim implies that $U = \overline{U} \cap \{ y \in \mathrm{P}(v) \mid \omega'(x) = \omega'(y), \alpha'(x) = \alpha'(y) \}$.
Then the open subset $U$ is a connected component of $\{ y \in \mathrm{P}(v) \mid \omega'(x) = \omega'(y), \alpha'(x) = \alpha'(y) \}$ containing $x$ and so $U = [x]'' = [x]$.
Then the abstract weak orbit $[x]$ is in the case $(4)$.
Thus we may assume that $\overline{U} - (U \sqcup \partial_\perp U) \neq \emptyset$.
By $\partial_\pitchfork U \sqcup \partial_\perp U = \partial U \subseteq D(v)$, the transverse boundary $\partial_\pitchfork U \subseteq D(v)$ consists of multi-saddles and semi-saddle separatrices.
Since the $\omega'$-limit set and the $\alpha'$-limit set of a multi-saddle are  empty, and since either of the $\omega'$-limit set and the $\alpha'$-limit set of any separatrix in $\partial_\pitchfork U$ is a multi-saddle in $\partial U - (\alpha(x) \cup \omega(x))$, the transverse boundary $\partial_\pitchfork U$ does not intersect $\{ y \in \mathrm{P}(v) \mid \omega'(x) = \omega'(y), \alpha'(x) = \alpha'(y) \} = \{ y \in \mathrm{P}(v) \mid \omega(x) = \omega(y), \alpha(x) = \alpha(y) \}$.
By $\partial_\pitchfork U \cap \{ y \in \mathrm{P}(v) \mid \omega(x) = \omega(y), \alpha(x) = \alpha(y) \} = \emptyset$, we have $U = \overline{U} \cap \{ y \in \mathrm{P}(v) \mid \omega(x) = \omega(y), \alpha(x) = \alpha(y) \}$.
Thus the open subset $U$ is a connected component of $\{ y \in \mathrm{P}(v) \mid \omega(x) = \omega(y), \alpha(x) = \alpha(y) \}$ containing $x$ and so $U = [x]'' = [x]$.
Therefore the abstract weak orbit $[x]$ is in the case $(4)$.
\end{proof}

\begin{proposition}
The binary relation $\leq_\pitchfork$ for a flow of finite type on a compact surface is a partial order at most height one.
\end{proposition}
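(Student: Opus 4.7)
The plan is to leverage the classification given by Proposition~\ref{prop:five} in order to compute $\partial_\pitchfork [y]$ explicitly for each of the five possible types of abstract weak orbits $[y]$. Reflexivity is immediate from the definition of $\leq_\pitchfork$, so it suffices to establish antisymmetry, transitivity, and the height bound; I would deduce all three from a single structural observation, namely that whenever $[x] <_\pitchfork [y]$ the class $[y]$ must be of type $(3)$ or type $(4)$ while the class $[x]$ must be of type $(1)$ or type $(2)$.

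The case analysis I intend to carry out is as follows. If $[y]$ is a singular point (type $(1)$) or a semi-multi-saddle separatrix (type $(2)$), then Lemma~\ref{lem:closure_01} gives $\overline{[y]} - [y] = \alpha(y) \cup \omega(y) = \alpha([y]) \cup \omega([y])$, so $\partial_\pitchfork[y] = \emptyset$. If $[y]$ is a transverse annulus $\mathbb{A}$ (type $(5)$), Proposition~\ref{prop:five}(5) gives $\overline{\mathbb{A}} = \mathbb{A} \sqcup (\alpha(x) \cup \omega(x))$ for any $x \in \mathbb{A}$, so again $\partial_\pitchfork[y] = \emptyset$. The remaining two cases are the substantive ones: if $[y]$ is a periodic connected component $C$ of $\Pv$ (type $(3)$), then since each orbit in $C$ is periodic we have $\alpha([y]) \cup \omega([y]) = [y]$, so $\partial_\pitchfork[y] = \overline{C} - C$, which by Proposition~\ref{prop:five}(3) consists of centers and non-trivial circuits in $D(v)$; if $[y]$ is a trivial flow box (type $(4)$), then Proposition~\ref{prop:five}(4) directly states that $\partial_\pitchfork[y]$ consists of multi-saddles and semi-multi-saddle separatrices. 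In either of these last two cases, every abstract weak orbit contained in $\partial_\pitchfork[y]$ is of type $(1)$ (a singular point, namely a center or a multi-saddle) or of type $(2)$ (a semi-multi-saddle separatrix in a non-trivial circuit or in the boundary of a trivial flow box).

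With this classification in hand, the height bound is immediate: if $[x] <_\pitchfork [y]$, then $[x]$ is of type $(1)$ or $(2)$, and for such classes we have just shown that $\partial_\pitchfork[x] = \emptyset$, which rules out any $[w] <_\pitchfork [x]$. Thus no chain $[w] <_\pitchfork [x] <_\pitchfork [y]$ exists. Antisymmetry follows similarly: if $[x] <_\pitchfork [y]$ and $[y] <_\pitchfork [x]$ both held, then $[y]$ would simultaneously be of type $(3)$ or $(4)$ (as the larger element) and of type $(1)$ or $(2)$ (as the smaller element), which is impossible. Transitivity also follows from the height bound, because any alleged chain $[x] \leq_\pitchfork [y] \leq_\pitchfork [z]$ with strict inequalities on both sides is ruled out, so the only nontrivial case reduces to one of the inequalities being an equality, for which the conclusion is trivial.

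The main obstacle I anticipate is purely bookkeeping: making sure that the sets $\partial_\pitchfork[y]$ described loosely in Proposition~\ref{prop:five} really are unions of abstract weak orbits of types $(1)$ and $(2)$, so that the statement ``$[x] \cap \partial_\pitchfork[y] \neq \emptyset$ implies $[x] \subseteq \partial_\pitchfork[y]$ and $[x]$ is of type $(1)$ or $(2)$'' holds without ambiguity. For the type $(3)$ case this requires noting that a non-trivial circuit in $D(v)$ is a finite union of multi-saddles and multi-saddle separatrices, each of which is an abstract weak orbit by Lemma~\ref{lem:awo} (for the multi-saddle points) and Lemma~\ref{lem:sep_multi} (for the separatrix orbits, which are non-recurrent by finite-type assumption). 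For the type $(4)$ case the same two lemmas apply directly. Once these identifications are in place, the three properties of the partial order and the height bound all follow from the case analysis sketched above.
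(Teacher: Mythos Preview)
Your proposal is correct and follows essentially the same route as the paper: both arguments invoke Proposition~\ref{prop:five} and run through the five types, showing $\partial_\pitchfork[y]=\emptyset$ for types (1), (2), (5) and that $\partial_\pitchfork[y]$ consists only of type-(1) and type-(2) classes for types (3) and (4). The paper's proof focuses on the height bound and leaves antisymmetry and transitivity implicit, whereas you spell these out explicitly from the structural dichotomy; this is a minor but welcome addition, and your anticipated bookkeeping about $\partial_\pitchfork[y]$ being a union of whole abstract weak orbits (via Lemma~\ref{lem:awo} and Lemma~\ref{lem:sep_multi}) matches exactly what the paper uses.
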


\begin{proof}
Let $v$ be a flow of finite type on a compact surface $S$.
Proposition~\ref{prop:five} implies that each abstract weak orbit is either a singular point, a semi-multi-saddle separatrix, a connected component of $\Pv$, a transverse annulus, or a trivial flow box.
Since $\Sv$ is finite, each singular point is minimal with respect to $\leq_\pitchfork$.
Fix a point $x \in S$.
If $x$ is singular, then it is minimal with respect to $\leq_\pitchfork$.
Therefore $\mathrm{ht}_{\leq_\pitchfork}(\Sv) \leq 0$.
Suppose that $O(x)$ is a semi-multi-saddle separatrix.
Lemma~\ref{lem:sep_multi} implies that the abstract weak orbit of a separatrix is itself and so $[x] = O(x)$.
Therefore $\partial_\pitchfork [x] = \overline{O(x)} - \overline{O(x)} = \emptyset$ and so $x$ is minimal with respect to $\leq_\pitchfork$.
Suppose that $x \in \Pv$.
Then $[x]$ is the connected component $C$ of $\Pv$ containing $x$ such that the transverse boundary $\partial_\pitchfork [x] = \overline{C} - C$ consists of centers and circuits in $D(v)$.
Since $D(v)$ consists of multi-saddles and separatrices between multi-saddles, it consists of minimal points with respect to $\leq_\pitchfork$.
Therefore the transverse boundary $\partial_\pitchfork [x] = \overline{C} - C$ consists of points whose heights are zero.
This means that the height of $x$ is at most one.
Therefore $\mathrm{ht}_{\leq_\pitchfork}(\Pv) \leq 1$.
Suppose that $x$ is contained in a transverse annulus $\mathbb{A} \subseteq \mathrm{P}(v)$ which is a connected component of $S - \mathop{\mathrm{Bd}}(v)$.
Then $\overline{\mathbb{A}} = \mathbb{A} \sqcup (\alpha(x) \cup \omega(x))$.
Therefore the transverse boundary $\partial_\pitchfork [x] = \overline{\mathbb{A}} - \bigcup_{y \in \mathbb{A}} \overline{O(y)} = \overline{\mathbb{A}} - \overline{\mathbb{A}} = \emptyset$.  This means that $x$ is minimal with respect to $\leq_\pitchfork$.
Suppose that $x$ is contained in a trivial flow box $\mathbb{D}$.
Then the transverse boundary $\partial_\pitchfork \mathbb{D} = \overline{[x]} - \mathop{\downarrow}_{\leq_v} [x] = \overline{\mathbb{D}} - \mathop{\downarrow}_{\leq_v}\mathbb{D}$ consists of multi-saddles and semi-multi-saddle separatrices.
Since the abstract weak orbits of saddles and semi-multi-saddle separatrices are themselves, the abstract weak orbit $[x]$ does not intersect the transverse boundary $\partial_\pitchfork \mathbb{D}$
 and so $[x] = \mathbb{D}$.
Therefore the transverse boundary $\partial_\pitchfork [x] = \overline{[x]} - \mathop{\downarrow}_{\leq_v}[x] = \overline{\mathbb{D}} - \mathop{\downarrow}_{\leq_v}\mathbb{D}$ consists of multi-saddles and separatrices whose heights are zero.
This means that the height of $x$ is at most one.
Therefore $\mathrm{ht}_{\leq_\pitchfork}(\mathrm{P}(v)) \leq 1$.
Since $S = \Sv \sqcup \Pv \sqcup \mathrm{P}(v)$, we have $\mathrm{ht}_{\leq_\pitchfork}(S) \leq 1$.
\end{proof}

\begin{lemma}
The binary relation $\leq_\perp$ for a flow of finite type on a compact surface is a partial order at most height two.
\end{lemma}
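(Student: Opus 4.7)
The strategy is to verify reflexivity, antisymmetry, transitivity, and the height bound separately, using Proposition~\ref{prop:five}'s classification of abstract weak orbits of a flow of finite type on a compact surface together with Corollary~\ref{cor:closure_01}'s computation of $\partial_\perp$ in each case. Reflexivity is immediate. The central structural observation, driving both the partial-order and the height arguments, is this: if $[x] \cap \partial_\perp[y] \neq \emptyset$ and $[x] \neq [y]$, then any witness $z \in [x] \cap \partial_\perp[y]$ lies in $\alpha(y') \cup \omega(y')$ for some $y' \in [y]$, so by the Poincar\'e--Bendixson theorem for flows with finitely many singular points on a compact surface (cf.~\cite[Corollary~6.6]{yokoyama2017decompositions}), $z$ belongs to a singular point, a periodic orbit, or a non-trivial limit circuit composed of multi-saddles and multi-saddle separatrices. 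Combined with Lemma~\ref{lem:sep_multi}, this forces $[x] = [z]$ to be either a singular point, a connected component of $\Pv$, or a single multi-saddle separatrix orbit.

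For antisymmetry, assume $[x] <_\perp [y]$ and $[y] <_\perp [x]$ with $[x] \neq [y]$. By the observation both $[x]$ and $[y]$ fall into these three types; singular points and periodic connected components are minimal since $\partial_\perp$ of them is empty by Corollary~\ref{cor:closure_01}, contradicting the existence of the strict predecessors. Hence both are multi-saddle separatrix orbits, but then $\partial_\perp[x]$ consists only of the multi-saddle endpoints of $[x]$, so $[y] \cap \partial_\perp[x] \neq \emptyset$ forces $[y]$ to be singular, a contradiction.

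For transitivity, assume $[x] <_\perp [y] <_\perp [z]$. The same observation applied to $[y] <_\perp [z]$ forces $[y]$ to be a multi-saddle separatrix orbit sitting in a non-trivial limit circuit of $[z]$, since the singular and periodic alternatives have empty $\partial_\perp$ and would contradict $[x] <_\perp [y]$. The two endpoints of $[y]$ are multi-saddles lying in the same limit circuit; because a non-trivial limit circuit is a closed connected union of multi-saddles with their separatrices, they lie in $\alpha([z]) \cup \omega([z])$. Since $[x] = \{z_1\}$ for one of these multi-saddle endpoints $z_1$ and $[x] \neq [z]$, we conclude $[x] \cap \partial_\perp[z] \neq \emptyset$, i.e., $[x] \leq_\perp [z]$.

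The height bound follows by iteration: in a strict chain $[x_0] <_\perp [x_1] <_\perp \cdots <_\perp [x_n]$, each intermediate $[x_i]$ with $1 \leq i \leq n-1$ must be a multi-saddle separatrix (to both lie in $\partial_\perp[x_{i+1}]$ and admit a strict predecessor requiring $\partial_\perp[x_i] \neq \emptyset$); but $\partial_\perp$ of a multi-saddle separatrix consists only of singular multi-saddle endpoints whose own $\partial_\perp$ is empty, terminating the chain after at most one further step. Hence $n \leq 2$, so the height with respect to $\leq_\perp$ is at most two. The main obstacle is executing the case analysis cleanly: one must use the fact that trivial flow boxes and transverse annuli are two-dimensional saturated open sets and therefore cannot themselves appear in any (at most one-dimensional) limit set, so they only contribute as maximal elements in chains.
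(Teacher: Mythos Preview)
Your argument is correct and rests on the same three ingredients the paper uses: the classification of abstract weak orbits in Proposition~\ref{prop:five}, the computation of $\partial_\perp$ in Corollary~\ref{cor:closure_01}, and the generalized Poincar\'e--Bendixson theorem forcing every $\alpha$- and $\omega$-limit set to be a singular point, a periodic orbit, or a non-trivial limit circuit.

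The paper's own proof is considerably terser: it simply assigns each abstract weak orbit a level---$0$ for closed orbits, at most $1$ for orbits inside limit circuits, at most $2$ for all remaining points of $\mathrm{P}(v)$---and observes via Poincar\'e--Bendixson that $[x] <_\perp [y]$ always forces $[x]$ to sit at a strictly lower level. The height bound follows immediately, and antisymmetry is implicit in the existence of such a strictly decreasing level function. What the paper does \emph{not} spell out is transitivity; your explicit argument (that the intermediate term $[y]$ in a chain $[x] <_\perp [y] <_\perp [z]$ must be a separatrix inside a limit circuit $\gamma \subseteq \alpha(z') \cup \omega(z')$, whence the multi-saddle $[x] \in \partial_\perp [y]$ lies in $\gamma$ and therefore in $\partial_\perp[z]$) genuinely fills a detail the paper leaves to the reader. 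So the two routes coincide in substance, but yours is the more complete of the two.
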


\begin{proof}
Let $v$ be a flow of finite type on a compact surface $S$.
Proposition~\ref{prop:five} implies that each abstract weak orbit is either a singular point, a semi-multi-saddle separatrix, a connected component of $\Pv$, a transverse annulus, or a trivial flow box.
Since $\Sv$ is finite, each singular point is minimal with respect to $\leq_\perp$.
Therefore $\mathrm{ht}_{\leq_\perp}(\Sv) \leq 0$.
By definition of $\leq_\perp$, the height of a periodic point is zero.
Therefore $\mathrm{ht}_{\leq_\perp}(\Cv) \leq 0$.
Fix a point $x \in S - \Cv = \mathrm{P}(v)$.
By definition of $\leq_\perp$, the height of any point of a limit circuit is at most one.
Suppose that $O(x)$ is a semi-multi-saddle separatrix.
The generalization of the Poincar\'e-Bendixson theorem for a flow with finitely many singular points implies that any $\omega$-limit sets and $\alpha$-limit sets are either singular points or limit circuits.
This means that the height of any point is at most two.
Therefore $\mathrm{ht}_{\leq_\perp}(\mathrm{P}(v)) \leq 2$.
Since $S = \Sv \sqcup \Pv \sqcup \mathrm{P}(v)$, we have $\mathrm{ht}_{\leq_\perp}(S) \leq 2$.
\end{proof}

\begin{proposition}\label{prop:height}
The following properties hold for a flow $v$ of finite type on a compact surface $S$: \\
$(1)$ $S/[v] = S/\langle v \rangle = S/[v]_k = S/\langle v \rangle_k$ for any $k \in \Z_{>0}$.
\\
$(2)$ The binary relation $\leq_\partial$ is a partial order at most height three such that $S_{\leq 0} = \Cv$ and $D(v) \sqcup \Pv \subseteq S_{\leq 2}$, where $S_{\leq i}$ is the set of point of height at most $i$ with respect to the partial order $\leq_\partial$.
\\
$(3)$ The quotient map $S \to S/[v]$ is a finite poset-stratification with respect to $\leq_\partial$.
\\
$(4)$ If there are no limit circuits, then the abstract weak orbit space $S/[v]$ is a stratification.
\end{proposition}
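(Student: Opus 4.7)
For~(1), the hypothesis of finite type gives $S = \mathop{\mathrm{Cl}}(v) \sqcup \mathrm{P}(v)$, so $\mathrm{R}(v) = \emptyset$ and Lemma~\ref{lem:w_s} directly yields $S/[v] = S/\langle v \rangle$. To compare with the $k$-th constructions I plan to apply Lemma~\ref{lem:k_th_weak} and Lemma~\ref{lem:k_th}, which reduces to verifying $[\alpha(x)] = \alpha(x)$ and $[\omega(x)] = \omega(x)$ for each $x \in S$. By the generalized Poincar\'e--Bendixson theorem for flows with finitely many singular points on a compact surface, each of $\alpha(x), \omega(x)$ is a singular point or a (possibly non-trivial) limit circuit. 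Each singular point is isolated and so $[p] = \{p\}$, while a non-trivial limit circuit is a finite union of multi-saddles and multi-saddle separatrices, and Lemma~\ref{lem:sep_multi} applied under $\mathrm{R}(v) = \emptyset$ yields $[x] = O(x)$ for every point on such a separatrix. Consequently every limit set is a disjoint union of abstract weak orbits, and the four equalities in~(1) follow.

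For~(2), I plan to read off heights from the classification in Proposition~\ref{prop:five}, using Corollary~\ref{cor:closure_01} and the explicit descriptions of transverse and horizontal boundaries given there. Isolated singular points have $[p] = \{p\} = \overline{[p]}$, hence height $0$; closed connected components of $\Pv$ contribute the remaining height-$0$ pieces of $\Cv$. A semi-multi-saddle separatrix $\sigma$ has $[\sigma] = \sigma$ by Lemma~\ref{lem:sep_multi} with $\overline{\sigma}\setminus\sigma \subseteq \Sv$, giving height $1$. A non-closed periodic component has coborder contained in $\Sv \cup D(v)$, made of centers and non-trivial circuits of $D(v)$ whose elements are of height $\leq 1$, so height $\leq 2$; a trivial flow box or transverse annulus has coborder consisting of multi-saddles, semi-multi-saddle separatrices and semi-attracting or semi-repelling limit circuits, all of height $\leq 2$, giving height $\leq 3$. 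Stacking these bounds yields the height bound and the inclusion $D(v) \sqcup \Pv \subseteq S_{\leq 2}$. Antisymmetry and transitivity of $\leq_\partial$ are then forced because every element of a coborder lies strictly below the class containing it in height, which rules out the cyclic configurations exemplified in Figures~\ref{non_symmetric} and~\ref{non_transitive}.

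Part~(3) follows immediately, since $S/[v]$ is finite by Proposition~\ref{prop:five} and with $\leq_\partial$ a partial order it becomes a finite $T_0$ Alexandroff poset, so the quotient map $S \to S/[v]$ is a finite poset-stratification. For~(4), the absence of limit circuits removes the non-trivial circuit contributions from both the transverse boundary of periodic components and the horizontal boundary of trivial flow boxes and transverse annuli, leaving only singular points and semi-multi-saddle separatrices; each $S_i := S_{\leq i} - S_{\leq i-1}$ then becomes an $i$-dimensional submanifold and the closure condition $\overline{S_i} - S_i \subseteq \bigsqcup_{j < i} S_j$ is verified directly from the coborder computations, mirroring the gradient argument of Proposition~\ref{prop:01}. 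The main obstacle I foresee is establishing antisymmetry of $\leq_\partial$ uniformly across all five classes from Proposition~\ref{prop:five}, because two distinct periodic components or flow boxes can in principle share pieces of a limit circuit in their coborders; the strict height drop along coborder edges, forced by the classification, is what ultimately prevents these cyclic comparisons and completes the proof.
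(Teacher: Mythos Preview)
Your approach is essentially the paper's: both use Corollary~\ref{cor:w_s} (you cite the underlying Lemma~\ref{lem:w_s}) together with the generalized Poincar\'e--Bendixson theorem and Lemma~\ref{lem:k_th_weak} for~(1), then run the classification of Proposition~\ref{prop:five} case by case to bound heights for~(2), and read off~(3)--(4) from finiteness.

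There is one concrete gap. You assert that a semi-multi-saddle separatrix $\sigma$ satisfies $\overline{\sigma}\setminus\sigma \subseteq \Sv$ and hence has height~$1$. This is false: if $\sigma$ is an ss-separatrix whose non-multi-saddle end is a limit circuit $\gamma$, then $\overline{\sigma}\setminus\sigma = \{m\}\cup\gamma$ with $m$ the multi-saddle, and $\gamma$ may be a limit cycle (a periodic orbit, so in $\Pv$ not $\Sv$) or a non-trivial circuit built from multi-saddles and multi-saddle separatrices, the latter already of height~$1$. In that second case $\sigma$ has height~$2$. The paper handles this by splitting into ``$O(x)$ is a separatrix in a limit circuit'' (height~$1$) versus ``not contained in a limit circuit'' (height~$1$ or~$2$). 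Your later bound ``all of height $\leq 2$'' for the coborder of a trivial flow box tacitly needs this corrected bound, so the overall height~$\leq 3$ survives, but the intermediate claim as written is wrong and requires the case split.

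A smaller point: your justification of antisymmetry and transitivity via ``every element of a coborder lies strictly below the class containing it in height'' is circular, since height is defined from $\leq_\partial$. What actually works---and what the paper does, tersely---is to verify for each of the five types in Proposition~\ref{prop:five} that $x \notin \overline{\overline{[x]}-[x]}$ and that the coborder $\overline{[x]}-[x]$ is a union of abstract weak orbits of strictly earlier types; finiteness of $S/[v]$ then forces $\leq_\partial$ to agree with the specialization order of the quotient topology, which is automatically a partial order. You should make this non-circular version explicit.
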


\begin{proof}
Let $v$ be a flow of finite type on a compact surface $S$.
Corollary~\ref{cor:w_s} implies that $S/[v] = S/\langle v \rangle$.
The generalization of the Poincar\'e-Bendixson theorem for a flow with finitely many singular points implies that the $\omega$-limit set and $\alpha$-limit set of a point are either singular points or limit circuits.
Proposition~\ref{prop:five} implies that each abstract weak orbit is either a singular point, a semi-multi-saddle separatrix, a connected component of $\Pv$, a transverse annulus, or a trivial flow box.
This implies that, for any point $x$, we have $[\alpha(x)] = \alpha(x)$ and $[\omega(x)] = \omega(x)$.
By Lemma~\ref{lem:k_th_weak}, we obtain $S/[v] =S/[v]_k$ and $S/\langle v \rangle = S/\langle v \rangle_k$ for any $k \in \Z_{>0}$.
Since $\Sv$ is finite, each singular point is minimal with respect to $\leq_\partial $.
Fix a point $x \in S$.
If $x$ is singular, then it is minimal with respect to $\leq_\partial $.
Therefore $\mathrm{ht}_{\leq_\partial }(\Sv) \leq 0$.
Suppose that $x$ is a semi-multi-saddle separatrix.
Lemma~\ref{lem:sep_multi} implies that the abstract orbit of a semi-multi-saddle separatrix is itself and so $[x] = O$.
Assume that $O(x)$ is a separatrix in a limit circuit.
Then the coborder $\bp [x] = \overline{[x]} - [x] = \overline{O(x)} - O(x) = \alpha'(x) \cup \omega'(x)$ consists of multi-saddles.
This implies that $x \notin \overline{\overline{[x]} - [x]}$ and so that $\mathrm{ht}_{\leq_\partial }(x) = 1$.
Assume that $O(x)$ is a semi-multi-saddle separatrix but not contained in a limit circuit.
Then the coborder $\bp [x] = \overline{[x]} - [x] = \overline{O(x)} - O(x) = \alpha'(x) \cup \omega'(x)$ consist of a multi-saddle and either a singular point or a limit circuit.
Therefore $x \notin \overline{\overline{[x]} - [x]}$ and so $\mathrm{ht}_{\leq_\partial }(x) = 1$ or $2$.
%
Suppose that $x \in \Pv$.
Then $[x]$ is the connected component $C$ of $\Pv$ containing $x$.
Since the intersection $D(v) \cap \partial \Pv$ consists of multi-saddles and separatrices between multi-saddles, it consists of points of height at most one with respect to $\leq_\partial $.
Therefore the coborder $\bp [x] = \overline{[x]} - [x] = \overline{C} - C$ consists of centers and circuits in $D(v)$ and so the difference consists of points whose heights are at most one.
This implies that $\mathrm{ht}_{\leq_\partial }(x) \leq 2$.
Therefore $\mathrm{ht}_{\leq_\partial }(\Pv) \leq 2$.
Suppose that $x$ is contained in a transverse annulus $\mathbb{A} \subseteq \mathrm{P}(v)$ which is a connected component of $S - \mathop{\mathrm{Bd}}(v)$.
Then $\overline{\mathbb{A}} = \mathbb{A} \sqcup (\alpha(x) \cup \omega(x))$.
Therefore the coborder $\bp [x] = \overline{[x]} - [x] = \overline{\mathbb{A}} - \mathbb{A} = \alpha(x) \cup \omega(x)$ consists of singular points and limit circuits.  This means that $\mathrm{ht}_{\leq_\partial }(x) = 1$ or $2$.
%
Suppose that $x$ is contained in a trivial flow box $\mathbb{D}$.
Then the difference $\overline{\mathbb{D}} - \mathbb{D}$ consists of singular points and semi-multi-saddle separatrices.
Since the abstract orbits of saddles and semi-multi-saddle separatrices are themselves, the abstract orbit $[x]$ does not intersect the coborder $\bp \mathbb{D} = \overline{\mathbb{D}} - \mathbb{D}$ and so $[x] = \mathbb{D}$.
Therefore the coborder $\bp [x] = \overline{[x]} - [x] = \overline{\mathbb{D}} - \mathbb{D}$ consists of singular points and semi-multi-saddle separatrices.
Since the $\omega$-limit and $\alpha$-limit sets of semi-multi-saddle separatrices are either singular points or limit circuits, we have $x \notin \overline{\overline{[x]} - [x]}$.
Since the coborder $\bp [x]$
consists of points of heights at most two, we obtain $\mathrm{ht}_{\leq_\partial }(x) \leq 3$.
Therefore $\mathrm{ht}_{\leq_\partial }(\mathrm{P}(v)) \leq 3$.
The finiteness of the abstract weak orbit space $S/[v]$ implies that the partial order $\leq_\partial$ on $S/[v]$ is also the specialization order of $S/[v]$, and that if there are no limit circuits then the quotient map $S \to S/[v]$ is a finite poset-stratification with respect to $\leq_\partial$.
\end{proof}

Notice that the resulting space from the abstract weak orbit space by collapsing limit circuits into singletons is also a  stratification of the surface as in the previous proposition.

\section{Completeness for flows on compact surfaces}
In this section, we show that the abstract weak orbit space is a complete invariant of Morse flows on closed surfaces and of flows generated by structurally stable Hamiltonian vector fields on compact surfaces.

\subsection{Completeness for gradient flows on compact surfaces}
We show the following completeness of an abstract weak orbit space of a regular gradient flow with the partial order $\leq_\partial$.

\begin{proposition}\label{prop:comp_qr}
The abstract weak orbit space equipped with the partial order $\leq_\partial$ is a complete invariant of regular gradient flows on closed surfaces.
In particular, the abstract weak orbit space is a complete invariant of Morse flows on closed surfaces.
\end{proposition}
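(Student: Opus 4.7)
The plan is to establish both directions of the equivalence, with the converse direction carrying the bulk of the work. The forward direction, that topological equivalence implies poset isomorphism, is routine: a topological equivalence $h \colon S \to S'$ sends $v$-orbits to $v'$-orbits preserving direction, hence preserves $\alpha$- and $\omega$-limit sets and orbit closures, so it descends to an order isomorphism $(S/[v], \leq_\partial) \to (S'/[v'], \leq_\partial)$.

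For the converse, suppose $\varphi \colon (S/[v], \leq_\partial) \to (S'/[v'], \leq_\partial)$ is a poset isomorphism between two regular gradient flows $v$ on $S$ and $v'$ on $S'$. By Proposition~\ref{prop:01}, each side carries a finite poset-stratification whose strata are $S_{\leq 0} = \Sv$ at height $0$, $S_{\leq 1} - S_{\leq 0} = D(v) - \Sv$ at height $1$ (consisting of saddle separatrices by Proposition~\ref{prop:five}), and $S_{\leq 2} - S_{\leq 1} = \mathrm{P}(v) - D(v)$ at height $2$ (consisting of trivial flow boxes, each a topological disk foliated by parallel orbits). Since $\varphi$ preserves heights, it maps each stratum bijectively to the corresponding one. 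I would then identify the combinatorial type of each singular point from its upset structure — for example, a saddle is precisely a height-$0$ element covered in $\leq_\partial$ by exactly four height-$1$ elements — and construct a topological equivalence $h \colon S \to S'$ cell by cell: first map singular points via $\varphi$; second, extend homeomorphically over each separatrix $\gamma$ using its two endpoints recoverable as $\mathop{\downarrow}_{\leq_\partial} [\gamma] \setminus \{[\gamma]\}$; and third, extend over each trivial flow box using the fact that a disk foliated by parallel orbits admits a leaf-preserving homeomorphism extending any prescribed boundary identification.

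The main obstacle will be recovering the direction of the flow and the cyclic order of cells around each flow-box boundary from the symmetric relation $\leq_\partial$. I expect to handle the direction ambiguity by showing that for a regular gradient flow on a closed surface the time-reversed flow is realised by a global self-homeomorphism: the reverse flow is the gradient flow of $-h$, which has the identical $\leq_\partial$ poset and which on a closed surface admits an involution swapping sinks and sources while preserving saddles, so any such ambiguity falls within the topological-equivalence class. For the cyclic order around each flow-box boundary, I would use the fact that the Euler characteristic $\chi(S) = |S_{\leq 0}| - |S_{\leq 1} - S_{\leq 0}| + |S_{\leq 2} - S_{\leq 1}|$ is determined by the poset, together with the orientability, reconstructible from local combinatorial data at saddles; these constraints pin the cellular embedding, and hence the cyclic order around each flow-box boundary, up to the ambiguity already absorbed by the direction argument.

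The Morse corollary then follows immediately because any Morse flow on a closed surface is a regular gradient flow: the Morse-Smale hypothesis without limit cycles on a surface forces quasi-regularity, since the $1$-dimensional stable and unstable manifolds of distinct saddles intersect transversely and hence cannot produce a saddle connection, so all non-wandering orbits are hyperbolic sinks, sources, or saddles; by Lemma~\ref{lem:2dg} the resulting quasi-regular gradient flow is of finite type, and the completeness established for regular gradient flows specialises.
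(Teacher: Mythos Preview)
Your overall strategy—reading off the height stratification, identifying singular points, separatrices, and trivial flow boxes, and then reconstructing the flow by gluing—matches the paper's approach closely. The paper likewise uses Proposition~\ref{prop:01} and Proposition~\ref{prop:five} to pin down the strata and then argues that the boundary data of each flow box is encoded in the order, so the gluing can be recovered.

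However, your handling of the direction ambiguity contains a genuine error. You claim that for a regular gradient flow on a closed surface, the time-reversed flow is realised by a self-homeomorphism of the surface, and hence lies in the same topological-equivalence class. This is false. Take the gradient flow of a Morse function on $\mathbb{S}^2$ with one maximum, one saddle, and two minima: the flow has one source, one saddle, and two sinks, while its reverse has two sources, one saddle, and one sink. No orientation-preserving (or reversing) self-homeomorphism can carry one to the other, because topological equivalence must preserve the number of sinks. Yet the posets $(\mathbb{S}^2/[v],\leq_\partial)$ and $(\mathbb{S}^2/[-v],\leq_\partial)$ are identical, since the abstract weak orbits and their closures do not depend on the time orientation. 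So $\leq_\partial$ alone genuinely cannot detect direction, and your proposed fix does not work.

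The paper sidesteps this: recall that by convention the abstract weak orbit space is equipped with the three relations $\leq_\alpha$, $\leq_\omega$, $\leq_\pitchfork$ separately (and $\leq_\partial$ is just their union). With $\leq_\alpha$ and $\leq_\omega$ available individually, sinks are the height-$0$ elements with only $\leq_\omega$-covers and sources those with only $\leq_\alpha$-covers, so the direction is encoded outright and no reversal argument is needed. The paper's reconstruction of the transverse boundary also uses $\leq_\pitchfork$ directly (via $\mathop{\downarrow}_{\leq_\pitchfork}[x]$), not merely $\leq_\partial$. You should rewrite your argument to use these finer relations; once you do, the direction issue disappears and the cyclic-order reconstruction of flow-box boundaries becomes more direct as well, since for a regular gradient flow each side of a trivial flow box is a path alternating between $\leq_\alpha$- and $\leq_\omega$-covers through saddles. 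Your Euler-characteristic/orientability sketch for the cyclic order is too vague to stand on its own.
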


\begin{proof}
It is known that Morse flows on closed surfaces are gradient flows with nondegenerate singular points but without separatrices from a saddle to a saddle~\cite[Theorem B]{smale1961gradient}.
This implies that Morse flows on closed surfaces are quasi-regular gradient flows.
Let $v$ be a regular gradient flow on a closed surface $S$.
Then $S = \Sv \sqcup \mathrm{P}(v)$.
Since a closed surface has at most finitely many connected components, we may assume that $S$ is connected.
By non-existence of limit circuits, a generalization of the Poincar\'e-Bendixson theorem for a flow with finitely many singular points implies that any $\omega$-limit sets and $\alpha$-limit sets are either sinks, sources, or saddles.
By Lemma~\ref{lem:three_pts}, we may assume that $v$ is not a flow on a sphere $\mathbb{S}^2$ which consists of a sink $s_+$, a source $s_-$ and non-recurrent orbits between them.
Poincar\'e-Hopf theorem for continuous flows with finitely many singular points on compact surfaces implies that there are saddles and so there are no transverse annuli.
Proposition~\ref{prop:five} implies that the abstract weak orbit space is a finite set of sinks, sources, saddles, separatrices, and trivial flow boxes.
This means that the gluing information of boundaries of the trivial flow boxes can reconstruct the flow $v$.
Proposition~\ref{prop:height} implies that the transverse order $\leq_\partial$ is the union of these orders $\leq_\pitchfork, \leq_\alpha, \leq_\omega$ and is a partial order.
We show that the abstract weak orbit space with the transverse order $\leq_\partial$
have the gluing information.
Indeed, denote by $S_{= k}$ (resp. $S_{\leq k}$) the set of points whose abstract weak orbits have height $k$ (resp. at most height $k$) with respect to the transverse order $\leq_\partial$.
By construction, the union of abstract weak orbits whose heights are zero with respect to $\leq_\partial$ is $\Sv$ (i.e. $S_{= 0} = \Sv$), the union $S_{= 1}$ of abstract weak orbits whose heights are one with respect to $\leq_\partial$ is the union of semi-multi-saddle separatrices (i.e. $\Sv \cup D(v) = S_{\leq 1}$), and the union  $S_{= 2}$ of abstract weak orbits whose heights are two with respect to $\leq_\partial$ is the union of open trivial flow boxes which are connected components of the complement $S - D(v)$ of the multi-saddle connection diagram $D(v)$ (i.e. $S_{= 2} = S - (\Sv \cup D(v))$).
Since $S_{= 0} = \Sv$ and $S_{= 1} = D(v) \setminus \Sv$, the abstract directed graph structure of $\Sv \cup D(v)$ can be reconstructed by the pre-order $\leq_\partial$.
For any abstract weak orbit $U$ which is an open trivial flow box and which is a connected component of $S - D(v)$, we have that $\mathrm{ht}_{\leq_\partial }(U) \leq 2$ and $\partial U \subseteq \Sv \cup D(v) = S_{\leq 1}$.
For any point $x \in U$, we have that $[x] = U$ and that the transverse boundary  $\partial_\pitchfork [x] = \{ y \in S \mid y <_\pitchfork x \} = \mathop{\downarrow}_{\leq_\pitchfork} [x]  - [x] = \mathop{\downarrow}_{\leq_\partial} U - U$ is the union of abstract weak orbits which is less than $U$ with respect to the pre-order $\leq_\pitchfork$.
Therefore the partial order $\leq_\partial$ have the gluing information.
\end{proof}

\subsection{Completeness for generic Hamiltonian flows on compact surfaces}
Recall that the set of structurally stable Hamiltonian flows on compact surfaces is open dense in the set of Hamiltonian flows.
Generic Hamiltonian flows on compact surfaces are classified by the abstract weak orbit spaces as follows.

\begin{proposition}
The abstract weak orbit space equipped with the partial order $\leq_\partial$ is a complete invariant of flows generated by structurally stable Hamiltonian vector fields on compact surfaces.
\end{proposition}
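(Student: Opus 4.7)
The plan is to adapt the strategy of Proposition~\ref{prop:comp_qr} to the Hamiltonian setting, using Lemma~\ref{lem:ham} and Lemma~\ref{lem:ham_trivial} as the structural input. Let $v$ be a flow generated by a structurally stable Hamiltonian vector field on a compact orientable surface $S$; by \cite[Theorem 2.3.8]{ma2005geometric}, all singular points are nondegenerate (hence centers or (multi-)saddles) and all separatrices are self-connected. In particular $v$ has finitely many singular points, so Lemma~\ref{lem:ham} gives the finite stratification $\Sv = S_{\leq 0}\subseteq \Sv\cup D(v) = S_{\leq 1}\subsetneq S = S_{\leq 2}$, with $S_{=2}$ the finite family of periodic annuli, and shows that $\leq_\partial$ is a partial order that reproduces these heights. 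Lemma~\ref{lem:ham_trivial} disposes of the cases $|S/[v]|\leq 3$ (closed periodic annulus, center disk, rotating sphere), which are already distinguished by $(S/[v],\leq_\partial)$, so we may assume $D(v)\neq\emptyset$.

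The partial order $\leq_\partial$ recovers, on the combinatorial side, the multi-saddle connection diagram as a directed graph together with its attached periodic annuli. Within $\Sv$, a singular point is a center if and only if no height-$1$ element covers it; each remaining point is a saddle or $\partial$-saddle, and Lemma~\ref{lem:equivalent} identifies its incoming (resp.\ outgoing) separatrices through $\leq_\omega$ (resp.\ $\leq_\alpha$). For a periodic annulus $U\in S_{=2}$, the transverse boundary $\partial_\pitchfork U$ is read off as $\{[y]:[y]<_\pitchfork U\}\subseteq S_{\leq 1}$. Since $S$ is orientable and $U$ consists of periodic orbits, $U$ is an open annulus foliated by circles, hence determined up to flow-preserving homeomorphism by its two boundary circuits (multi-saddle connections) together with the matching of separatrix ends at the incident saddles.

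The main obstacle is recovering from $\leq_\partial$ alone the cyclic ordering of separatrices at each saddle, since this is two-dimensional embedding data not manifestly encoded in an order. Here self-connectedness is essential: a nondegenerate saddle has exactly two stable and two unstable separatrices (sorted by $\leq_\alpha$ vs.\ $\leq_\omega$), and each pair of consecutive separatrices in the cyclic order around the saddle bounds a unique adjacent periodic annulus. The relation $p<_\pitchfork U$ identifies which periodic annuli touch a saddle $p$; then for each such $U$ the two separatrices of $p$ on $\partial U$ are precisely those height-$1$ elements $e$ with $p<_\partial e<_\partial U$, which supplies the required pairing. This reconstructs the planar cyclic order at every (multi-)saddle.

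Finally, given two structurally stable Hamiltonian flows $v,v'$ with an isomorphism of posets $(S/[v],\leq_\partial)\cong(S'/[v'],\leq_{\partial'})$, one constructs a topological equivalence stratum by stratum: first a type-preserving bijection on $\Sv$, then an extension to each separatrix using the self-connection data and the reconstructed cyclic order at each saddle, and finally an extension across each periodic annulus via a fibered homeomorphism of $S^1\times(0,1)$ compatible with the already fixed boundary identifications. Orientability together with self-connectedness rules out obstructions to these successive extensions, yielding the desired topological equivalence and hence the completeness of the invariant $(S/[v],\leq_\partial)$.
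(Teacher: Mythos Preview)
Your proof follows essentially the same route as the paper: handle the degenerate cases via Lemma~\ref{lem:ham_trivial}, use structural stability to get nondegenerate singular points and self-connected separatrices, read off the stratification from $\leq_\partial$ via Lemma~\ref{lem:ham}, and exploit self-connectedness to recover the cyclic data at each saddle---the paper phrases this last step as the observation that each boundary circuit of a periodic annulus has a ``trivial cyclic order'' (its Figures~\ref{tr_order} and~\ref{embed}), which is exactly your pairing-via-adjacent-annuli argument. One small slip worth cleaning up: you invoke $\leq_\alpha$ and $\leq_\omega$ separately to sort incoming versus outgoing separatrices, but only $\leq_\partial$ is part of the stated invariant; this is harmless here because each self-connected separatrix at an interior saddle has that saddle as both its $\alpha$- and $\omega$-limit, so the two relations coincide there and nothing needs sorting (correspondingly, an interior saddle has two separatrix \emph{orbits}, not four---the four are germs---so a boundary component of an adjacent annulus carries one or two of them, not always two).
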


\begin{proof}
Let $v$ be a flow generated by a structurally stable Hamiltonian vector field on a compact surface $S$.
Since the abstract weak orbit space of a connected surface is connected, we may assume that $S$ is connected.
By Lemma~\ref{lem:ham_trivial}, we may assume that the multi-saddle connection diagram $D(v)$ is not empty.
\cite[Theorem 2.3.8, p. 74]{ma2005geometric} implies that each singular point is nondegenerate and each separatrix is self-connected.
Since $v$ is generated by a Hamiltonian vector field, we have $S = \mathop{\mathrm{Cl}}(v) \sqcup \mathrm{P}(v)$ and it has neither sinks, $\partial$-sinks, sources, nor $\partial$-sources, and so each singular point is either a center, a saddle, or a $\partial$-saddle.
Denote by $\mathop{\mathrm{Sing}}(v)_c$ the set of centers.
By non-existence of limit circuits, the generalization of the Poincar\'e-Bendixson theorem for a flow with finitely many singular points implies that any $\omega$-limit sets and $\alpha$-limit sets of non-periodic orbits are singular points.
This implies that $D(v) = (\Sv - \mathop{\mathrm{Sing}}(v)_c) \sqcup \mathrm{P}(v)$ and so the complement $ S- (\mathop{\mathrm{Sing}}(v)_c \sqcup D(v))$ of the union of centers and the multi-saddle connection diagram $D(v)$ is the periodic point set $\Pv$ which is a finite disjoint union of periodic annuli.
Proposition~\ref{prop:five} implies that the abstract weak orbit space is a finite set of centers, saddles, $\partial$-saddles, separatrices, and periodic annuli.
This means that the gluing information of boundaries of the periodic annuli can reconstruct the flow $v$.
Proposition~\ref{prop:height} implies that the transverse order $\leq_\partial$ is the union of these orders $\leq_\pitchfork, \leq_\alpha, \leq_\omega$ and is a partial order.
We show that the abstract weak orbit space with the transverse order $\leq_\partial$ have the gluing information.
Indeed, denote by $S_{= k}$ (resp. $S_{\leq k}$) the set of points whose abstract weak orbits have height $k$ (resp. at most height $k$) with respect to the transverse order $\leq_\partial$.
By construction, we have that $S_{= 0} = \Sv$.
Since the multi-saddle connection diagram $D(v)$ of a flow generated by a structurally stable Hamiltonian vector field on a compact surface is the union of saddles, $\partial$-saddles, and separatrices, the abstract directed graph of the multi-saddle connection diagram $D(v)$ is the finite union of elements of height less than two.
Since the boundary of a connected component of $\Pv = S - (\Sv \sqcup \mathrm{P}(v))$ contains separatrices, we have $S_{= 1} = D(v) \setminus \Sv = \mathrm{P}(v)$ and $S_{= 2} = \Pv = S - (\Sv \sqcup \mathrm{P}(v))$.
The self-connectivity of $D(v)$ implies that each multi-saddle connection out of the boundary $\partial S$ has trivial cyclic orders (see Figure~\ref{tr_order} on the left and middle) and that the boundary component of such an open periodic annulus intersecting $\partial S$ is an embedded circuit (see Figure~\ref{embed}).
\begin{figure}
\begin{center}
\includegraphics[scale=0.4]{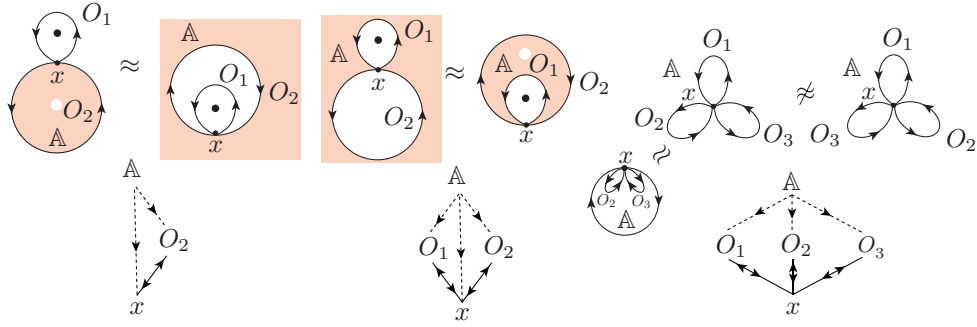}
\end{center}
\caption{Left, the abstract weak orbit space restricted to the boundary component of a periodic annulus $\mathbb{A}$ which consists of a saddle $x$ and a separatrix $O_1$ has exactly one realization, which is a circuit $[x] \leq_{\alpha} O_2 \geq_{\omega} [x]$; middle,  the abstract weak orbit space restricted to the boundary component of a periodic annulus $\mathbb{A}$ which consists of a saddle $x$ and two separatrices $O_1, O_2$ has exactly one realization, which is a circuit $[x] \leq_{\alpha} O_1 \geq_{\omega} [x] \leq_{\alpha} O_2 \geq_{\omega} [x]$; right, the abstract weak orbit space restricted to the boundary component of a periodic annulus $\mathbb{A}$ which consists of a saddle $x$ and separatrices $O_1, O_2, O_3$ has exactly two realizations. Moreover, the binary relation $\leq_\partial$ is a pre-order which corresponds to the union of the pre-order $\leq_v$ and the binary relation $\leq_\pitchfork$ as a direct product.}
\label{tr_order}
\end{figure}
\begin{figure}
\begin{center}
\includegraphics[scale=0.5]{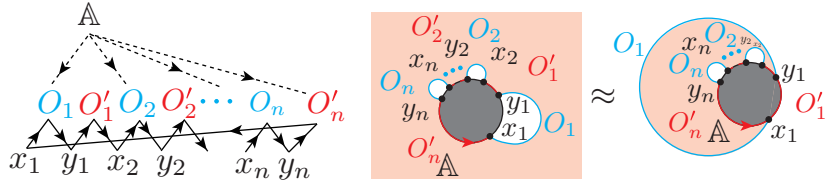}
\end{center}
\caption{Middle and right, the boundary component of an open periodic annulus $\mathbb{A}$ intersecting $\partial S$ and $O'_1$ is an embedded circuit $\{ x_1 \} \sqcup O_1\sqcup \{ y_1 \}  \sqcup O'_1\sqcup \{ x_2 \} \sqcup O_2\sqcup \{ y_2 \}  \sqcup O'_2 \sqcup \cdots \sqcup O'_n$; left, solid lines form the Hessian diagram of the order $\leq_v$ whose down (resp. up) arrows correspond to $\leq_{\omega}$ (resp. $\leq_{\alpha}$), and dotted directed lines correspond to $\leq_\pitchfork$. Moreover, the binary relation $\leq_\partial$ is a pre-order which corresponds to the union of the pre-order $\leq_v$ and the binary relation $\leq_\pitchfork$ as a direct product.}
\label{embed}
\end{figure}
This implies that the abstract directed graph structure of $D(v) = S_{\leq 1}$ can be reconstructed by the pre-order $\leq_\partial$.
For any abstract weak orbit $U$ which is an open periodic annulus and which is a connected component of $S - D(v)$, we have $\mathrm{ht}_{\leq_\partial }(U) = 2$.
For any point $x \in U$, we have that $[x] = U$ and that the boundary $\partial [x] = \partial_\pitchfork [x] = \{ y \in S \mid y <_\pitchfork x \} = \mathop{\downarrow}_{\leq_\pitchfork} [x]  - [x] = \mathop{\downarrow}_{\leq_\pitchfork} U - U$ contains non-trivial circuits in $D(v)$ and consists of centers and circuits in $D(v)$.
Therefore the boundary $\partial U = \partial_\pitchfork U =  \mathop{\downarrow}_{\leq_\pitchfork} U - U$ is the union of abstract weak orbits which is less than $U$ with respect to the pre-order $\leq_\partial$.
Therefore the partial order $\leq_\partial$ has the gluing information.
\end{proof}

\section{Reconstructions of orbit spaces and abstract weak orbit spaces of flows}


\subsection{Detection of original flows from the time-one maps}

A function on a one-dimensional manifold is piecewise non-constant linear if there is a cover of nondegenerate closed intervals to which the restriction of the function is linear but not constant.
We have the following property to reconstruct orbit spaces.

\begin{lemma}\label{lem:pl}
Let $v$ be a Hamiltonian flow with finitely many singular points on a compact surface $S$ and $T$ a union of transverse arcs such that any periodic orbit intersects $T$ exactly once.
If the period $p \colon \Pv/v \to \R$ is piecewise non-constant linear, then the orbit space of $v$ is homeomorphic to the abstract weak orbit space of {\rm (}the suspension flow of {\rm )} the time-one map of $v$.
\end{lemma}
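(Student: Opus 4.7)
The plan is to establish the set-theoretic equality $[x]_{v_1} = O_v(x)$ for every $x \in S$; once this holds, the quotient maps $S \to S/v$ and $S \to S/[v_1]$ descend from identical equivalence relations on $S$, so the two quotient spaces coincide as quotient topologies of $S$ and are automatically homeomorphic.

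First I would invoke Lemma~\ref{lem:ham} to decompose $S = \mathop{\mathrm{Sing}}(v) \sqcup \mathrm{P}(v) \sqcup \mathop{\mathrm{Per}}(v)$, with singular points being centers or multi-saddles, $\mathrm{P}(v)$ a finite union of multi-saddle separatrices, and $\mathop{\mathrm{Per}}(v)$ a finite disjoint union of periodic annuli. The key tool is the suspension flow $v_{v_1}$ on the mapping torus $S_{v_1}$ together with the defining formula $[x]_{v_1} = \pi_{v_1}^{-1}(h_{v_1}(\pi_{v_{v_1}}([(x,0)]_{v_{v_1}})))$; for a $v$-orbit $O$ let $T_O := \pi(O \times \R) \subset S_{v_1}$, which is a two-torus when $O$ is periodic and a circle $L_x$ when $O = \{x\}$ is singular.

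Next I would compute $[(x,0)]_{v_{v_1}}$ case by case. For $x$ on a periodic orbit $O$ with irrational $v$-period, the orbit of $(x,0)$ is dense in $T_O$, so $(x,0)\in\mathrm{R}(v_{v_1})$ with $\alpha_{v_{v_1}}((x,0))=\omega_{v_{v_1}}((x,0))=T_O$, and by Lemma~\ref{lem:aw2cl02} we get $[(x,0)]_{v_{v_1}}=T_O$ (points on any other torus $T_{O'}$ have different limit sets). For $x$ on $O$ with rational period, $(x,0)$ lies in $T_O \subset \mathop{\mathrm{Per}}(v_{v_1})$; the crucial use of the piecewise non-constant linear hypothesis on $p$, read off along the transverse arc $T$, is that rationals and irrationals are interleaved in each component of $\Pv/v$, so between any two rational-period tori lies an irrational-period torus, which sits in $\mathrm{R}(v_{v_1})\setminus\mathop{\mathrm{Per}}(v_{v_1})$. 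This lets me produce, for each other rational-period torus $T_{O'}$, a tubular neighbourhood in $S_{v_1}$ that separates $T_O$ from $T_{O'}$ while the in-between irrational-period torus prevents any other points of $\mathop{\mathrm{Per}}(v_{v_1})$ from connecting them, so $T_O$ is clopen in the subspace $\mathop{\mathrm{Per}}(v_{v_1})$ restricted to the suspension of the annulus containing $x$, and thus is the connected component of $(x,0)$ in $\mathop{\mathrm{Per}}(v_{v_1})$. The same separation argument handles $x\in\mathop{\mathrm{Sing}}(v)$: the loop $L_x$ is connected, lies in $\mathop{\mathrm{Per}}(v_{v_1})$, and is isolated from every rational-period torus in each adjacent periodic annulus by in-between irrational-period tori, yielding $[(x,0)]_{v_{v_1}}=L_x$. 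For $x\in\mathrm{P}(v)$ on a separatrix from $s'$ to $s$, one computes directly $\alpha_{v_{v_1}}((x,0))=L_{s'}$ and $\omega_{v_{v_1}}((x,0))=L_s$, and then by Corollary~\ref{cor:ch} $[(x,0)]_{v_{v_1}}$ is the connected component in $\mathrm{P}(v_{v_1})$ of $\{y:\alpha_{v_{v_1}}(y)=L_{s'},\,\omega_{v_{v_1}}(y)=L_s\}$ containing $(x,0)$, which is exactly the suspension $\pi(O_v(x)\times\R)$, since by Lemma~\ref{lem:ham} there are only finitely many separatrices from $s'$ to $s$ and their suspensions are pairwise disjoint two-dimensional submanifolds.

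In each case, applying $\pi_{v_{v_1}}$, $h_{v_1}$, and $\pi_{v_1}^{-1}$ to the computed $[(x,0)]_{v_{v_1}}$ recovers exactly $O_v(x)$, completing the argument. The main obstacle will be the separation step in the rational-period and singular cases: one must verify carefully that the piecewise non-constant linear behaviour of $p$ along $T$ genuinely interleaves rational and irrational periods in every component of $\Pv/v$, and that the corresponding irrational-period tori in $S_{v_1}$ really separate the rational-period tori and the singular loops in the subspace topology on $\mathop{\mathrm{Per}}(v_{v_1})$. Without the non-constancy hypothesis, $p$ could be locally constant at a rational value on some sub-arc of $T$, which would collapse an entire sub-annulus into a single abstract weak $v_1$-orbit and destroy the equality $[x]_{v_1}=O_v(x)$.
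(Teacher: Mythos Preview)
Your proposal is correct and follows essentially the same route as the paper: a case-by-case verification that $[x]_{v_1}=O_v(x)$ for $x$ singular, periodic with irrational period, periodic with rational period, and non-recurrent, using the piecewise non-constant linear hypothesis precisely to interleave rational- and irrational-period tori so that each rational-period torus and each suspension circle $L_x$ is its own connected component of $\mathop{\mathrm{Per}}(v_{v_1})$. Your treatment is in fact more explicit than the paper's about the separation argument and about invoking Corollary~\ref{cor:ch} and Lemma~\ref{lem:sep_multi} in the $\mathrm{P}(v)$ case, but the underlying strategy and the key use of the hypothesis are identical.
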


\begin{proof}
Lemma~\ref{lem:ham} implies that $v$ is of finite type and that $S = \Sv \sqcup \Pv \sqcup \mathrm{P}(v)$.
Moreover, the surface $S$ is a finite union of abstract weak orbits which are either centers, semi-multi-saddle separatrices, or periodic annuli.
The flow box theorem (cf. \cite[Theorem 1.1, p.45]{aranson1996introduction}) implies that the period $p \colon \Pv/v \to \R$ is continuous.
Suppose that $p \colon \Pv/v \to \R$ is piecewise non-constant linear.
Then the subset of periodic orbits with irrational periods and one with rational period are dense subsets of $\Pv$.
Let $f$ be the time-one map of $v$ and $w$ the suspension flow of $f$.
Then $\mathrm{P}(v) = \mathrm{P}(f)$, $\Sv \subset \mathop{\mathrm{Fix}}(f)$, and $\overline{\Pv \cap \mathop{\mathrm{Per}}_{>1}(f)} = \overline{\Pv \cap \mathrm{R}(f)} = S$.
For a point $x \in \Pv$ with irrational period with respect to $v$, the abstract weak orbit of $w$ is a torus and so the abstract weak orbit of $f$ is $O_v(x)$.
For a point $x \in \Pv$ with rational period with respect to $v$, the abstract weak orbit of $w$ is the connected component of $\mathop{\mathrm{Per}}(w)$ containing $x$ which is a torus, and so the abstract weak orbit of $f$ is $O_v(x)$.
For a point $x \in \Sv$, the abstract weak orbit of $w$ is itself which is a periodic orbit, and so the abstract weak orbit of $f$ is the fixed point.
For a point $x \in \mathrm{P}(v)$, the abstract weak orbit of $f$ is the connected component of $\mathrm{P}(f)$ containing $x$ which is the saturation $w(O_v(x))$ of $O_v(x)$ by $w$ and so the abstract weak orbit of $f$ is $O_v(x)$.
This means that then the abstract weak orbit spaces of $v$ and of (the suspension flow of) the time-one map of $v$ are homeomorphic.
\end{proof}

We show that the orbit space of a Hamiltonian flows with finitely many singular points on a compact surface is homeomorphic to the abstract weak orbit space of the time-one map by taking a reparametrization.
Precisely, we have the following reconstructions of topologies of flows.

\begin{theorem}\label{th:Ham_equiv}
For any Hamiltonian flow $v$ with finitely many singular points on a compact surface $S$, there is an arbitrarily small reparametrization $w$ of $v$ with the compact-open topology such that the orbit space of $v$ is homeomorphic to the abstract weak orbit space of the time-one map of the reparametrization $w$.
\end{theorem}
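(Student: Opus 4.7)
The plan is to produce a reparametrization $w$ of $v$, lying in any prescribed compact-open neighborhood of $v$, whose period function on a chosen transverse arc system is piecewise non-constant linear, and then apply Lemma~\ref{lem:pl} to $w$. Since reparametrization preserves orbits, $S/v = S/w$, so the homeomorphism from $S/w$ to the abstract weak orbit space of the time-one map of $w$ transfers to $v$. To set up, Lemma~\ref{lem:ham} gives $S = \Sv \sqcup \mathrm{P}(v) \sqcup \Pv$ with $\Pv$ a finite disjoint union of open periodic annuli $\mathbb{A}_1,\dots,\mathbb{A}_k$; on each $\mathbb{A}_i$ fix a transverse arc $T_i$ parametrized by an open interval $(a_i,b_i)$ and meeting every periodic orbit of $\mathbb{A}_i$ exactly once, and let $p_v^i \colon T_i \to \R_{>0}$ denote the continuous period function. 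Because reparametrization is a topological equivalence, $w$ will again be Hamiltonian in the sense of the paper, so Lemma~\ref{lem:pl} will indeed apply to $w$.

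Given a basic neighborhood of $v$ specified by a compact $K \subset \R$ and $\varepsilon > 0$, if $f \colon S \to \R_{>0}$ is continuous, constant along orbits of $v$, and satisfies $\|f-1\|_\infty < \delta$, then the reparametrization $w_t(x) := v_{f(x)\,t}(x)$ satisfies $\sup_{t \in K,\,x \in S} d(v_t(x), w_t(x)) < \varepsilon$ for $\delta = \delta(K,\varepsilon)$ small enough, by uniform continuity of $v$ on $K \times S$. Under this choice, the period of $w$ at an orbit $x \in \mathbb{A}_i$ equals $p_v^i(x)/f(x)$. The task thus reduces to building a continuous $f \colon S \to \R_{>0}$ that is $\equiv 1$ on $\Sv \cup \mathrm{P}(v)$, constant along each orbit of $v$ in $\Pv$, satisfies $\|f-1\|_\infty < \delta$, and for which $q_i := p_v^i/f$ is piecewise non-constant linear on each $T_i$.

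To construct such an $f$, choose on each $T_i = (a_i,b_i)$ a bi-infinite partition $\{t^i_n\}_{n \in \Z}$ with $t^i_n \to a_i$ as $n \to -\infty$ and $t^i_n \to b_i$ as $n \to +\infty$. On every compact piece $[t^i_n, t^i_{n+1}]$ approximate $p_v^i$ by a sufficiently finely subdivided linear interpolation $q^i_n$, perturbing each interior node value by at most $p_v^i \cdot \delta \cdot 2^{-|n|-1}$ to guarantee that every sub-piece has nonzero slope; this yields $|q^i_n/p_v^i - 1| < \delta \cdot 2^{-|n|}$ on $[t^i_n, t^i_{n+1}]$. Concatenating gives a piecewise non-constant linear $q_i \colon T_i \to \R_{>0}$ whose ratio with $p_v^i$ tends to $1$ at both endpoints of $T_i$. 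Defining $f := p_v^i/q_i$ on each $\mathbb{A}_i$ (pulled back along the orbit projection $\mathbb{A}_i \to T_i$) and $f \equiv 1$ on $\Sv \cup \mathrm{P}(v)$ gives a continuous $f \colon S \to \R_{>0}$ with $\|f-1\|_\infty \leq 2\delta$; the geometrically decaying tolerance is what secures continuity of $f$ across each boundary component of $\partial \mathbb{A}_i$, where $p_v^i$ itself may blow up.

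With $w$ generated from this $f$, its period on $T := \bigsqcup_i T_i$ is the piecewise non-constant linear function $q := \bigsqcup_i q_i$. Lemma~\ref{lem:pl} applied to $w$ and $T$ then produces the desired homeomorphism from $S/w$ to the abstract weak orbit space of the time-one map of $w$, and $S/v = S/w$ finishes the argument. The principal obstacle is the construction in the third paragraph: simultaneously achieving piecewise non-constant linearity and multiplicative proximity to $p_v^i$ on the entire open interval $T_i$, including the behavior at endpoints where $p_v^i$ can be unbounded for annuli bounded by multi-saddle connections or centers; the bi-infinite partition with $2^{-|n|}$-weighted relative tolerance is designed precisely to allow $q_i$ to follow this divergence while keeping the global reparametrization within the prescribed compact-open neighborhood of $v$.
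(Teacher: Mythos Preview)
Your proof is correct and follows the same overall architecture as the paper: both reduce the theorem to Lemma~\ref{lem:pl} by producing a small reparametrization whose period function on a complete transversal is piecewise non-constant linear, and both observe that $S/v = S/w$ since reparametrization preserves orbits.

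The difference lies in how the reparametrization is built. The paper works locally: for each periodic annulus it lays down a chain of closed flow boxes $U_n$ along the transversal, first perturbs $v$ so that the flow is horizontal with constant speed $\delta(q)$ on each horizontal slice of these boxes, and then adjusts that speed by an explicitly computed amount $a_m(q)$ inside the boxes so as to realise a prescribed piecewise linear period $p_m$. This requires tracking the box widths $\varepsilon'_n$, the local transit times $p'(q)$, and solving for $a_m(q)$ in terms of these quantities. Your approach is global and more direct: you use the orbit-wise reparametrization $w_t(x)=v_{f(x)t}(x)$ with $f$ constant along orbits, so that the new period is simply $p_v/f$; then constructing $w$ amounts only to choosing a piecewise non-constant linear $q_i$ with $q_i/p_v^i\to 1$ at the endpoints of each $T_i$, which you arrange via the bi-infinite partition with geometrically decaying relative tolerance. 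This sidesteps the flow-box machinery entirely. What the paper's route buys is that the perturbation is compactly supported near the transversal (only the speed inside the boxes is altered), whereas your $f$ modifies the speed along the entire orbit; but since both constructions stay within any prescribed compact-open neighborhood, this distinction is immaterial for the theorem.
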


\begin{proof}
Since $S$ consists of finitely many connected components, we may assume that $S$ is connected.
Let $\mathcal{H}(S)$ be the set of Hamiltonian flows with finitely many singular points on a compact surface $S$ with the compact-open topology.
Fix a  Riemannian metric $d$ on the compact surface $S$.
Since $\mathcal{H}(S)$ is a complete metrizable space (cf. \cite[Theorem~2.4]{montgomery1973cohomology}), fix a metric $d_{\mathcal{H}(S)}$ on $\mathcal{H}(S)$ defined by $d_{\mathcal{H}(S)}(v,w) := \sup \{ d(v(t,x), w(t,x)) \mid (t,x) \in [0,1] \times S \}$ for any flows $v, w \in \mathcal{H}(S)$.
Fix a Hamiltonian flow $v$ in $\mathcal{H}(S)$ and a positive number $\varepsilon \in  (0, 1)$.
Lemma~\ref{lem:ham} implies that $v$ is of finite type.
Moreover, the surface $S$ is a finite union of abstract weak orbits of $v$ which are either centers, semi-multi-saddle separatrices, or periodic annuli.
Note that the orbit space of such a periodic annulus is an interval.
Lemma~\ref{lem:ham} implies that there is a filtration $\emptyset \subset S_{\leq 0} \subset S_{\leq 1}  \subset S_{\leq 2}$ is a stratification with respect to $\leq_v$ such that $S_{\leq 0} = \Sv$, $S_{\leq 1} = \Sv \cup D(v)$, and $S_{\leq 2} - S_{\leq 1}  =  \Pv$ unless $D(v)$ is empty.
We claim that there is a reparametrization $w$ of $v$ with $d_{\mathcal{H}(S)}(v, w)< \varepsilon$ such that, for any periodic annulus $\mathbb{A}$ which is an abstract weak orbit of $w$, the periodic annulus $\mathbb{A}$ is also a union of abstract weak orbits of $w_1$ and $\mathbb{A}/w = \mathbb{A}_{w_1}/[v_{w_1}]$, where $w_1$ is the time-one map of $w$ and $v_{w_1}$ is the suspension flow of $w_1$.
Indeed, fix a periodic annulus $\mathbb{A}$ which is an abstract weak orbit of $v$.
Therefore any boundary component of $\mathbb{A}$ is either a center or a non-trivial circuit.
Suppose that $\mathbb{A} \cap \partial S  = \emptyset$.
Then there is a transverse open arc $T$ such that the two end points in $\overline{T} - T$ are singular points.
The flow box theorem implies that for any point $x \in T$ there is an open flow box $V$ containing $x$.
Identify $T$ with $\{ 0 \} \times \R \subset \R^2$.
Since any bounded closed interval is compact, we may assume that, for any $n \in \Z$, there are positive numbers $\varepsilon_n >0$ such that $U_n := [- \varepsilon_n, \varepsilon_n ] \times [n, n+1] \subset V$ is a closed flow box whose diameter is less than one and in which any closed orbit arc is of the form $[- \varepsilon_n, \varepsilon_n ] \times \{ y \}$.
Here the diameter of a subset $A$ of a metric space $(X,d)$ is the number $\sup \{ d(a,b) \in \R_{\geq 0} \mid a, b \in A \}$.
 Then the union $\bigcup_{n \in \Z} U_n \subset V$ is a \nbd of $T$.
 By a small perturbation of the parameter of $v$, we can obtain a reparametrization $v'$ of $v$ such that there are positive numbers $\varepsilon'_n \in (0, \varepsilon_n)$ and a continuous function $\delta \colon T \to \R_{>0}$ such that $d_{\mathcal{H}(S)}(v, v') < \varepsilon/2$ and that $U'_n := [- \varepsilon'_n, \varepsilon'_n ] \times [n, n+1]$ is a closed flow box with $d v'/dt |_{U'_n}(x,y) = (\delta ((0,y)), 0)$ for any $(x,y) \in U'_n$.
Put $U' := \bigcup_{n \in \Z} U'_n$.
Then the restriction $v'|_{U'}$ is a horizontal flow with constant speeds on all horizontal arc of forms $[- \varepsilon'_n, \varepsilon'_n ] \times \{ y \}$.
Let $p \colon T \to \R$ be the period of their orbits with respect to $v'$.
Define a continuous function $p' \colon T \to (0,1)$ by $p'((0,y)) := t_+(y) - t_-(y)$, where $t_-(y)$ is the negative hitting time from $y$ to the vertical boundary $\{ - \varepsilon'_n \} \times [n, n+1] \subset \partial U_n$ of $U_n$ with respect to $v'$ (i.e. $t_-(y)$ is the negative largest number with $v'_{t_-(y)}(y) = (-\varepsilon'_n, y)$), and $t_+(y)$ is the positive hitting time from $y$ to the vertical boundary $\{ \varepsilon'_n \} \times [n, n+1] \subset \partial U_n$ of $U_n$ with respect to $v'$  (i.e. $t_+(y)$ is the positive smallest number with $v'_{t_+(y)}(y) = (\varepsilon'_n, y)$).
Put $p'_{n,\min} := \min_{q \in T \cap U'_n} p'(q)$ and $\delta_{n,\max} := \max_{q \in T \cap U'_n} \delta(q)$.
Since the saturation $v'(T)$ is the periodic annulus $\mathbb{A}$, the functions $p$ and $p'$ are continuous.
Since the restriction $p|_{T \cap U'_n}$ is uniformly continuous, by a small perturbation of the parameter of $p|_{T \cap U'_n}$, for any positive integer $m \in  \Z_{>0}$, there is a piecewise non-constant linear continuous function $p_m \colon T \to \R$  such that $0 < p(q) - p_m(q) < \min \{ 1/ m, p'_{n,\min}/m, p'_{n,\min}/(m \delta_{n,\max}) \}$ for any $n \in \Z$ and for any $q \in \{ 0 \} \times [n, n+1] = T \cap U'_n$.
Then $\sup \{ p(q) - p_m(q) \mid q \in T \} \leq 1/ m$ and so $\lim_{m \to \infty} \sup \{ p(q) - p_m(q) \mid q \in T \} = 0$.
On the other hand, for any $q \in T$, replacing $v'|_{U'}$ such that  the vector field $dv'/dt|_{U'}(x,y) = (\delta (q), 0)$ becomes $(\delta (q) + a, 0)$ for any $a > 0$, the periods of the orbit of $q$ with respect to the resulting flows for any numbers $a>0$ form $(p(q) - p'(q), p(q)) \subset \R_{>0}$, and so there are numbers $a_m(q) > 0$ with $\lim_{m \to \infty} a_m(q) = 0$ such that the periods of the orbits of $q$ with respect to the resulting flows are $p_m(q)$ because $p_m(q) \in (p(q) - p'_{n,\min}, p(q)) \subset (p(q) - p'(q), p(q))$ for any $p \in \{ 0 \} \times [n, n+1] = T \cap U'_n$.
We show that $a_m$ is a continuous positive function.
Indeed, since $p'(q)\delta(q)$ is the length of the interval on which the speed $\delta(q)$ is replaced by $\delta(q) +a_m(q)$, we have $p(q) - p_m(q) = p'(q) - p'(q)\delta(q)/(\delta(q) +a_m(q)) = p'(q) a_m(q)/(\delta(q) +a_m(q))$ and so $(p(q) - p_m(q))/p'(q) = a_m(q)/(\delta(q) +a_m(q))$.
Since functions $p - p_m$ and $p'(q)$ are positive and continuous, by $0 < p(q) - p_m(q) < p'(q)$, the function $C_m \colon T \to \R_{>0}$ defined by $C_m(q) :=(p(q) - p_m(q))/p'(q)$ is continuous and $0 < C_m(q)< \min \{ 1, 1/m, 1/(m \delta_{n,\max}) \}$.
Then $C_m(q) (\delta(q) +a_m(q)) = a_m(q)$ and so $a_m(q) = \delta(q) C_m(q)/(1-C_m(q)) \in (0, (1-m)^{-1}m^{-1})$.
This implies that the function $a_m \colon T \to \R$ is continuous.
The resulting flow $v'_m$ from $v'$ by replacing the vector field $dv'/dt|_{U'}(x,y) = (\delta (q), 0)$ by $(\delta (q) + a_m(q), 0)$ is continuous such that the piecewise non-constant linear continuous function $p_m \colon T \to \R$ is the period with respect to $v'_m$.
Since $0 < a_m(q) < (1-m)^{-1}m^{-1}$ and $\lim_{m \to \infty} a_m(q) = 0$, there is a large integer $M >0$ such that $d_{\mathcal{H}(S)}(v', v'_M)< \varepsilon/2$.
Then $d_{\mathcal{H}(S)}(v, v'_M)< \varepsilon$.
Suppose that $\mathbb{A} \cap \partial S \neq \emptyset$.
Identifying $T$ with either $\{ 0 \} \times \R_{\geq 0} \subset \R^2$ or $\{ 0 \} \times [0,1] \subset \R^2$, the same argument implies the assertion.
Applying the above operations to all periodic annuli which are the connected components of the complement $S - D(v)$, we obtain the resulting vector field $w$ with $d_{\mathcal{H}(S)}(v, w)< \varepsilon$ and the period $p \colon \Pv/v \to \R$ is piecewise non-constant linear.
This means that, for any periodic annulus $\mathbb{A}$ which is an abstract weak orbit of $w$, since the union of periodic orbits in $\mathbb{A}$ by $w$ with rational periods and the union of periodic orbits in $\mathbb{A}$ by $w$ with irrational periods are dense in $\mathbb{A}$, the abstract weak orbit in a point of $\mathbb{A}$ by $w_1$ is a circle which is a periodic orbit of $w$.
This completes the claim.
Lemma~\ref{lem:pl} implies that the orbit space $S/v$ is homeomorphic to the abstract weak orbit space $S/[w_1]$ of the time-one map of the reparametrization  $w$.
\end{proof}

\subsection{Coincidence of abstract weak orbit spaces of flows and the time-one maps}

We show coincidences of the (second) abstract weak orbit spaces of a Morse-Smale flow on a compact manifold and of the time-one map.

\begin{theorem}\label{th:MS_equiv}
The {\rm(}second {\rm)} abstract weak orbit space of a Morse-Smale flow on a compact manifold is homeomorphic to the second abstract weak orbit space of the suspension flow of the time-one map.
Moreover, all periodic orbits of the flow has an irrational period if and only if the abstract weak orbit spaces of the flow and the time-one map are homeomorphic.
\end{theorem}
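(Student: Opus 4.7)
The plan is to build a candidate bijection $\Psi \colon M/[v]_{2} \to M_{f}/[v_{f}]_{2}$ by suspending each abstract weak orbit of $v$, verify that it is a homeomorphism, and then identify the obstruction preventing $\Psi$ from descending to a homeomorphism $M/[v] \to M_{f}/[v_{f}]$. Write $q \colon M \times \R \to M_{f}$ for the quotient, and for $A \subseteq M$ set $T_{A} := q(A \times \R)$. Proposition~\ref{prop:fin_MS} yields $M = \Cv \sqcup \mathrm{P}(v)$, hyperbolicity of every closed orbit, and $M/[v] = M/[v]_{2}$; hyperbolicity passes from $v$ to $f = v_{1}$ via $Df(p) = \exp(DX(p))$ at singular $p$ and $Dv_{p}(x) = (Dv_{T}(x))^{q}$ at a period-$T = p/q$ orbit. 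Define $\Psi$ by $[x]_{v} \mapsto T_{[x]_{v}}$.

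The main computation shows $[(x,0)/\!\!\sim_{\mathrm{susp}}]_{v_{f},2} = T_{[x]_{v}}$ for every $x \in M$. For $p \in \Sv$, $T_{\{p\}}$ is a single hyperbolic $v_{f}$-periodic orbit, isolated in $\mathop{\mathrm{Per}}(v_{f})$, hence its own abstract weak orbit of $v_{f}$. For a periodic orbit $O \subseteq \Pv$ of period $T$, $T_{O}$ is a $v_{f}$-invariant two-torus carrying a constant-slope linear flow: if $T$ is irrational the flow is minimal, and if $T = p/q$ is rational $T_{O}$ is foliated by period-$p$ hyperbolic $v_{f}$-periodic orbits; in either case $T_{O}$ is a connected component of $\mathop{\mathrm{Per}}(v_{f}) \cup \mathrm{R}(v_{f})$ and hence a single abstract weak orbit of $v_{f}$ at every level. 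For $y \in \mathrm{P}(v)$, $(y,0) \in \mathrm{P}(v_{f})$ has $\alpha_{v_{f}}(y,0) \subseteq T_{\alpha_{v}(y)}$ and $\omega_{v_{f}}(y,0) \subseteq T_{\omega_{v}(y)}$, and the previous step gives $[\alpha_{v_{f}}(y,0)]_{v_{f},1} = T_{\alpha_{v}(y)}$ (likewise for $\omega$). Hence the level-two classification of $(y,0)$ by $v_{f}$ records the pair $(T_{\alpha_{v}(y)}, T_{\omega_{v}(y)})$, matching $v$'s classification by $(\alpha_{v}(y), \omega_{v}(y))$; a covering-map argument, using that $f$ preserves each $v$-abstract weak orbit, matches connected components of the relevant subsets of $\mathrm{P}(v_{f})$ with those of $\mathrm{P}(v)$. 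Continuity of $\Psi$ and $\Psi^{-1}$ follows from the canonical continuous inclusion $\iota \colon M \hookrightarrow M_{f}$, $x \mapsto (x,0)/\!\!\sim_{\mathrm{susp}}$, the identity $\Psi \circ \pi_{[v]} = \pi_{[v_{f}]_{2}} \circ \iota$, and the fact that $q$ is an open $\Z$-quotient so that $\pi_{[v_{f}]_{2}}^{-1}(\Psi(W)) = q(\pi_{[v]}^{-1}(W) \times \R)$ is open whenever $W$ is open. This completes the first assertion.

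For the second assertion, the gap between $[v_{f}]$ and $[v_{f}]_{2}$ lives on $\mathrm{P}(v_{f})$ and is governed by whether $\alpha_{v_{f}}(y,0) = T_{\alpha_{v}(y)}$ already at level one. If every periodic orbit of $v$ has irrational period, then minimality of $v_{f}$ on each torus $T_{O}$ forces this equality (and similarly for $\omega$), so $[v_{f}] = [v_{f}]_{2}$ on $M_{f}$ and $M/[v] \cong M_{f}/[v_{f}]$. Conversely, suppose some $O$ has rational period $T = p/q$; then the asymptotic-phase map $W^{u}(O) \setminus O \to O$ descends to a continuous surjection onto the quotient circle $O/f$, and for $y \in W^{u}(O)$ the level-one limit $\alpha_{v_{f}}(y,0)$ is the single $v_{f}$-periodic orbit on $T_{O}$ selected by the $f$-orbit of the asymptotic phase of $y$. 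Continuous variation of this phase produces uncountably many distinct first-level $v_{f}$-classes, so $M_{f}/[v_{f}]$ cannot be homeomorphic to the (in this respect strictly coarser) space $M/[v]$. The main obstacle I anticipate is careful handling of connectedness of the fibered subsets of $\mathrm{P}(v_{f})$ in the bijection step and, in the rational case, verifying that asymptotic-phase strata in $W^{u}(O)$ yield genuinely distinct connected first-level $v_{f}$-classes rather than collapsing under the equivalence relation.
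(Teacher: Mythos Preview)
Your argument parallels the paper's: a case analysis on $x \in \Sv$, $x \in \Pv$ (rational or irrational period), and $x \in \mathrm{P}(v)$, verifying in each case that $[(x,0)/\!\!\sim_{\mathrm{susp}}]_{v_f,2} = T_{[x]_v}$ and hence that the second abstract weak orbit spaces coincide. You are more explicit than the paper about building the bijection $\Psi$ and checking continuity in both directions; the paper simply shows the two decompositions of $M$ agree pointwise and leaves the homeomorphism implicit.

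For the converse (some rational period $\Rightarrow$ not homeomorphic), your asymptotic-phase argument is the right idea and in fact more detailed than the paper's, which essentially equates ``$[x]_f = [x]_v$ for all $x$'' with ``homeomorphic.'' The obstacle you anticipate---phase strata collapsing under the equivalence---is not real: distinct values of $\alpha_f$ give distinct level sets of $(\alpha_f,\omega_f)$, hence distinct connected components, hence distinct first-level classes automatically. The genuine missing step is elsewhere: producing uncountably many points in $M_f/[v_f]$ does not by itself preclude a homeomorphism with $M/[v]$ unless you know $M/[v]$ is countable. This follows from Proposition~\ref{prop:fin_MS}: for a smooth representative, abstract orbits are embedded submanifolds, so each connecting orbit set has at most countably many components, and $\Cv$ is finite. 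With that observation added, your argument is complete.
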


\begin{proof}
Let $v$ be a Morse-Smale flow on a compact manifold $M$, $f := v_1 \colon M \to M$ the time-one map of $v$, and $v_f$ the suspension flow of $f$ on the mapping torus $M_f$.
Identify $M$ with the slice $(M \times \{0 \} )/\sim_{\mathrm{susp}} \subset M_f$.
Proposition~\ref{prop:fin_MS} implies that  $M = \Cv \sqcup \mathrm{P}(v)$ and each abstract weak orbit $[x]_v$ by $v$ is either a closed orbit $O(x)$ or the  connected component containing $x$ of the connecting orbit set $W^u_v(\alpha_v(x)) \cap W^s_v(\omega_v(x))$ from the hyperbolic closed orbit $W^u_v(\alpha_v(x))$ to the hyperbolic closed orbit $W^s_v(\omega_v(x))$, where $W^u_v(A)$ (resp. $W^s_v(A)$, $\alpha_v(x)$,  $\omega_v(x)$) is the unstable manifold (resp. stable manifold, $\alpha$-limit set, $\omega$-limit set) with respect to $v$.
Then $\mathop{\mathrm{Cl}}(v) = \mathop{\mathrm{Per}}(f) \sqcup \mathrm{R}(f)$ and $\mathrm{P}(v) = \mathrm{P}(f)$.
Fix a point $x$.
Suppose that $x \in \Sv$.
By isolatedness of closed orbits of $v$, the orbit $O_{v_f}(x)$ is a connected component of $\mathop{\mathrm{Per}}(v_f)$ and so $O_{v_f}(x) = [x]_{v_f}$, where $[x]_{v_f}$ is the abstract weak orbit of $x = (x, 0)/\sim_{\mathrm{susp}} \in M_f$.
This implies that the abstract weak orbit $[x]_f$ is the singleton $\{ x \} = [x]_v$.
Suppose that $x \in \Pv$.
Assume that $x \in \mathrm{R}(f)$.
Then $\overline{O_f(x)} = O_v(x)$ is a limit cycle of $f$ such that $\overline{O_f(x)}$ is a connected component of $\mathrm{R}(f) \subseteq \Pv$.
Therefore the closure $\overline{O_{v_f}(x)}$ is a minimal torus which is a connected component of $\mathrm{R}(v_f) \subseteq v_f(\Pv)$, and so $\overline{O_{v_f}(x)} = \hat{O}_{v_f}(x)= \check{O}_{v_f}(x) \subset \mathrm{R}(v_f)$.
This means that $[x]_f = \check{O}_{f}(x) = O_v(x) = [x]_v$.
Thus we may assume that $x \in \mathop{\mathrm{Per}}(f)$.
Then $O_v(x)$ is the connected component of $\mathop{\mathrm{Per}}(f)$ containing $x$.
Therefore $v_f(O_v(x))$ is an invariant torus which is a connected component of $\mathop{\mathrm{Per}}(v_f)$ and so is the abstract weak orbit $[x]_{v_f}$.
This implies that $[x]_f = O_v(x) = [x]_v$.
Thus we may assume that $x \in \mathrm{P}(v)$.
Then $x \in \mathrm{P}(f)$ and so  $x = (x, 0)/\sim_{\mathrm{susp}} \in  \mathrm{P}(v_f)$.
By definition of time-one map, the $\alpha$-limit set $\alpha_f(x)$ by $f$ is an invariant subset of the closed orbit $\alpha_v(x)$ and the $\omega$-limit set $\omega_f(x)$ by $f$ is an invariant subset of the closed orbit $\omega_v(x)$.
In particular, if $\alpha_f(x) \subseteq \mathrm{R}(f)$ (i.e the period of $\alpha_v(x)$ for $v$ is irrational) (resp. $\omega_f(x) \subseteq \mathrm{R}(f)$ (i.e the period of $\omega_v(x)$ for $v$ is irrational)), then $\alpha_f(x) = \alpha_v(x)$ (resp. $\omega_f(x)  = \omega_v(x))$ is a closed orbit of $v$.
Then if the periods of all periodic orbits of $v$ is irrational, then the orbit space $M/v$ of $v$ is homeomorphic to the abstract weak orbit space $M_f/[v_f]$ and so to $M/[v]$.
If $\alpha_f(x) \subseteq \mathop{\mathrm{Per}}(f)$ (resp. $\omega_f(x) \subseteq \mathop{\mathrm{Per}}(f)$), then $\alpha_f(x) \subsetneq \alpha_v(x)$ (resp. $\omega_f(x)  \subsetneq \omega_v(x))$ is a finite subset contained in the closed orbit $\alpha_v(x)$ (resp. $\omega_v(x)$) of $v$, and so $[x]_f = [x]_{f,1} \subsetneq [x]_v$.
This means that $[x]_f \subsetneq [x]_v$, and that $\alpha_f(x)$ and $\omega_f(x)$ are closed orbits if and only if $[x]_f = [x]_v$.
Notice that $\alpha_f(x)$ and $\omega_f(x)$ are closed orbits if and only if the periods of closed orbits $\alpha_f(x)$ and $\omega_f(x)$ of $v$ are irrational.
This means that all periodic orbits of the flow has an irrational period if and only if the abstract weak orbit spaces of the flow and the time-one map are homeomorphic.
Since the hyperbolic closed orbit of $v$ is the abstract weak orbit of $v$ and $f$,  we have, for any $y \in [x]_v$, $[\alpha_f(y)]_f = \bigcup_{z \in \alpha_f(y)} [z]_f = \bigcup_{z \in \alpha_f(y)} \alpha_v(y) = \alpha_v(y) =\alpha_v(x) = [\alpha_v(x)]_v$ and $[\omega_f(y)]_f = [\omega_v(x)]_v$ by symmetry.
Since $\alpha_v(x)$ and $\omega_v(x)$ are closed orbits of $v$, we obtain
\begin{eqnarray*}
  [x]_{f,2} & =& \left\{ y \in \mathrm{P}(v) \mid [\alpha_f(y)]_{f,1} = [\alpha_f(x)]_{f,1}, [\omega_f(y)]_{f,1} = [\omega_f(x)]_{f,1} \right\} \\
&  = & \left\{ y \in \mathrm{P}(v) \mid [\alpha_f(y)]_{f} = [\alpha_f(x)]_{f}, [\omega_f(y)]_{f} = [\omega_f(x)]_{f} \right\}  \\
&  = & \left\{ y \in \mathrm{P}(v) \mid [\alpha_f(y)]_{f} = \alpha_v(x), [\omega_f(y)]_{f} = \omega_v(x) \right\}  \\
&  = & \left\{ y \in \mathrm{P}(v) \mid \alpha_f(y) \subseteq \alpha_v(x), \omega_f(y) \subseteq \omega_v(x) \right\}  \\
&  = & \left\{ y \in \mathrm{P}(v) \mid \alpha_v(y) = \alpha_v(x), \omega_v(y) = \omega_v(x) \right\}  \\
&  = & \left\{ y \in \mathrm{P}(v) \mid \alpha'_v(y) = \alpha'_v(x), \omega'_v(y) = \omega'_v(x) \right\}  \\
& = & [x]_v
\end{eqnarray*}
Therefore the abstract weak orbit space $M/[v]$ of $v$ is homeomorphic to the second abstract weak orbit space $M_f/[v_f]_2$.
\end{proof}

\begin{corollary}\label{cor:MS_equiv}
The abstract weak orbit space of a Morse flow on a compact manifold is homeomorphic to the abstract weak orbit space of the time-one map.
\end{corollary}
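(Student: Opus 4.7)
The plan is to derive this corollary as an immediate application of the second assertion of Theorem~\ref{th:MS_equiv}, which states that for a Morse-Smale flow $v$ on a compact manifold $M$ with time-one map $f$, the abstract weak orbit spaces $M/[v]$ and $M/[f]$ are homeomorphic if and only if every periodic orbit of $v$ has irrational period. My strategy is to verify that the ``irrational period'' hypothesis is vacuously satisfied for any Morse flow, because a Morse flow has no periodic orbits at all.

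First I will unpack the relevant definitions from the preliminaries: a Morse flow on a compact manifold is defined as a Morse-Smale flow without limit cycles. By condition (MS1) of the Morse-Smale definition, every point of the non-wandering set $\Omega(v)$ lies on a hyperbolic closed orbit, so $\Omega(v) = \Cv$. Any hyperbolic non-trivial periodic orbit of a flow on a compact manifold is automatically a limit cycle, since nearby orbits in its (nontrivial) stable or unstable manifold have it as their $\omega$-limit or $\alpha$-limit set. Hence ruling out limit cycles forces $\Omega(v) \cap \Pv = \emptyset$, and combined with the containment $\Cv \subseteq \mathcal{R}(v) \subseteq \Omega(v)$ already used in the proof of Proposition~\ref{prop:fin_MS} this gives $\Cv = \Sv$. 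In particular $\Pv = \emptyset$.

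With $\Pv = \emptyset$ established, the universally quantified statement ``every periodic orbit of $v$ has irrational period'' holds vacuously, and Theorem~\ref{th:MS_equiv} delivers the desired homeomorphism $M/[v] \cong M/[f]$. There is no substantial obstacle: the argument is a short definitional observation that rules out the one failure mode identified in the proof of Theorem~\ref{th:MS_equiv}, namely the case of a connecting orbit $x \in \mathrm{P}(v)$ whose $\alpha$-limit or $\omega$-limit is a periodic orbit of rational period (which would cause $[x]_f \subsetneq [x]_v$); since no non-singular periodic orbits exist in a Morse flow, this obstruction simply never arises.
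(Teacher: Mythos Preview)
Your proposal is correct and matches the paper's intent: the corollary is stated without proof immediately after Theorem~\ref{th:MS_equiv}, so it is meant to follow directly from that theorem, and your observation that $\Pv = \emptyset$ for a Morse flow (making the irrational-period hypothesis vacuous) is exactly the missing one-line justification.
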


\section{Examples}

%

We state an example of a periodic homeomorphism to state triviality of abstract weak orbit space for virtually trivial dynamics.

\begin{example}
Let $g$ be a periodic homeomorphism on a compact orientable surface $S$ and $v_g$ the suspension flow of $g$ on the mapping torus 3-manifold $M$.
Then the orbit space $M/v_g \cong M/\hat{v}_g \cong S/g$ is an orbifold but the abstract {\rm(}weak {\rm)} orbit space $M/[v_g] = M/\langle v_g \rangle$ and the Morse graph is a singleton.
\end{example}

\subsection{Recurrent dynamics}

The Morse graphs of all examples in this subsection are singletons but the abstract weak orbit spaces are not singletons.

\begin{example}\label{ex:rot}
Let $\mathbb{D}$ be a unit disk with the polar coordinate system $(r, \theta)$ and $w$ a rotation defined by $w(t,(r,\theta)) = (r, 2 \pi rt + \theta)$.
Denote by $f_w$ the time-one map and by $v_{f_w}$ the suspension flow on the resulting solid torus $M$.
Then each orbit of $v_{f_w}$ is periodic or non-closed recurrent, and each minimal set of $v_{f_w}$ is a periodic orbit or a torus.
Therefore the orbit space $M/v_{f_w}$ is not $T_0$, the orbit class space $M/{\hat{v}_{f_w}}$ is $T_1$ but not $T_2$, and the abstract {\rm(}weak {\rm)} orbit space $M/[v_{f_w}] = M/\langle v_{f_w} \rangle  = \mathbb{D}/v$ is a closed interval and so $T_2$.
\end{example}

\begin{example}
Let $v$ be a Hamiltonian flow on a closed disk $\mathbb{D}_1$ with periodic boundary $O$ with two centers $c_1, c_2$, one homoclinic saddle connection $\{ s \} \sqcup O_1 \sqcup O_2$ and three open periodic annuli $\mathbb{A}_1, \mathbb{A}_2, \mathbb{A}_3$ $($see Figure~\ref{ex:01}$)$.
\begin{figure}
\begin{center}
\includegraphics[scale=0.15]{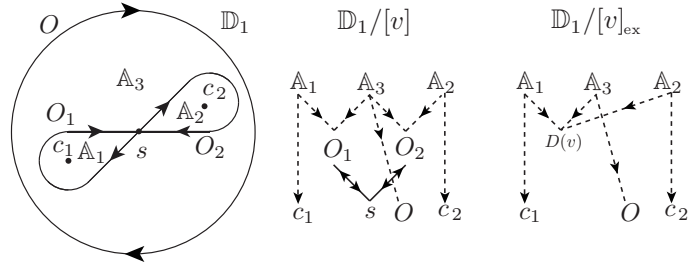}
\end{center}
\caption{The picture on the left is a Hamiltonian flow on a closed disk $\mathbb{D}_1$.
In the diagram on the right, dotted directed lines correspond to $\leq_\pitchfork$. Moreover, the binary relation $\leq_\partial$ is a pre-order which corresponds to the union of the pre-order $\leq_v$ and the binary relation $\leq_\pitchfork$ as a direct product.}
\label{ex:01}
\end{figure}
Then $\mathbb{D}_1 = \{ c_1, c_2, s \} \sqcup O_1 \sqcup O_2 \sqcup \mathbb{A}_1 \sqcup \mathbb{A}_2 \sqcup \mathbb{A}_3$ and $\mathbb{D}_1/[v] = \{ \{ c_1 \}, \{c_2\}, \{s \}, O_1, O_2, \mathbb{A}_1, \mathbb{A}_2,  \mathbb{A}_3 \}$.
Moreover, the abstract graph of the Reeb graph is the extended weak orbit space $\mathbb{D}_1/[v]_{\mathrm{ex}}$ which is the quotient space of the abstract weak orbit space $\mathbb{D}_1/[v]$ by collapsing the homoclinic saddle connection $\{ s \} \sqcup O_1 \sqcup O_2$ into a singleton.
\end{example}

\begin{example}
Let $v$ be a volume-preserving flow on a connected compact manifold which is neither identical, pointwise periodic, nor minimal.
The abstract orbit space of $v$ is not a singleton.
Indeed, since any volume-preserving flows on a compact manifold have no non-wandering domains and so are non-wandering and the $\Omega$-limit set is contained in the chain recurrent set $\mathrm{CR}(v)$, we have $X = \Omega(v) = \mathrm{CR}(v)$.
Corollary~\ref{prop:refine} implies the assertion.
\end{example}

Recall that Nielsen-Thurston theorem~\cite{thurston1988geometry} as follows:
Let $S$ be a compact connected orientable surface and $f:S \to S$ a homeomorphism.
Then there is a map $g$ isotopic to $f$ such that at least one of the following holds:
$(1)$ $g$ is periodic (i.e. $g^k = 1_S$ for some positive integer $k$);
$(2)$ $g$ preserves some finite union of disjoint simple closed curves on $S$, called $g$ reducible; or
$(3)$ $g$ is pseudo-Anosov.
We state an example of a homeomorphism which is isotopic to a periodic homeomorphism.

\begin{example}
Let $v_g$ be the suspension flow on a closed 3-manifold $\mathbb{T}^2_g$ of a homeomorphism $g$ on a torus $\mathbb{T}^2$ whose minimal sets consist of a semi-attracting limit cycle $\gamma_1$, one semi-attracting and semi-repelling limit cycle $\gamma_2$, one semi-repelling limit cycle, and cycles as on the left of Figure~\ref{ex:02}.
\begin{figure}
\begin{center}
\includegraphics[scale=0.195]{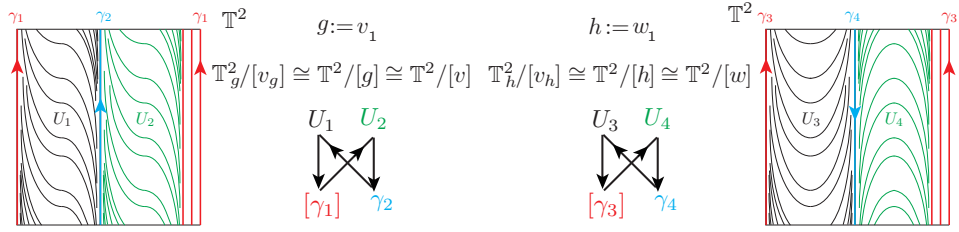}
\end{center}
\caption{The picture on the left and right are homeomorphisms $g$ and $h$ on torus $\mathbb{T}^2$ which are time-one map of flows, and whose suspension flows $v_g$ and $v_h$ is flows on closed 3-manifolds.
}
\label{ex:02}
\end{figure}
Then $\mathbb{T}^2 = [\gamma_1] \sqcup \gamma_2 \sqcup U_1 \sqcup U_2$ and the abstract weak orbit space $\mathbb{T}^2_g/[v_g] \cong \mathbb{T}^2/[g] = \{ [\gamma_1], \gamma_2, U_1, U_2 \}$ with the pre-order $\leq_v$ is a poset with height one which consists of four points with $U_1 <_{\alpha} [\gamma_1] >_{\omega} U_2$ and $U_1 <_{\omega} \gamma_2 >_{\alpha} U_2$, where $[\gamma_1]$ is a closed annulus consisting of cycles, the open annuli $U_1$ and $U_2$ are connected components of $\mathbb{T}^2 - ([\gamma_1] \sqcup \gamma_2)$.
Note that the poset $\mathbb{T}^2_g/[v_g]$ as an Alexandroff space is weakly homotopy equivalent to a circle.
\end{example}

We also state an example of a homeomorphism which is isotopic to a periodic homeomorphism and which is not topologically equivalent to the flow $g$ in the previous example but whose abstract weak orbit space is homeomorphic to one of $g$.

\begin{example}
Let $v_h$ be the suspension flow on a closed 3-manifold $\mathbb{T}^2_h$ of a homeomorphism $h$ on a torus $\mathbb{T}^2$ whose minimal sets consist of a semi-attracting limit cycle $\gamma_3$, one semi-attracting and semi-repelling limit cycle $\gamma_4$, one semi-repelling limit cycle, and cycles as on the right of Figure~\ref{ex:02}.
Then the abstract weak orbit space $\mathbb{T}^2_h/[v_h] \cong \mathbb{T}^2/[h] = \{ [\gamma_3], \gamma_4, U_3, U_4 \}$ is homeomorphic to the abstract weak orbit space $\mathbb{T}^2_g/[v_h] \cong \mathbb{T}^2/[g] = \{ [\gamma_1], \gamma_2, U_1, U_2 \}$ of the flow $g$ in the previous example.
\end{example}

We state examples of Pseudo-Anosov homeomorphisms.

\begin{example}
Let $v_g$ be the suspension flow on a closed 3-manifold $M$ of a pseudo-Anosov homeomorphism $g$ on a compact connected orientable surface $S$.
Then the abstract weak orbit space $M/[v_g] \cong S/[g]$ contains abstract weak orbits contained in $\mathop{\mathrm{Per}}(v_g)$ and abstract weak orbits contained in dense orbits.
In particular, the abstract weak orbit space $M/[v_g] \cong S/[g]$ is not a singleton.
\end{example}

We state a flow $v$ whose abstract weak orbit space with infinite height with respect to the binary relation $\leq_v$.

\begin{example}\label{ex:anosov}
Let $v_f$ be the suspension flow on a closed 3-manifold $M$ of an Anosov diffeomorphism $f$ on a torus $\mathbb{T}^2$.
Then the binary relation $\leq_{v_f}$ on the abstract weak orbit space $M/[v_f]  \cong \mathbb{T}^2 /[f]$ has infinite height. Indeed, the Anosov diffeomorphism $f$ is topological conjugate to a toral automorphism and so is semi-conjugate to a shift map on a shift space given by finite symbols.
\end{example}

Recall that a topologically transitive flow $v$ on a compact metric space $X$ with $\overline{\mathop{\mathrm{Cl}}(v)} = X$ is chaotic in the sense of Devaney if it is sensitive to initial conditions.

\begin{example}\label{ex:chaotic}
Let $w$ be a flow on a compact metric space $M$ which is chaotic in the sense of Devaney.
Then the abstract weak orbit space $M/[w]$ contains abstract weak orbits contained in $\mathop{\mathrm{Cl}}(w)$ and abstract weak orbits contained in dense orbits.
In particular, the abstract weak orbit space $M/[w]$ is not a singleton.
%
\end{example}

\subsection{Non-completeness of abstract weak orbit spaces and of Reeb graphs}

Roughly speaking, the abstract weak orbit space of a flow has no information of spiral directions  around limit circuits.
Therefore abstract weak orbit space is not a complete invariant of Morse-Smale flows with limit cycles.
In fact, there are two Morse-Smale flows on a torus which are not topologically equivalent to each other, but the abstract weak orbit spaces are isomorphic as in  Figure~\ref{non_complete01}.
\begin{figure}
\begin{center}
\includegraphics[scale=0.3]{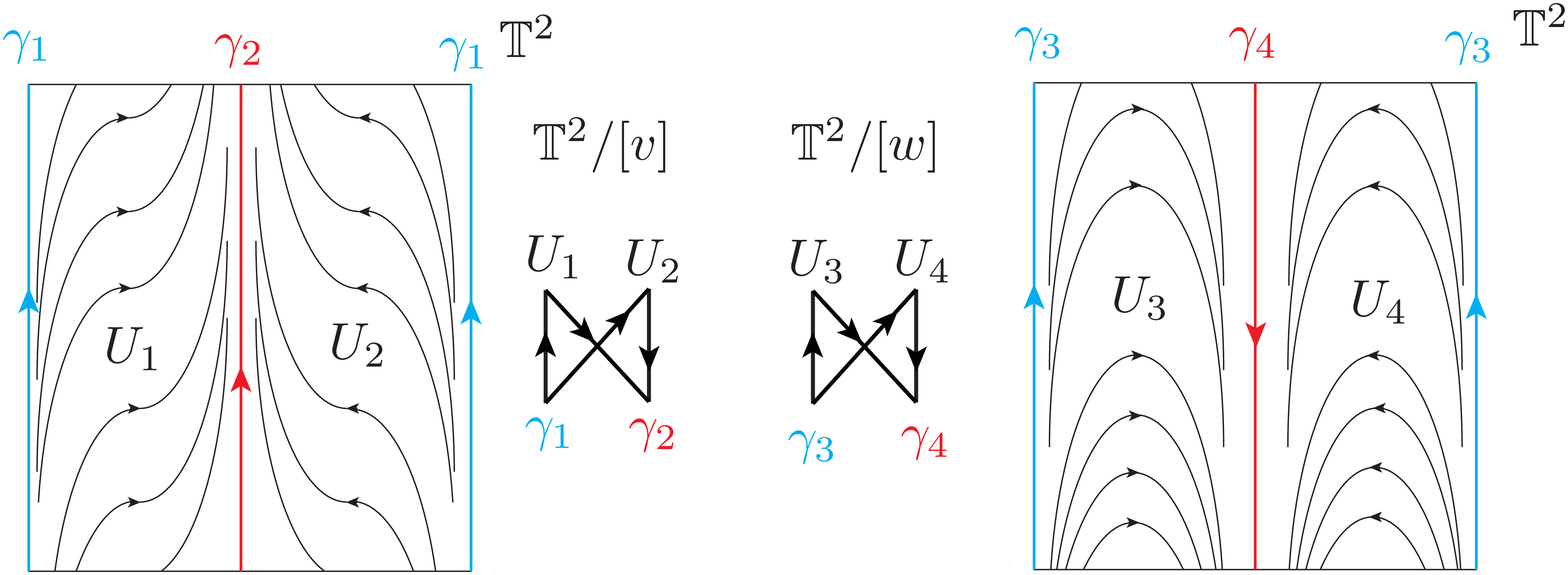}
\end{center}
\caption{Two flows generated by Morse-Smale vector fields on a torus are not topologically equivalent to each other but the abstract weak orbit spaces are isomorphic.}
\label{non_complete01}
\end{figure}
Moreover, an abstract weak orbit space has no information of cyclic orders around multi-saddles out of the boundary of the surface.
Therefore the abstract weak orbit space is not a complete invariant of Hamiltonian flows with finitely many singular points.
Indeed, there are two Hamiltonian flows $v$ and $w$ on a disk $\mathbb{D}^2$ as in Figure~\ref{fig05} that are not topologically equivalent to each other but whose abstract weak orbit spaces are isomorphic to each other.
\begin{figure}
\begin{center}
\includegraphics[scale=0.17]{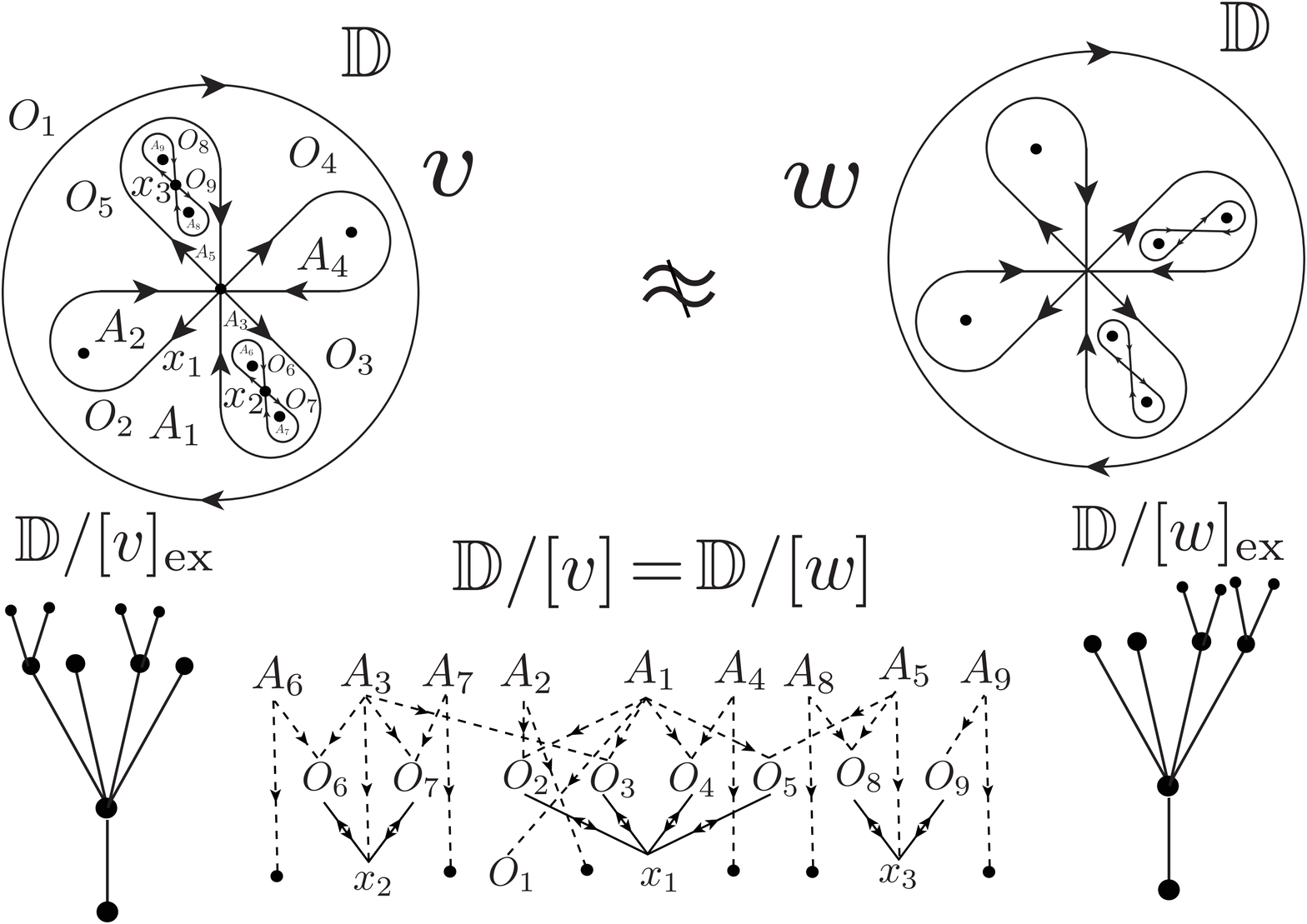}
\end{center}
\caption{Two flows $v$ and $w$ on a disk $\mathbb{D}$ that are not topologically equivalent to each other and whose multi-saddle connection diagrams are not isomorphic as plane graphs but isomorphic as abstract multi-graphs. In particular, the extended weak orbit spaces are isomorphic and are pre-ordered sets with the pre-orders $\leq_\partial$ each of which corresponds to the union of the pre-order $\leq_v$ and the binary relation $\leq_\pitchfork$ as a direct product.}
\label{fig05}
\end{figure}

Moreover, the Reeb graphs of the Hamiltonian flows are also not complete.
Indeed, let $v$ and $w$ be the flows as above in Figure~\ref{fig05}.
Then the abstract weak orbit spaces $\mathbb{D}/[v]$ and $\mathbb{D}/[w]$ are isomorphic.
Since Reeb graphs of Hamiltonian flows on compact surfaces as abstract graphs corresponds to the extended weak orbit spaces $\mathbb{D}/[v]_{\mathrm{ex}}$ and $\mathbb{D}/[w]_{\mathrm{ex}}$ which are quotient spaces of abstract weak orbit spaces $\mathbb{D}/[v]$ and $\mathbb{D}/[w]$, the non-completeness of the abstract weak orbit spaces implies that one of the Reeb graphs.

\bibliographystyle{my-amsplain-nodash-abrv-lastnamefirst-nodot}
\bibliography{../../../bib/yokoyama}
\end{document}